\newcommand{\mfc}{m_{fc}}
\renewcommand{\v}{\mathrm{v}}
\renewcommand{\a}{\mathrm{a}}
\renewcommand{\b}{\mathrm{b}}
\theoremstyle{plain} 
\newtheorem{theorem}{Theorem}[section]
\newtheorem*{theorem*}{Theorem}
\newtheorem{lemma}[theorem]{Lemma}
\newtheorem*{lemma*}{Lemma}
\newtheorem{corollary}[theorem]{Corollary}
\newtheorem*{corollary*}{Corollary}
\newtheorem*{proposition*}{Proposition}
\newtheorem{definition}[theorem]{Definition}
\newtheorem*{definition*}{Definition}
\theoremstyle{definition} 
\newtheorem*{example*}{Example}
\newtheorem{remark}[theorem]{Remark}
\newtheorem*{remark*}{Remark}
\newtheorem*{remarks*}{Remarks}
 \newtheorem{assumption}[theorem]{Assumption}
\numberwithin{equation}{section}
\numberwithin{figure}{section}
\newcommand{\ol}[1]{\overline{#1} \!\,} 
\newcommand{\deq}{\mathrel{\mathop:}=}
\newcommand{\e}[1]{\mathrm{e}^{#1}}
\newcommand{\R} {\mathbb{R}}
\newcommand{\C} {\mathbb{C}}
\newcommand{\N} {\mathbb{N}}
\newcommand{\E} {\mathbb{E}}
\newcommand{\T} {\mathbb{T}}
\newcommand{\adj}{^*} 
\DeclareMathOperator{\Tr}{Tr}
\DeclareMathOperator{\supp}{supp}
\DeclareMathOperator{\re}{\mathrm{Re}}
\DeclareMathOperator{\im}{\mathrm{Im}}
\newcommand{\caB}{{\mathcal B}}
\newcommand{\caD}{{\mathcal D}}
\newcommand{\caE}{{\mathcal E}}
\newcommand{\caG}{{\mathcal G}}
\newcommand{\caL}{{\mathcal L}}
\newcommand{\caO}{{\mathcal O}}
\newcommand{\caP}{{\mathcal P}}
\newcommand{\caY}{{\mathcal Y}}
\newcommand{\bbP}{{\mathbb P}}
\newcommand{\opunit}{\text{1}\kern-0.22em\text{l}}
\newcommand{\fra}{{\mathfrak a}}
\newcommand{\frn}{{\mathfrak n}}
\newcommand{\frX}{{\mathfrak X}}
\newcommand{\bsu}{{\boldsymbol u}}
\renewcommand{\d}{{\mathrm d}}
\newcommand{\beq}{ \begin{equation} }
\newcommand{\eeq}{ \end{equation} }
\newcommand{\baq}{ \begin{eqnarray} }
\newcommand{\eaq}{ \end{eqnarray} }
\newcommand{\bet}{ \begin{theorem} }
\newcommand{\eet}{ \end{theorem} }
\newcommand{\lone}{\mathbbm{1}}
  \let\ga=\gamma 
\let\ka=\kappa \let\la=\lambda
\newcommand{\ben}{\begin{arabicenumerate}}
\newcommand{\een}{\end{arabicenumerate}}
\newcommand{\dd}{\mathrm{d}}
\newcommand{\ii}{\mathrm{i}}
\title{Local deformed semicircle law and complete delocalization for Wigner matrices with random potential}
\author[1]{Ji Oon Lee \thanks{Partially supported by Basic Science Research Program through the National Research Foundation of Korea Grant 2011-0013474}}
\author[2]{Kevin Schnelli}
\affil[1]{\it Department of Mathematical Sciences, KAIST \authorcr \it Daejeon 305-701, Republic of Korea \authorcr \it jioon.lee@kaist.edu}
\affil[2]{\it Department of Mathematics, Harvard University \authorcr \it Cambridge, MA 02138, USA \authorcr \it skevin@math.harvard.edu}
\begin{document}

\maketitle

\begin{abstract}
We consider Hermitian random matrices of the form $H = W + \lambda V$, where $W$ is a Wigner matrix and $V$ a diagonal random matrix independent of $W$. We assume subexponential decay for the matrix entries of $W$ and we choose $\lambda \sim 1$ so that the eigenvalues of $W$ and $\lambda V$ are of the same order in the bulk of the spectrum. In this paper, we prove for a large class of diagonal matrices $V$ that the local deformed semicircle law holds for $H$, which is an analogous result to the local semicircle law for Wigner matrices. We also prove complete delocalization of eigenvectors and other results about the positions of eigenvalues.
\end{abstract}

\vspace{15mm}

{\small
\textbf{AMS Subject Classification (2010)}: 15B52, 60B20, 82B44

\vspace{5mm}

\textit{Keywords}: Random matrix, Deformation, Local semicircle law, Delocalization

\vspace{5mm}
}

\section{Introduction}

Consider large matrices whose entries are random variables. Famous examples of such matrices are Wigner matrices: a Wigner matrix is an $N\times N$ real or complex matrix $W=(w_{ij})$ whose entries are independent random variables with mean zero and variance $1/N$, subject to the symmetry constraint $w_{ij}=\overline{w}_{ji}$. The eigenvalues of Wigner matrices are highly correlated; the empirical density of eigenvalues converges to the Wigner semicircle law~\cite{Wigner} in the large $N$ limit. Under some additional moment assumptions on the entries this convergence also holds on very small scales: denoting by $G_W(z)=(W-z)^{-1}$, $z\in\C^+$, the resolvent or Green function of $W$, the convergence of the empirical eigenvalue distribution on scale~$\eta$ at an energy~$E\in\R$ is equivalent to the convergence of the averaged Green function $m_W(z)=N^{-1}\mathrm{Tr}\,G_W(z)$, $z=E+\ii\eta$. The convergence of $m_W(z)$ at the optimal scale $N^{-1}$, up to logarithmic corrections, the so-called {\it local 
semicircle law}, was established for Wigner matrices in a series of papers~\cite{ESY1,ESY2,ESY3}, where it was also shown that the eigenvectors of Wigner matrices are completely delocalized. The proof is based on a self-consistent equation for $m_W(z)$ and the continuity of the Green function~$G(z)$ in the spectral parameter $z$. In~\cite{EYY1,EYY2} convergence of Green function entries was established on optimal scales. Precise estimates on the averaged Green function $m_W(z)$ and on the eigenvalue locations are essential ingredients for proving bulk universality~\cite{ESY4,ESYY} and edge universality~\cite{EYY} for Wigner matrices.

Diagonal matrices with i.i.d.\ random entries are another example of random square matrices. Their eigenvalues are independent, hence uncorrelated, and their eigenvectors are localized. Physically, the diagonal matrix may represent an on-site random potential on a lattice system. Compared to the mean-field nature of Wigner matrices, which are in the weak disorder or the delocalization regime, the diagonal randomness also provides a good example of the strong disorder or localization regime. 

In this paper we consider the interpolation of the two, i.e., the $N \times N$ random matrix
\begin{align} \label{interpolation}
H = \lambda V + W\,,\qquad \lambda\in\R\,,
\end{align}
where $V$ is a real diagonal random matrix, or a `random potential', and $W$ is a real symmetric or complex Hermitian Wigner matrix independent of $V$. The matrix $V$ is properly normalized so that the typical eigenvalues of~$V$ and~$W$ are of the same order. (See Definition \ref{assumption wigner} for a precise statement.) The parameter $\lambda$ determines the relative strength of each part in this model. 

For $\lambda \sim 1$ the eigenvalue density is not solely determined by $V$ or $W$ in the limit $N\to\infty$, but can be described by a functional equation for the Stieltjes transforms of the limiting eigenvalue distributions of~$V$ and~$W$; see~\cite{P, PV}. In general, this limiting eigenvalue distribution, referred to as the {\it deformed semicircle law}, is different from the semicircle distribution. The equal strength of~$V$ and~$W$ makes it non-trivial to find the spectral properties of~$H$. We remark that there are some results related to this model~\cite{CDFF, K, BBCF}.

When $W$ belongs to the Gaussian Unitary Ensemble (GUE), $H$ is called the {\it deformed GUE}, and it can describe Dyson Brownian motion \cite{D} on the real line; see, e.g., \cite{J1}. There has been much important work on various scales of~$\lambda$: Related to symmetry-breaking, transition statistics for eigenvalues in the bulk, especially the nearest neighbor spacing, were studied in \cite{Pa, FN} for~$\lambda \sim N^{1/2}$. In this situation, the diagonal part $\lambda V$ controls the average density, while the GUE part induces fluctuation of eigenvalues. For $\lambda\lesssim 1$, it was shown in~\cite{S1} that universality of eigenvalue correlation functions holds in the bulk of the spectrum. Concerning the edge behaviour, it was shown in \cite{J2} that the transition from the Tracy-Widom to the standard Gaussian distribution occurs on the scale $\lambda \sim N^{-1/6}$. For $\lambda\ll N^{-1/6}$, the Tracy-Widom distribution for the edge eigenvalues was established in~\cite{S2}.

In this paper, we prove, for $\lambda \lesssim 1$ and a large class of random potentials, convergence of the empirical density of eigenvalues down to the optimal scale $1/N$, i.e., we show a {\it local deformed semicircle law} for the averaged Green function $m_H(z)=N^{-1} \Tr(H-z)^{-1}$ , $z=E+\ii\eta$, for all $\eta\gg N^{-1}$. Unlike in the Wigner case, the diagonal disorder of $V$ prevents the diagonal Green function entries from concentrating around $m_H(z)$ for $\lambda\not=0$. Following~\cite{EYY} we derive a self-consistent equation for $m_H(z)$, whose analysis requires a stability estimate that forces interesting conditions on $V$ and $\lambda$. As an intermediate result, we obtain a weak local deformed semicircle law for $m_H(z)$ and complete delocalization of the eigenvectors of $H$ up to the edge.
 In~\cite{EYY2} a `fluctuation average lemma' was proven that yielded optimal rigidity estimates on the location of the eigenvalues of $H$ in the bulk~\cite{EYY2} and up to the edge~\cite{EYY}. Combining the weak deformed semicircle law with the `fluctuation average lemma'~\cite{EYY2} we obtain convergence of $m_H(z)$ on the optimal scale. However, the self-averaging mechanism of the Wigner matrix $W$ in the bulk is only observed after the leading fluctuations stemming from $V$ are subtracted. For example, for $\lambda\sim 1$, the rigidity of eigenvalue location is weaker than in the Wigner case, but we show that the eigenvalue spacing is rigid in the bulk on intermediate scales.

The paper is organized as follows: In Section~\ref{Definition and Results}, we introduce the precise definition and assumptions of the model, state the main results and give a short outline of the proofs. Our assumptions on $\lambda V$ mainly depend on the behaviour of the deformed semicircle law as described in Lemmas~\ref{lemma assumptions 1} and~\ref{squareroot lemma 2} (see also Lemma~\ref{lemma.lee1} in Section~\ref{Weak Deformed Semicircle Law}). For similar results on the deformed semicircle law; see~\cite{B,S2}. In Section \ref{Weak Deformed Semicircle Law}, we prove a weak local (deformed) semicircle law and complete delocalization of eigenvectors. The proof of the local deformed semicircle law follows closely the proof of the weak local semicircle law for sparse random matrices given in~\cite{EKYY1}. In Section~\ref{strong deformed semicircle Law}, we give a proof of the average fluctuation lemma (Lemma \ref{lemma.21}). The proof is inspired by~\cite{EKY}, where fluctuation averages are considered 
for generalized Wigner and random band matrices. The combination of the fluctuation average lemma with the weak local (deformed) semicircle law, yields a proof of the strong local deformed semicircle law as in~\cite{EYY,EKYY1}. In Section~\ref{Identifying the leading corrections}, we identify the leading corrections due to the random diagonal part to the strong local semicircle law on scale~$N^{-1/2}$, in the bulk of the spectrum. In Section~\ref{Density of states}, we establish, following the Helffer-Sj\"ostrand argument given in~\cite{ERSY}, estimates on the density of states and the rigidity of eigenvalues. Using the results obtained in Section~\ref{Identifying the leading corrections}, we obtain estimates on the rigidity of the eigenvalue spacing on intermediate scales in the bulk of the spectrum. Technical details about the square root behaviour and the stability bounds for the deformed semicircle law are given in the Appendix. \\

{\it Acknowledgements:} We thank Horng-Tzer Yau for suggesting this problem to us and for
numerous helpful discussions. We are grateful to Antti Knowles for
discussions and many useful remarks. We also thank Alex Bloemendal,
Paul Bourgade, L\'aszl\'o Erd\H{o}s and an anonymous referee for helpful comments.

\section{Definition and Results} \label{Definition and Results}
In this section, we define our model and state our main results.

\subsection{Free convolution}\label{freeadditiveconvolution}
As first shown in~\cite{P} the limiting spectral distribution of the interpolating model~\eqref{interpolation} is given by the {\it (additive) free convolution measure} of the limiting distribution of the entries of $\lambda V$ and $\mu_{sc}$, the semicircular measure. In a more general setting, the free convolution measure, $\mu_1\boxplus\mu_2$, of two probability measures $\mu_1$ and $\mu_2$, is defined as the distribution of the sum of two freely independent non-commutative random variables, having distributions $\mu_1$, $\mu_2$ respectively; we refer to~\cite{VDN,NS,HP,AGZ}. The (additive) free convolution may also be described in terms of the Stieltjes transform: Let $\mu$ be a probability measure on $\R$, then we define the Stieltjes transform of $\mu$ by
\begin{align}
 m_{\mu}(z)\deq\int_{\R}\frac{\dd\mu(x)}{x-z}\,,\quad\quad z\in\C^+\,.
\end{align}
Note that $m_{\mu}(z)$ is an analytic function in the upper half plane, satisfying $\lim_{y\to\infty}\ii y m_{\mu}(\ii y)=1$. 
As shown in~\cite{V,B1}, the free convolution has the following property: Denote by $m_{\mu_1}$, $m_{\mu_2}$, $m_{\mu_1\boxplus\mu_2}$, the Stieltjes transforms of $\mu_1$, $\mu_2$, $\mu_1\boxplus\mu_2$, respectively. Then there exist two analytic functions $\omega_1$, $\omega_2$, from $\C^+$ to $\C^+$, satisfying $\lim_{y\rightarrow\infty}\,\omega_i(\ii y)/\ii y=1$, ($i=1,2$), such that
\begin{align}\label{system 1}
 m_{\mu_1\boxplus\mu_2}(z)&=m_{\mu_1}(\omega_1(z))=m_{\mu_2}(\omega_2(z))\,,\nonumber\\
 \omega_1(z)+\omega_2(z)&=z-\frac{1}{m_{\mu_1\boxplus\mu_2}(z)}\,,
\end{align}
for $z\in\C^+$. The functions $\omega_i$ are referred to as subordination functions. Note that~\eqref{system 1} also shows that $\mu_1\boxplus\mu_2=\mu_2\boxplus\mu_1$. It was pointed out in~\cite{CG, BeB07} that the system~\eqref{system 1} may be used as an alternative definition of the free convolution. In particular, given $\mu_1$, $\mu_2$, the system~\eqref{system 1} has a unique solution $(m_{\mu_1\boxplus\mu_2},\omega_1,\omega_2)$.

The system~\eqref{system 1} has been used in~\cite{PV} to exploit the limiting eigenvalue distributions for random matrices of the form $A+UBU\adj$, with $A$, $B$ deterministic or random $N\times N$ matrices and $U$ an $N\times N$ random Haar unitary matrix. Free probability theory turned out to be a natural setting for studying global laws for such ensembles; see, e.g.,~\cite{VDN,AGZ}. For more recent treatments, including local laws, we refer to~\cite{K,CDFF,BBCF}.

In case we choose the measure $\mu_2$ as the standard semicircular law $\dd\mu_{sc}(E)=\frac{1}{2\pi}\sqrt{(4-E^2)_+}\dd E$, a simple computation reveals that the Stieltjes transform, $m_{\mu_{sc}}\equiv m_{sc}$, satisfies
\begin{align*}
 m_{sc}(z)=-\frac{1}{z+m_{sc}(z)}\,,\quad\quad z\in\C^+\,.
\end{align*}
Using this information, we can reduce the system~\eqref{system 1}, to the self-consistent equation
\begin{align}\label{lemma.11}
m_{fc}(z)=\int\frac{\dd\mu(x)}{x-z-m_{fc}(z)}\,,\quad\quad z\in\C^+\,,
\end{align}
$\im m_{fc}(z)\ge 0$, for $z\in\C^+$, with $\lim_{y\to\infty}\ii y\, m_{{fc}}(\ii y)=1$, where we have abbreviated $\mu\equiv\mu_1$. Equation~\eqref{lemma.11} is often called the {\it Pastur relation}. A slightly modified version of the functional Equation~\eqref{lemma.11} is the starting point of the analysis in~\cite{P} and also of the present paper; see~\eqref{eq111}.

The (unique) solution of~\eqref{lemma.11} has first been studied in details in~\cite{B}. In particular, it has been shown that $\limsup_{\eta\searrow 0} \im m_{fc}(E+\ii\eta)<\infty$, $E\in\R$, and hence the free convolution measure $\mu_{fc}\equiv\mu\boxplus\mu_{sc}$ is absolutely continuous (for simplicity we denote the density also with $\mu_{fc}$) and we conclude from the Stieltjes inversion formula that
\begin{align*}
 \mu_{fc}(E)=\lim_{\eta\searrow 0}\frac{1}{\pi}\im m_{fc}(E+\ii \eta)\,,\quad\quad E\in\R\,.
\end{align*}
Moreover, it was shown in~\cite{B} that the density $\mu_{fc}$ is analytic in the interior of the support of $\mu_{fc}$. We refer to, e.g.,~\cite{BBG} for further results on the regularity of the free convolution measure.

\subsection{Assumptions}
In this section, we define the model~\eqref{interpolation} in details and list our main assumptions.
\subsubsection{Definition of the model}
\begin{definition}\label{assumption wigner}
Let $W$ be an $N\times N$ random matrix, whose entries, $(w_{ij})$, are independent, up to the symmetry constraint $w_{ij}=\ol{w}_{ji}$, centered, complex random variables with variance $N^{-1}$ and subexponential decay, i.e.,
\begin{align}\label{eq.C0}
 \mathbb{P}\left (\sqrt{N} |w_{ij}|>x\right)\le C_0\,\e{-x^{1/\theta}}\,,
\end{align}
for some positive constants $C_0$ and $\theta>1$. In particular,
\begin{align}
 \E w_{ij}=0\,, \qquad \E|w_{ij}|^p\le C\frac{(\theta p)^{\theta p}}{N^{p/2}}\,, \qquad (p\ge3)\,,
\end{align}
and,  
\begin{align}\label{wigner complex}
\E w_{ii}^2 = \frac{1}{N}\,, \qquad \E |w_{ij}|^2 = \frac{1}{N}\,, \qquad \E w_{ij}^2 = 0\,, \qquad (i \neq j)\,.
\end{align}

\end{definition}
\begin{remark}
We remark that all our methods also apply to {\it symmetric} Wigner matrices, i.e., when $(w_{ij})$ are centered, real random variables with variance $N^{-1}$ and subexponential decay. In this case,~\eqref{wigner complex} gets replaced by
\begin{align}
  \E w_{ii}^2 = \frac{2}{N}\,, \qquad \E w_{ij}^2 = \frac{1}{N}\,, \qquad (i \neq j)\,.
\end{align}

\end{remark}

Let $V=(v_i)$ be an $N\times N$ diagonal random matrix, whose entries $(v_i)$ are real, centered, i.i.d.\ random variables, independent of $W=(w_{ij})$, with law $\mu$. More assumptions on $\mu$ will be stated below. For $\lambda\in\R$, we consider the random matrix
\begin{align}\label{thematrix}
 H=(h_{ij})\deq\lambda V+W\,.
\end{align}
In the next sections, we will choose $\mu$, such that $\supp\mu=[-1,1]$, but we observe that varying $\lambda$ is equivalent to changing the support of $\mu$.

We define the resolvent, or \emph{Green function}, $G(z)$, and the averaged Green function, $m(z)$, of $H$ by
\begin{align*}
 G(z)=(G_{ij}(z))\deq \frac{1}{\lambda V+W-z}\,,\quad\quad m(z)\deq\frac{1}{N}\Tr G(z)\,,\quad\quad z\in\mathbb{C}^{+}\,.
\end{align*} 
Frequently, we abbreviate $G\equiv G(z)$, $m\equiv m(z)$, etc..

\subsubsection{Free convolution}\label{our free convolution}
Following the discussion in Subsection~\ref{freeadditiveconvolution}, we define $m^{\lambda}_{fc}$ as the solution to
\begin{align}\label{eq111}
 m^{\lambda}_{fc}(z)=\int\frac{\dd\mu(v)}{\lambda v-z-m^{\lambda}_{fc}(z)}\,,\quad\quad z\in\C^{+}\,,
\end{align}
with $\im\mfc^{\lambda}(z)\ge0$, $z\in\C^+$. We denote by $\mu^{\lambda}_{fc}$ the corresponding probability measure. For simplicity, we discard the superscript $\lambda$ from our notation. Let us list some easy examples:
\begin{itemize}
\item[$i.$] Choosing $\mu=\delta_1$, one directly sees that $\mu_{fc}$ is a semicircle law of radius $2$ centered at $\lambda$. 
\item[$ii.$] For the choice $\mu=\frac{1}{2}(\delta_{-1}+\delta_{1})$,~\eqref{eq111} reduces to a cubic equation and the support of the measure $\mu_{fc}$ can be inferred from a simple analysis of the discriminant of that equation. As it turns out, the support of $\mu$ consists of a single interval for $\lambda\le 1$ and of two intervals for $\lambda>1$. For simplicity, we will exclude the possibility of $\mu_{fc}$ having support on several disjoint intervals in the following. However, some of our results can be generalized to this setting.
\item[$iii.$] If $\mu$ is the standard Gaussian measure, no closed expression for $\mu_{fc}$ exists, but the moments of~$\mu_{fc}$ can be computed recursively; see~\cite{BDJ}. Moreover, the density of $\mu_{fc}$ is a smooth function with Gaussian tails. Although the Gaussian case is important, we will not deal with measures of unbounded support, but comment on the Gaussian case in Remarks~\ref{remark gaussian} and~\ref{remark gaussian 2}.
\end{itemize}

\subsubsection{Assumptions on $\lambda V$ and $\mu$}
We state our assumptions on $\mu$ the distributions of the entries $(v_i)$ of $V$. From now on, we choose $\mu$ with $\mu=\supp[-1,1]$. Depending on the size of the `perturbation' parameter $\lambda$, we have to distinguish two cases: For $|\lambda|\le 1$, we will assume the following:
\begin{assumption}{[Small $\lambda$]}\label{assumption mu}\label{assumption v}
The entries of the diagonal matrix $V=(v_i)$ are centered, real, i.i.d.\ random variables, independent of $W=(w_{ij})$. For $|\lambda|\le1$, we assume that the distribution of $(v_i)$ has a continuous density $\mu(v)$, such that $\mu(v)>0$, $v\in(-1,1)$, and $\mu(v)=0$, $v\not\in[-1,1]$. 
\end{assumption}
This assumption ensures that the deformed semicircle law $\mu_{fc}$ is supported on a single interval $[L_1,L_2]$, with a square root behaviour at the edges. More precisely, we have the following result:

\begin{lemma}\label{lemma assumptions 1}
 Let $\lambda\le1$ and assume that $\mu$ satisfies Assumption~\ref{assumption v}. Then there are $-\infty<L_1<0<L_2<\infty$, such that $\supp\,\mu_{fc}=[L_1,L_2]$. Moreover, denoting by $\kappa_E$ the distance to the endpoints of the support of $\mu_{fc}$, i.e.,
\begin{align}\label{kappa}
 \kappa_E\deq \min\{|E-L_1|,|E-L_2|\}\,,\quad\quad E\in\R\,,
\end{align}
there exists $C\ge1$ such that
\begin{align}\label{squarerrot behaviour}
 C^{-1}\sqrt{\kappa_E}\le\mu_{fc}(E)\le C\sqrt{\kappa_E}\,,\quad\quad E\in[L_1,L_2]\,.
\end{align}
\end{lemma}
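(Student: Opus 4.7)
The plan is to analyze \eqref{eq111} via the subordination function $\omega(z)\deq z+m_{fc}(z)$, which, by \eqref{eq111}, satisfies $z=\Psi(\omega(z))$ on $\C^+$, where
\[
\Psi(w)\deq w+\int\frac{\dd\mu(v)}{w-\lambda v}\,.
\]
The function $\Psi$ is real-analytic on $\R\setminus[-\lambda,\lambda]$. For $E\in\R\setminus\supp\mu_{fc}$ the limit $m_{fc}(E+\iu 0)$ is real, so $\omega(E+\iu 0)\in\R$. The Sokhotski--Plemelj formula applied to \eqref{eq111} shows that $\omega(E+\iu 0)$ cannot lie in $(-\lambda,\lambda)$, since the continuous positive density of $\mu$ on $(-1,1)$ would otherwise contribute an imaginary part $\pi\mu(\omega/\lambda)/\lambda\neq 0$ to $m_{fc}(E+\iu 0)$. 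Consequently, the complement of $\supp\mu_{fc}$ in $\R$ is precisely the image of the set $\{\omega\in\R\setminus[-\lambda,\lambda]:\Psi'(\omega)>0\}$ under $\Psi$.

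Differentiating gives
\[
\Psi'(w)=1-\int\frac{\dd\mu(v)}{(w-\lambda v)^2},\qquad \Psi''(w)=2\int\frac{\dd\mu(v)}{(w-\lambda v)^3}\,,
\]
so $\Psi''>0$ on $(\lambda,\infty)$ and $\Psi''<0$ on $(-\infty,-\lambda)$. Together with $\Psi'(\pm\infty)=1$, strict monotonicity on each unbounded component yields at most one zero $\omega_+\in(\lambda,\infty)$ and at most one zero $\omega_-\in(-\infty,-\lambda)$. Their existence follows from the hypothesis $|\lambda|\le 1$ combined with the assumptions on $\mu$, which together force $\int\dd\mu(v)/(\omega-\lambda v)^2$ to exceed $1$ for some $\omega$ in each unbounded component; this is the step where both hypotheses are essential, in contrast to example (ii) of Subsection~\ref{our free convolution}. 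Setting $L_2\deq\Psi(\omega_+)$ and $L_1\deq\Psi(\omega_-)$, the identification of the first paragraph gives $\supp\mu_{fc}=[L_1,L_2]$. Since $\mu$ is centered and $\mu_{sc}$ has zero mean, $\mu_{fc}$ is a non-degenerate probability measure with zero mean, forcing $L_1<0<L_2$.

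The square-root behavior at $L_2$ follows from $\Psi'(\omega_+)=0$ together with $\Psi''(\omega_+)>0$ strictly (positivity of the integrand on $(\lambda,\infty)$). The Taylor expansion $z-L_2=\tfrac{1}{2}\Psi''(\omega_+)(\omega-\omega_+)^2+O(|\omega-\omega_+|^3)$ inverts, in the branch with $\im\omega\geq 0$, to $\omega(E+\iu 0)-\omega_+=\iu\, c_+\sqrt{L_2-E}(1+o(1))$ for $E$ near $L_2$ from below. Using $\mu_{fc}(E)=\pi^{-1}\im m_{fc}(E+\iu 0)=\pi^{-1}\im\omega(E+\iu 0)$ gives $\mu_{fc}(E)\asymp\sqrt{L_2-E}$ near $L_2$, and symmetrically near $L_1$. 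In the bulk, $\mu_{fc}$ is real-analytic \cite{B} and strictly positive on $(L_1,L_2)$ (by analyticity and connectedness of the support), so a compactness argument supplies a uniform lower bound on any compact subset of $(L_1,L_2)$; gluing the edge and bulk estimates yields \eqref{squarerrot behaviour}. The main obstacle is verifying that $\Psi'$ admits no additional real zeros near the singular set $[-\lambda,\lambda]$: here the condition $|\lambda|\le 1$ combined with the continuous positive density of $\mu$ on $(-1,1)$ is essential, and precisely what fails in example (ii) of Subsection~\ref{our free convolution} once the support of $\mu$ is too concentrated relative to $\lambda$.
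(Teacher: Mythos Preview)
Your approach is essentially identical to the paper's: you introduce the subordination variable $\omega(z)=z+m_{fc}(z)$ (the paper calls it $\tau$), study the real-analytic function $\Psi$ (the paper's $F$), locate the edges as the unique zeros of $\Psi'$ on each component of $\R\setminus[-\lambda,\lambda]$ via the sign of $\Psi''$, and extract the square-root from $\Psi''(\omega_\pm)\neq 0$ through local inversion. The one place where you gesture rather than compute is the existence step ``$\int\dd\mu(v)/(\omega-\lambda v)^2>1$ for some $\omega$ in each unbounded component''; the paper dispatches this in one line by Jensen's inequality, using that $\mu$ is centered: $\int\dd\mu(v)/(\lambda-\lambda v)^2>\lambda^{-2}\big(\int(1-v)\,\dd\mu(v)\big)^{-2}=\lambda^{-2}\ge 1$, which is exactly the computation that makes $|\lambda|\le 1$ decisive.
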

In a slightly different setting this lemma has been proven in~\cite{S2}; see also~\cite{B,ON}. In the Appendix we explain how to adopt the proof in~\cite{S2} to our setting.
\begin{remark}
Above, we have chosen $\lambda$ to be independent of $N$. However, we may choose $\lambda=CN^{-\delta}$, for some constants $C$ and $\delta>0$. In this setting~{\it all} our results hold true as well, in particular, in all the bounds one may simply replace $\lambda$ by $CN^{-\delta}$.
\end{remark}

\begin{remark}
Determining the endpoints $L_1$ and $L_2$ of the support of $\mu_{fc}$ explicitly is, in general, not possible, since it involves solving an implicit equation. However, for $\lambda$ sufficiently small, one can show that $L_1=-2\sqrt{1+\lambda^2}+\caO(\lambda^3)$ and $L_2=2\sqrt{1+\lambda^2}+\caO(\lambda^3)$. Also the measure $\mu_{fc}$ is $\caO(\lambda)$-close to the semicircular measure of radius $2\sqrt{1+\lambda^2}$ in an appropriate distance, but we refrain from going into the details of this `perturbative approach'.
\end{remark}

For $|\lambda|>1$, we have to strengthen the above assumptions, since the square root behaviour at the endpoint of the support of $\mu_{fc}$ may fail, for $\lambda$ large enough. We call a probability measure $\mu$ a {\it Jacobi} measure if it is given by  a density of the form
\begin{align}\label{Jacobi measure}
 \mu(v)=Z^{-1}(1+v)^{\a}(1-v)^{\b} d(v)\lone_{[-1,1]}(v)\,,
\end{align}
where $d\in C^{1}([-1,1])$, with $d(v)>0$, $v\in[-1,1]$, $-1<\a,\b<\infty$, and $Z$ a normalization constant.

We have the following result.
\begin{lemma}\label{lemma assumptions 2a}
 Let $\mu$ be a centered Jacobi measure; see~\eqref{Jacobi measure}. Then, for any $\lambda\in\R$,  there are $-\infty<L_1<0<L_2<\infty$, such that $\supp\,\mu_{fc}=[L_1,L_2]$. Moreover,
\begin{itemize}\label{squareroot lemma 2}
 \item[$(1)$] for $-1<\a,\b\le 1$, for any $\lambda\in\R$, $\mu_{fc}$ has the square root behaviour~\eqref{squarerrot behaviour};

\item[$(2)$] for $1<\a,\b<\infty$, there exists $\lambda_1\equiv{\lambda_1(\mu)}>1$ and $\lambda_2\equiv{\lambda_2(\mu)}>1$ such that,
\begin{itemize}
\item[$(2a)$] for $|\lambda|<\lambda_1$, $|\lambda|<\lambda_2$, $\mu_{fc}$ has the square root behaviour at both endpoints;
\item[$(2b)$] for $|\lambda|<\lambda_1$, $|\lambda|>\lambda_2$, $\mu_{fc}$ has the square root behaviour at the lower endpoint of the support (i.e., for $E\in[L_1,0])$, but there is $C\ge1$, such that
\begin{align}
 C^{-1}(L_2-E)^{\b}&\le  \mu_{fc}(E)\le C(L_2-E)^{\b}\,,\quad\quad E\in[0,L_2]\,.\label{no squareroot}
\end{align}
Analogue statements hold for $|\lambda|>\lambda_1$, $|\lambda|<\lambda_2$, etc..
\end{itemize}
\end{itemize}
\end{lemma}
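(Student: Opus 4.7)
The plan is to reduce the statement to a local analysis of the deterministic analytic function
\begin{equation*}
F(\omega) \deq \omega - m_{\mu^\lambda}(\omega)\,, \qquad m_{\mu^\lambda}(\omega) \deq \int \frac{\dd\mu(\v)}{\lambda\v - \omega}\,.
\end{equation*}
Setting $\omega(z) \deq z + \mfc(z)$, the Pastur relation~\eqref{eq111} is equivalent to the subordination identity $F(\omega(z)) = z$ on $\C^+$. Standard arguments (see~\cite{B, BeB07}) show that $\omega$ is an analytic $\C^+ \to \C^+$ map that extends continuously to $\R$ and that $\supp \mu_{fc} = \{E \in \R : \im\omega(E+\ii 0) > 0\}$; the endpoints $L_1, L_2$ are therefore precisely the points where $\im\omega$ vanishes, and the density $\mu_{fc}(E) = \pi^{-1}\im\omega(E+\ii 0)$ can be read off from a local expansion of the inverse of $F$.

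To pin down $L_2$ (the argument at $L_1$ is symmetric), I would study $F$ on $(\lambda,\infty)$, where it is real-analytic with $F'(\omega) \to 1$ as $\omega \to \infty$. A direct differentiation yields
\begin{equation*}
F'(\lambda^+) \;=\; 1 - \frac{1}{\lambda^2} \int \frac{\dd\mu(\v)}{(\v-1)^2}\,.
\end{equation*}
For $\b \le 1$, this integral diverges, so $F'(\lambda^+) = -\infty$ and $F$ attains a strict minimum at a unique critical point $\omega_+ \in (\lambda,\infty)$; one sets $L_2 \deq F(\omega_+)$. For $\b > 1$, $I_\b \deq \int \dd\mu(\v)/(\v-1)^2$ is finite and one defines $\lambda_2 \deq \sqrt{I_\b}$. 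Centering of $\mu$ and Jensen's inequality applied to the strictly convex function $x \mapsto x^{-2}$ yield $I_\b > (\E_\mu[1-\v])^{-2} = 1$, so $\lambda_2 > 1$. For $|\lambda| < \lambda_2$ the critical-point picture persists, while for $|\lambda| > \lambda_2$, $F$ is strictly increasing on $[\lambda,\infty)$ and $L_2 = F(\lambda)$. An analogous analysis at the left edge with $\lambda_1 \deq \sqrt{\int \dd\mu(\v)/(\v+1)^2}$ produces the other threshold.

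The density asymptotics at $L_2$ follow by inverting $F$ near the relevant point. In the regular case (parts~(1) and~(2a)), $F''(\omega_+) > 0$ so the quadratic expansion $F(\omega) - L_2 = \tfrac12 F''(\omega_+)(\omega - \omega_+)^2 + O((\omega-\omega_+)^3)$ inverts to $\omega(z) - \omega_+ \sim \sqrt{2(z-L_2)/F''(\omega_+)}$, yielding $\mu_{fc}(E) \asymp \sqrt{L_2 - E}$. In the singular case~(2b) the key input is the expansion
\begin{equation*}
m_{\mu^\lambda}(\omega) = m_{\mu^\lambda}(\lambda) + \frac{I_\b}{\lambda^2}(\omega-\lambda) + c_\b(\omega-\lambda)^\b + o(|\omega-\lambda|^\b)\,,
\end{equation*}
valid as $\omega \to \lambda$ in $\overline{\C^+}$, with the principal branch of the power. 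The nonzero constant $c_\b$ is tied to the edge density of $\mu^\lambda$ at $\v = \lambda$ through the Stieltjes inversion identity $c_\b \sin(\pi\b) = \pi \cdot (\text{leading coefficient of }\mu^\lambda \text{ at }\v = \lambda)$. Substituting and setting $A \deq 1 - I_\b/\lambda^2 > 0$, one obtains
\begin{equation*}
z - L_2 = A(\omega - \lambda) - c_\b(\omega-\lambda)^\b + o(|\omega-\lambda|^\b)\,.
\end{equation*}
Since $\b > 1$, the linear term dominates; plugging $z = E + \ii 0$ with $E \nearrow L_2$ forces $\re\omega - \lambda \simeq (E - L_2)/A$, while matching imaginary parts at order $(L_2 - E)^\b$ gives $\im\omega(E+\ii 0) \asymp (L_2-E)^\b$, which is the bound~\eqref{no squareroot}.

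The main technical obstacle is justifying the expansion of $m_{\mu^\lambda}(\omega)$ at $\omega = \lambda$ rigorously, with the correct constant $c_\b$ and uniform control in the direction of approach within $\overline{\C^+}$. This is handled by the substitution $u = \lambda - \v$, which decomposes the integral into a singular piece where $\mu^\lambda(\lambda - u) = c_R u^\b(1 + O(u))$ and a regular remainder; the singular piece is evaluated via the beta-integral identity $\int_0^\infty t^{\b-2}/(1+t)\,\dd t = \pi/\sin(\pi(\b-1))$ (with logarithmic corrections at integer $\b$), while dominated convergence controls the error. Combined with the preceding steps and the symmetric argument at the left edge, this yields the asserted square-root and power-$\a$/$\b$ behaviour in the respective regimes.
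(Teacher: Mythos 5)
Your route is correct in spirit but genuinely different from the paper's. You reduce everything to a local inversion of $F(\omega) = \omega - m_{\mu^\lambda}(\omega)$ near the critical point $\omega_+$ (regular case) or near $\omega=\lambda$ (singular case), obtaining the density asymptotics by expanding the Cauchy transform of the Jacobi density and matching real and imaginary parts; this is precisely the complex-analytic route that the paper flags as plausible but does not pursue. The paper instead takes the elementary road: in Lemma~\ref{squareroot of mufc - large lambda} it verifies the persistence of the criterion $H(\lambda+\epsilon)>1$ by a direct integral estimate using $\mu(v)\geq C_0(1-v)$, and in Lemma~\ref{general case - large lambda} it parametrizes $\tau=\lambda-\lambda k+\ii\lambda y$, takes imaginary and real parts of the self-consistent equation, and bounds each piece of the resulting real integral above and below to derive $y\sim(k+y)^\b$ and then $\kappa\sim k$. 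Your approach gives sharper information (an actual asymptotic expansion of $\omega(z)$, with an identified leading coefficient via the beta-integral/Stieltjes-inversion identity) and is more naturally suited to detecting logarithmic corrections at integer $\b$; the paper's estimates are cruder but robust, avoid branch bookkeeping entirely, and produce the two-sided bounds~\eqref{no squareroot} with less machinery. Your identification of $\lambda_2=\sqrt{I_\b}$ and the Jensen argument for $\lambda_2>1$ agree with the paper.

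One concrete issue you should repair: the beta-integral identity $\int_0^\infty t^{\b-2}/(1+t)\,\dd t = \pi/\sin(\pi(\b-1))$ only makes sense for $1<\b<2$ (the integrand fails to be integrable at infinity for $\b\geq 2$), so as written your expansion of $m_{\mu^\lambda}$ near $\omega=\lambda$ covers only that sub-range. For $\b\geq 2$ you must subtract further integer powers of $(\omega-\lambda)$ (equivalently, iterate the Taylor step $\int_0^\delta t^\gamma/(t+w)\,\dd t=\int_0^\delta t^{\gamma-1}\,\dd t - w\int_0^\delta t^{\gamma-1}/(t+w)\,\dd t$ until the exponent lands in $(-1,0)$) before the fractional term emerges. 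This changes the expansion's intermediate polynomial part but not the exponent of the leading fractional term, so the conclusion $\mu_{fc}(L_2-\kappa)\sim\kappa^\b$ survives; nevertheless the lemma claims the full range $\b>1$, so the iteration needs to be made explicit. You should also verify the sign of $c_\b\sin(\pi\b)$ carefully when you match $\im\omega>0$, since $\sin(\pi\b)$ changes sign on each unit interval of $\b$; the paper's two-sided bound structure sidesteps this bookkeeping, which is one reason they chose the elementary path.
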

The proof of the lemma is given in the Appendix.

For our methods to work, we have to exclude situation (2b) of Lemma~\ref{squareroot lemma 2}. For $|\lambda|>1$, we will thus assume:

\begin{assumption}{[Large $\lambda$]}\label{assumption mu'}\label{assumption v'}
The entries of the diagonal matrix $V=(v_i)$ are centered, real, i.i.d.\ random variables, independent of $W=(w_{ij})$. For $|\lambda|>1$, we assume that the distribution of the $(v_i)$ is given by a centered Jacobi measure, and $\lambda$ and $\a,\b$ are chosen as in $(1)$ or $(2a)$ of Lemma~\ref{squareroot lemma 2}.

\end{assumption}

\subsubsection{Notations and Conventions} 
To state our main results, we need some more notations and conventions. For high probability estimates we use two parameters $\xi\equiv\xi_N$ and $\varphi\equiv\varphi_N$:   We assume that
\begin{align}\label{eq.xi}
 a_0<\xi\le A_0\log\log N\,,\quad\quad \varphi=(\log N)^C\,,
\end{align}
for some fixed constants $a_0>2$, $A_0\ge10$, $C\ge 1$. These constants are chosen such that large deviation estimates in Lemma~\ref{lemma.LDE} hold. They only depend on $\theta$ and $C_0$ in~\eqref{eq.C0} and will be kept fixed in the following. 

\begin{definition}
For $\nu>0$, we say an event $\Omega$ has $(\xi,\nu)$-high probability, if
\begin{align*}
 \mathbb{P}(\Omega^c)\le\e{-\nu(\log N)^{\xi}}\,,
\end{align*}
for $N$ sufficiently large.

Similarly, for a given event $\Omega_0$ we say an event $\Omega$ holds with $(\xi,\nu)$-high probability on $\Omega_0$, if
\begin{align*}
 \mathbb{P}(\Omega_0\cap\Omega^c)\le\e{-\nu(\log N)^{\xi}}\,,
\end{align*}
for $N$ sufficiently large.
\end{definition}

For brevity, we occasionally say an event holds with high probability, when we mean $(\xi,\nu)$-high probability. We do not keep track of the explicit value of $\nu$ in the following, allowing $\nu$ to decrease from line to line such that $\nu>0$. From our proof it becomes apparent that such reductions occur only finitely many times.

We use the symbols $\caO(\,\cdot\,)$ and $o(\,\cdot\,)$ for the standard big-O and little-o notation. The notations $\caO\,,\,o$, $\ll$, $\gg$, usually refer to the limit $N\to \infty$. Here $a\ll b$ means $a=o(b)$. We use $c$ and $C$ to denote positive constants that do not depend on $N$. Their value may change from line to line. Finally, we write $a\sim b$, if there is $C\ge1$ such that $C^{-1}|b|\le |a|\le C |b|$, and, occasionally, we write for $N$-dependent quantities $a_N\lesssim b_N$, if there exist constants $C,c>0$ such that $|a_N|\le C(\varphi_N)^{c\xi}|b_N|$.

\subsection{Results}\label{section results} 
In this subsection we state our main results. The presentation of our results follows the one in~\cite{EKYY1}.

 Since we choose the measure $\mu$ to be centered, we may assume that $\lambda\ge 0$, without loss of generality in the following. Fix some $\lambda_0>0$, then we assume that the perturbation parameter $\lambda$ is in the domain
\begin{align}\label{domain of lambda}
 \caD_{\lambda_0}\deq\{\lambda\in\R^+\,:\,\lambda\le\lambda_0\}\,.
\end{align}
Here $\lambda_0$ is an arbitrary constant, but recall that in case $\lambda_0>1$, Assumption~\ref{assumption v'} may not be satisfied for $\a,\b>1$.

We define the spectral parameter $z=E+\ii\eta$, with $E\in\R$ and $\eta>0$. Let $E_0\ge3+\lambda_0$ and define the domain
\begin{align}\label{eq.DL}
 \caD_L\deq\{z=E+\ii \eta\in\C\,:\,| E|\le E_0\,, (\varphi_N)^{L}\le N\eta\le 3N\}\,,
\end{align}
with $L\equiv L(N)$, such that $L\ge 12 \xi$. Here, we chose $E_0$ bigger than $3+\lambda$, since we know that the spectrum of~$W$ lies in the set $\{E\in\R\,:\,|E|\le 3\}$ with high probability. Thus spectral perturbation theory implies that the spectrum of $H$ is contained in $\{E\in\R\,:\,|E|\le 3+\lambda\}$, with high probability.

Recall the definition of $\kappa_E$, the distance to the endpoints of the support of $\mu_{fc}$, in~\eqref{kappa}. In the following, we often abbreviate $\kappa\equiv \kappa_E$.

\subsubsection{Local Laws}
\begin{theorem}\emph{[Strong local law]}\label{thm.strong}
Let $H=\lambda V+W$, where $W$ satisfies the assumptions in Definition~\ref{assumption wigner} and~$\lambda V$ satisfies Assumption~\ref{assumption v} or Assumption~\ref{assumption v'}. Let
\begin{align}\label{eq: strong xi}
 \xi=\frac{A_0+o(1)}{2}\log\log N\,.
\end{align}
Then there are constants $\nu>0$ and $c_1$, depending on the constants ~$\theta$~and~$C_0$ in~\eqref{eq.C0},~$\lambda_0$ in~\eqref{domain of lambda},~$E_0$ in~\eqref{eq.DL}, $A_0$ in~\eqref{eq: strong xi}, and the measure~$\mu$, such that for $L\ge 40\xi$, the events
\begin{align}\label{strong1}
 \bigcap_{\substack{z\in \caD_L\\ \lambda\in\caD_{\lambda_0}}}\left\lbrace\left| m(z)-m_{fc}(z) \right| \le (\varphi_N)^{c_1\xi}\left(\min\left\{\frac{\lambda^{1/2}}{N^{1/4}},\frac{\lambda}{\sqrt{\kappa+\eta}}\frac{1}{\sqrt{N}}\right\}+\frac{1}{N\eta}\right)\right\rbrace
\end{align}
and
\begin{align}\label{strong2}
\bigcap_{\substack{z\in \caD_L\\ \lambda\in\caD_{\lambda_0}}}\left\lbrace\max_{i\not=j}|G_{ij}(z)| \le (\varphi_N)^{c_1\xi}\left(\sqrt{\frac{\im m_{fc}(z)}{N\eta}}+\frac{1}{N\eta}\right)\right\rbrace
\end{align}
both have $(\xi,\nu)$-high probability.
\end{theorem}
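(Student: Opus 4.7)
The strategy is to bootstrap from the weak local deformed semicircle law of Section~\ref{Weak Deformed Semicircle Law} using the fluctuation averaging lemma, Lemma~\ref{lemma.21}, of Section~\ref{strong deformed semicircle Law}, following the scheme developed for Wigner matrices in~\cite{EYY,EKYY1}. The starting point is the Schur complement identity
\begin{align*}
\frac{1}{G_{ii}(z)} = \lambda v_i - z - w_{ii} - \sum_{k,l \neq i} w_{ik}\, G^{(i)}_{kl}(z)\, w_{li}\,,
\end{align*}
where $G^{(i)}$ denotes the Green function of the matrix obtained by deleting the $i$-th row and column of $H$. Setting $Z_i \deq \sum_{k,l\neq i} w_{ik} G^{(i)}_{kl} w_{li} - \frac{1}{N}\Tr G^{(i)}$, the standard large deviation estimates, applied on the high-probability event where the weak local law holds, yield $|Z_i| \lesssim \sqrt{\im m_{fc}(z)/(N\eta)} + 1/(N\eta)$ with $(\xi,\nu)$-high probability, uniformly in~$i$. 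Using $\frac{1}{N}\Tr G^{(i)} = m + \caO(1/(N\eta))$ and $|w_{ii}| \lesssim N^{-1/2}$, this produces the representation
\begin{align*}
G_{ii}(z) = \frac{1}{\lambda v_i - z - m(z) - \Upsilon_i(z)}\,,
\end{align*}
with a random error $\Upsilon_i$ of the same size as $Z_i$.

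Averaging over~$i$ and subtracting the Pastur relation~\eqref{eq111} yields
\begin{align*}
m - m_{fc} = \frac{1}{N}\sum_i\frac{1}{\lambda v_i - z - m - \Upsilon_i} - \int\frac{\dd\mu(v)}{\lambda v - z - m_{fc}}\,,
\end{align*}
and a first-order Taylor expansion in $(m - m_{fc})$ and in the $\Upsilon_i$ produces the linearised identity
\begin{align*}
(1 - b(z))\,(m - m_{fc}) = X(z) + R(z) + (\text{higher-order terms})\,,
\end{align*}
where $b(z) \deq \int\dd\mu(v)/(\lambda v - z - m_{fc})^2$, the quantity $X(z)$ is the fluctuation of the linear statistic $\frac{1}{N}\sum_i 1/(\lambda v_i - z - m_{fc})$ around its expectation $m_{fc}$, and $R(z) \deq \frac{1}{N}\sum_i \Upsilon_i/(\lambda v_i - z - m_{fc})^2$. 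A term-by-term estimate of $R(z)$ using the weak bound $|\Upsilon_i| \lesssim \lambda^{1/2}/N^{1/4}$ would only reproduce the weak-law estimate; the crucial improvement is provided by Lemma~\ref{lemma.21}, which — exploiting the conditional centering of each $Z_i$ given $V$ and $H^{(i)}$ via a high-moment expansion in the spirit of~\cite{EKY} — gives $|R(z)| \lesssim \lambda/\sqrt{N(\kappa+\eta)} + 1/(N\eta)$ with high probability. A direct variance computation over the i.i.d.\ $v_i$ controls $|X(z)|$ at the same order.

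The bound~\eqref{strong1} then follows by inverting the linearised relation via the stability estimate for the Pastur relation proved in the Appendix: uniformly on $\caD_L$ one has $|1 - b(z)| \gtrsim \sqrt{\kappa + \eta}$, a consequence of the square-root vanishing of $\mu_{fc}$ at the edges established in Lemmas~\ref{lemma assumptions 1} and~\ref{squareroot lemma 2} (and this quantitative control is precisely why situation $(2b)$ of Lemma~\ref{squareroot lemma 2} was excluded). Dividing through by $1 - b(z)$, and retaining the weak-law bound $\lambda^{1/2}/N^{1/4}$ in the degenerate regime $\kappa+\eta \lesssim \lambda^2/\sqrt{N}$ where the linearisation no longer improves matters, yields~\eqref{strong1}. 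The main technical obstacle is precisely this uniform stability analysis near the spectral edges, since unlike the Wigner case the Pastur relation is an integral equation and the magnitude of $b(z)$ along the real axis must be tracked very carefully. The off-diagonal estimate~\eqref{strong2} is derived separately from the resolvent identity
\begin{align*}
G_{ij}(z) = -G_{ii}G_{jj}^{(i)}\Bigl(w_{ij} - \sum_{k,l\notin\{i,j\}} w_{ik}\,G^{(ij)}_{kl}\,w_{lj}\Bigr)\,,\qquad i\neq j\,,
\end{align*}
by applying large deviation estimates to the bracketed sum (using the independence of $w_{ij}$ from $G^{(ij)}$) and invoking the uniform bound $|G_{ii}|,|G_{jj}^{(i)}| \lesssim 1$, which follows from~\eqref{strong1} and the representation of $G_{ii}$ derived above.
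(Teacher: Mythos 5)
Your overall architecture is right — Schur complement, averaging, law of large numbers for the $(v_i)$-fluctuation, fluctuation averaging for the $Z_i$, stability bound $|1-R_2|\sim\sqrt{\kappa+\eta}$ — but the proposal has a genuine gap: you present the passage from the weak law to the strong law as a \emph{one-shot} improvement, whereas the fluctuation averaging lemma is inherently self-referential and has to be iterated. Lemma~\ref{lemma.21} (via Lemma~\ref{cor.french}) does not produce $|R(z)|\lesssim\lambda/\sqrt{N(\kappa+\eta)}+1/(N\eta)$, as you claim; it produces $|[Z]|\lesssim(\varphi_N)^{c\xi}\bigl(\im m_{fc}+\gamma\bigr)/(N\eta)$, where $\gamma$ is the \emph{input} a priori bound on $\Lambda$. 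Feeding in the weak-law bound $\gamma\sim(N\eta)^{-1/3}$ and solving the quadratic self-consistent equation (near the edge the term $R_3[\v]^2$ cannot be linearised away, since $\alpha=|1-R_2|$ may be smaller than the right-hand side; then $\Lambda\sim\sqrt{|[Z]|+|X|}$) gives at best something like $\Lambda\lesssim(N\eta)^{-2/3}+\lambda^{1/2}N^{-1/4}$. That is better than the weak law but not the target $(N\eta)^{-1}$. The paper closes this gap with the deterministic dichotomy Lemma~\ref{lemma.22}, which sharpens the exponent from $1-\tau$ to $1-\tau/2$, and iterates it $M\sim\log\log N$ times; each iteration costs a fixed decrement in the $\xi$-parameter, which is exactly why the theorem requires the specific calibration $\xi=\frac{A_0+o(1)}{2}\log\log N$. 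Your write-up never explains this, and without the iteration the argument falls short of~\eqref{strong1}.

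Two further points worth flagging. First, the term $\lambda^{1/2}N^{-1/4}$ does not arise by ``retaining the weak-law bound'' — Theorem~\ref{thm.weak} only yields $(N\eta)^{-1/3}$, with no $\lambda$-dependence. That term is produced by the edge branch of the dichotomy: when $\alpha\lesssim\lambda^{1/2}N^{-1/4}$, one must solve the quadratic and take the square root of the $|X|\sim\lambda/\sqrt{N}$ contribution. Second, you attribute the contribution $\lambda/\sqrt{N(\kappa+\eta)}$ to the fluctuation averaging lemma, but it actually comes from dividing $|X(z)|\lesssim\lambda/\sqrt{N}$ (your ``direct variance computation over the i.i.d.\ $v_i$'', which is the McDiarmid/LLN estimate~\eqref{lawlargenumber}) by the stability factor $\alpha\sim\sqrt{\kappa+\eta}$; the fluctuation averaging lemma controls a different term. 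The off-diagonal bound~\eqref{strong2} and its derivation from~\eqref{res.1} plus Lemma~\ref{lemma.14} are correctly sketched.
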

\begin{remark}\label{remark gaussian}
 If we choose the entries of $V=(v_i)$ to be independent standard Gaussian random variables, we have the following result: Under the same assumptions as in Theorem~\ref{thm.strong} (except Assumption~\ref{assumption v} or~\ref{assumption v'}) and with similar constants, the events

\begin{align}\label{strongbulk1}
\bigcap_{\substack{z\in \caD_L\\ \lambda\in\caD_{\lambda_0}}}\left\lbrace\left| m(z)-m_{fc}(z) \right| \le(\varphi_N)^{c_1\xi}\left(\frac{\lambda}{\sqrt{N}}+\frac{1}{N\eta}\right)\right\rbrace
\end{align}
and
\begin{align}\label{strongbulk2}
\bigcap_{{\substack{z\in \caD_L\\ \lambda\in\caD_{\lambda_0}}}}\left\lbrace\max_{i\not=j}|G_{ij}(z)| \le(\varphi_N)^{c_1\xi}\left(\sqrt{\frac{\im m_{fc}(z)}{N\eta}}+\frac{1}{N\eta}\right)\right\rbrace
\end{align}
both have $(\xi,\nu)$-high probability. Note, however, that the result only applies to the compact domain $\caD_L$ of the spectral parameter $z$ (i.e., for some fixed $E_0$), but the limiting spectrum of $H=\lambda V+W$ has unbounded support. The proof of the estimates~\eqref{strongbulk1} and~\eqref{strongbulk2} is similar to the proof of Theorem~\ref{thm.strong} and we refrain from stating it explicitly.
\end{remark}
For $\lambda=0$, we have $m_{fc}=m_{sc}$, where $m_{sc}$ is the Stieltjes transform of the standard semicircle law. In this case stronger estimates have been obtained; see, e.g.,~\cite{E}. Roughly speaking, in this situation we have the high probability bounds
\begin{align}\label{estimates on semicircle}
 |m(z)-m_{sc}(z)|\lesssim\frac{ 1}{N\eta}\quad\quad\textrm{ and }\quad\quad |G_{ij}(z)-\delta_{ij}m(z)|\lesssim \sqrt{\frac{\im m_{sc}(z)}{N\eta}}+\frac{1}{N\eta}\,, \qquad(\lambda=0)\,,
\end{align}
(up to logarithmic corrections), within the range of admitted parameters. 

This suggests that the bound on $G_{ij}(z)$, ($i\not=j$), in~\eqref{strong2} is optimal. However, for $\lambda\not=0$, $G_{ii}(z)$ strongly depends on $v_i$, $i\in\{1,\ldots,N\}$, and the diagonal resolvent entries do not concentrate round their mean $m(z)$. This becomes apparent from Schur's complement formula (see, e.g.,~\eqref{eq.148}) and one easily establishes that \mbox{$|G_{ii}(z)-m(z)|\le C\lambda+o(1)$}, with high probability. 

Comparing the estimate on $m-m_{fc}$ in~\eqref{strong1} with the corresponding estimate in~\eqref{estimates on semicircle}, one may suspect that the leading correction terms in~\eqref{strong1} stem from fluctuations of the random variables $(v_i)$. The next theorem asserts that this is indeed true, at least in the bulk of the spectrum: There are random variables, $\zeta_0\equiv\zeta_0^N(z)$, which depend on the random variables $(v_i)$, but are independent of the random variables $(w_{ij})$, such that \mbox{$|m(z)-m_{fc}(z)-\zeta_0(z)|\lesssim (N\eta)^{-1}$} with high probability in the bulk of the spectrum; see~\eqref{thm xi0 equation}. Concerning the spectral edge, we remark that the estimate in~\eqref{strong1} is optimal for $\lambda\ll N^{-1/6}$, but it is not known whether~$\lambda^{1/2}N^{-1/4}$  is the optimal rate for $\lambda\gg N^{-1/6}$.

 To state our next result, we define the domain
\begin{align*}
 \caB_L\deq \caD_L\cap\{ z=E+\ii\eta\in\C\,:\, \sqrt{\kappa_E+\eta}\ge(\varphi_N)^{L\xi}N^{-1/4}\}\,.
\end{align*}
We have the following result:

\begin{theorem}\label{thm xi0}
Let $H=\lambda V+W$, where $W$ satisfies the assumptions in Definition~\ref{assumption wigner} and $\lambda V$ satisfies Assumption~\ref{assumption v} or~\ref{assumption v'}. Then, for any $z\in\caD_L$, $\lambda\in\caD_{\lambda_0}$, there exist random variables $\zeta_0(z)\equiv\zeta_0^N(z)$, depending only on $(v_i)$ such that, with the same constants as in Theorem~\ref{thm.strong}, the event
\begin{align}\label{thm xi0 equation}
 \bigcap_{{\substack{z\in \caB_L\\ \lambda\in\caD_{\lambda_0}}}}\left\lbrace\left| m(z)-m_{fc}(z) -\zeta_0(z)\right| \le (\varphi_N)^{c_1\xi}\frac{1}{N\eta}\right\rbrace
\end{align}
has $(\xi,\nu)$-high probability. The random variables $\zeta_0(z)$ have the following property: The event
\begin{align}\label{thm xi0 equation 2}
 \bigcap_{{\substack{z\in \caD_L\\ \lambda\in\caD_{\lambda_0}}}}\left\lbrace|\zeta_0(z)| \le (\varphi_N)^{c_1\xi}\min\left\{\frac{\lambda^{1/2}}{N^{1/4}},\frac{\lambda}{\sqrt{\kappa+\eta}}\frac{1}{\sqrt{N}}\right\}\right\rbrace
\end{align}
has $(\xi,\nu)$-high probability.
\end{theorem}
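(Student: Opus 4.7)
I plan to define $\zeta_0(z)$ implicitly through a $V$-measurable self-consistent equation, obtained by replacing the deterministic measure $\mu$ in~\eqref{eq111} with the empirical measure $\mu_N := N^{-1}\sum_{i=1}^N \delta_{v_i}$. Concretely, let $\tilde m(z)$ denote the unique analytic function on $\C^+$ with $\operatorname{Im} \tilde m > 0$ satisfying
\begin{equation*}
\tilde m(z) = \frac{1}{N}\sum_{i=1}^N \frac{1}{\lambda v_i - z - \tilde m(z)},\qquad \lim_{y\to\infty}\iu y\,\tilde m(\iu y) = 1;
\end{equation*}
existence and uniqueness follow by the same Herglotz-function argument used for $m_{fc}$. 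I then set $\zeta_0(z) := \tilde m(z) - m_{fc}(z)$, which is manifestly $\sigma(v_1,\dots,v_N)$-measurable.

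To establish~\eqref{thm xi0 equation 2}, I would subtract the fixed-point equations for $m_{fc}$ and $\tilde m = m_{fc}+\zeta_0$ and expand the resolvent kernel to get
\begin{equation*}
\zeta_0(z)\bigl(1 - b_N(z)\bigr) = \tilde\zeta_0(z) + \mathcal{O}\Bigl(|\zeta_0(z)|^2\cdot \tfrac{1}{N}\textstyle\sum_i |\lambda v_i - z - m_{fc}|^{-3}\Bigr),
\end{equation*}
where $b_N(z) := N^{-1}\sum_i (\lambda v_i - z - m_{fc})^{-2}$ and $\tilde\zeta_0(z) := N^{-1}\sum_i(\lambda v_i - z - m_{fc})^{-1} - \int(\lambda v - z - m_{fc})^{-1}\dd\mu(v)$. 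The quantity $\tilde\zeta_0$ is an empirical mean of i.i.d.\ centered variables. A direct variance computation using $g'(v) = -\lambda(\lambda v - z - m_{fc})^{-2}$, together with the identity $\int|\lambda v - z - m_{fc}|^{-2}\dd\mu = \operatorname{Im} m_{fc}/(\eta + \operatorname{Im} m_{fc})$ that follows from~\eqref{eq111}, yields $\operatorname{Var}(g(v_i))\lesssim \lambda^2/(\kappa+\eta)$, so the large-deviation bound (Lemma~\ref{lemma.LDE}) gives $|\tilde\zeta_0|\le (\varphi_N)^{c_1\xi}\lambda/\sqrt{N(\kappa+\eta)}$ with $(\xi,\nu)$-high probability. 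Concentration of $b_N$ around $b(z)=\int(\lambda v - z - m_{fc})^{-2}\dd\mu(v)$, together with the stability estimate $|1-b(z)|\gtrsim\sqrt{\kappa+\eta}$ from Lemma~\ref{lemma.lee1} and a bootstrap analysis of the resulting quadratic inequality in $\zeta_0$ (in the same spirit as in the proof of Theorem~\ref{thm.strong}), produces both branches of the minimum in~\eqref{thm xi0 equation 2}: the linear-response bound $\lambda/\sqrt{N(\kappa+\eta)}$ away from the edge, and the $\lambda^{1/2}/N^{1/4}$ bound in the crossover region where the quadratic correction becomes comparable to the linear term.

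For~\eqref{thm xi0 equation} it is enough to show $|m(z)-\tilde m(z)|\le(\varphi_N)^{c_1\xi}/(N\eta)$ on $\caB_L$. Starting from Schur's complement identity
\begin{equation*}
G_{ii}^{-1}(z) = \lambda v_i + w_{ii} - z - \sum_{k,l\ne i} w_{ik} G_{kl}^{(i)}(z) w_{li} = \lambda v_i - z - m(z) - \Upsilon_i(z),
\end{equation*}
where $\Upsilon_i(z)$ collects off-diagonal and minor-replacement fluctuations controlled by Theorem~\ref{thm.strong}, averaging over $i$ and expanding as above produces
\begin{equation*}
(m-\tilde m)\bigl(1 - b_N(z)\bigr) = \frac{1}{N}\sum_i \frac{\Upsilon_i(z)}{(\lambda v_i - z - \tilde m)^2} + \text{smaller}.
\end{equation*}
Conditioning on $V$, the weights $(\lambda v_i - z - \tilde m)^{-2}$ become deterministic, so the right-hand side is a weighted sum of $W$-fluctuations to which the fluctuation-averaging Lemma~\ref{lemma.21} applies and upgrades the naive $1/\sqrt{N\eta}$ estimate to the desired $1/(N\eta)$ rate. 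Inverting with the stability bound for the \emph{empirical} self-consistent equation (whose solution is $\tilde m$) then yields the claim on $\caB_L$.

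The main obstacle is verifying that the random self-consistent equation for $\tilde m$ inherits the stability (Lemma~\ref{lemma.lee1}-type) and square-root edge properties enjoyed by $m_{fc}$, with $(\xi,\nu)$-high probability uniformly on the spectral domain. Since $\mu_N$ differs from $\mu$ by $\mathcal{O}(N^{-1/2})$ in Stieltjes transform, the edges of $\tilde\mu_{fc}$ are perturbed on that same scale, and the stability analysis only closes where $\sqrt{\kappa+\eta}\gg N^{-1/4}$; this is precisely the restriction to $\caB_L$ imposed in~\eqref{thm xi0 equation}. Within this domain, the conditional application of Lemma~\ref{lemma.21} cleanly separates the $V$-dependent leading correction encoded by $\zeta_0$ from the genuinely Wigner-like residual fluctuations at scale $1/(N\eta)$.
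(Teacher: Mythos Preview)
Your proposal is correct and would yield the theorem, but it takes a genuinely different route from the paper.

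The paper does \emph{not} define $\zeta_0$ through the full empirical fixed-point equation. Instead it truncates: $\zeta_0$ is declared to be the solution (vanishing at $\im z\to\infty$) of the \emph{quadratic} equation
\[
(1-R_2-r_2)\,\zeta_0 \;=\; r_1+(R_3+r_3)\,\zeta_0^2,
\]
where $R_n=\int(\lambda v-z-m_{fc})^{-n}\,\dd\mu$ and $r_n=N^{-1}\sum_i(\lambda v_i-z-m_{fc})^{-n}-R_n$. This is precisely your equation for $\tilde m-m_{fc}$ with the $\mathcal O(\zeta_0^3)$ tail discarded. The paper then subtracts this quadratic relation from the strong self-consistent equation~\eqref{self} already obtained for $[\v]=m-m_{fc}$, getting a linear equation in $\zeta:=m-m_{fc}-\zeta_0$ with the same stability factor $1-R_2-r_2$, and closes by a two-case argument depending on whether $\lambda/(\sqrt{\kappa+\eta}\sqrt N)$ dominates $1/(N\eta)$.

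What this buys the paper is economy: all stability input is the deterministic bound $|1-R_2|\sim\sqrt{\kappa+\eta}$ from Lemma~\ref{lemma.17}, perturbed only by the small scalars $r_2,r_3$. There is no need to verify that the empirical measure $\mu_N$ inherits a square-root edge or a uniform lower bound on $|\lambda v-z-\tilde m|$; the quadratic truncation stays inside the world already controlled by Theorem~\ref{thm.strong} and Corollary~\ref{generalized Z lemma}. Your definition is conceptually cleaner (it really is the free convolution of $\mu_N$ with the semicircle), and on $\caB_L$ it differs from the paper's $\zeta_0$ by $\mathcal O(\zeta_0^3/|1-R_2|)\ll 1/(N\eta)$, so the two objects are interchangeable there. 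The cost is exactly the ``main obstacle'' you flag: you must re-derive the stability bounds for $\tilde m$. This is doable on $\caB_L$ via $|\lambda v-z-\tilde m|\ge|\lambda v-z-m_{fc}|-|\zeta_0|$, but it is extra work the paper avoids.

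One small correction: your variance claim $\operatorname{Var}(g(v_i))\lesssim\lambda^2/(\kappa+\eta)$ is not what the identity you cite gives (that identity only yields $\le 1$). The $\lambda$ factor comes instead from the Lipschitz bound $|g'(v)|\le C\lambda$ (the stability estimate), which gives $\operatorname{Var}(g(v_i))\lesssim\lambda^2$ and hence $|\tilde\zeta_0|\lesssim(\varphi_N)^\xi\lambda/\sqrt N$; the $(\kappa+\eta)^{-1/2}$ then enters when you divide by $|1-b_N|$. This is also how the paper argues (see~\eqref{lawlargenumber}).
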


\begin{remark}
 The estimates in~\eqref{thm xi0 equation} and~\eqref{thm xi0 equation 2} need some explanation: Choosing $E$ in the bulk of the spectrum, i.e., $\kappa_E\ge\varkappa$, for some $\varkappa>0$, we have
\begin{align}\label{remarks about xi0}
 |m(z)-m_{fc}(z)-\zeta_0(z)|\le  (\varphi_N)^{c_1\xi}\frac{1}{N\eta}\,,\quad\quad |m(z)-m_{fc}(z)|\le (\varphi_N)^{c_1\xi}\left(\frac{\lambda}{\sqrt{N}}+\frac{1}{N\eta}\right)\,,
\end{align}
with high probability and the estimate seems to be optimal. In particular, on microscopic scales, $\eta\ll  N^{-1/2}$, the local fluctuations stem from the Wigner matrix $W$, whereas on intermediate scales, $\eta\sim N^{-1/2}$, the fluctuations due to the Wigner matrix are of the same size as the fluctuations due to the diagonal matrix $V$. Finally, on macroscopic scales, $\eta\sim 1$, the fluctuations are dominated by the matrix $V$.
\end{remark}

\begin{remark}\label{remark on widetildexi0}
 The random variable $\zeta_0\equiv \zeta_0(z)$ is defined as the solution to a quadratic equation; see~\eqref{definition of zeta0} below. In the bulk of the spectrum, we can approximate $\zeta_0$ by
\begin{align}\label{preview widetildex0}
 \widetilde{\zeta}_0(z)\deq \left(1-\int\frac{\dd\mu(v)}{(\lambda v-z-m_{fc}(z))^2} \right)^{-1}\left(\frac{1}{N}\sum_{i=1}^N\frac{1}{\lambda v_i-z-\mfc(z)}-\int\frac{\dd\mu(v)}{\lambda v-z-\mfc(z)} \right)\,,\quad z\in\caD_L\,,
\end{align}
that only depends on $(v_i)$. Note that $\widetilde{\zeta}_0(z)$ embodies, up to a deterministic ($z$-dependent) prefactor, the expected fluctuation corresponding to the $\lambda N^{-1/2}$ term in \eqref{strong1}. For $\kappa_E\ge\varkappa$, for some fixed $\varkappa>0$, we will argue in Subsection~\ref{definition of widetildezeta0}, that $|\zeta_0(z)-\widetilde{\zeta}_0(z)|\ll (N\eta)^{-1}$ and we may thus replace~$\zeta_0$ in~\eqref{remarks about xi0} by~$\widetilde{\zeta}_0$ without changing the bounds; c.f., Lemma~\ref{lemma bound on widetildexi0}.
\end{remark}

\subsubsection{Eigenvector delocalization}
Next, let $\mu_1\le\ldots\le \mu_N$ denote the eigenvalues of $H=\lambda V+W$, and let $\bsu_1,\ldots,\bsu_N$ denote the associated eigenvectors. We use the notation $\bsu_{\alpha}=(u_{\alpha}(i))_{i=1}^N$ for the vector components. All eigenvectors are $\ell^2$-normalized.
The next theorem asserts that, with high probability, all eigenvectors of $H=\lambda V+W$ are completely delocalized:
\begin{theorem}{\emph{[Eigenvector delocalization]}}\label{thm: eigenvector delocalization} 
 Assume that $H=\lambda V+W$ satisfies the assumptions in Definition~\ref{assumption wigner} and Assumption~\ref{assumption mu} or~\ref{assumption mu'}. Then there is a constant $\nu>0$, depending on~$\theta$~and~$C_0$ in~\eqref{eq.C0},~$\lambda_0$ in~\eqref{domain of lambda},~$E_0$ in~\eqref{eq.DL}, $A_0$ in~\eqref{eq: strong xi}, and the measure~$\mu$, such that, for any $\xi$ satisfying~\eqref{eq.xi}, we have
\begin{align*}
 \max_{1\le\alpha\le N}\max_{1\le i\le N}|u_{\alpha}(i)|\le \frac{(\varphi_N)^{4\xi}}{\sqrt{N}}\,,
\end{align*}
 with $(\xi,\nu)$-high probability.
\end{theorem}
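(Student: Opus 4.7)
The plan is to derive delocalization from a uniform bound on $\im G_{ii}(z)$ via the spectral decomposition, then invoke the weak local deformed semicircle law from Section \ref{Weak Deformed Semicircle Law} to supply the required bound on $\im G_{ii}$. The starting point is the spectral decomposition
\begin{equation*}
\im G_{ii}(E+\ii\eta)=\sum_{\beta=1}^{N}\frac{\eta\,|u_{\beta}(i)|^{2}}{(\mu_{\beta}-E)^{2}+\eta^{2}},
\end{equation*}
which, specialised to $E=\mu_{\alpha}$, yields $|u_{\alpha}(i)|^{2}\le \eta\,\im G_{ii}(\mu_{\alpha}+\ii\eta)$. Thus the theorem reduces to showing that, with $(\xi,\nu)$-high probability,
\begin{equation*}
\im G_{ii}(\mu_{\alpha}+\ii\eta)\le (\varphi_{N})^{c\xi}\qquad\text{uniformly in }\alpha,i,
\end{equation*}
for some suitably small $\eta$ inside $\caD_{L}$, and then to choose $\eta$ of size $(\varphi_{N})^{L}/N$ with $L\le 8\xi$, so that $|u_{\alpha}(i)|^{2}\le(\varphi_{N})^{8\xi}/N$.

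To produce the bound on $\im G_{ii}$, I would use the weak local deformed semicircle law (to be proved in Section \ref{Weak Deformed Semicircle Law}), which, through Schur's complement formula, controls the diagonal entries by the explicit deterministic quantity $1/(\lambda v_{i}-z-m_{fc}(z))$ up to small random corrections. Concretely, write
\begin{equation*}
G_{ii}(z)^{-1}=\lambda v_{i}+w_{ii}-z-\bigl\langle h_{i},G^{(i)}(z)h_{i}\bigr\rangle,
\end{equation*}
and note that $\im\langle h_{i},G^{(i)}h_{i}\rangle\ge 0$ since $\im G^{(i)}\succeq 0$, so that trivially $|G_{ii}|\le 1/\eta$. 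The weak law upgrades this: large-deviation estimates force $\langle h_{i},G^{(i)}h_{i}\rangle$ to be close to $m^{(i)}(z)\approx m_{fc}(z)$ with $(\xi,\nu)$-high probability, hence
\begin{equation*}
|G_{ii}(z)|\le \bigl|\lambda v_{i}-z-m_{fc}(z)\bigr|^{-1}+o(1)\le \bigl(\im m_{fc}(z)+\eta\bigr)^{-1}+o(1),
\end{equation*}
which is bounded by an absolute constant in the bulk and by $(\varphi_{N})^{c\xi}$ up to the edge, uniformly in $i$ after a union bound over the $N$ indices.

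To obtain the estimate uniformly in the eigenvalue $\mu_{\alpha}$, I would choose a deterministic $N^{-C}$-grid of points $E_{k}\in[-E_{0},E_{0}]$, apply the weak law and the union bound at each $(E_{k}+\ii\eta)$ with $\eta=(\varphi_{N})^{8\xi}/N$, and extend the bound on $\im G_{ii}(\cdot+\ii\eta)$ to all real $E$ by its Lipschitz constant $\partial_{E}\im G_{ii}$, which is controlled by $C\eta^{-2}$ times $\|G(E+\ii\eta)\|^{2}$—harmless since this is compensated by choosing the grid fine enough. Picking $E_{k}$ closest to $\mu_{\alpha}$ and combining with $|u_{\alpha}(i)|^{2}\le \eta\,\im G_{ii}(\mu_{\alpha}+\ii\eta)$ delivers $|u_{\alpha}(i)|\le(\varphi_{N})^{4\xi}/\sqrt{N}$.

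The main obstacle is the proof of the required bound on $G_{ii}$, i.e., the weak local law itself. The obstacle—absent from the pure Wigner case—is that $G_{ii}$ retains an $O(\lambda)$ deterministic dependence on the random potential value $v_{i}$ and does \emph{not} concentrate around the averaged $m(z)$; one must therefore track the denominator $\lambda v_{i}-z-m_{fc}(z)$ individually for each $i$, ensuring that its modulus is controlled by $\im m_{fc}(z)+\eta$ even when $v_{i}$ is such that the real part nearly vanishes. This requires the square-root behaviour of $\mu_{fc}$ (Lemma \ref{lemma assumptions 1}, Lemma \ref{squareroot lemma 2}), which guarantees $\im m_{fc}(z)\gtrsim \sqrt{\kappa+\eta}$ in the support and rules out pathological edge behaviour. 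Once this structural ingredient is in hand, the Schur-complement self-consistent argument of Section \ref{Weak Deformed Semicircle Law} closes, and the delocalization conclusion follows as above.
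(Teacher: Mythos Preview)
Your overall strategy---spectral decomposition plus Schur complement plus the weak local law---is exactly what the paper does. However, there is a genuine gap in your control of $|G_{ii}|$ at the edge.

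You bound $|G_{ii}(z)|\le|\lambda v_i-z-m_{fc}(z)|^{-1}+o(1)$ and then estimate the denominator from below by its imaginary part, $\im m_{fc}(z)+\eta$. Near the edge, with $\eta=(\varphi_N)^{L}N^{-1}$, the square-root behaviour you cite gives only $\im m_{fc}(z)\sim\sqrt{\kappa+\eta}$, which can be as small as $\sqrt{\eta}\sim(\varphi_N)^{L/2}N^{-1/2}$. Plugging this in yields $|G_{ii}|\lesssim\eta^{-1/2}$, and hence $|u_\alpha(i)|^2\le\eta\cdot\eta^{-1/2}=\eta^{1/2}$, i.e.\ only $|u_\alpha(i)|\lesssim N^{-1/4}$---far short of the claimed $N^{-1/2}$. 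Your remark that $(\im m_{fc}+\eta)^{-1}$ is ``bounded by $(\varphi_N)^{c\xi}$ up to the edge'' is therefore incorrect.

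The missing ingredient is the \emph{stability bound}~\eqref{stability bound} of Lemma~\ref{lemma.lee1}: for all $z\in\caD_L$, all $v\in[-1,1]$, and all $\lambda\in\caD_{\lambda_0}$, one has $|\lambda v-z-m_{fc}(z)|\ge c>0$. This is strictly stronger than the imaginary-part lower bound: it says that even when $\im m_{fc}$ is small (edge or outside the support), the quantity $z+m_{fc}(z)$ has real part exceeding $\lambda$ in absolute value, so the denominator stays uniformly away from zero. The proof (Corollary~\ref{cor: stability bound} in the Appendix) uses the characterisation of the edge via $F'(\tau)=0$ and the fact that the solutions $\tau_i$ satisfy $|\tau_i|>\lambda$. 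With this bound in hand one gets $|G_{ii}(z)|\le C$ uniformly on $\caD_L$, and the spectral-decomposition step then gives the full $N^{-1/2}$ delocalization all the way to the edge, exactly as in the paper's proof in Section~3.6. Your lattice argument over energies is then also unnecessary, since the uniformity in $z\in\caD_L$ is already built into Corollary~\ref{cor.12}.
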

\begin{remark}\label{remark gaussian 2} In case the entries of $V=(v_i)$ are independent Gaussian random variables, the situation is more subtle. For any finite $E_0$, there exists a constant $c_{E_0}$, independent of $N$, and a constant $\nu$, depending on $A_0$, $E_0$, $\theta$ and $C_0$,  such that, for any $\xi$ satisfying~\eqref{eq.xi}, the following holds: Let $\alpha\in\{1,\ldots,N\}$ be such that the eigenvalue $\mu_{\alpha}$ satisfies $|\mu_{\alpha}|\le E_0$. Then we have
\begin{align}\label{gaussian eigenvectors}
 \max_{1\le i\le N}|u_{\alpha}(i)|\le c_{E_0}\frac{(\varphi_N)^{4\xi}}{\sqrt{N}}\,,
\end{align}
with $(\xi,\nu)$-high probability. However, $c_{E_0}\to\infty$ and $\nu\to 0$, as $E_0\to\infty$.
\end{remark}

\subsubsection{Density of States}
Next, we state our main results about the {\it local density of states} of $H=\lambda V+W$. For $E_1<E_2$, we define the counting functions
\begin{align}\label{the counting functions}
\frn(E_1,E_2)\deq \frac{1}{N}|\{ \alpha\,:\, E_1<\mu_{\alpha}\le E_2\}|\,,\quad\quad n_{fc}(E_1,E_2)\deq\int_{E_1}^{E_2} \dd x \, \rho_{fc}(x)\,,
\end{align}
where we denote by $\rho_{fc}$ the density of the free convolution measure $\mu_{fc}$.
\begin{theorem}{\emph{[Local density of states]}}\label{theorem density of states}
 Let $H=\lambda V+W$, where $W$ satisfies the assumptions in Definition~\ref{assumption wigner} and $\lambda V$ satisfies Assumption~\ref{assumption mu} or~\ref{assumption mu'}. Let $\xi$ satisfy~\eqref{eq: strong xi}. Then there are constants $\nu>0$ and $c$, depending on~$\theta$~and~$C_0$ in~\eqref{eq.C0},~$\lambda_0$ in~\eqref{domain of lambda},~$E_0$ in~\eqref{eq.DL}, $A_0$ in~\eqref{eq: strong xi}, and the measure~$\mu$, such that, for $L\ge 40\xi$, the following holds: For any $E_1$, $E_2$, satisfying $-E_0\le E_1<E_2\le E_0$, $E_2>E_1+(\varphi_N)^L N^{-1}$, and any $\lambda\in\caD_{\lambda_0}$, the estimate
\begin{align}\label{result density of states 1}
 |\frn(E_1,E_2)-n_{fc}(E_1,E_2)|\le (\varphi_N)^{c\xi}\left(\frac{1}{N}+\frac{\lambda(E_2-E_1)}{\sqrt{\kappa+(E_2-E_1)}}\frac{1}{\sqrt{N}}\right)\,,
\end{align}
holds with $(\xi,\nu)$-high probability. 

Moreover, let $\varkappa>0$. Then, there exists a constant $C_{\varkappa}$, depending only on $\varkappa$, such that, for any $E_1,E_2$, satisfying $L_1+\varkappa\le E_1<E_2\le L_2-\varkappa$,  $E_2>E_1+(\varphi_N)^L N^{-1}$, and any $\lambda\in\caD_{\lambda_0}$, the estimate
\begin{align}\label{results density of states 2}
 |\frn(E_1,E_2)-n_{fc}(E_1,E_2)|\le C_{\varkappa}(\varphi_N)^{c\xi}\left(\frac{1}{N}+\frac{\lambda^{2}(E_2-E_1)^2}{\sqrt{N}}\right)\,,
\end{align}
 holds with $(\xi,\nu)$-high probability.
\end{theorem}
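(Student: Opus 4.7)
I will adapt the Helffer--Sjöstrand argument of~\cite{ERSY}, using Theorems~\ref{thm.strong} and~\ref{thm xi0} as the main inputs. Set $\eta_0\deq(\varphi_N)^L/N$ and choose $f\in C_c^\infty(\R)$ with $f\equiv 1$ on $[E_1,E_2]$, $\supp f\subset[E_1-\eta_0,E_2+\eta_0]$, and $\|f^{(k)}\|_\infty\lesssim\eta_0^{-k}$. Let $\tilde f(x+\ii y)\deq(f(x)+\ii y f'(x))\chi(y)$ be its almost-analytic extension, with $\chi$ a smooth cutoff of $\{|y|\le 2\}$. The Helffer--Sjöstrand formula then gives
\begin{equation*}
\frac{1}{N}\Tr f(H)-\int f\,\dd\mu_{fc}=\frac{1}{\pi}\int_{\C}\partial_{\bar z}\tilde f(z)\bigl(m(z)-m_{fc}(z)\bigr)\,\dd x\,\dd y.
\end{equation*}
The smoothing error $|\frn(E_1,E_2)-\tfrac{1}{N}\Tr f(H)|$ and its $n_{fc}$-analogue are bounded by the eigenvalue/measure mass in the two buffer intervals of width $\eta_0$; Theorem~\ref{thm.strong} at scale $\eta_0$ shows these are $O((\varphi_N)^{c\xi}/N)$, absorbed into both right-hand sides.

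For the first estimate~\eqref{result density of states 1}, I split the $y$-integration into $\{\eta_0\le y\le 1\}$ and $\{1\le y\le 2\}$. The latter contributes $O(1/N)$ trivially using $|m-m_{fc}|\lesssim 1$ and $|\partial_{\bar z}\tilde f|\lesssim|\chi'|$. On $\{\eta_0\le y\le 1\}$, Theorem~\ref{thm.strong} gives
\[
|m(z)-m_{fc}(z)|\lesssim (\varphi_N)^{c_1\xi}\left[\frac{\lambda}{\sqrt{N(\kappa_x+y)}}+\frac{1}{Ny}\right],
\]
while $|\partial_{\bar z}\tilde f(x+\ii y)|\lesssim y|f''(x)|+|f'(x)\chi'(y)|$ is concentrated near $x\in[E_1-\eta_0,E_1]\cup[E_2,E_2+\eta_0]$. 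The $(Ny)^{-1}$ term integrates against $y|f''|$ to $O((\varphi_N)^{c\xi}/N)$ by the standard bookkeeping, while the $\lambda/\sqrt{N(\kappa_x+y)}$ term gives
\[
\frac{\lambda}{\sqrt N}\int\!\!\int\frac{y|f''(x)|+|f'(x)\chi'(y)|}{\sqrt{\kappa_x+y}}\,\dd x\,\dd y\lesssim\frac{\lambda(E_2-E_1)}{\sqrt{N(\kappa+(E_2-E_1))}}
\]
by elementary calculus, yielding~\eqref{result density of states 1}.

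For the bulk estimate~\eqref{results density of states 2}, I use the finer Theorem~\ref{thm xi0} to decompose $m-m_{fc}=\zeta_0+r$ on $\caB_L$ with $|r|\lesssim(\varphi_N)^{c_1\xi}/(N\eta)$; the $r$-contribution gives $O(1/N)$ exactly as above. For the $\zeta_0$-contribution, Remark~\ref{remark on widetildexi0} allows me to replace $\zeta_0$ by the explicit approximant $\widetilde\zeta_0$ from~\eqref{preview widetildex0} (with negligible error in the bulk). For each realization of $(v_i)$, $\widetilde\zeta_0(z)$ is analytic in $z\in\C^+$ (the denominators $\lambda v_i-z-m_{fc}(z)$ and $1-\int\dd\mu/(\lambda v-z-m_{fc})^2$ are non-vanishing on compact bulk subsets by stability of~\eqref{eq111}) and factorizes as $\widetilde\zeta_0(z)=C(z)\cdot\frac{1}{N}\sum_{i=1}^N[X_i(z)-\E X_i(z)]$ with $X_i(z)=(\lambda v_i-z-m_{fc}(z))^{-1}$. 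Combining contour deformation inside $\C^+$ (using analyticity of $\widetilde\zeta_0$) with a variance estimate on the integrated mean-zero sum, and using the Lipschitz dependence of $X_i$ in $E$ on the bulk to extract an additional factor of order $\lambda(E_2-E_1)$, delivers the bound $\lambda^2(E_2-E_1)^2/\sqrt N$.

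The main obstacle is precisely this quadratic improvement in $(E_2-E_1)$ in the bulk. The naive bound $|\widetilde\zeta_0(z)|\lesssim\lambda/\sqrt N$ from Theorem~\ref{thm xi0} together with $\int|\partial_{\bar z}\tilde f|\sim 1$ only yields $\lambda/\sqrt N$, which is far too weak. The gain requires simultaneously exploiting the analytic structure of $\widetilde\zeta_0$ in $\C^+$ (reducing the double integral to a boundary/contour term via Stokes' theorem) and a careful second-moment estimate on the integrated sum $\frac{1}{N}\sum_i\int f(E)[X_i(E)-\E X_i(E)]\,\dd E$ (whose typical size is $\lambda(E_2-E_1)/\sqrt N$), together with a second-order cancellation that picks up an additional factor of $\lambda(E_2-E_1)$ from the interaction of $f$ with the $E$-derivative of $\widetilde\zeta_0$.
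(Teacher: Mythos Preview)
Your Helffer--Sj\"ostrand setup has a concrete gap that affects both parts: the cutoff $\chi$ must be taken on the scale $\mathcal E\deq\max\{E_2-E_1,(\varphi_N)^LN^{-1}\}$ of the window, not on the fixed scale~$1$. With your choice $\supp\chi\subset\{|y|\le 2\}$, the terms carrying $\chi'(y)$ live at $y\sim 1$, and after the (mandatory) integration by parts that converts $\int yf''\chi\,\im m^\Delta$ into $\int f'\chi\,\re m^\Delta$ plus boundary pieces, you are left with $\int|f'|\,dx\int_{\eta_0}^{2}|m^\Delta|\,dy$. Since $\int|f'|\sim 1$ and $\int_{\eta_0}^{2}\lambda\,dy/\sqrt{N(\kappa+y)}\sim \lambda/\sqrt N$ uniformly in $\kappa$, this term is of order $\lambda/\sqrt N$, which exceeds the target $\lambda(E_2-E_1)/\sqrt{N(\kappa+E_2-E_1)}$ whenever $E_2-E_1\ll 1$; your ``elementary calculus'' claim is therefore false with this cutoff. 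The paper's remedy is to take $\chi\equiv 1$ on $[-\mathcal E,\mathcal E]$ and $\chi\equiv 0$ outside $[-2\mathcal E,2\mathcal E]$: then every $y$-integral has length $\sim\mathcal E$ and the $\chi'$-terms sit at $y\sim\mathcal E$, which is exactly what produces the factor $E_2-E_1$ in~\eqref{result density of states 1}. (Your displayed bound for $|\partial_{\bar z}\tilde f|$ is also incomplete---the term $|f(x)|\,|\chi'(y)|$ is missing---and with the $\mathcal E$-scale cutoff that term is in fact one of the leading contributions.)

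For the bulk estimate~\eqref{results density of states 2}, the mechanism you describe does not deliver the quadratic factor $(E_2-E_1)^2$. A variance bound on $N^{-1}\sum_i\int f(E)[X_i(E)-\E X_i(E)]\,dE$ yields only $\lambda(E_2-E_1)/\sqrt N$, and your ``second-order cancellation'' is left unspecified. The paper instead proves a pointwise derivative bound
\[
\bigl|\partial_E\widetilde\zeta_0(E+\ii\eta)\bigr|\;\le\;(\varphi_N)^{c\xi}\,\frac{\lambda}{(\kappa_E+\eta)^{3/2}\sqrt N}\,,
\]
obtained by differentiating~\eqref{preview widetildex0} and using $|1+m_{fc}'|=|1-R_2|^{-1}$ together with the stability bound~\eqref{stability bound}. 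Inserting $\widetilde\zeta_0$ into the Helffer--Sj\"ostrand integral (again with the $\mathcal E$-scale cutoff) and integrating each of the three pieces $f\chi'$, $yf'\chi'$, $yf''\chi$ once by parts in $x$ or $y$---using analyticity via $\partial_y\widetilde\zeta_0=\ii\,\partial_x\widetilde\zeta_0$---replaces one factor of $\widetilde\zeta_0$ by $\partial_E\widetilde\zeta_0$; the derivative bound then gains one power of $\mathcal E$, and the $y$-range $[0,2\mathcal E]$ supplies the second, producing $\lambda\mathcal E^2(\kappa+\mathcal E)^{-3/2}/\sqrt N\le C_\varkappa\lambda\mathcal E^2/\sqrt N$. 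The remaining piece $m-m_{fc}-\widetilde\zeta_0$ is handled by Theorem~\ref{thm xi0} exactly as you indicate.
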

We remark, however, that the estimate in~\eqref{results density of states 2} deteriorates at the edge: $C_{\varkappa}\to\infty$, as $\varkappa\to 0$.

\subsubsection{Rigidity of eigenvalue spacing}
Recall that we denote by $\mu_1\le \mu_2\le\ldots\le \mu_N$, the eigenvalues of \mbox{$H=\lambda V+W$}. The estimates on the density of states in Theorem~\ref{theorem density of states} imply the following result on the rigidity of eigenvalue spacing, which establishes the relation between $|\mu_i - \mu_j|$ and $|i-j|$.

\begin{theorem} \label{rigidity of eigenvalue spacing}
Assume that $H=\lambda V+W$ satisfies the assumptions in Definition~\ref{assumption wigner} and Assumption~\ref{assumption mu} or~\ref{assumption mu'}. Consider $\mu_i < \mu_j$, with $i\ge \epsilon N$ and $j\le (1-\epsilon)N$, for some constant $\epsilon > 0$. Assume that $|i-j| \geq (\varphi_N)^{C' \xi}$, for some constant $C' > C$, where $C$ is the constant in \eqref{results density of states 2}. Then, there exist constants $C_1$, $C_2$, depending only on $\epsilon$, $\max \{ \rho_{fc}(x) : \mu_i \leq x \leq \mu_j \}$ and $\min \{ \rho_{fc}(x) : \mu_i \leq x \leq \mu_j \}$, such that the following holds: For $\xi$ and $\nu>0$, as in Theorem~\ref{theorem density of states}, the estimate
\begin{align*}
C_1 \frac{|i-j|}{N} \leq |\mu_i - \mu_j| \leq C_2 \frac{|i-j|}{N}\,,
\end{align*}
holds with $(\xi,\nu)$-high probability, for all $\lambda\in\caD_{\lambda_0}$. If we assume further that $|i-j| \leq (\varphi_N)^{c \xi} N^{1/2}$, for some constant $c>0$, then there exists a constant $K$ such that
\begin{align} \label{rigidity.spacing}
\left| |\mu_i - \mu_j| - \frac{|i-j|}{N \rho_{fc} (\mu_i)} \right| \leq(\varphi_N)^{K \xi}\frac{1}{N}\,,
\end{align}
with $(\xi,\nu)$-high probability, for all $\lambda\in\caD_{\lambda_0}$.
\end{theorem}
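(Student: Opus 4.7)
The plan is to deduce both parts from the exact identity $\frn(\mu_i,\mu_j)=(j-i)/N$ (immediate from the definition in \eqref{the counting functions}) combined with the bulk density-of-states estimate \eqref{results density of states 2}. Since Theorem~\ref{theorem density of states} is formulated for deterministic $E_1<E_2$, the first step is to upgrade it to the random endpoints $E_1=\mu_i$, $E_2=\mu_j$ by a standard net argument: intersect the high-probability event over a polynomial grid of pairs in $[-E_0,E_0]^2$ (this preserves $(\xi,\nu)$-high probability) and use that both $\frn$ and $n_{fc}$ are Lipschitz on scales much larger than $N^{-1}$ to pass from the closest grid point to the random endpoints. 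This yields, with $(\xi,\nu)$-high probability,
\begin{align*}
\left| \frac{j-i}{N} - n_{fc}(\mu_i,\mu_j)\right| \leq (\varphi_N)^{c\xi}\!\left(\frac{1}{N}+\frac{\lambda^2(\mu_j-\mu_i)^2}{\sqrt N}\right),
\end{align*}
uniformly in $i,j$ in the allowed range.

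For the coarse bound, the hypotheses $i\geq \epsilon N$ and $j\leq (1-\epsilon)N$ force $\mu_i,\mu_j$ to lie in a compact subset of $(L_1,L_2)$ with high probability (extracted from a coarser application of Theorem~\ref{theorem density of states} at an appropriate scale, so no circularity arises). On such a set $\rho_{fc}$ is bounded above and below by positive constants depending on $\epsilon$, so $c_\epsilon(\mu_j-\mu_i)\leq n_{fc}(\mu_i,\mu_j)\leq C_\epsilon(\mu_j-\mu_i)$. Plugging this into the previous display, and noting that $\lambda^2(\mu_j-\mu_i)^2/\sqrt N\leq C/\sqrt N$ is dominated by $|i-j|/N$ whenever $|i-j|\geq(\varphi_N)^{C'\xi}$ with $C'>C$ chosen sufficiently large, the error becomes negligible compared to $|i-j|/N$ and one obtains $C_1|i-j|/N\leq|\mu_i-\mu_j|\leq C_2|i-j|/N$.

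For the sharper spacing bound \eqref{rigidity.spacing}, I would feed the estimate $|\mu_j-\mu_i|\leq C_2|j-i|/N\leq C(\varphi_N)^{c\xi}N^{-1/2}$ from the first part back into the expansion. A first-order Taylor expansion
\begin{align*}
n_{fc}(\mu_i,\mu_j)=\rho_{fc}(\mu_i)(\mu_j-\mu_i)+O\!\left((\mu_j-\mu_i)^2\right),
\end{align*}
valid because $\rho_{fc}$ is analytic in the interior of its support (cf.\ Lemma~\ref{lemma assumptions 1}), produces an $O((\varphi_N)^{2c\xi}/N)$ error, while the $\lambda^2(\mu_j-\mu_i)^2/\sqrt N$ term in the density-of-states estimate is $O((\varphi_N)^{2c\xi}/N^{3/2})$. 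Both are absorbed into a single $(\varphi_N)^{K\xi}/N$ bound; dividing through by $\rho_{fc}(\mu_i)$, which is bounded below in the bulk, yields \eqref{rigidity.spacing}. The only nonroutine step is the uniformity in random endpoints; the rest is a direct inversion of the density-of-states estimate, with the restriction $|i-j|\leq (\varphi_N)^{c\xi}N^{1/2}$ precisely ensuring that the quadratic Taylor remainder stays below the target precision $(\varphi_N)^{K\xi}/N$.
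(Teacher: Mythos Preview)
Your proposal is correct and follows essentially the same route as the paper: start from the exact identity $\frn(\mu_i,\mu_j)=(j-i)/N$, apply the bulk density-of-states estimate \eqref{results density of states 2}, use that $\rho_{fc}$ is bounded above and below in the bulk to get the two-sided bound, and then for \eqref{rigidity.spacing} feed the first part back in and linearize $n_{fc}$ (the paper uses the mean value theorem at an intermediate point $\mu_i'$ plus Lipschitz continuity of $\rho_{fc}$, which is equivalent to your Taylor expansion). The one thing you add that the paper leaves implicit is the lattice argument to handle the random endpoints $\mu_i,\mu_j$; this is indeed the correct way to make the application of \eqref{results density of states 2} rigorous, and it goes through because the underlying strong local law is already uniform in $z\in\caD_L$.
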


\begin{remark}
The estimate in~\eqref{rigidity.spacing} can be extended to $|i-j| \leq (\varphi_N)^{-c \xi} N^{3/4}$ in the following sense: There exists a constant $K$ such that, for some $\mu_i' \in [\mu_i, \mu_j]$,
\begin{align}\label{stronger estimate on eigenvalue spacing}
\left| |\mu_i - \mu_j| - \frac{|i-j|}{N \rho_{fc} (\mu_i')} \right| \leq(\varphi_N)^{K \xi}\frac{1}{N}\,,
\end{align}
with $(\xi,\nu)$-high probability. The estimate~\eqref{stronger estimate on eigenvalue spacing} easily follows from the proof of Theorem~\ref{rigidity of eigenvalue spacing}.
\end{remark}

\subsubsection{Integrated density of states and rigidity of eigenvalues}
Define the {\it integrated density of states} by
\begin{align*}
 \frn(E)\deq\frac{1}{N}|\{\alpha\,:\,\mu_{\alpha}\le E\}|\,.
\end{align*}
Similarly, we set
\begin{align*}
 n_{fc}(E)\deq\int_{-\infty}^E\rho_{fc}(x)\,\dd x\,,
\end{align*}
where $\rho_{fc}$ denotes the density of the free convolution measure $\mu_{fc}$. Then we have the following result:
\begin{theorem}\label{integrated density of states}
 Let $H=\lambda V+W$, where $W$ satisfies the assumptions in Definition~\ref{assumption wigner} and $\lambda V$ satisfies Assumption~\ref{assumption mu} or~\ref{assumption mu'}. Let~$\xi$ satisfy~\eqref{eq: strong xi}. Then there  are constants $\nu>0$ and $c$, depending on the constants ~$\theta$~and~$C_0$ in~\eqref{eq.C0},~$\lambda_0$ in~\eqref{domain of lambda},~$E_0$ in~\eqref{eq.DL}, $A_0$ in~\eqref{eq: strong xi}, and the measure~$\mu$, such that the event
\begin{align*}
 \bigcap_{\substack{E\in[-E_0,E_0]\\ \lambda\in\caD_{\lambda_0}}}\left\{|\frn(E)-n_{fc}(E)|\le (\varphi_N)^{c\xi}\left(\frac{1}{N}+\frac{\lambda^{3/2}}{N^{3/4}}+\frac{\lambda}{N^{5/6}}+\frac{\lambda\sqrt{\kappa_E}}{\sqrt{N}} \right)\right\}\,,
\end{align*}
has $(\xi,\nu)$-high probability.
\end{theorem}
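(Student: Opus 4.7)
My proof plan is to use the Helffer--Sj\"ostrand functional calculus, as in~\cite{ERSY}, to express $\frn(E) - n_{fc}(E)$ as a double integral of $m - m_{fc}$ against an almost analytic extension of a smoothed step function, and then to insert the strong local law from Theorem~\ref{thm.strong}. First, I would fix $E\in[-E_0,E_0]$, set $\eta_1 \deq (\varphi_N)^L/N$, and replace $\lone_{(-\infty,E]}$ by a Lipschitz cutoff $q_E$ supported on $(-\infty,E+\eta_1]$ with $\|q_E'\|_\infty\lesssim \eta_1^{-1}$. Since Lemma~\ref{lemma assumptions 1} gives $\rho_{fc}\lesssim 1$, the smoothing error in $n_{fc}$ is $O(\eta_1)$; the corresponding error in $\frn$ is controlled by Theorem~\ref{theorem density of states} (applied on a small window near $E$), and both are absorbed into $(\varphi_N)^{c\xi}/N$.

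Second, I would extend $q_E$ almost analytically via $\tilde q_E(x+\ii y)=(q_E(x)+\ii y q_E'(x))\chi(y)$, with a smooth cutoff $\chi$ vanishing for $|y|\ge \eta_0$ at a scale $\eta_0$ to be optimized, and write
\[
\int q_E\,d(\frn-n_{fc}) \;=\; \frac{1}{\pi}\re\int\!\!\int \bar\partial \tilde q_E(z)\,(m(z)-m_{fc}(z))\,dx\,dy.
\]
This splits into one piece supported where $q_E''$ is nonzero (an $x$-interval of length $\sim\eta_1$ near $E$, multiplied by $y$) and one supported where $\chi'(y)$ is nonzero (a $y$-band near $\eta_0$).

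Third, I would substitute the Theorem~\ref{thm.strong} bound
\[
|m-m_{fc}|\le (\varphi_N)^{c_1\xi}\left(\min\Bigl\{\frac{\lambda^{1/2}}{N^{1/4}},\,\frac{\lambda}{\sqrt{N(\kappa+\eta)}}\Bigr\}+\frac{1}{N\eta}\right)
\]
and estimate each piece. The $1/(N\eta)$ contribution integrates (with the usual logarithmic gain) to $(\varphi_N)^{c\xi}/N$, giving the first term. For the $\lambda$-dependent part, I would split the integration according to which branch of the minimum is active: the branch $\lambda/\sqrt{N(\kappa+\eta)}$ dominates when $\kappa+\eta\gtrsim \lambda N^{-1/2}$, and its integral against $yq_E''$ and against $\chi'$ (with $\eta_0\sim\kappa_E$ near the edge and $\eta_0\sim 1$ in the bulk) yields $(\varphi_N)^{c\xi}\lambda\sqrt{\kappa_E}/\sqrt{N}$. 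The other branch $\lambda^{1/2}/N^{1/4}$ is active on the $\kappa+\eta\lesssim \lambda N^{-1/2}$ region, and trading the two branches at the crossover produces $(\varphi_N)^{c\xi}\lambda^{3/2}/N^{3/4}$; the $\lambda/N^{5/6}$ term emerges from the interplay of the first branch with the intrinsic Wigner edge scale $\eta\sim N^{-2/3}$ (where the $1/(N\eta)$ branch would otherwise match it). Finally, pointwise high-probability bounds are upgraded to a bound uniform in $E\in[-E_0,E_0]$ and $\lambda\in\caD_{\lambda_0}$ using an $N^{-2}$-net together with the monotonicity of $\frn$ in $E$ and Lipschitz continuity of $n_{fc}$ in $(E,\lambda)$.

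The main obstacle I anticipate is the third step: the local law features a minimum of two competing branches, and no single choice of the cutoff scale $\eta_0$ extracts all four exponents at once. The integration domain must be partitioned into regions where each branch dominates, with $\eta_0$ optimized separately in each region and the $\kappa_E$-dependence tracked carefully. The $\lambda/N^{5/6}$ contribution is the most delicate: it arises only from the edge crossover where the spectral-edge width $N^{-2/3}$ and the branch transition width $\lambda N^{-1/2}$ interact, and extracting the correct exponent requires a careful dyadic decomposition in $\eta$ near the edge.
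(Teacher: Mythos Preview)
Your approach is more elaborate than necessary and overlooks a key simplification. The paper does \emph{not} redo a Helffer--Sj\"ostrand argument from scratch for the integrated density of states. Instead, it reduces directly to the already-established local density of states estimate~\eqref{result density of states 1} (which you cite, but only to control a smoothing error) by choosing an anchor point $E_1$ just outside the spectrum and writing $\frn(E)-n_{fc}(E)=\frn(E_1,E)-n_{fc}(E_1,E)$.

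Concretely, the paper first proves a norm bound (Lemma~\ref{estimate norm H}):
\[
\|H\|\le\max\{|L_1|,L_2\}+(\varphi_N)^{c_0\xi}\Bigl(\frac{\lambda}{\sqrt{N}}+\frac{1}{N^{2/3}}\Bigr)
\]
with $(\xi,\nu)$-high probability. This gives an $E_1=L_1-(\varphi_N)^{c_1\xi}(\lambda N^{-1/2}+N^{-2/3})$ at which $\frn(E_1)=n_{fc}(E_1)=0$. Applying~\eqref{result density of states 1} on $[E_1,E]$ with $\caE\le E-E_1+(\varphi_N)^{L\xi}N^{-1}$ yields
\[
|\frn(E)-n_{fc}(E)|\le(\varphi_N)^{c\xi}\Bigl(\frac{1}{N}+\frac{\lambda}{\sqrt{N}}\sqrt{E-E_1+(\varphi_N)^{L\xi}N^{-1}}\Bigr),
\]
and expanding the square root using $E-E_1\le\kappa_E+(\varphi_N)^{c_1\xi}(\lambda N^{-1/2}+N^{-2/3})$ immediately produces all four terms. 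The $\lambda^{3/2}N^{-3/4}$ and $\lambda N^{-5/6}$ contributions come from pure algebra ($\lambda N^{-1/2}\cdot\lambda^{1/2}N^{-1/4}$ and $\lambda N^{-1/2}\cdot N^{-1/3}$), not from any edge crossover or dyadic decomposition.

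Your direct Helffer--Sj\"ostrand route also has a gap as written: the cutoff $q_E$ you describe is not compactly supported on the left, so the $\chi'(y)$ piece involves $\int q_E(x)(m-m_{fc})(x+\ii y)\,\dd x$ over all $x<E$, which extends outside the domain $\caD_L$ on which the strong local law holds. Compactifying on the left requires precisely the norm bound above --- and once you have it, the paper's shortcut is available and the ``main obstacle'' you anticipate disappears entirely.
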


Our last result concerns the rigidity of the eigenvalue location. We define the `classical' location of the eigenvalue $\mu_{\alpha}$ of $H$, $\gamma_\alpha$, by
\begin{align}\label{classical location}
 \int_{-\infty}^{\gamma_\alpha}\rho_{fc}(x)\dd x=\frac{ \alpha}{N}\,,\qquad \alpha\in\{1,\ldots,N\}\,,
\end{align}
where $\rho_{fc}$ is the density of the free convolution measure $\mu_{fc}$.
\begin{theorem}\label{rigidity of eigenvalues}
Let $H=\lambda V+W$, where $W$ satisfies the assumptions in Definition~\ref{assumption wigner} and $\lambda V$ satisfies Assumption~\ref{assumption mu} or~\ref{assumption mu'}. Let $\xi$ satisfy~\eqref{eq: strong xi}. Then there are constants $\nu>0$ and $c$, depending on the constants ~$\theta$~and~$C_0$ in~\eqref{eq.C0},~$\lambda_0$ in~\eqref{domain of lambda},~$E_0$ in~\eqref{eq.DL}, $A_0$ in~\eqref{eq: strong xi}, and the measure~$\mu$, such that 
\begin{align}\label{rigidity of eigenvalues equation}
 |\mu_{\alpha}-\gamma_{\alpha}|\le (\varphi_N)^{c\xi}\left( N^{-2/3}\left[\widehat\alpha^{-1/3}+\lone\left(\widehat\alpha\le(\varphi_N)^{c\xi}(1+\lambda^{3/2}N^{1/4}\right) \right]+ \lambda^{ 2} N^{-1/3}\widehat\alpha^{-2/3}+\lambda N^{-1/2}\right)\,,
\end{align}
with $(\xi,\nu)$-high probability, for all $\lambda\in\caD_{\lambda_0}$, where we have abbreviated $\widehat\alpha\deq\min\{\alpha,N-\alpha\}$.
\end{theorem}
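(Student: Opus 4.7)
The strategy is to turn the integrated density of states estimate of Theorem~\ref{integrated density of states} into a bound on individual eigenvalues by inverting through the square-root behaviour of $\rho_{fc}$ near its endpoints. Set
\begin{align*}
\epsilon(E) := (\varphi_N)^{c\xi}\left(\frac{1}{N} + \frac{\lambda^{3/2}}{N^{3/4}} + \frac{\lambda}{N^{5/6}} + \frac{\lambda\sqrt{\kappa_E}}{\sqrt{N}}\right).
\end{align*}
Using the elementary equivalence $\{\mu_\alpha\le E\}\Leftrightarrow\{N\frn(E)\ge\alpha\}$, it suffices to exhibit, for each $t>0$, the implication
\begin{align*}
\int_{\gamma_\alpha}^{\gamma_\alpha+t}\rho_{fc}(x)\,\dd x\ \ge\ 2\epsilon(\gamma_\alpha+t)\quad\Longrightarrow\quad\mu_\alpha\le\gamma_\alpha+t,
\end{align*}
together with the symmetric lower-bound analogue. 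The proof thus reduces to solving a one-dimensional monotone inequality in $t$, with the left-hand side controlled by $\rho_{fc}$ and the right-hand side by $\epsilon$.

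I would distinguish two regimes according to the relative size of $t$ and $\kappa:=\kappa_{\gamma_\alpha}$, where $\kappa\sim(\widehat\alpha/N)^{2/3}$ by the square-root behaviour of $\mu_{fc}$ (Lemmas~\ref{lemma assumptions 1} and~\ref{squareroot lemma 2}) combined with the definition~\eqref{classical location} of $\gamma_\alpha$. In the linearisation regime $t\ll\kappa$, the integrand is of order $\sqrt{\kappa}$, so the condition becomes $\sqrt{\kappa}\,t\gtrsim\epsilon(\gamma_\alpha+t)$, giving
\begin{align*}
t\ \lesssim\ \frac{N^{-1}+\lambda^{3/2}N^{-3/4}+\lambda N^{-5/6}}{\sqrt{\kappa}}+\lambda N^{-1/2}.
\end{align*}
Substituting $\sqrt{\kappa}\sim(\widehat\alpha/N)^{1/3}$ and using $\widehat\alpha^{-1/3}\lesssim\lambda^{-1/2}N^{-1/12}$ whenever $\widehat\alpha\gtrsim\lambda^{3/2}N^{1/4}$ (the regime in which linearisation is valid) collapses the middle term into $\lambda N^{-1/2}$, producing $N^{-2/3}\widehat\alpha^{-1/3}+\lambda N^{-1/2}$. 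In the complementary non-linearisation regime $t\gtrsim\kappa$, which is forced precisely when $\widehat\alpha\lesssim 1+\lambda^{3/2}N^{1/4}$, one has $\int\rho_{fc}\sim t^{3/2}$ and $\kappa_{\gamma_\alpha+t}\sim t$, so solving $t^{3/2}\gtrsim\epsilon_0+\lambda\sqrt{t}N^{-1/2}$ (with $\epsilon_0$ the $\kappa$-independent part of $\epsilon$) yields $t\lesssim N^{-2/3}+\lambda N^{-1/2}$ after a short case analysis; this reproduces the edge-indicator term $N^{-2/3}\lone(\widehat\alpha\le(\varphi_N)^{c\xi}(1+\lambda^{3/2}N^{1/4}))$.

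The remaining contribution $\lambda^2 N^{-1/3}\widehat\alpha^{-2/3}$ I would extract from the sharper two-point bulk estimate~\eqref{results density of states 2}, whose error is $C_\varkappa(\varphi_N)^{c\xi}(N^{-1}+\lambda^2(E_2-E_1)^2/\sqrt{N})$. Splitting
\begin{align*}
\frn(\gamma_\alpha+t)-\tfrac{\alpha}{N}=[\frn(\gamma_\alpha)-n_{fc}(\gamma_\alpha)]+[\frn(\gamma_\alpha,\gamma_\alpha+t)-n_{fc}(\gamma_\alpha,\gamma_\alpha+t)]+n_{fc}(\gamma_\alpha,\gamma_\alpha+t),
\end{align*}
estimating the first bracket by Theorem~\ref{integrated density of states} at the single point $\gamma_\alpha$ (which does not depend on $t$), and estimating the second bracket by~\eqref{results density of states 2}, reduces the problem to $\sqrt{\kappa}\,t\gtrsim \epsilon(\gamma_\alpha)+N^{-1}+\lambda^2 t^2/\sqrt{N}$. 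Viewing this as a quadratic in $t$ and balancing the newly appearing $\lambda^2 t^2/\sqrt{N}$ term against $\sqrt{\kappa}\,t$ produces the additional bound $\lambda^2/(N\kappa)\sim\lambda^2 N^{-1/3}\widehat\alpha^{-2/3}$ once $\kappa\sim(\widehat\alpha/N)^{2/3}$ is substituted.

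The main obstacle is the careful matching of regimes: the stated bound is the minimum of four candidate contributions (the two linearisation terms, the edge-indicator contribution, and the bulk refinement), and one must verify that in each subregime of $(\widehat\alpha,\lambda)$ the correct candidate dominates while the others are absorbed. A secondary subtlety is that the lower-bound $\mu_\alpha\ge\gamma_\alpha-t$ must be handled symmetrically, which is immediate from the uniformity of Theorem~\ref{integrated density of states} in $E\in[-E_0,E_0]$ and the symmetric square-root behaviour of $\rho_{fc}$ at both endpoints of $[L_1,L_2]$.
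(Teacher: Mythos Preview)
Your overall strategy---invert Theorem~\ref{integrated density of states} through the square-root behaviour of $\rho_{fc}$---is the same as the paper's. The paper, however, evaluates the integrated-density error at the random point $E=\mu_\alpha$ (using $\frn(\mu_\alpha)=\alpha/N=n_{fc}(\gamma_\alpha)$) rather than at the deterministic points $E=\gamma_\alpha\pm t$, together with a dyadic decomposition in $\max\{\kappa_{\gamma_\alpha},\kappa_{\mu_\alpha}\}$. This puts $\kappa_{\mu_\alpha}$ into the error; the paper bounds it by $\kappa_{\gamma_\alpha}+|\mu_\alpha-\gamma_\alpha|$, and the resulting self-referential piece $\lambda\sqrt{|\mu_\alpha-\gamma_\alpha|}/\sqrt{N}$ is absorbed via Young's inequality. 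That absorption is exactly where the term $\lambda^{2}N^{-1/3}\widehat\alpha^{-2/3}$ in the statement comes from; it is an artifact of this particular step. Your linearisation and non-linearisation estimates already yield
\[
|\mu_\alpha-\gamma_\alpha|\le (\varphi_N)^{c\xi}\Big(N^{-2/3}\widehat\alpha^{-1/3}+N^{-2/3}\lone\big(\widehat\alpha\le(\varphi_N)^{c\xi}(1+\lambda^{3/2}N^{1/4})\big)+\lambda N^{-1/2}\Big),
\]
which is never larger than the right side of~\eqref{rigidity of eigenvalues equation}; so you do not need a separate argument for the $\lambda^{2}$ term. Your proposed route to it via~\eqref{results density of states 2} would not work in any case: that estimate carries a constant $C_\varkappa$ which diverges as $\varkappa\to 0$, whereas the only regime where $\lambda^{2}N^{-1/3}\widehat\alpha^{-2/3}$ is not already dominated by $\lambda N^{-1/2}$ is $\widehat\alpha\lesssim\lambda^{3/2}N^{1/4}$, for which $\kappa_{\gamma_\alpha}\lesssim\lambda N^{-1/2}\to 0$.

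There is one genuine gap. Your claim that the lower bound $\mu_\alpha\ge\gamma_\alpha-t$ is ``immediate from symmetry'' fails at the edge: for small $\alpha$ and $t>\kappa:=\kappa_{\gamma_\alpha}$ the point $\gamma_\alpha-t$ lies outside $[L_1,L_2]$, so $\int_{\gamma_\alpha-t}^{\gamma_\alpha}\rho_{fc}=\alpha/N$ no longer grows with $t$, while $\epsilon(\gamma_\alpha-t)$ still contains the contribution $\lambda\sqrt{t-\kappa}/\sqrt{N}$. When $\widehat\alpha\lesssim(\varphi_N)^{c\xi}(1+\lambda^{3/2}N^{1/4})$ one has $\alpha/N\lesssim\epsilon_0$, and no choice of $t$ makes the required inequality hold; Theorem~\ref{integrated density of states} alone cannot rule out $\mu_\alpha$ being far to the left of $L_1$. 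The paper closes this with a separate operator-norm estimate (its Lemma~\ref{estimate norm H}),
\[
\|H\|\le\max\{|L_1|,L_2\}+(\varphi_N)^{c_0\xi}\big(\lambda N^{-1/2}+N^{-2/3}\big),
\]
which directly gives $\mu_\alpha\ge L_1-(\varphi_N)^{c_0\xi}(\lambda N^{-1/2}+N^{-2/3})$ and hence the edge-regime lower bound. You need to invoke this ingredient (or reprove it) to complete your argument.
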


\begin{remark}
Let us compare this rigidity result with the corresponding rigidity result for Wigner matrices $(\lambda=0)$: In the bulk of the spectrum, where $\alpha\sim N$, we obtain from~\eqref{rigidity of eigenvalues equation},
\begin{align}
  |\mu_{\alpha}-\gamma_{\alpha}|\le (\varphi_N)^{c\xi}\left(\frac{1+C\lambda}{N}+\frac{\lambda}{\sqrt{N}}\right)\,,
\end{align}
with $(\xi,\nu)$-high probability. Thus, for $\lambda\not=0$, the leading corrections in the rigidity estimate arise from fluctuations in the diagonal matrix $V$, and the eigenvalues do not satisfy as strong a rigidity estimate for their locations as in the Wigner case; see, e.g.,~\cite{ESY4,ESYY,ESY1,ESY2}. However, the eigenvalues satisfy a strong rigidity estimate on intermediate scales for their relative position or their spacing; see Theorem~\ref{rigidity of eigenvalue spacing} above. For $\lambda=0$, the rigidity of eigenvalue spacing is an immediate consequence of the rigidity of eigenvalue location.
\end{remark}

\begin{remark}
For $\lambda=0$, the model is known to exhibit bulk universality, (see, e.g.,~\cite{ESY4, ESYY, EYY1, EYY2, EY}), and it is easy to see that bulk universality holds true for the choice $\lambda=CN^{-\delta}$, with $\delta\ge1/2$. In case $W$ is a GUE matrix, bulk universality has been proved to hold for $\lambda$ order one; see~\cite{S1}. 

A corner stone in the proof of bulk universality for Wigner matrices in, e.g.,~\cite{EYY}, is the rigidity estimate
\begin{align}
 \frac{1}{N}\sum_{\alpha=1}^N\E|\mu_\alpha-\gamma_\alpha|^2\le C N^{-1-2\fra}\,,\qquad (\lambda=0)\,,
\end{align}
for some $\fra>0$, where $(\mu_\alpha)$ are the eigenvalues of $W$ and $(\gamma_\alpha)$ are the classical locations of the eigenvalues (with respect to the standard semicircle law). For $\lambda\not=0$, one can show that~\eqref{rigidity of eigenvalues equation} implies
\begin{align}\label{rigidity estimate for lambda order one}
 \frac{1}{N}\sum_{\alpha=1}^N\E|\mu_\alpha-\gamma_\alpha|^2\le C(\varphi_N)^{c\xi}\lambda^2N^{-1}+C N^{-1-2\fra}\,,
\end{align}
for some constants $C$, $c$ and $\fra>0$, where now $(\gamma_\alpha)$ denote the classical locations with respect to the deformed semicircle law. For $\lambda=CN^{-\delta}$, with $\delta>0$, it is thus conceivable that one can prove bulk universality following the lines of, e.g.,~\cite{EYY}, using local ergodicity of  Dyson Brownian motion and Green function comparison. Note that for $\lambda\ll 1$, the limiting eigenvalue distribution of $H$ is the semicircle law. For $\lambda$ order one, the proof of~\cite{EYY} seems not to be applicable without 
greater modifications. 

\end{remark}

\begin{remark}

When $V$ is a deterministic instead of a random diagonal matrix, one can still prove the results in this paper, provided that the limiting density of the eigenvalues of $V$ satisfies the required assumptions. Moreover, when $V$ is symmetric and $W$ is GUE or GOE, it is possible to prove the same results after diagonalizing $\lambda V + W$, due to the invariance of GUE (GOE) under unitary (orthogonal) conjugation; some knowledge on the convergence of empirical eigenvalue distribution is required, e.g., the analogue statement to~\eqref{mini lln} below. The extension to the more general case where $V$ is non-diagonal and $W$ is a general Wigner matrix requires a further investigation; the usual Lindeberg replacement strategy using moment matching conditions (e.g., \cite{TV1}) may not be sufficient, because the statements in this paper are stronger than those one gets from the moment matching conditions in the sense that the former hold with high probability 
while the latter hold after taking expectation.

\end{remark}

\subsection{Outline of proofs}
In this subsection, we briefly outline the proofs of the main results.

In Section~\ref{Weak Deformed Semicircle Law}, we derive a {\it weak local deformed semicircle law},~Theorem~\ref{thm.weak}. Following the lines of~\cite{EKYY1} we derive a {\it (weak) self-consistent equation} for $m-m_{fc}$.  The main differences with the Wigner case are: (1) The limiting eigenvalue distribution follows the deformed semicircle law instead of the semicircle law. For Wigner matrices, the stability of this self-consistent equation is obtained by elementary calculus using the exact form of $m_{sc}$. For the deformed model, the stability of the self-consistent Equation~\eqref{eq.172} follows from the ($\mu$- and $\lambda$-dependent) stability estimate~\eqref{stability bound}.  (2) When taking the normalized 
trace of the Green function, we average over the random variables $(v_i)$; see~\eqref{eq.173}. Eventually, we replace this average by its expected value. This replacement results in an error term that is, according to the CLT, of order $\lambda N^{-1/2}$ (up to logarithmic corrections); see~\eqref{mini lln}. These fluctuations should be compared with the other error terms. Among those error terms the dominating one, the~$Z_i$ defined in~\eqref{eq.146}, is of order $(N\eta)^{-1/2}$. Since $z\in\caD_L$, we can combine the error terms and our estimates in Theorem~\ref{thm.weak} are the same as the corresponding estimates in~\cite{EKYY1}. However, due to the order one diagonal entries~$(v_i)$, $G_{ii}$ is not self-averaging. In Section~\ref{Weak Deformed Semicircle Law}, we also prove Theorem~\ref{thm: eigenvector delocalization}: Delocalization of eigenvectors can be obtained as in~\cite{EYY1} as a corollary of the local deformed semicircle law, Theorem~\ref{thm.strong}.

In Section~\ref{strong deformed semicircle Law}, we prove the fluctuation average bound $|\frac{1}{N}\sum_i{Z_i}|\lesssim(N\eta)^{-1}$; see Lemma \ref{lemma.21}. The proof of this lemma is inspired by~\cite{EKY}, where more general fluctuation averages are considered for Wigner and random band matrices. Our treatment is in so far different as fluctuations of (diagonal elements of) Green functions are not self-averaging. Having established an optimal error bound on the average of $(Z_i)$, we have to keep track of the ($\eta$-independent) $\lambda N^{-1/2}$ fluctuation from the CLT alluded to above ($(N\eta)^{-1}$ cannot be compared with $\lambda N^{-1/2}$ on $\caD_L$). As in~\cite{EYY1}, we obtain a {\it (strong) self-consistent equation}, Equation~\eqref{self}, whose stability analysis yields a proof of the strong local deformed semicircle law.

In Section~\ref{Identifying the leading corrections}, we prove Theorem~\ref{thm xi0}. In~\eqref{definition of zeta0}, we define a random variable~$\zeta_0$, depending only on $(v_i)$, such that $|m(z)-m_{fc}(z)-\zeta_0(z)|$ is minimized. This can be achieved by defining $\zeta_0$ as the solution to the strong self-consistent equation with all error terms but the ones depending only on $(v_i)$ discarded. Defining $\zeta_0$ in this way yields an optimal bound on $|m(z)-m_{fc}-\zeta_0(z)|$ away from the spectral edges, or more precisely, for energies~$E$ satisfying $\kappa_E\gtrsim N^{-1/4}$.

In Section~\ref{local density of states}, we establish, using the Helffer-Sj\"{o}strand formula in the argument in~\cite{ERSY}, estimates on the density of states and the rigidity of eigenvalues. Using the results obtained in Section~\ref{Identifying the leading corrections} on $\zeta_0$, we obtain in Section~\ref{section bulk fluctuation} estimates on the rigidity of the eigenvalue spacing on intermediate scales in the bulk of the spectrum. More precisely, in the bulk of the spectrum we approximate $\zeta_0(z)$ by $\widetilde{\zeta}_0(z)$; see Remark~\ref{remark on widetildexi0} above for a definition. In Lemma~\ref{derivative zeta0}, we show that the $z$-dependent random variables $\widetilde{\zeta}_0(E+\ii\eta)$ is, for fixed $\eta$, a slowly varying function of $E$ in the bulk of the spectrum. This in turn can be used to get more precise estimates on the density of states in the bulk of the spectrum; see~\eqref{results density of states 2}. In Section~\ref{Eigenvalue spacing in the bulk}, we prove 
Theorem \ref{rigidity of eigenvalue spacing}, based on results on the local density of states. Using once more the random variables $\widetilde{\zeta}_0$, one can obtain stronger estimates on the eigenvalue spacing in the bulk of the spectrum; see~\eqref{stronger estimate on eigenvalue spacing}. Following~\cite{EKYY1}, we prove in Section~\ref{section: Integrated density of states and rigidity of eigenvalues} Theorems~\ref{integrated density of states} and~\ref{rigidity of eigenvalues}.

In the Appendix, we discuss properties of the deformed semicircular law, $\mu_{fc}$, in particular, we prove Lemma~\ref{lemma assumptions 1} ($\lambda\le 1$), Lemma~\ref{lemma assumptions 2a} ($\lambda>1$). In a slightly different setting Lemma~\ref{lemma assumptions 1} has been proven in~\cite{S1}, but we recall parts the proof, since it is used in the proof of Lemma~\ref{lemma assumptions 2a}. The first part of~\ref{lemma assumptions 2a}, can be proved in a similar way, but we have to impose some stronger assumptions on $\mu$; c.f.,~\eqref{jacobi measure}. The second part of Lemma~\ref{lemma assumptions 2a} shows that the deformed semicircle law may not have a square root behaviour at the edge. Our proof is based on elementary estimates, but we expect that the statement can be proven using methods of complex analysis.

\section{Weak Deformed Semicircle Law} \label{Weak Deformed Semicircle Law}

In this section, we prove a weaker form of the deformed semicircle law. This {\it weak deformed semicircle law} will be used to prove the {\it strong deformed law} in Theorem~\ref{thm.strong}. Moreover, complete delocalization of eigenvectors is a direct consequence of the weak law stated in the next theorem.

\begin{theorem}\emph{ [Weak deformed semicircle law]}\label{thm.weak}
 Let $H=\lambda V+W$ satisfy the assumptions in Definition~\ref{assumption wigner} and Assumption~\ref{assumption v} or~\ref{assumption v'}. Then there are constants $C$, $\nu>0$, depending on the constants ~$\theta$~and~$C_0$ in~\eqref{eq.C0},~$\lambda_0$ in~\eqref{domain of lambda},~$E_0$ in~\eqref{eq.DL}, $A_0$ in~\eqref{eq: strong xi}, and the measure~$\mu$, such that, for
\begin{align*}
 a_0\le \xi\le A_0 \log\log N\,,\quad\quad L\ge 12\xi\,,
\end{align*}
the event
\begin{align}\label{eq.weak1}
\bigcap_{\substack{z\in \caD_L\\ \lambda\in\caD_{\lambda_0}}}\left\{\max_{i\not=j}|G_{ij}(z)|\le C \frac{{(\varphi_N)}^{\xi}}{\sqrt{N\eta}}\right\}\,,
\end{align}
has $(\xi,\nu)$-high probability.

Denote by $\E^{v_i}$, the expectation with respect to the random variable $v_i$, $i\in\{1,\ldots,N\}$. Then the event
 \begin{align}\label{eq.weak2}
\bigcap_{\substack{z\in \caD_L\\ \lambda\in\caD_{\lambda_0}}}\left\{\max_{1\le i\le N}|\E^{v_i}G_{ii}(z)-m(z)|\le C(\varphi_N)^{\xi}\left(\frac{\lambda}{\sqrt{N}}+\frac{1}{(N\eta)^{1/3}}\right)\right\}\,,
\end{align}
has $(\xi,\nu)$-high probability.

Moreover, we have the weak local deformed semicircle law: The event
\begin{align}\label{eq.weak3}
\bigcap_{\substack{z\in \caD_L\\ \lambda\in\caD_{\lambda_0}}}\left\{|m(z)-m_{fc}(z)|\le C\frac{(\varphi_N)^{\xi}}{(N\eta)^{1/3}}\right\}\,,
\end{align}
has $(\xi,\nu)$-high probability.
\end{theorem}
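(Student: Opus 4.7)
The plan is to adapt the Schur-complement / self-consistent equation strategy of \cite{EKYY1} to this deformed setting, establishing the three estimates \eqref{eq.weak1}--\eqref{eq.weak3} simultaneously by a bootstrap in $\eta$. The key structural novelty, compared with the pure Wigner case, is that the on-site entries $\lambda v_i$ are of order one, so the diagonal Green function entries $G_{ii}$ are \emph{not} self-averaging: they retain $v_i$-dependence at order $\lambda$ in the $N\to\infty$ limit. This is why \eqref{eq.weak2} is stated for $\E^{v_i}G_{ii}$ rather than for $G_{ii}$ itself, and why an extra $\lambda/\sqrt N$ fluctuation appears that has no counterpart in the Wigner setting.

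Fix $i$ and apply Schur's complement to obtain
\begin{align*}
G_{ii}(z) \;=\; \frac{1}{\lambda v_i + w_{ii} - z - \tfrac{1}{N}\Tr G^{(i)}(z) - Z_i},
\end{align*}
where $G^{(i)}$ is the resolvent of the $(N{-}1)\times(N{-}1)$ minor obtained by deleting the $i$-th row and column of $H$, and $Z_i$ is the fluctuation part of the quadratic form in the $i$-th row of $W$, as in \eqref{eq.146}. The three deviations are controlled by: (a) $|w_{ii}|\lesssim (\varphi_N)^{\xi}/\sqrt N$ from subexponential decay; (b) $|\tfrac{1}{N}\Tr G^{(i)} - m|\le (N\eta)^{-1}$ by eigenvalue interlacing; and (c) $|Z_i|\lesssim(\varphi_N)^{\xi}\sqrt{\im m/(N\eta)}$ with $(\xi,\nu)$-high probability, by applying the large deviation Lemma~\ref{lemma.LDE} to the independent entries of the $i$-th row of $W$ together with the Ward identity $\sum_k |G^{(i)}_{jk}|^2 = \eta^{-1}\im G^{(i)}_{jj}$. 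Inverting (the denominator stays away from zero thanks to $\im(z+m)\ge \eta$) yields
\begin{align*}
G_{ii} \;=\; \frac{1}{\lambda v_i - z - m(z)} + \Upsilon_i,\qquad |\Upsilon_i|\lesssim (\varphi_N)^{\xi}\Big(\sqrt{\tfrac{\im m}{N\eta}} + \tfrac{1}{N\eta}\Big).
\end{align*}
A parallel Schur expansion for $G_{ij}$, $i\neq j$, expresses the off-diagonal entry as $-G_{ii}G^{(i)}_{jj}$ times $w_{ij}$ plus a bilinear form in the $i$-th and $j$-th rows of the minor $W^{(ij)}$; the LDE on this bilinear form, together with uniform upper bounds on $|G_{ii}|$ and $|G^{(i)}_{jj}|$ inherited from the boundedness of $m_{fc}$ on $\caD_L$ established through Lemmas~\ref{lemma assumptions 1} and~\ref{squareroot lemma 2}, delivers \eqref{eq.weak1}.

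Averaging the diagonal representation over $i$ gives
\begin{align*}
m(z) \;=\; \frac{1}{N}\sum_{i=1}^{N}\frac{1}{\lambda v_i - z - m(z)} + \mathcal{E}_1(z),\qquad |\mathcal{E}_1|\lesssim (\varphi_N)^{\xi}\Big(\sqrt{\tfrac{\im m}{N\eta}} + \tfrac{1}{N\eta}\Big).
\end{align*}
A Hoeffding/Azuma-type concentration inequality applied to the i.i.d.\ sequence $(v_i)$ replaces the empirical mean by its $\mu$-expectation at the additional cost $(\varphi_N)^{\xi}\lambda/\sqrt N$, producing a perturbed Pastur relation. Subtracting \eqref{eq111} for $m_{fc}$ and invoking the $\mu$- and $\lambda$-dependent stability bound for that equation (which rests on the square-root behaviour from Lemmas~\ref{lemma assumptions 1} and~\ref{squareroot lemma 2} and is proved in the Appendix) converts the total error into $|m-m_{fc}|\lesssim (\varphi_N)^{\xi}(N\eta)^{-1/3}$; the exponent $1/3$ is the standard loss caused by the square-root vanishing of $\rho_{fc}$ at the edges. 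On $\caD_L$ with bounded $\lambda$ one has $(N\eta)^{-1/3}\ge N^{-1/3}\ge \lambda N^{-1/2}$, so the $\lambda/\sqrt{N}$ Hoeffding term is absorbed, giving \eqref{eq.weak3}. For \eqref{eq.weak2}, applying $\E^{v_i}$ inside the Schur representation integrates out $v_i$ explicitly --- since $G^{(i)}$, $Z_i$, and $w_{ii}$ are independent of $v_i$ --- to produce $\int \mathrm{d}\mu(v)/(\lambda v - z - m(z))$, and using \eqref{eq.weak3} together with the stability of \eqref{eq111} identifies this integral with $m_{fc}(z)$, hence with $m(z)$, up to an error of the claimed size $(\varphi_N)^{\xi}(\lambda/\sqrt N + (N\eta)^{-1/3})$; here the $\lambda/\sqrt N$ term cannot be absorbed because it enters through the comparison of $m$ with $m_{fc}$ rather than through the inner Schur expansion.

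The three estimates must be obtained simultaneously because the bound on $|Z_i|$ relies on an a priori control of $\im m$, which is part of the conclusion. This circularity is resolved by the usual continuity/bootstrap argument in $\eta$: at $\eta=3$ every quantity is deterministically $O(1)$, and the bounds are propagated down to $\eta\gtrsim(\varphi_N)^L/N$ in small Lipschitz steps in $z$, using at each step the fact that the estimates above \emph{strictly improve} the previous a priori hypothesis. The main obstacle I anticipate is the stability analysis of \eqref{eq111} uniformly up to the spectral edge: the square-root singularity of $\rho_{fc}$ makes the linearized equation only marginally stable, and it is exactly this degeneracy that forces the $(N\eta)^{-1/3}$ exponent and constrains the admissible class of measures $\mu$ and parameters $\lambda$ in Assumptions~\ref{assumption v}--\ref{assumption v'}; the delicate $\mu$-dependent stability estimate from the Appendix is the technical backbone the whole argument rests on.
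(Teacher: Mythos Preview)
Your outline is essentially the paper's own proof: Schur complement plus large deviation estimates to control $Z_i$ and $G_{ij}$, McDiarmid/Hoeffding concentration for the empirical $v$-average, the quadratic self-consistent equation for $m-m_{fc}$ whose stability is governed by $|1-R_2|\sim\sqrt{\kappa+\eta}$, and the continuity/bootstrap in $\eta$ descending from $\eta\sim 1$. One correction: the line ``the denominator stays away from zero thanks to $\im(z+m)\ge\eta$'' is not the right justification---$\eta$ can be as small as $(\varphi_N)^L/N$, so inverting with only that lower bound would blow up the error; what you actually need (and later correctly invoke) is the \emph{constant} lower bound $|\lambda v - z - m_{fc}(z)|\ge c$ from the stability estimate, combined with the a priori smallness of $m-m_{fc}$ on the bootstrap event $\Omega(z)$.
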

The rest of the section is devoted to the proof of Theorems~\ref{thm.weak} and ~\ref{thm: eigenvector delocalization}. The proof follows closely the proof for Wigner matrices, see,~\cite{EKYY1} and~\cite{EYY}. We will always assume that $W$ satisfies the assumptions in Definition~\ref{assumption wigner} and that $\lambda V$ satisfies Assumption~\ref{assumption v} or~\ref{assumption v'}.

\subsection{Preliminaries}
\subsubsection{Some properties of $\mu_{fc}$ and $m_{fc}$}
The next lemma collects some useful properties of $m_{fc}$ under Assumptions~\ref{assumption v} or~\ref{assumption v'}.
\begin{lemma}\label{lemma.lee1}\label{lemma.12}
There exist $L_1<L_2$ such that the free convolution measure $\mu_{fc}$ has support $[L_1,L_2]$.  For all $z=E+\ii\eta\in \caD_L$, $\lambda\in\caD_{\lambda_0}$, the Stieltjes transform, $m_{fc}$, of $\mu_{fc}$ has the following properties:
\begin{itemize}
\item[i.] Let $\kappa\deq\min\{|E-L_1|, |E-L_2|\}$, then
\begin{align}\label{behaviour of mfc}
 \im m_{fc}(z)\sim\begin{cases} \sqrt{\kappa+\eta}\,,\quad& E\in[L_1,L_2]^{\phantom{c}}\,,\\
                   \frac{\eta}{\sqrt{\kappa+\eta}}\,,\quad &E\in[L_1,L_2]^c\,.
                  \end{cases}
\end{align}
\item[ii.] There exist constants $C,c>0$, depending on $\mu$, $E_0$ and $\lambda_0$, such that
\begin{align}\label{stability bound}
 c\le |\lambda -z-m_{fc}(z)|\le C\,.
\end{align}
\end{itemize}
\end{lemma}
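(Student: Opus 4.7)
My plan is to treat the two parts separately, leveraging the square-root behaviour of $\mu_{fc}$ established in Lemma~\ref{lemma assumptions 1} (for small $\lambda$) and Lemma~\ref{squareroot lemma 2} (for large $\lambda$) as the main inputs from the appendix.

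For part $(i)$, I would start from the Stieltjes inversion formula
\begin{equation*}
\im m_{fc}(E+\ii\eta)=\int_{L_1}^{L_2}\frac{\eta\,\rho_{fc}(x)}{(x-E)^2+\eta^2}\,\dd x,
\end{equation*}
and substitute the pointwise estimate $\rho_{fc}(x)\sim\sqrt{\kappa_x}$. I would split the integration region into the dyadic pieces $\{|x-E|\le\max(\kappa,\eta)\}$ and $\{2^{k-1}\max(\kappa,\eta)<|x-E|\le 2^k\max(\kappa,\eta)\}$ and bound each by elementary calculus. For $E\in[L_1,L_2]$ with $\eta\le\kappa$, the main contribution comes from $|x-E|\le\kappa$ and gives $\sim\rho_{fc}(E)\sim\sqrt{\kappa}$; for $\eta\ge\kappa$, the kernel smooths the edge and the integral becomes $\sim\sqrt{\eta}$. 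The two cases combine to $\sim\sqrt{\kappa+\eta}$. For $E\in[L_1,L_2]^c$ with $\eta\le\kappa$, the factor $\rho_{fc}(x)\sim\sqrt{|x-L_\pm|}$ near the nearest edge $L_\pm$ together with $|x-E|\ge\kappa$ yields $\eta/\sqrt{\kappa}$; for $\eta\ge\kappa$ one again obtains $\sqrt{\eta}\sim\eta/\sqrt{\kappa+\eta}$.

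For part $(ii)$, the upper bound is immediate from $|\lambda v|\le\lambda_0$, $|z|\le E_0+1$ and the uniform bound $|m_{fc}(z)|\le C$ on $\caD_L$, which follows because $\rho_{fc}$ is bounded with compact support and H\"older continuous (the square-root behaviour controls the only possible singularities), so that $m_{fc}$ extends continuously to $\overline{\C^+}$ on any compact set. The delicate part is the lower bound. Differentiating~\eqref{eq111} in $z$ gives the identity
\begin{equation*}
m'_{fc}(z)\bigl[1-F(z)\bigr]=F(z),\qquad F(z)\deq\int\frac{\dd\mu(v)}{(\lambda v-z-m_{fc}(z))^2}.
\end{equation*}
Since $m'_{fc}(z)$ diverges at $z=L_\pm$ by the square-root behaviour of $\rho_{fc}$, we must have $F(L_\pm)=1$, which requires the denominator $\lambda v-L_\pm-m_{fc}(L_\pm)$ to be bounded away from zero uniformly in $v\in[-1,1]=\supp\mu$. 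This is the regularity of the edge, implicit in Lemma~\ref{lemma assumptions 1}/\ref{squareroot lemma 2}. By continuity of $m_{fc}$ up to the real line and compactness, this lower bound extends to a neighbourhood of each edge. In the bulk ($\kappa\ge\delta$ for a fixed $\delta>0$) we instead use $|\lambda v-z-m_{fc}(z)|\ge\eta+\im m_{fc}(z)\sim\sqrt{\kappa+\eta}\gtrsim\sqrt{\delta}$ from part~$(i)$. For $\kappa\ge\delta$ with $E\in[L_1,L_2]^c$ the same bound applies because $\eta+\im m_{fc}\ge\eta$ can be small, so here we combine it with the fact that $\lambda v$ lies in $[-\lambda_0,\lambda_0]$ while $\re(z+m_{fc}(z))$ stays uniformly away from this interval outside a neighbourhood of the spectrum (again by continuity of $m_{fc}$ across $L_\pm$).

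The main obstacle is the edge regularity invoked in the lower bound of $(ii)$: proving that $F(L_\pm)=1$ is achieved by a finite integral, i.e. that $\lambda v-L_\pm-m_{fc}(L_\pm)$ does not vanish on $\supp\mu$. Under Assumption~\ref{assumption v} (or the admissible subcase of Assumption~\ref{assumption v'}), this is precisely the content of the square-root analysis in the appendix, so I would formally reduce this step to the statement of Lemma~\ref{squareroot lemma 2}, which rules out the degenerate edge scenario $(2b)$. Once these pieces are assembled, patching the estimates across the three regimes (deep bulk, edge, and exterior) via continuity of $m_{fc}$ on $\overline{\C^+}$ yields the uniform bounds~\eqref{stability bound}.
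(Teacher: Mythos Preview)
Your treatment of part~(i) is correct and matches the paper's Lemma~\ref{lem:m_fc_bound} almost exactly: both start from $\im m_{fc}(z)=\int\eta\,\rho_{fc}(x)/((x-E)^2+\eta^2)\,\dd x$, insert the square-root density from Lemmas~\ref{lemma assumptions 1}/\ref{squareroot lemma 2}, and decompose according to the relative sizes of $\kappa$ and $\eta$.

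For part~(ii), your route differs from the paper's. The paper (Corollary~\ref{cor: stability bound}) works directly with the change of variable $\tau=z+m_{fc}(z)$ and the function $H(\tau)=\int\dd\mu(v)/(\lambda v-\tau)^2$: the edges are characterized by $H(\tau_i)=1$ with $|\tau_i|\ge\lambda$, and one shows $H(\pm\lambda)>1$ (by Jensen for $\lambda\le1$, by a short explicit estimate for Jacobi $\mu$ with $-1<\alpha,\beta\le1$). Monotonicity of $H$ on $|\tau|\ge\lambda$ then forces $|\tau_i|>\lambda$ strictly, giving $|\lambda v-\tau_i|\ge|\tau_i|-\lambda>0$ at once. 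The bulk uses $\im\tau>c$ and the far exterior is handled by monotonicity of $\tau(E)$ on $\R\setminus[L_1,L_2]$.

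Your edge argument via the ODE $m'_{fc}(1-F)=F$ has a gap as written. From $|m'_{fc}(z)|\to\infty$ you do get $F(z)=m'_{fc}/(1+m'_{fc})\to1$, but the assertion ``$F(L_\pm)=1$ requires the denominator to be bounded away from zero'' is not true in general: the integral $\int\dd\mu(v)/(\lambda v-\tau_\pm)^2$ can be finite with $\tau_\pm=\pm\lambda$ precisely when $\mu$ decays fast enough at $\pm1$ --- this is exactly the excluded case~(2b) of Lemma~\ref{squareroot lemma 2}. To close your argument you must invoke the assumptions explicitly: under Assumption~\ref{assumption v} or~\ref{assumption v'} one has $\mu(v)\gtrsim|1\mp v|^{\gamma}$ with $\gamma\le1$ near $\pm1$ and $\mu>0$ in the interior, so vanishing of the denominator anywhere on $[-1,1]$ would make $F$ diverge, contradicting $F\to1$. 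With that step spelled out your argument becomes valid, but the paper's direct verification of $H(\pm\lambda)>1$ is shorter and sidesteps the limiting issues.
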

We refer to~\eqref{stability bound} as `stability bound' and remark that a similar condition has already been used in~\cite{S2}. The proof of Lemma~\ref{lemma.lee1} is given in the Appendix.

\subsubsection{Minors}
\begin{definition}
Let $\T\subset\{1,\ldots,N\}$. Then we define $H^{(\T)}$ as the $(N-|\T|)\times(N-|\T|)$ minor of $H$ obtained by removing all columns and rows of $H$ indexed by $i\in\T$.  Note that we do not change the names of the indices of $H$ when defining $H^{(\T)}$.
More specifically, we define an operation $\pi_i$, $i\in\{1,\ldots,N\},$ on the probability space by
\begin{align*}
 (\pi_i(H))_{kl}\deq\lone(k\not=i)\lone(l\not=i)h_{kl}\,.
\end{align*}
Then, for $\T\subset\{1,\ldots,N\}$, we set $\pi_{\T}\deq\prod_{i\in\T}\pi_i$ and define
\begin{align*}
 H^{(\T)}\deq((\pi_{\T}(H)_{ij})_{i,j\not\in\T}\,.
\end{align*}
The Green functions $G^{(\T)}$, are defined in an obvious way using $H^{(\T)}$. Moreover, we use the shorthand notation
\begin{align*}
 \sum_{i}^{(\T)}\deq\sum_{\substack{i=1\\i\not\in\T}}^N\,\,,
\end{align*}
and abbreviate $(i)=(\{i\})$ and, similarly, $(\T i)=(\T\cup\{i\})$.
Finally, we set
\begin{align*}
 m^{(\T)}\deq\frac{1}{N}\sum_{i}^{(\T)}G_{ii}^{(\T)}\,.
\end{align*}
Here, we use the normalization $N^{-1}$, instead $(N-|\T|)^{-1}$, since it is more convenient for our computations.
\end{definition}

\subsubsection{Resolvent identities}
The next lemma collects the main identities between resolvent matrix elements of $H$ and $H^{(\T)}$.

\begin{lemma}
Let $H$ be an $N\times N$ matrix. Consider the Green function $G(z)\equiv G\deq(H-z)^{-1}$, $ z\in\C^+$. Then, for $i,j,k\in\{1,\ldots,N\}$, the following identities hold:
\begin{itemize}
 \item[-] {\it Schur complement/Feshbach formula:}\label{feshbach} For any $i$,
\begin{align}\label{schur} 
 G_{ii}=\frac{1}{h_{ii}-z-\sum_{m,n}^{(i)}{h_{im} G_{mn}^{(i)}}h_{ni}}\,.
\end{align}
\item[-] For $i\not=j$,
\begin{align}\label{res.1}
 G_{ij}=-G_{ii}G_{jj}^{(i)}(h_{ij}-\sum_{m,n}^{(ij)}h_{im}G_{mn}^{(ij)}h_{nj})\,.
\end{align}
\item[-] For $i,j\not=k$,
\begin{align}\label{res.2}
 G_{ij}=G_{ij}^{(k)}+\frac{G_{ik}G_{kj}}{G_{kk}}\,.
\end{align}

\item[-]{\it Ward identity:} For any $i$,
\begin{align}\label{ward}
 \sum_{n=1}^N|G_{in}|^2=\frac{1}{\eta}\im G_{ii}\,,
\end{align}
where $\eta=\im z$.
\end{itemize}
\end{lemma}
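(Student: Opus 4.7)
The plan is to derive each identity by the standard block-matrix / minor-expansion technique; these are purely algebraic identities about inverses of Hermitian matrices, so no probabilistic input is needed.

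First I would handle the Schur/Feshbach formula \eqref{schur}. Fixing $i$, decompose $H-z$ into a $2\times 2$ block structure according to the partition $\{i\}\sqcup\{1,\dots,N\}\setminus\{i\}$: the scalar block is $h_{ii}-z$, the off-diagonal blocks are the vectors $(h_{im})_{m\ne i}$ and its adjoint, and the remaining block is $H^{(i)}-z$. The block-inversion formula then identifies the $(i,i)$ entry of $G$ with the Schur complement, yielding exactly \eqref{schur}. (The sum appearing there is the quadratic form $\langle h_i^{(i)}, G^{(i)} h_i^{(i)}\rangle$ with $h_i^{(i)}=(h_{im})_{m\ne i}$, written out in components.)

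For \eqref{res.1}, I would argue as follows. From the Schur formula applied to $H$ one obtains an expression for $G_{ii}$; similarly applying Schur to $H^{(i)}$ (i.e., removing row/column $i$ first) gives the analogous expression for $G_{jj}^{(i)}$. To relate $G_{ij}$ to these, I use the block inversion formula in the partition $\{i,j\}\sqcup\{1,\dots,N\}\setminus\{i,j\}$: the $2\times 2$ upper-left block of the inverse of $H-z$ equals the inverse of a $2\times 2$ Schur complement whose entries are $h_{ab}-z\delta_{ab}-\sum^{(ij)}_{m,n}h_{am}G^{(ij)}_{mn}h_{nb}$ for $a,b\in\{i,j\}$. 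Inverting this explicit $2\times 2$ matrix and using the Schur representations of $G_{ii}$ and $G_{jj}^{(i)}$ to simplify the diagonal entries of that $2\times 2$ matrix, the off-diagonal entry yields precisely $G_{ij}=-G_{ii}G_{jj}^{(i)}\bigl(h_{ij}-\sum^{(ij)}_{m,n}h_{im}G^{(ij)}_{mn}h_{nj}\bigr)$.

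For the minor-expansion identity \eqref{res.2}, I would apply the Schur formula to $G^{(k)}_{ij}$ relative to the same partition and compare with the same formula for $G_{ij}$, or equivalently apply the resolvent formula $G=G^{(k)}-G^{(k)}(H-H^{(k)})G$ once and observe that $H-H^{(k)}$ is supported on the row and column of index $k$, which reduces the identity to the two-term expression $G_{ij}=G^{(k)}_{ij}+G_{ik}G_{kj}/G_{kk}$ after eliminating the row-$k$ matrix elements via the Schur formula at $k$. Finally, for the Ward identity \eqref{ward}, I use the first resolvent identity $G-G^*=(z-\bar z)GG^*=2\ii\eta\,GG^*$; taking the $(i,i)$ entry on both sides, the right-hand side is $2\ii\eta\sum_n|G_{in}|^2$ and the left-hand side is $2\ii\,\im G_{ii}$, which is exactly \eqref{ward}. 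None of these steps presents a genuine obstacle; the only thing that requires a little bookkeeping is the $2\times 2$ block inversion in the derivation of \eqref{res.1}, which is routine.
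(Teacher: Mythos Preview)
Your proposal is correct and follows the standard block-inversion/Schur-complement derivation of these resolvent identities; this is exactly the approach taken in the reference the paper cites (the paper itself does not give a proof, writing only ``For a proof we refer to, e.g., \cite{EKYY1}''). One small remark: your argument for the Ward identity uses $G^*=(H-\bar z)^{-1}$, which requires $H=H^*$; this is of course satisfied in the paper's setting, but the lemma as stated says only ``Let $H$ be an $N\times N$ matrix'', so it is worth making that hypothesis explicit when you write it up.
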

For a proof we refer to, e.g.,~\cite{EKYY1}. 

\subsubsection{Large deviation estimates}
We collect here some useful large deviation estimates for random variables with slowly decaying moments.
\begin{lemma}\label{lemma.LDE}
 Let $(a_i)$ and $(b_i)$ be centered and independent complex random variables with variance $\sigma^2$ and having subexponential decay
\begin{align*}
 \mathbb{P}\left(|a_i|\ge x\sigma\right)\le C_0\,\e{-x^{1/\theta}}\,,\qquad\mathbb{P}\left(|b_i|\ge x\sigma\right)\le C_0\,\e{-x^{1/\theta}}\,,
\end{align*}
for some positive constant $C_0$ and $\theta>1$. Let $A_i\in\C$ and $B_{ij}\in\C$. Then there exist constants $a_0>1$, $A_0\ge 10$ and $C\ge 1$, depending on $\theta$ and $C_0$, such that for $a_0\le\xi\le A_0\log\log N$, and $\varphi_N=(\log N)^{C}$,
\begin{align}
\mathbb{P}\left( \left|\sum_{i=1}^N A_ia_i   \right|\ge (\varphi_N)^{\xi}\sigma\left(\sum_{i=1}^N|A_i|^2\right)^{1/2}\right)&\le \e{-(\log N)^{\xi}}\,,\label{LDE1} \\[1mm]
\mathbb{P}\left(\left|\sum_{i=1}^N\ol{a}_i B_{ii}a_i-\sum_{i=1}^N \sigma^2 B_{ii}\right|\ge (\varphi_N)^{\xi}\sigma^2\left(\sum_{i=1}^N|B_{ii}|^2\right)^{1/2}\right)&\le \e{-(\log N)^{\xi}}\,,\label{LDE2}\\[1mm]
\mathbb{P}\left(\left|\sum_{i\not=j}^N\ol{a}_i B_{ij}a_j\right|\ge(\varphi_N)^{\xi}\sigma^2\left(\sum_{i\not=j}|B_{ij}|^2  \right)^{1/2}\right)&\le\e{-(\log N)^{\xi}}\label{LDE3}\,,\\[1mm]
\mathbb{P}\left(\left|\sum_{i,j=1}^N\ol{a}_i B_{ij}b_j\right|\ge(\varphi_N)^{\xi}\sigma^2\left(\sum_{i\not=j}|B_{ij}|^2  \right)^{1/2}\right)&\le\e{-(\log N)^{\xi}}\label{LDE4}\,,\end{align}
for $N$ sufficiently large.
\end{lemma}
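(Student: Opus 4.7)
The plan is to deduce each of the four bounds by a high-moment Markov inequality, with $p\sim(\log N)^{\xi}$. The subexponential tail assumption yields the moment bound $\mathbb{E}|a_i|^{p}\le C(\theta p)^{\theta p}\sigma^{p}$ for all integers $p\ge 1$, and analogously for $b_i$. With this in hand, for any of the sums $S$ below one has $\mathbb{P}(|S|\ge t)\le t^{-2p}\mathbb{E}|S|^{2p}$, and the stated failure probability $e^{-(\log N)^{\xi}}$ follows once the right-hand side of the moment estimate has the form $(K p^{\theta})^{2p}\sigma^{2p}\mathcal{N}^{p}$, where $\mathcal{N}$ is the appropriate $\ell^{2}$-quantity on the coefficients. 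Indeed, setting $p=\lfloor(\log N)^{\xi}\rfloor$, the constraint $\xi\le A_{0}\log\log N$ guarantees $p^{\theta}\le(\log N)^{\theta A_{0}\log\log N}\le \varphi_{N}^{c\,\xi}$ for $C=C(\theta,A_{0})$ large enough, so the choice $t=\varphi_{N}^{\xi}\sigma\sqrt{\mathcal{N}}$ absorbs all prefactors.

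For \eqref{LDE1}, I expand $\mathbb{E}|\sum_{i}A_{i}a_{i}|^{2p}$; by independence and centering each index must appear at least twice, so grouping terms by partition type and bounding higher moments via $\mathbb{E}|a_{i}|^{k}\le(\theta k)^{\theta k}\sigma^{k}$ produces exactly $(Kp^{\theta})^{2p}\sigma^{2p}(\sum|A_{i}|^{2})^{p}$, where the combinatorial factor uses $(2p)!/(p!\,2^{p})\le(2p)^{p}$. The same combinatorial book-keeping, together with the elementary fact that $|a_{i}|^{2}-\sigma^{2}$ is a centered random variable with subexponential decay of exponent $2\theta$, gives \eqref{LDE2} by applying \eqref{LDE1} to $B_{ii}(|a_{i}|^{2}-\sigma^{2})$.

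For the bilinear form \eqref{LDE4}, I condition on $(b_{j})$: conditionally, $\sum_{i}\bar{a}_{i}\bigl(\sum_{j}B_{ij}b_{j}\bigr)$ is of the type treated in \eqref{LDE1} with coefficients $A_{i}=\sum_{j}B_{ij}b_{j}$. That step gives, on an event of probability $\ge 1-e^{-(\log N)^{\xi}}$ (conditionally on the $b$'s), a bound of the form $\varphi_{N}^{\xi}\sigma(\sum_{i}|\sum_{j}B_{ij}b_{j}|^{2})^{1/2}$. Expanding the latter as a quadratic form in $b$ and applying \eqref{LDE2}+\eqref{LDE3} to $\sum_{j,k}\bar{b}_{j}(\sum_{i}\overline{B_{ij}}B_{ik})b_{k}$ controls this by $\varphi_{N}^{c\xi}\sigma^{2}\|B\|_{\mathrm{HS}}$, and a union bound combines the two failure events. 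For the off-diagonal form \eqref{LDE3}, I use the standard decoupling inequality of de la Peña–Montgomery-Smith: there is a universal constant $K$ with $\mathbb{E}\bigl|\sum_{i\ne j}\bar{a}_{i}B_{ij}a_{j}\bigr|^{2p}\le K^{2p}\,\mathbb{E}\bigl|\sum_{i,j}\bar{a}_{i}B_{ij}a_{j}'\bigr|^{2p}$ for an independent copy $(a_{j}')$, which reduces it to the bilinear case \eqref{LDE4}.

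The main technical obstacle is keeping the combinatorial factor at the $p^{\theta p}$ level rather than the $p^{p}$ level that a naive expansion would give for the quadratic form \eqref{LDE3}; sharper accounting is needed because the higher moments $\mathbb{E}|a_{i}|^{k}$ grow like $k^{\theta k}$ rather than $k^{k/2}$. Combined with the scale choice $p=(\log N)^{\xi}$, this is precisely what pins down the admissible parameters in the statement: one needs $\xi>a_{0}$ (so that $p$ is large enough to make Markov's inequality effective), $\xi\le A_{0}\log\log N$ (so that $p^{\theta p}$ is absorbed by $\varphi_{N}^{\xi}$), and $\varphi_{N}=(\log N)^{C}$ with $C=C(\theta,C_{0})$ large enough to cover the worst of the four bounds. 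Once these constants are fixed, a final union bound over the four inequalities, plus the truncation argument $\{\max_{i}|a_{i}|\le\sigma(\log N)^{C'\xi}\}$ to handle the joint failure of subexponential concentration, completes the proof.
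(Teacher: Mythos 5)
Your overall strategy — a high-moment Markov inequality with $p\sim(\log N)^{\xi}$ combined with the moment bound $\mathbb{E}|a_i|^{k}\le C(\theta k)^{\theta k}\sigma^{k}$ — is the same as the one in~\cite{EYY1}, to which the paper defers for the proof of this lemma, and your handling of \eqref{LDE1} and \eqref{LDE2} is fine. The problem is that your derivations of \eqref{LDE3} and \eqref{LDE4} are circular. To prove \eqref{LDE4} you condition on $(b_j)$, apply \eqref{LDE1} with coefficients $A_i=\sum_j B_{ij}b_j$, and then invoke \eqref{LDE2} \emph{together with} \eqref{LDE3} to control the resulting quadratic form $\sum_{j,k}\bar b_j\bigl(\sum_i\overline{B_{ij}}B_{ik}\bigr)b_k$ in the variables $b$. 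Immediately afterwards you prove \eqref{LDE3} by decoupling, which, as you say, ``reduces it to the bilinear case \eqref{LDE4}''. Each of \eqref{LDE3} and \eqref{LDE4} thus relies on the other, so neither is actually established. The dependence is not cosmetic, either: if inside the proof of \eqref{LDE4} you try to avoid \eqref{LDE3} by bounding $\sum_i|\sum_j B_{ij}b_j|^2\le\|B\|_{\mathrm{op}}^2\|b\|_2^2$ and then controlling $\|b\|_2^2\sim N\sigma^2$ with \eqref{LDE2}, the resulting estimate loses a factor of $\sqrt N$ against the Hilbert--Schmidt norm on the right-hand side of \eqref{LDE4}.

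The repair is to establish one of \eqref{LDE3}, \eqref{LDE4} by the direct high-moment expansion you already employ for \eqref{LDE1}: expand $\mathbb{E}\bigl|\sum_{i\ne j}\bar a_iB_{ij}a_j\bigr|^{2p}$, use independence and centering to force every index to be repeated, and count the resulting partitions of the $2p$ factor positions, keeping track of the $(\theta k)^{\theta k}$ growth of the individual moments. This is what the cited reference does. Once one of the two quadratic estimates stands on its own, your conditioning step (for \eqref{LDE4}) or your decoupling step (for \eqref{LDE3}) validly yields the other. A smaller remark: the truncation event $\{\max_i|a_i|\le\sigma(\log N)^{C'\xi}\}$ you mention at the end is unnecessary in the high-moment approach, since the subexponential tails are already encoded in the moment bounds.
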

We refer to~\cite{EYY1} for a proof.

\subsubsection{Schur complement formula} The proof of Theorem~\ref{thm.weak} starts with Schur's formula
\begin{align}\label{eq.145}
 G_{ii}=\frac{1}{h_{ii}-z-\sum_{k,l}^{(i)}h_{ik}G_{kl}^{(i)}h_{li}}\,,\quad \quad z\in \caD_L\,,
 \end{align}
where, for brevity, $G_{ij}\equiv G_{ij}(z)$. Define $\E_i$ to be the partial expectation with respect to the $i^{\textrm{th}}$-column/row of~$W$ and set
\begin{align}\label{eq.146}
Z_i\deq(\lone-\E_i)\sum^{(i)}_{k,l}h_{ik}G_{kl}^{(i)}h_{li}&=\sum_{k,l}^{(i)}(h_{ik}G^{(i)}_{kl}h_{li}-\frac{1}{N}\delta_{kl}G_{kl}^{(i)})\nonumber\\ 
&=\sum_{k}^{(i)}(|w_{ik}|^2-\frac{1}{N})G_{kk}^{(i)}+\sum_{k\not=l}^{(i)}w_{ik}G_{kl}^{(i)}w_{li}\,,
\end{align}
here we used $h_{ik}=w_{ik}+\lambda\delta_{ik}v_i$.
For a family of random variables $(F_1,\ldots, F_N)$ we introduce the notation
\begin{align}\label{eq.147}
 [F]\deq\frac{1}{N}\sum_{i=1}^N F_i\,.
\end{align}
Recalling the definition $m^{(i)}=\frac{1}{N}\Tr G^{(i)}=\frac{1}{N}\sum_{k}^{(i)}G_{kk}^{(i)}$, we obtain from Equations~\eqref{eq.145} and~\eqref{eq.146}
\begin{align}\label{eq.148}
G_{ii}&=\frac{1}{\lambda v_i+w_{ii}-z-m^{(i)}-Z_i}\nonumber\\
&=\frac{1}{\lambda v_i-z-m_{fc}-([\v]-\caY_i)}\,,
\end{align}
where 
\begin{align}\label{eq.149}
\v_i\deq G_{ii}-m_{fc}\,,\quad
 \caY_i\deq w_{ii}- Z_{i}-(m^{(i)}-m)\,.
\end{align}

Note the difference between $v_i$ and $w_{ii}$: Since we assumed that the (rescaled) entries of $W$ have subexponential decay, we have
\begin{align}\label{eq.1410}
|w_{ij}|\le C\frac{(\varphi_N)^{\xi}}{\sqrt{N}}\,,
\end{align}
with $(\xi,\nu)$-high probability, whereas $v_i=\caO(1)$, almost surely.

\begin{lemma}\label{lemma.13} There is a constant $C$ such that, for $z \in \caD_L $, $\lambda\in\caD_{\lambda_0}$ and $1\le i\le N$, we have
\begin{align}\label{eq.140}
 |m(z)-m^{(i)}(z)|\le \frac{C}{N\eta}\,.
\end{align}
\end{lemma}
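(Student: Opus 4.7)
The plan is to relate $m$ and $m^{(i)}$ via the resolvent identity \eqref{res.2}, which for $j\neq i$ gives
\begin{align*}
 G_{jj}=G_{jj}^{(i)}+\frac{G_{ji}G_{ij}}{G_{ii}}\,.
\end{align*}
Using the definitions $m=\frac{1}{N}\sum_{j=1}^N G_{jj}$ and $m^{(i)}=\frac{1}{N}\sum_{j\neq i}G_{jj}^{(i)}$, the identity rearranges to
\begin{align*}
 m-m^{(i)}=\frac{G_{ii}}{N}+\frac{1}{N}\sum_{j\neq i}\frac{G_{ij}G_{ji}}{G_{ii}}\,.
\end{align*}

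The first term is trivially controlled by the deterministic bound $|G_{ii}|\le 1/\eta$, giving $|G_{ii}|/N\le 1/(N\eta)$. For the second term I would use that $G_{ij}=\overline{G_{ji}}$ (since $H$ is Hermitian) together with the Ward identity \eqref{ward} applied to $G$:
\begin{align*}
 \frac{1}{N|G_{ii}|}\sum_{j\neq i}|G_{ij}|^2\le \frac{1}{N|G_{ii}|}\sum_{j=1}^N|G_{ij}|^2=\frac{1}{N|G_{ii}|}\cdot\frac{\im G_{ii}}{\eta}\le\frac{1}{N\eta}\,,
\end{align*}
where in the last step I used the elementary inequality $\im G_{ii}\le|G_{ii}|$. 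Combining the two contributions yields $|m-m^{(i)}|\le 2/(N\eta)$, and the constant $C=2$ suffices.

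Two remarks on the execution. First, the bound is entirely deterministic: it holds for any Hermitian $H$ and requires none of the probabilistic input developed so far, so there is no genuine obstacle. Second, the only subtlety worth checking is that $G^{(i)}$ is the resolvent of the genuine $(N-1)\times(N-1)$ minor (not padded with a zero row/column) so that $\Tr G^{(i)}=\sum_{j\neq i}G_{jj}^{(i)}$; this is consistent with the normalization convention $m^{(i)}=\frac{1}{N}\sum_{j}^{(i)}G_{jj}^{(i)}$ adopted earlier (in particular the prefactor $1/N$, not $1/(N-1)$, is what makes the rearrangement above exact).
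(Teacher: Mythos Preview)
Your approach is valid and gives the right bound, but it differs from the paper's. The paper simply invokes Cauchy's interlacing of the eigenvalues of $H$ and $H^{(i)}$ and refers to~\cite{E}; your resolvent-identity argument is more self-contained (it uses only~\eqref{res.2} and~\eqref{ward}, both already stated) and makes the constant explicit. Both are purely deterministic.

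One inaccuracy to fix: the claim ``$G_{ij}=\overline{G_{ji}}$ since $H$ is Hermitian'' is false for $z\in\C^+$; that would make $G(z)$ Hermitian, which it is not. The correct relation is $\overline{G_{ij}(z)}=G_{ji}(\bar z)$. Your bound survives with a one-line patch: use $|G_{ij}G_{ji}|\le\tfrac12(|G_{ij}|^2+|G_{ji}|^2)$ and observe that the column Ward identity $\sum_j|G_{ji}|^2=(G(\bar z)G(z))_{ii}=\sum_\alpha|u_\alpha(i)|^2/|\mu_\alpha-z|^2=\eta^{-1}\im G_{ii}$ holds as well, giving the same $1/(N\eta)$ bound. (In the real symmetric case $G$ is complex symmetric, so $G_{ij}=G_{ji}$ and $|G_{ij}G_{ji}|=|G_{ij}|^2$ holds directly.) Also note that the division by $G_{ii}$ is harmless since $\im G_{ii}>0$ for $\eta>0$.
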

\begin{proof}
 The claim follows from Cauchy's interlacing property of eigenvalues of $H$ and its minor $H^{(i)}$. For a detailed proof we refer to~\cite{E}.
\end{proof}

\subsection{A priori estimates on the domain $\Omega(z)$}
Define the z-dependent control quantities
\begin{align}\label{eq.151}
\Lambda_o\deq\max_{i\not= j}|G_{ij}|\,,\qquad \Lambda_d\deq\max_{i}|G_{ii}|\,,\qquad\Lambda\deq |m-m_{fc}|\,.
\end{align}
Note that these quantities also depend on $\lambda$, but we do not display this dependence, since, as we shall see, uniformity in $\lambda$ can always be achieved on the domain $\caD_{\lambda_0}$ using the stability bound~\eqref{stability bound}.

For $z\in \caD_L$, we define an event $\Omega(z)$ by
\begin{align}\label{eq.152}
\Omega(z)\deq\{\Lambda_o\le (\varphi_N)^{-2\xi}\}\cap \{\Lambda\le (\varphi_N)^{-2\xi}\}\,.
\end{align}
First, we check that we can bound the matrix elements of the Green function of the minor $H^{(i)}$ in terms of the matrix elements of the Green function of $H$. 
\begin{lemma}\label{lemma.111}
Let $z\in\caD_L$, $\lambda\in\caD_{\lambda_0}$. Then there are constants $C,c>0$ such that, for any \mbox{$\T\subset\{1,\ldots,N\}$} with $|\T|\le 10$, the following statements hold with high probability on $\Omega(z)$: 
\begin{itemize}
 \item[$i.$]
For any $i\not\in\T$,
\begin{align}\label{eq.153}
 c\le|G_{ii}^{(\T)}|\le C\,.
\end{align}
\item[$ii.$]For any $i,j\not\in\T$, $i\not=j$,
\begin{align}\label{eq.154}
c\Lambda_o\le|G_{ij}^{(\T)}|\le C\Lambda_o\,.
\end{align}
\item[$iii.$] 
\begin{align}\label{eq.155}
 |m-m^{(\T)}|\le C\Lambda_o^2\,.
\end{align}
\end{itemize}
Moreover, the constants $C$ and $c$ can be chosen uniformly in $z\in\caD_L$, $\lambda\in\caD_{\lambda_0}$.
\end{lemma}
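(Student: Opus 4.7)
My plan is to prove (i)--(iii) simultaneously by induction on $|\T|$, using the resolvent identity \eqref{res.2} at each step. Throughout, all estimates will be justified on the event $\Omega(z)$ together with events of $(\xi,\nu)$-high probability on which the relevant large-deviation bounds hold.

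\emph{Base case ($\T = \emptyset$).} The off-diagonal bound (ii) and (iii) are trivial. For the diagonal bound (i), I will use representation \eqref{eq.148}:
\[
G_{ii} \;=\; \frac{1}{\lambda v_i - z - m_{fc} - ([\v] - \caY_i)}\,.
\]
On $\Omega(z)$ we have $|[\v]| = |m-m_{fc}| = \Lambda \le (\varphi_N)^{-2\xi}$. For $\caY_i = w_{ii} - Z_i - (m^{(i)} - m)$, one combines $|w_{ii}| \le (\varphi_N)^{\xi}/\sqrt{N}$ (from \eqref{eq.1410}), the bound $|m-m^{(i)}| \le C/(N\eta)$ from Lemma~\ref{lemma.13}, and the large deviation estimates \eqref{LDE2}--\eqref{LDE3} applied to the two sums in the second line of \eqref{eq.146} to get $|\caY_i| \ll 1$ with $(\xi,\nu)$-high probability; here, the large deviation bounds use $|G_{kk}^{(i)}|, |G_{kl}^{(i)}| \le 1/\eta$, which is crude but suffices for the qualitative bound $|\caY_i| \to 0$. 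Combined with the stability bound \eqref{stability bound}, interpreted uniformly over $v_i \in [-1,1]$, this gives $c \le |\lambda v_i - z - m_{fc} - ([\v] - \caY_i)| \le C$, hence $c \le |G_{ii}| \le C$.

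\emph{Inductive step.} Suppose (i)--(iii) hold for all $\T'$ with $|\T'| = t < 10$. Fix $\T$ with $|\T| = t+1$, pick any $k \in \T$, and set $\T' = \T \setminus \{k\}$. Applying \eqref{res.2} to the matrix $H^{(\T')}$ with $i,j \notin \T$ gives
\[
G_{ij}^{(\T)} \;=\; G_{ij}^{(\T')} - \frac{G_{ik}^{(\T')} G_{kj}^{(\T')}}{G_{kk}^{(\T')}}\,.
\]
By the inductive hypothesis, $|G_{kk}^{(\T')}| \ge c$, so the correction is bounded in absolute value by $C \Lambda_o^2$. Since $\Lambda_o \le (\varphi_N)^{-2\xi}$ on $\Omega(z)$, this correction is negligible compared with the leading term: for $i = j$ it preserves both the upper and lower bounds of (i), while for $i \ne j$ it gives $|G_{ij}^{(\T)}| \le |G_{ij}^{(\T')}| + C\Lambda_o^2 \le C \Lambda_o$, establishing the upper bound in (ii). The lower bound in (ii) -- which I read as the statement that $\max_{i\ne j,\,i,j \notin \T} |G_{ij}^{(\T)}| \ge c\Lambda_o$ -- follows analogously: the pair realising $\Lambda_o$ in $G$ persists up to an $O(\Lambda_o^2)$ perturbation after each removal, and at most $|\T|\le 10$ removals only change the maximum by a constant factor.

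For (iii), iterating the above identity yields
\[
N(m - m^{(\T)}) \;=\; \sum_{k\in \T} G_{kk} + \sum_{i\notin \T}\bigl(G_{ii} - G_{ii}^{(\T)}\bigr)
\;=\; O(|\T|) + O\!\bigl(N |\T|\, \Lambda_o^2\bigr)\,,
\]
so $|m - m^{(\T)}| \le C/N + C\Lambda_o^2 \le C \Lambda_o^2$ in the parameter regime of interest (where $\Lambda_o^2 \gtrsim 1/N$, which is consistent with $N\eta \ge (\varphi_N)^L$). All constants depend only on $\mu$, $E_0$, $\lambda_0$, and the (bounded) size of $\T$, hence are uniform in $z \in \caD_L$ and $\lambda \in \caD_{\lambda_0}$.

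\emph{Main obstacle.} The delicate point is the base case, specifically establishing $|\caY_i| \to 0$ with high probability without falling into circularity -- the large deviation bound on $Z_i$ formally requires control of $G^{(i)}$ entries, yet $\Omega(z)$ only controls $G$. This can be handled by either (a) applying the induction at the level of the crude $L^\infty\to L^\infty$ bound $|G_{kl}^{(i)}| \le \eta^{-1}$ first, which is enough to conclude $|\caY_i|$ is small on $\Omega(z)$ and then recovers the refined bounds, or (b) running the induction simultaneously on $G$ and all $G^{(\T)}$ with $|\T|\le 10$, which is the cleaner bookkeeping but slightly heavier.
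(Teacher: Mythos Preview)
Your inductive scheme via the resolvent identity \eqref{res.2} is the right framework and matches the paper's approach. The genuine gap is in the base case, specifically the bound on $Z_i$ (and hence $\caY_i$). Inserting the crude estimate $|G_{kl}^{(i)}|\le 1/\eta$ into the large deviation bound \eqref{LDE3} yields only
\[
|Z_i|\;\le\; C(\varphi_N)^{\xi}\,\frac{1}{N}\Bigl(\sum_{k\ne l}^{(i)}|G_{kl}^{(i)}|^2\Bigr)^{1/2}\;\le\; C(\varphi_N)^{\xi}\,\frac{1}{\eta}\,,
\]
which is \emph{not} $o(1)$ in the regime $\eta\sim(\varphi_N)^{L}N^{-1}$ that $\caD_L$ allows. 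Your option (b) does not resolve this either: bounding $G_{ii}^{(\T)}$ via Schur's formula requires control of $G^{(\T i)}$, so the cardinality of the removed set moves in the wrong direction for an induction.

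The missing ingredient is the Ward identity \eqref{ward}, which gives
\[
\frac{1}{N^2}\sum_{k,l}^{(i)}|G_{kl}^{(i)}|^2\;=\;\frac{\im m^{(i)}}{N\eta}\,,
\]
together with the interlacing bound $|m-m^{(i)}|\le C/(N\eta)$ of Lemma~\ref{lemma.13}. On $\Omega(z)$ this yields $\im m^{(i)}\le \im m_{fc}+\Lambda+C/(N\eta)=\caO(1)$, hence
\[
|Z_i|\;\le\; C(\varphi_N)^{\xi}\sqrt{\frac{\im m_{fc}+\Lambda}{N\eta}}+C(\varphi_N)^{\xi}\frac{1}{N\eta}\;=\;o(1)\,,
\]
since $N\eta\ge(\varphi_N)^{12\xi}$. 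This eliminates the circularity entirely: no entrywise control of $G^{(i)}$ is needed, only control of its normalized trace, which interlacing supplies directly. Once this bound on $\caY_i$ is in place, your base case closes via the stability bound \eqref{stability bound} exactly as you wrote, and the inductive step through \eqref{res.2} is correct as stated.
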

\begin{proof}
Let $z\in \caD_L$ and $\lambda\in\caD_{\lambda_0}$. We will successively use ~\eqref{res.2}, i.e.,
\begin{align}\label{eq.156}
G_{ij}-G^{(k)}_{ij}=\frac{G_{ik}G_{kj}}{G_{kk}}\,.
\end{align}
Since we are working on $\Omega(z)$ we have $|G_{ij}|\le \Lambda_o\le  (\varphi_N)^{-2\xi}$, for $i\not=j$. Next, Equation~\eqref{eq.148} yields
\begin{align*}
\left |\frac{1}{G_{ii}}\right|= |z+m^{(i)}-\lambda v_i-w_{ii}-Z_i|\,.
\end{align*}
By the large deviation estimates~\eqref{LDE2},~\eqref{LDE3}, the Ward identity~\eqref{ward} and Inequality~\eqref{eq.140} we have
\begin{align}\label{eq.157}
|Z_i|\le C(\varphi_N)^{\xi}\left(\frac{1}{N^2}\sum_{k,l}^{(i)}|G^{(i)}_{kl}|^2\right)^{1/2}&\le C(\varphi_N)^{\xi}\sqrt{\frac{\im m^{(i)}}{N\eta}}\nonumber\\
&\le C(\varphi_N)^{\xi}\sqrt{\frac{\Lambda + \im m_{fc}} {N\eta}}+C(\varphi_N)^{\xi}\frac{1}{N\eta}\,,
\end{align}
with high probability on $\Omega(z)$. Since $\Lambda\le (\varphi_N)^{-2\xi}$ on $\Omega(z)$ and since $N\eta\ge (\varphi_N)^{12\xi}$, we conclude that $|Z_i|=o(1)$, with high probability on $\Omega(z)$, for $z\in \caD_L$ and $\lambda\in\caD_{\lambda_0}$. Finally, since 
\begin{align*}
\big|m^{(i)}\big|=\big|m_{fc}\big|+\caO\left(\frac{1}{N\eta}+(\varphi_N)^{-2\xi}\right)\,,
\end{align*}
on $\Omega(z)$, by~\eqref{eq.140}, we find 
\begin{align*}
  \left|\frac{1}{G_{ii}}\right|&= |\lambda v_i-z-m_{fc}|+o(1)\,,
\end{align*}
with high probability on $\Omega(z)$. This, together with the stability bound \eqref{stability bound}, proves the lower and upper bound on $G_{ii}$. Note that by~\eqref{stability bound}, we can choose the upper and lower bound on $G_{ii}$ to be uniform in $\lambda\in\caD_{\lambda_0}$, $z\in\caD_{L}$.

Statements $i$-$iii$ now follow by iterating~\eqref{eq.156}.
\end{proof}

Next, we define the control parameter $\Psi(z)$, for $z\in \caD_L$, by
\begin{align}\label{eq.158}
 \Psi(z)\deq (\varphi_N)^{\xi}\sqrt{\frac{\Lambda+\im m_{fc}}{N\eta}}\,,
\end{align}
 where $\Lambda=|m-m_{fc}|$. Again, we suppress the $\lambda$-dependence of $\Psi(z)$ from our notation. We will use $\Psi\equiv\Psi(z)$ to bound various quantities in the following:

\begin{lemma}\label{lemma.14}
Let $z\in\caD_L$, $\lambda\in\caD_{\lambda_0}$. Then there is a constant $C$ such that we have with $(\xi,\nu)$-high probability on~$\Omega(z)$:
\begin{align}
\Lambda_o&\le C\Psi\,,\label{eq.161}\\[1.5mm]
\max_i |Z_i|&\le C\Psi\,,\label{eq.162}\\
\max_i |\caY_i|&\le  C\Psi\,.\label{eq.163}
\end{align}
\end{lemma}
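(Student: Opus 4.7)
\emph{Plan.} All three bounds follow from combining the large deviation estimates of Lemma~\ref{lemma.LDE} with the resolvent identities~\eqref{res.1}--\eqref{res.2}, the Ward identity~\eqref{ward}, and the a priori control provided by Lemma~\ref{lemma.111}. Throughout we restrict to the event $\Omega(z)$, on which Lemma~\ref{lemma.111} gives $|G_{ii}^{(\T)}|\sim 1$, $|G_{ij}^{(\T)}|\sim\Lambda_o$ and the crucial improved estimate $|m-m^{(\T)}|\le C\Lambda_o^{2}$ (rather than only the $C/(N\eta)$ bound of Lemma~\ref{lemma.13}). A key preliminary observation, used to absorb stray error terms, is that $\im m_{fc}\ge c\eta$ on $\caD_L$ (by Lemma~\ref{lemma.lee1}(i), since $\sqrt{\kappa+\eta}$ is bounded), which yields the lower bound $\Psi\ge c(\varphi_N)^{\xi}/\sqrt{N}$ and, together with $\Lambda\le (\varphi_N)^{-2\xi}$ and $N\eta\ge(\varphi_N)^{12\xi}$, also $\Psi\le 1$.

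\emph{Bound on $Z_i$.} Since $h_{ik}=w_{ik}$ for $k\ne i$, the decomposition~\eqref{eq.146} of $Z_i$ lets us apply the large deviation estimates~\eqref{LDE2} and~\eqref{LDE3} to the diagonal and off-diagonal parts respectively, with $\sigma^{2}=1/N$. Independence is ensured because $G^{(i)}$ is a function of the matrix entries outside row/column $i$. This gives, with $(\xi,\nu)$-high probability,
\begin{equation*}
|Z_i|\le C(\varphi_N)^{\xi}\,\frac{1}{N}\bigl(\textstyle\sum_{k,l}^{(i)}|G_{kl}^{(i)}|^{2}\bigr)^{1/2}
= C(\varphi_N)^{\xi}\sqrt{\frac{\im m^{(i)}}{N\eta}},
\end{equation*}
where the last identity is the Ward identity~\eqref{ward} applied to $G^{(i)}$. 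Using $\im m^{(i)}\le \im m+|m-m^{(i)}|\le \im m_{fc}+\Lambda+C\Lambda_o^{2}$ from Lemma~\ref{lemma.111}(iii), the subadditivity of $\sqrt{\cdot}$ gives $|Z_i|\le \Psi+ C(\varphi_N)^{\xi}\Lambda_o/\sqrt{N\eta}$, and the second term will be absorbed after bounding $\Lambda_o$ in the next step.

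\emph{Bound on $\Lambda_o$.} For $i\ne j$ start from the identity~\eqref{res.1}. Since the sum in~\eqref{res.1} excludes $m=i,n=j$, the diagonal entries $\lambda v$ do not contribute and the bracketed term reads $w_{ij}-\sum_{m,n}^{(ij)}w_{im}G_{mn}^{(ij)}w_{nj}$. By Lemma~\ref{lemma.111}(i), $|G_{ii}G_{jj}^{(i)}|\le C$; the subexponential decay~\eqref{eq.1410} gives $|w_{ij}|\le(\varphi_N)^{\xi}/\sqrt{N}\le C\Psi$; the quadratic form is estimated via~\eqref{LDE4} and Ward applied to $G^{(ij)}$ to give $C(\varphi_N)^{\xi}\sqrt{\im m^{(ij)}/(N\eta)}$. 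Bounding $\im m^{(ij)}\le \im m_{fc}+\Lambda+C\Lambda_o^{2}$ as above yields, for all $i\ne j$,
\begin{equation*}
|G_{ij}|\le C\Psi+ C\frac{(\varphi_N)^{\xi}}{\sqrt{N\eta}}\,\Lambda_o.
\end{equation*}
Taking the maximum over $i\ne j$ and using $(\varphi_N)^{\xi}/\sqrt{N\eta}\le (\varphi_N)^{-5\xi}\ll 1$ on $\caD_L$, the $\Lambda_o$ term can be absorbed into the left side, yielding $\Lambda_o\le C\Psi$ with $(\xi,\nu)$-high probability. Substituting this back into the bound for $|Z_i|$ gives $|Z_i|\le C\Psi$.

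\emph{Bound on $\caY_i$ and main obstacle.} Writing $\caY_i=w_{ii}-Z_i-(m^{(i)}-m)$, the three terms are bounded by $(\varphi_N)^{\xi}/\sqrt{N}\le C\Psi$, $C\Psi$ (from the previous step), and $|m^{(i)}-m|\le C\Lambda_o^{2}\le C\Psi^{2}\le C\Psi$ (by Lemma~\ref{lemma.111}(iii), the bound on $\Lambda_o$, and $\Psi\le 1$), respectively. The only genuine subtlety is the self-referential nature of the argument: the large deviation bounds naturally produce residuals of the form $(\varphi_N)^{\xi}/(N\eta)$ or $(\varphi_N)^{\xi}\Lambda_o/\sqrt{N\eta}$ that cannot be compared to $\Psi$ directly. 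What rescues us is precisely the improved estimate $|m-m^{(\T)}|\le C\Lambda_o^{2}$ from Lemma~\ref{lemma.111}(iii), together with the smallness $(\varphi_N)^{\xi}/\sqrt{N\eta}\le (\varphi_N)^{-5\xi}$ from the definition of $\caD_L$, which together allow the contracting fixed-point argument used to deduce $\Lambda_o\le C\Psi$ and, via feedback, the bounds on $Z_i$ and $\caY_i$.
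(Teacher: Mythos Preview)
Your proof is correct and follows essentially the same approach as the paper: use \eqref{res.1} together with the large deviation estimates and the Ward identity to control the off-diagonal resolvent entries and $Z_i$, invoke Lemma~\ref{lemma.111}(iii) to replace $\im m^{(\T)}$ by $\im m_{fc}+\Lambda+C\Lambda_o^{2}$, and then absorb the resulting $(\varphi_N)^{\xi}(N\eta)^{-1/2}\Lambda_o$ term using $N\eta\ge(\varphi_N)^{12\xi}$. The paper presents the $\Lambda_o$ bound first and then simply declares the proofs for $Z_i$ and $\caY_i$ ``similar'', whereas you first bound $Z_i$ up to a residual $\Lambda_o$ term and close the loop afterwards; this is a cosmetic reordering of the same argument.
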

The constant $C$ can be chosen uniformly in $z\in\caD_L$, $\lambda\in\caD_{\lambda_0}$.
\begin{proof}
 We prove~\eqref{eq.161}. Let $i\not= j$, then by Equation~\eqref{res.1}, the large deviation estimates of Lemma~\ref{lemma.LDE} and Inequality~\eqref{eq.1410},
\begin{align*}
 |G_{ij}|\le C(|w_{ij}|+\sum_{k,l}^{(ij)}|w_{ik}G^{(ij)}_{kl}w_{lj}|)&\le C(\varphi_N)^{\xi}\left(\frac{1}{\sqrt{N}}+ \sqrt{\frac{1}{N^2} \sum_{k,l}^{(ij)}|G_{kl}^{(ij)}|^2} \right)\\
&=C(\varphi_N)^{\xi}\left(\frac{1}{\sqrt{N}}+\sqrt{\frac{\im m^{(ij)}}{N\eta}}\right)\,,
\end{align*}
with high probability, where we used in the last step the Ward identity~\eqref{ward}. Since $|m^{(ij)}-m|\le C\Lambda_o^2$, by Lemma~\ref{lemma.111}, we get
\begin{align*}
|G_{ij}|\le C\left(\frac{(\varphi_N)^{\xi}}{\sqrt{N}}+\Psi(z)\right)+C{\frac{(\varphi_N)^{\xi}}{\sqrt{N\eta}}}\Lambda_o\,, 
\end{align*}
with high probability. Since $\im m_{fc}(z)\ge C\eta$, by~\eqref{behaviour of mfc}, we can absorb the term $(\varphi_N)^{\xi}N^{-1/2}$ into the term $\Psi(z)$. Taking the maximum over $i\not=j$,  inequality~\eqref{eq.161} follows. The proofs for $Z_i$ and $\caY_i$ are similar.
\end{proof}

\subsection{Derivation of the weak self-consistent equation}
We now put Equation~\eqref{eq.148} into a form which admits an analysis of the average of the diagonal resolvent entries.
For $n\in\N$, define
\begin{align}\label{eq.171}
R_n(z)\deq\int\frac{\dd \mu(v)}{(\lambda v -z-m_{fc}(z))^n}\,,\quad\quad z\in \caD_L\,,\quad \lambda\in\caD_{\lambda_0}\,.
\end{align}
For any $n$, $R_n$ is bounded uniformly in $z$ and $\lambda$. This follows from the stability bound~\eqref{stability bound}. Note the special case $R_1=\mfc$. Recall the definitions $[\v]=\frac{1}{N}\sum_i G_{ii}-m_{fc}$ and $|\Lambda|=|m-\mfc|$.
\begin{lemma}\emph{[Weak self-consistent equation]}\label{lemma.15}
There is a constant $C$ such that, for all $z\in\caD_L$, $\lambda\in\caD_{\lambda_0}$, we have on $\Omega(z)$ with $(\xi,\nu)$-high probability
\begin{align}\label{eq.172}
 \left|(1-R_2)[\v]-R_3[\v]^2\right|\le C\Psi +C\frac{\Lambda^2}{\log N}\,.
\end{align}
\end{lemma}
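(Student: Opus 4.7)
The proof plan is to derive the self-consistent equation by iterating Schur's formula and keeping track of which terms contribute at which order. Starting from \eqref{eq.148}, I would write $G_{ii} = (h_i - ([\v]-\caY_i))^{-1}$ with $h_i \deq \lambda v_i - z - m_{fc}(z)$, and expand as a geometric series
\begin{equation*}
G_{ii} \;=\; \frac{1}{h_i} + \frac{[\v] - \caY_i}{h_i^2} + \frac{([\v] - \caY_i)^2}{h_i^3} + \frac{([\v] - \caY_i)^3}{h_i^3\,(h_i - [\v] + \caY_i)}\,.
\end{equation*}
Convergence of the expansion is justified on $\Omega(z)$, where $\Lambda \le (\varphi_N)^{-2\xi}$ by definition and $\max_i|\caY_i| \le C\Psi$ by Lemma~\ref{lemma.14}, while $|h_i|$ is bounded below by the stability estimate \eqref{stability bound} uniformly in $\lambda \in \caD_{\lambda_0}$ and $z \in \caD_L$.

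Next I would average the expansion over $i$. Writing $\widetilde{\chi}_n \deq \frac{1}{N}\sum_i h_i^{-n} - R_n$ and using $R_1 = m_{fc}$ together with $m - m_{fc} = [\v]$, the averaged equation becomes
\begin{equation*}
[\v] \;=\; \widetilde{\chi}_1 + (R_2 + \widetilde{\chi}_2)[\v] - \frac{1}{N}\sum_i \frac{\caY_i}{h_i^2} + (R_3 + \widetilde{\chi}_3)[\v]^2 - \frac{2}{N}\sum_i \frac{[\v]\,\caY_i}{h_i^3} + \frac{1}{N}\sum_i\frac{\caY_i^2}{h_i^3} + \caE\,,
\end{equation*}
where $\caE$ collects the cubic and higher contributions from the geometric remainder. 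Rearranging produces $(1-R_2)[\v] - R_3[\v]^2$ on the left, and the task reduces to bounding each term on the right by $C\Psi + C\Lambda^2/\log N$.

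For the "$v$-only" fluctuations $\widetilde{\chi}_n$, I would apply the large deviation estimate \eqref{LDE1} to the centered, bounded i.i.d.\ variables $h_i^{-n} - R_n$, whose variance is $\caO(\lambda^2)$. This yields $|\widetilde{\chi}_n| \le C(\varphi_N)^\xi \lambda/\sqrt{N}$ with $(\xi,\nu)$-high probability. Since $\eta \le 3$ and $\im m_{fc}/\eta$ is bounded below uniformly on $\caD_L$ by Lemma~\ref{lemma.lee1}, one checks that $\lambda/\sqrt{N} \le C\Psi$, so $\widetilde{\chi}_1$ is absorbed. The cross term $\widetilde{\chi}_2[\v]$ is even smaller since $|[\v]| \le (\varphi_N)^{-2\xi}$ on $\Omega(z)$. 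The $\caY$ terms are controlled through \eqref{eq.163}: $|\frac{1}{N}\sum_i \caY_i/h_i^2| \le C\max_i|\caY_i| \le C\Psi$, and similarly the mixed term $[\v]\caY_i/h_i^3$ and the quadratic $\caY_i^2/h_i^3$ are bounded by $C\Psi\Lambda$ and $C\Psi^2$ respectively, which are subsumed either in $C\Psi$ (since $\Lambda + \Psi \le (\varphi_N)^{-\xi}$) or in the target.

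The main obstacle is the remainder $\caE$, which is cubic in $[\v]-\caY_i$. Using $|[\v]-\caY_i| \le \Lambda + C\Psi$ with both $\Lambda$ and $\Psi$ bounded by $(\varphi_N)^{-2\xi}$ on $\Omega(z)$, one factors out two small factors and one $\Lambda$ or $\Psi$ to obtain $|\caE| \le C(\Lambda + \Psi)^3 \le C(\varphi_N)^{-2\xi}\Lambda^2 + C\Psi^3 + C\Psi\Lambda^2$. Since $(\varphi_N)^{-2\xi} \le 1/\log N$ for $N$ large and $\Psi^3 \le \Psi$ (again using $\Psi \le 1$), these are absorbed into the final bound $C\Psi + C\Lambda^2/\log N$. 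The entire argument is uniform in $z \in \caD_L$ and $\lambda \in \caD_{\lambda_0}$ thanks to the uniformity in Lemmas~\ref{lemma.lee1}, \ref{lemma.111}, \ref{lemma.14}.
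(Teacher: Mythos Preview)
Your proof is correct and follows essentially the same route as the paper's: expand the Schur identity~\eqref{eq.148} to second order in $[\v]-\caY_i$, average over $i$, replace the empirical moments $\tfrac{1}{N}\sum_i h_i^{-n}$ by $R_n$ at the cost of $\caO((\varphi_N)^\xi\lambda N^{-1/2})$ (the paper invokes McDiarmid and a lattice argument, cf.~\eqref{lawlargenumber}, rather than~\eqref{LDE1}, but the output is the same), and absorb that error into $\Psi$ via $\im m_{fc}\ge c\eta$. The only cosmetic difference is your handling of the cross term $[\v]\caY_i$: you bound it directly by $C\Lambda\Psi\le C\Psi$, whereas the paper splits it with Young's inequality as $\Lambda^2/\log N + (\log N)\Psi^2$; both are fine, and the $\Lambda^2/\log N$ contribution in the final estimate comes in either case from the cubic remainder $\caO(\Lambda^3)\le(\varphi_N)^{-2\xi}\Lambda^2$.
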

\begin{proof}
Since $|\lambda v_i-z-m_{fc}|$ is bounded below by~\eqref{stability bound}, we can expand Equation~\eqref{eq.148} to second order in $([\v]-\caY_i)$,
\begin{align}\label{eq.173}
 \frac{1}{N}\sum_{i=1}^N G_{ii}=&\frac{1}{N}\sum_{i=1}^N\frac{1}{\lambda v_i-z-m_{fc}}+\frac{1}{N}\sum_{i=1}^N\frac{1}{(\lambda v_i-z-m_{fc})^2}([\v]-\caY_i)\nonumber\\ &+\frac{1}{N}\sum_{i=1}^N\frac{1}{(\lambda v_i-z-m_{fc})^3}([\v]-\caY_i)^2+ \caO(\Lambda^3)+\caO(\max_i|\caY_i|^3)\,,
\end{align}
where $ \caY_i=w_{ii}-Z_i-(m^{(i)}-m)$; see Equation~\eqref{eq.149}.

Next, we use the `law of large numbers' to replace the averages in the first two terms on the right side of~\eqref{eq.173} by their expectation:
It follows from the stability bound in~\eqref{stability bound} that the family of functions {$g_i\,:\,\caD_{\lambda_0}\times\caD_L\to\C^+$, $(\lambda,z)\mapsto (\lambda v_i-z-\mfc(z))^{-1}$} are jointly Lipschitz continuous with a constant depending only on $E_0$, $\lambda_0$ and $\mu$. Since the $(v_i)$ are i.i.d.\ random variables, McDiarmid's inequality implies that, for $\lambda\in\caD_{\lambda_0}$, $z\in\caD_L$, $n=1,2,3$, there is a constant $C'$, 
\begin{align}\label{mini lln}
 \left|\frac{1}{N}\sum_{i=1}^N\frac{1}{(\lambda v_i-z-m_{fc})^n}-\int\frac{\dd\mu(v)}{{(\lambda v-z-m_{fc})^n}}\right|\le\ C'\frac{\lambda(\varphi_N)^{\xi}}{\sqrt{N}}\,,
\end{align}
with $(\xi,\nu)$-high probability. Uniformity in $\lambda$, $z$ and $\nu$ can be established by a lattice argument: Choose a lattice $\caL\in\caD_{\lambda_0}\times\caD_L$, with $|\caL|\le CN^4$, such that for any $(\lambda, z)\in\caD_{\lambda_0}\times\caD_L$ there is $(\lambda',z')\in\caL$, with $|z-z'|\le N^{-2}$ and $|\lambda-\lambda'|\le N^{-2}$. Then~\eqref{mini lln} holds for all $(\lambda,z)\in\caL$ for some sufficiently large $C'$ and some sufficiently small $\nu>0$. Using the joint Lipschitz continuity of $(g_i)$, we conclude that there is a constant $C\ge C'$ such that the event
\begin{align} \label{lawlargenumber}
\bigcap_{n=1,2,3}\bigcap_{\substack{(\lambda,z)\in\caD_{\lambda_0}\times\caD_L }}\left\{\left|\frac{1}{N}\sum_{i=1}^N\frac{1}{(\lambda v_i-z-m_{fc})^n}-\int\frac{\dd\mu(v)}{{(\lambda v-z-m_{fc})^n}}\right|\le\ C\frac{\lambda(\varphi_N)^{\xi}}{\sqrt{N}}\right\}\,,
\end{align}
has $(\xi,\nu)$-high probability, for some $\nu>0$, depending on $E_0$, $\lambda_0$ and the distribution $\mu$.

Hence, we obtain from~\eqref{eq.173},
\begin{align*}
\frac{1}{N}\sum_{i=1}^N G_{ii}&=\int\frac{\dd\mu(v)}{\lambda v-z-m_{fc}}+R_2[\v]+R_3[\v]^{2} +\frac{1}{N}\sum_{i=1}^N\frac{1}{(\lambda v_i-z-m_{fc})^2}\caY_i\\ &\quad +\frac{1}{N}\sum_{i=1}^N\frac{1}{(\lambda v_i-z-m_{fc})^3}(\caY_i^2-2[\v]\caY_i)+ \caO(\Lambda^3)+\caO(\max_i|\caY_i|^3)+\caO\left(\frac{\lambda(\varphi_N)^{\xi}}{\sqrt{N}}\right)\,,
\end{align*}
with high probability on $\Omega(z)$, for $z\in \caD_L$ and $\lambda\in\caD_{\lambda_0}$. 
Recalling the functional Equation~\eqref{eq111} for $m_{fc}$, we obtain
\begin{align}\label{eq.149bis}
(1-R_2)[\v]&=R_3[\v]^2+\frac{1}{N}\sum_{i=1}^N\frac{1}{(\lambda v_i-z-m_{fc})^2}\caY_i +\frac{1}{N}\sum_{i=1}^N\frac{1}{(\lambda v_i-z-m_{fc})^3}(\caY_i^2-2[\v]\caY_i)\nonumber\\ &\quad\quad+ \caO(\Lambda^3)+\caO(\max_i|\caY_i|^3)+\caO\left(\frac{\lambda(\varphi_N)^{\xi}}{\sqrt{N}}\right)\,,
\end{align}
with high probability on $\Omega(z)$. Recalling that $|[\v]|=\Lambda$, we obtain
\begin{align*}
|2[\v]\caY_i| \le \left(\frac{\Lambda^2}{\log N}+(\log N)\max_i|\caY_i|^2  \right)\,,
\end{align*}
(the added factor $\log N$ will be useful below). Using the estimates in~\eqref{eq.162} and~\eqref{eq.163}, Equation~\eqref{eq.173} thus becomes
\begin{align*}
 (1-R_2)[\v]&=R_3[\v]^2+\caO\left( \frac{\Lambda^2}{\log N}   \right)+\caO\left(\frac{\lambda(\varphi_N)^{\xi}}{\sqrt{N}}+\Psi       \right)\,,
\end{align*}
which holds with high probability on $\Omega(z)$, $z\in \caD_L$ and $\lambda\in\caD_{\lambda_0}$. Next, observe that, since $\im\mfc(z)\ge C\eta$, we can absorb the third term on the right side of the above equation into the forth term. Finally, we note that we can choose the constants uniform in $z$ and $\lambda$.
\end{proof}
To conclude the proof of Theorems~\ref{thm.weak} we reason as follows. Assume, for simplicity, that $1-R_2(z)$, ($z=E+\ii\eta$), is bounded below (this holds true for $E$ in the bulk of the spectrum). Recalling that $|[\v]|=\Lambda$ and the definition of $\Psi(z)$, we are going to show that~\eqref{eq.172} implies

\begin{align*}
\Lambda\le C\Lambda^2+\caO\left(\frac{(\varphi_N)^{\xi}}{({N\eta})^{1/3}}  \right)\,,
\end{align*}
  with high probability on $\Omega(z)$. Hence, we obtain the following dichotomy: Either
\begin{align}\label{guineapig}
 \Lambda\le C\frac{(\varphi_N)^{\xi}}{(N\eta)^{1/3}}\,,\quad\quad{\mathrm{or}}\quad\quad \Lambda\ge c\,,
\end{align}
for some $N$-independent constant $c>0$, with high probability on $\Omega(z)$, $z\in \caD_L$, $\lambda\in\caD_{\lambda_0}$. Using the self-consistent equation~\eqref{eq.172}, we establish in the next section, that, for large $\eta$, i.e., $\eta\ge 2$, $\Lambda+\Lambda_o\le (\varphi_N)^{-2\xi}$, with high probability. In other words, $\Omega(z)$ holds with high probability, for $\im z\ge 2$. But then the first inequality in~\eqref{guineapig} must hold, for sufficiently large $N$, and we can reject the second inequality in~\eqref{guineapig} for such $\eta$. To extend this conclusion to all $\eta\ge(\varphi_N)^{L}N^{-1}$, we make use of the Lipschitz continuity of the resolvent mapping $z\mapsto G(z)$, which not only allows us to establish that $\Omega(z)$ holds with high probability for $\eta$ small, but also shows that~\eqref{guineapig} holds for small $\eta$. This continuity, or bootstrapping, argument is outlined in Section~\ref{proofofthm.weak}. This argument applies in a straightforward way in the bulk of 
the spectrum where we have $|1-R_2(z)|\ge c>0$. For~$z$ close to the spectral edge, $|1-R_2(z)|$ can become very small and a slightly modified version of the above dichotomy has to be applied (see Lemma~\ref{lemma.18}), but the bootstrapping method still applies.

\subsection{Initial estimates for large $\eta$}
To get the bootstrapping started, we need estimates on $\Lambda_o$ and $\Lambda$, for~$\eta\sim 1$.
\begin{lemma}\label{lemma.16}
 Let $\eta\ge2$. Then, for $z\in \caD_L$, $\lambda\in\caD_L$, we have
\begin{align}\label{eq.181}
 \Lambda_o+\Lambda\le  \frac{(\varphi_N)^{2\xi}}{\sqrt N}\,,
\end{align}
with $(\xi,\nu)$-high probability.
\end{lemma}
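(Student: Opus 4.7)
The plan is to exploit that for $\eta\ge 2$ every entry of $G$ is deterministically controlled via $\|G(z)\|\le \eta^{-1}\le 1/2$, while every denominator $\lambda v_i-z-m$, $\lambda v_i-z-m_{fc}$, etc.\ that will appear has imaginary part of absolute value at least $\eta\ge 2$. This \emph{absolute} stability, both of the Schur expansion and of the Pastur fixed-point map, means that the arguments of Lemmas~\ref{lemma.14} and~\ref{lemma.15} may be run directly, without first conditioning on $\Omega(z)$; in particular one never has to invoke the dichotomy~\eqref{guineapig}.

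For the off-diagonal bound, I would combine the resolvent identity~\eqref{res.1}, the subexponential bound~\eqref{eq.1410} on $h_{ij}=w_{ij}$ ($i\ne j$), the large deviation estimates~\eqref{LDE1}--\eqref{LDE4}, and the Ward identity~\eqref{ward} applied to $G^{(ij)}$. Since $|G_{ii}|,|G_{jj}^{(i)}|\le 1/2$ and $\im m^{(ij)}\le 1/\eta\le 1/2$ hold deterministically, for $i\ne j$,
\[
 |G_{ij}|\,\le\, |G_{ii}|\,|G_{jj}^{(i)}|\Bigl(|h_{ij}|+C(\varphi_N)^{\xi}\sqrt{\tfrac{\im m^{(ij)}}{N\eta}}\Bigr)\,\le\, \frac{C(\varphi_N)^{\xi}}{\sqrt{N}}
\]
with $(\xi,\nu)$-high probability. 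A union bound over the $N^2$ pairs (absorbed by enlarging the exponent of $\varphi_N$) gives $\Lambda_o\le (\varphi_N)^{2\xi}/(2\sqrt N)$.

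For the diagonal/averaged part, Schur's formula~\eqref{eq.148} combined with the bounds $|w_{ii}|+|Z_i|\le C(\varphi_N)^{\xi}/\sqrt N$ (from~\eqref{LDE1}--\eqref{LDE3} and Ward, using $\im m^{(i)}\le 1/\eta$) and $|m-m^{(i)}|\le C/(N\eta)$ from Lemma~\ref{lemma.13}, yields
\[
 G_{ii}\,=\,\frac{1}{\lambda v_i-z-m}\,+\,\caO\!\left(\frac{(\varphi_N)^{\xi}}{\sqrt N}\right)
\]
with high probability, because both denominators have modulus $\ge \eta\ge 2$. Averaging over $i$ and applying the McDiarmid-type bound~\eqref{mini lln}, made uniform in $(z,\lambda)$ by the lattice argument already used in~\eqref{lawlargenumber}, one arrives at
\[
 m(z)\,=\,\int\frac{\dd\mu(v)}{\lambda v-z-m(z)}\,+\,\caO\!\left(\frac{(\varphi_N)^{\xi}}{\sqrt N}\right).
\]

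I would then close by contraction. Writing $\phi_z(m)\deq\int\dd\mu(v)/(\lambda v-z-m)$, one has $|\im(\lambda v-z-m)|\ge\eta\ge 2$ whenever $\im m\ge 0$, hence $|\phi_z'(m)|\le 1/\eta^2\le 1/4$ uniformly on the closed upper half-plane; applying the mean value theorem along the segment from $m_{fc}$ to $m$ (which stays in the closed upper half-plane since both endpoints do) and using the defining equation~\eqref{eq111} for $m_{fc}$ gives
\[
 |m-m_{fc}|\,\le\,\tfrac{1}{4}|m-m_{fc}|\,+\,\caO\!\left((\varphi_N)^{\xi}/\sqrt N\right),
\]
so $\Lambda\le C(\varphi_N)^{\xi}/\sqrt N$. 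Adding the two bounds yields~\eqref{eq.181}. No genuine obstacle arises; the only point that requires care is that the contraction constant and the large deviation exponents must be uniform in $z\in\caD_L$ with $\eta\ge 2$ and in $\lambda\in\caD_{\lambda_0}$, which is handled exactly as in the derivation of~\eqref{lawlargenumber}.
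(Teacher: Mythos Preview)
Your proof is correct and follows essentially the same route as the paper's: both exploit $\|G\|\le\eta^{-1}\le 1/2$ to run the resolvent identity plus large deviation argument for $\Lambda_o$ unconditionally, bound $|\caY_i|\le C(\varphi_N)^\xi/\sqrt N$, and then close $\Lambda$ by a contraction of the Pastur map. The paper organizes the last step around $m_{fc}$ (see~\eqref{eq.184}), whereas you organize it around the map $\phi_z$ evaluated at the random $m$; this is a cosmetic difference.

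One small point deserves care. When you pass from $\frac{1}{N}\sum_i(\lambda v_i-z-m)^{-1}$ to $\int(\lambda v-z-m)^{-1}\dd\mu(v)$ via the McDiarmid bound~\eqref{mini lln}, note that $m=m(z)$ depends on \emph{all} the $(v_i)$, so this is not literally a sum of independent functions of the $v_i$. The fix is immediate: make~\eqref{mini lln} uniform in an additional deterministic parameter $w$ ranging over $\{|w|\le 1,\ \im w\ge 0\}$ (the map $(\lambda,z,w)\mapsto(\lambda v-z-w)^{-1}$ is Lipschitz with constant $\le\eta^{-2}\le 1/4$ on this set, so the same lattice argument as in~\eqref{lawlargenumber} applies), and then substitute $w=m(z)$. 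The paper sidesteps this by expanding around the deterministic $m_{fc}$ instead.
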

\begin{proof}
Let $\lambda\in\caD_L$. We fix $z\in\caD_L$, with $\eta\ge 2$. Then we have the following trivial estimates
\begin{align}\label{eq.182}
 |G_{ij}^{(\T)}|\le\frac{1}{\eta}\,,\quad |m^{(\T)}|\le\frac{1}{\eta}\,,\quad |m_{fc}|\le\frac{1}{\eta}\,,\quad |R_n|\le \left(\frac{1}{\eta}\right)^n\,,	
\end{align}
 for any $\T\subset\{1,\ldots, N\}$. 

We start with estimating $\Lambda_o$: From Equation~\eqref{res.1} we obtain using the large deviation estimates in Lemma~\ref{lemma.LDE}, that
\begin{align}\label{eq.183}
 |G_{ij}|\le C\left(\frac{(\varphi_N)^{\xi}}{\sqrt N}+(\varphi_N)^{\xi}\sqrt{\frac{m^{(ij)}}{N\eta}}\right)\le C\frac{(\varphi_N)^{\xi}}{\sqrt N}\,,
\end{align}
with high probability.

To bound $\Lambda$, we note that 
\begin{align*}
 |\caY_i|\le |Z_i|+|m^{(i)}-m|+|w_{ii}|\le C\frac{(\varphi_N)^{\xi}}{\sqrt N}\,,
\end{align*}
with high probability. The self-consistent equation~\eqref{eq.148} can be written as\small
 \begin{align}\label{eq.184}
[\v]&=\frac{1}{N}\sum_{i=1}^N\left[\frac{1}{\lambda v_i-z-m_{fc}-([\v]-\caY_i)}-\frac{1}{\lambda v_i-z-m_{fc}}\right]+\frac{1}{N}\sum_{i=1}^N\int\dd \mu(v)\left[\frac{1}{\lambda v_i-z-m_{fc}}-\frac{1}{\lambda v-z-m_{fc}}   \right]\,.
\end{align}\normalsize
The second term on the right side of the above equation is bounded by $C\frac{(\varphi_N)^{\xi}}{\sqrt{N}}$ with high probability, as follows from~\eqref{lawlargenumber}. To bound the other term, we rewrite it as
\begin{align*}
\frac{1}{N}\sum_{i=1}^N\frac{([\v]-\caY_i)}{(\lambda v_i-z-m_{fc}-([\v]-\caY_i))(\lambda v_i-z-m_{fc})}\,.
\end{align*}
Taking the imaginary part, we see that the denominators of the summands are with high probability larger in absolute value than
\begin{align*}
 \left(2-1+\caO\left(\frac{(\varphi_N)^{\xi}}{\sqrt{N}}\right)\right)2\ge \frac{3}{2}\,,
\end{align*}
for $\eta\ge2$. Thus, taking the maximum over $i$, we can bound the right side of~\eqref{eq.184} as
\begin{align*}
\Lambda= |[\v]|\le\frac{|[\v]|+\caO(\frac{(\varphi_N)^{\xi}}{\sqrt{N}})}{3/2}+\caO\left(\frac{(\varphi_N)^{\xi}}{\sqrt{N}}\right)\,,
\end{align*}
with high probability. This completes the estimate of $\Lambda$ and hence the proof.	
\end{proof}

\subsection{Proof of Theorem~\ref{thm.weak}}\label{proofofthm.weak}
We introduce the control parameters
\begin{align}\label{eq.191}
 \alpha(z)\equiv\alpha\deq |1-R_2|\,,\quad\quad \beta(z)\equiv\beta\deq\frac{(\varphi_N)^{2\xi/3}}{(N\eta)^{1/3}}\,.
\end{align}
Note that for any $z\in\caD_L$, we have $\beta\ll (\varphi_N)^{-3\xi}$. Also note that we have chosen $\beta$ to be independent of $\lambda$. 

\begin{lemma}\label{lemma.17}
For~$R_2$ and $R_3$, we have the following estimates:
\begin{itemize}
\item[i.]
There exists a constant $K>1$, depending only on $E_0$, $\lambda_0$ and $\mu$, such that, 
\begin{align}\label{eq.192}
 \frac{1}{K}\sqrt{\kappa+\eta}\le\alpha(z)\le K\sqrt{\kappa+\eta}\,,\quad\quad z\in\caD_L\,,\quad \lambda\in\caD_{\lambda_0}\,.
\end{align}
In particular, we have $\im m_{fc}(z)\le C_2 \alpha(z)$, for some $C_2 \ge1$.

\item[ii.]
There exists a constant $C_3$ such that $|R_3(z)| \leq C_3$ uniformly in $z\in \caD_L$ and $\lambda\in \caD_{\lambda_0}$. Moreover, there exist constants $c$ and $\epsilon_0$ such that $|R_3(z)| \geq c$ whenever $z\in \caD_L$ satisfies $|z-L_i| < \epsilon_0$, $i=1,2$.

\end{itemize}
\end{lemma}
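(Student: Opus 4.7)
The proof of Lemma~\ref{lemma.17} rests on differentiating the Pastur relation~\eqref{eq111} to get algebraic identities linking $R_2$, $R_3$ and the derivatives of $m_{fc}$, and then combining these with the edge asymptotics for $\mu_{fc}$ recorded in Lemmas~\ref{lemma assumptions 1}--\ref{squareroot lemma 2} together with the stability bound~\eqref{stability bound}. Throughout, the stability bound $|\lambda v - z - m_{fc}(z)| \ge c$ (uniformly in $v\in\supp\mu$, $z\in\caD_L$, $\lambda\in\caD_{\lambda_0}$) gives free upper bounds $|R_n(z)|\le c^{-n}$, and in particular $|R_3|\le C_3$.

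For part~(i), I would differentiate~\eqref{eq111} in $z$ to obtain the key identity $m'_{fc}=(1+m'_{fc})R_2$. The assumption $1+m'_{fc}(z_0)=0$ for some $z_0\in\mathbb{C}^+$ would force $m'_{fc}(z_0)=-1=0$, a contradiction; hence $1+m'_{fc}$ never vanishes on $\mathbb{C}^+$, and the identity can be rewritten as
\begin{equation*}
1-R_2(z)\;=\;\frac{1}{1+m'_{fc}(z)}.
\end{equation*}
Bounding $\alpha(z)$ two-sidedly is thus equivalent to controlling $|1+m'_{fc}(z)|$. Using Lemmas~\ref{lemma assumptions 1}--\ref{squareroot lemma 2}, the square-root behavior $\mu_{fc}(E)\sim\sqrt{\kappa_E}$ can be upgraded (via the Pastur relation and the implicit function theorem, as in~\cite{S2}) to an expansion $m_{fc}(z)=m_{fc}(L_i)+c_i\sqrt{L_i-z}+\caO(|L_i-z|)$ near each edge, with $c_i\neq 0$. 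Differentiating this expansion yields $|m'_{fc}(z)|\sim(\kappa+\eta)^{-1/2}$ in a neighbourhood of $L_i$, so $|1+m'_{fc}|\sim(\kappa+\eta)^{-1/2}$ there. Away from the edges, $m'_{fc}$ is bounded by analyticity of $m_{fc}$, and non-vanishing of $1+m'_{fc}$ combined with continuity yields $|1+m'_{fc}|\sim 1\sim(\kappa+\eta)^{-1/2}$ (since $\kappa+\eta$ is of order one on that portion of $\caD_L$). Combining gives $\alpha(z)=|1-R_2(z)|\sim\sqrt{\kappa+\eta}$ uniformly on $\caD_L\times\caD_{\lambda_0}$. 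The inequality $\im m_{fc}\le C_2\alpha$ is immediate from Lemma~\ref{lemma.lee1}(i): both $\sqrt{\kappa+\eta}$ (on the support) and $\eta/\sqrt{\kappa+\eta}\le\sqrt{\kappa+\eta}$ (off the support) are $\lesssim \alpha$.

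For part~(ii), the upper bound $|R_3|\le C_3$ is already noted. For the lower bound near the edges, I would differentiate $m'_{fc}=(1+m'_{fc})R_2$ once more, using $R_2'=2(1+m'_{fc})R_3$, to obtain
\begin{equation*}
m''_{fc}(1-R_2)\;=\;2(1+m'_{fc})^2 R_3,\qquad\text{equivalently}\qquad R_3\;=\;\frac{m''_{fc}}{2(1+m'_{fc})^3}.
\end{equation*}
The edge expansion above gives $m''_{fc}(z)\sim(L_i-z)^{-3/2}$ and $(1+m'_{fc}(z))^3\sim(L_i-z)^{-3/2}$ with explicit nonzero leading coefficients, so $R_3(z)$ tends to a nonzero constant as $z\to L_i$. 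Since the stability bound makes the integrand defining $R_3$ uniformly bounded and continuous up to $L_i$, continuity of $R_3$ yields $|R_3(z)|\ge c>0$ on $|z-L_i|<\epsilon_0$ for some $\epsilon_0>0$.

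The main obstacle is the upgrade from the density estimate $\mu_{fc}(E)\sim\sqrt{\kappa_E}$ to the full expansion $m_{fc}(z)=m_{fc}(L_i)+c_i\sqrt{L_i-z}+\caO(|L_i-z|)$ with $c_i\neq 0$ in a complex neighbourhood of each edge; this governs the edge asymptotics of both $m'_{fc}$ and $m''_{fc}$ that drive both parts of the lemma. Once this expansion (essentially the content of Lemmas~\ref{lemma assumptions 1}--\ref{squareroot lemma 2} as developed in the Appendix) is in hand, everything else reduces to elementary calculus on the Pastur relation combined with the stability bound.
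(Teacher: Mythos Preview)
Your proof is correct but follows a genuinely different route from the paper's. The paper (Lemma~\ref{lem:stability_bound} in the Appendix) never passes through $m'_{fc}$: instead it exploits the imaginary part of the Pastur relation, which gives the exact identity $\int \frac{\dd\mu(v)}{|\lambda v - z - m_{fc}|^2} = \frac{\im m_{fc}}{\eta + \im m_{fc}}$, so that $1-\int\dd\mu/|\cdot|^2 = \eta/(\eta+\im m_{fc})$. The upper bound on $|1-R_2|$ then follows by comparing $R_2$ to this absolute-value integral via a direct estimate of the difference (which is $\caO(\im(z+m_{fc}))$), and the lower bound by a case split on whether $|\re(z+m_{fc})|$ exceeds $\lambda$. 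This is completely elementary and uses only the behaviour of $\im m_{fc}$ from Lemma~\ref{lem:m_fc_bound}, not any differentiability of $m_{fc}$ at the edge. Your route via $1-R_2=(1+m'_{fc})^{-1}$ and $R_3=m''_{fc}/\bigl(2(1+m'_{fc})^3\bigr)$ is algebraically cleaner but leans more heavily on the analytic edge expansion $m_{fc}(z)=m_{fc}(L_i)+c_i\sqrt{L_i-z}+\caO(|z-L_i|)$; this expansion is indeed established in the Appendix (following~\cite{S2}), so your reduction is legitimate. One point that deserves a line of justification: your ``away from the edges'' step uses nonvanishing of $1+m'_{fc}$ plus continuity to conclude $|1+m'_{fc}|\sim 1$, but since $\eta$ is allowed down to $(\varphi_N)^L N^{-1}$ you must first extend $m'_{fc}$ continuously to $\eta=0$ (via analyticity of $\mu_{fc}$ in the interior and outside the support) before invoking compactness in $(E,\lambda)$. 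Conversely, your derivation of $R_3(L_i)\neq 0$ from the second-derivative identity is more self-contained than the paper's treatment, which simply cites~\cite{S2} for $R_3(L_2)>0$ and then invokes continuity.
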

The proof of this lemma is stated in the appendix; see Lemma~\ref{lem:stability_bound}.

Next, we fix $E$ and vary $\eta$ from 2 down to $(\varphi_N)^LN^{-1}$. Since $\sqrt{\kappa+\eta}$ is increasing and $\beta(E+\ii \eta)$ is decreasing in $\eta$, we conclude that the equation
\begin{align}\label{eq.193}
 \sqrt{\ka+\eta}=2U^2 K\beta(E+\ii \eta)
\end{align}
has a unique solution $\eta=\tilde{\eta}(U,E)$, for any $U>1$. Note that $\tilde{\eta}(U,E)\ll 1$.

\begin{lemma}\label{lemma.18}
There exists a constant $U_0$ such that, for any fixed $U\ge U_0$, there exists a constant $C_1(U)$, depending only on $U$, such that the following estimates hold for any $z\in \caD_L$:
\begin{align}
 \Lambda(z)&\le U\beta(z)\quad\textrm{or}\quad \Lambda(z)\ge\frac{\alpha(z)}{U}\,,&\textrm{if }\eta\ge\tilde\eta(U,E)\,,\label{dicho1}\\
\Lambda(z)&\le C_1(U)\beta(z)\,, &\textrm{if }\eta<\tilde\eta(U,E)\,,\label{dicho2}
\end{align}
on $\Omega(z)$, with $(\xi,\nu)$-high probability.
\end{lemma}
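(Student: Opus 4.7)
\medskip

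The starting point is the weak self-consistent equation of Lemma~\ref{lemma.15},
\[
\bigl|(1-R_2)[\v]-R_3[\v]^2\bigr|\le C\Psi+\tfrac{C\Lambda^2}{\log N}\,,
\]
which, using $|[\v]|=\Lambda$, the bound $|R_3|\le C_3$ from Lemma~\ref{lemma.17}(ii), and the reverse triangle inequality, I can rewrite in the form
\[
\alpha\,\Lambda\le C_3\Lambda^2+C\Psi+\tfrac{C\Lambda^2}{\log N}\,,\qquad |R_3|\,\Lambda^2\le\alpha\,\Lambda+C\Psi+\tfrac{C\Lambda^2}{\log N}\,.
\]
On $\Omega(z)$ we have $\Lambda\le(\varphi_N)^{-2\xi}\ll 1$, so the $\Lambda^2/\log N$ term is negligible relative to $\Lambda^2$. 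The proof then reduces to a careful quadratic analysis of the first inequality, split according to $\eta\gtrless\tilde\eta(U,E)$. I will take $U_0$ large enough that all absorptions below are valid.

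\medskip

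\textbf{Case 1: $\eta\ge\tilde\eta(U,E)$.} By the definition~\eqref{eq.193} of $\tilde\eta$ and Lemma~\ref{lemma.17}(i), I get $\alpha(z)\ge K^{-1}\sqrt{\kappa+\eta}\ge 2U^2\beta$. I dichotomize on whether $\Lambda\ge\alpha/U$ (in which case the second alternative of~\eqref{dicho1} already holds), or $\Lambda<\alpha/U$. In the latter case, $C_3\Lambda\le C_3\alpha/U\le\alpha/2$ for $U\ge 2C_3$, so after absorbing the $\Lambda^2$ terms I obtain $\alpha\Lambda\le 4C\Psi$, i.e.\ $\Lambda\le 4C\Psi/\alpha$. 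I now bound $\Psi$ using $\im m_{fc}\le C_2\alpha$ from Lemma~\ref{lemma.17}(i), splitting on $\Lambda$ vs.\ $C_2\alpha$:
\begin{itemize}
\item If $\Lambda\le C_2\alpha$, then $\Psi\le C'(\varphi_N)^\xi\sqrt{\alpha/(N\eta)}$, so $\Lambda\le C''(\varphi_N)^\xi/\sqrt{\alpha N\eta}\le C''(\varphi_N)^\xi/\sqrt{2U^2\beta N\eta}=(C''/U)\beta$, using that $\sqrt{\beta N\eta}=(\varphi_N)^{\xi/3}(N\eta)^{1/3}$. For $U\ge C''$, this gives $\Lambda\le U\beta$.
\item If $\Lambda>C_2\alpha$, then $\Psi\le C'(\varphi_N)^\xi\sqrt{\Lambda/(N\eta)}$, which after squaring yields $\Lambda\le C'''(\varphi_N)^{2\xi}/(\alpha^2 N\eta)$. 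Combined with $\Lambda>C_2\alpha$, this forces $\alpha^3\le C''''(\varphi_N)^{2\xi}/(N\eta)$, i.e.\ $\alpha\le C^{(5)}\beta$, contradicting $\alpha\ge 2U^2\beta$ for $U$ large. So this subcase is void.
\end{itemize}

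\medskip

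\textbf{Case 2: $\eta<\tilde\eta(U,E)$.} Here $\sqrt{\kappa+\eta}<2U^2K\beta$, hence $\kappa+\eta\ll 1$, so $|E-L_i|<\epsilon_0$ for one of $i=1,2$ and Lemma~\ref{lemma.17}(ii) furnishes a uniform lower bound $|R_3|\ge c>0$. Moreover $\alpha\le K\sqrt{\kappa+\eta}\le 2U^2K^2\beta$. I split on $\Lambda\gtrless 2\alpha/|R_3|$. If $\Lambda\le 2\alpha/|R_3|$, then $\Lambda\le 4U^2K^2\beta/c$, so $\Lambda\lesssim_{U}\beta$. Otherwise $|R_3|\Lambda^2\ge 2\alpha\Lambda$, so the second inequality above gives (after absorbing the $\Lambda^2/\log N$ term) $|R_3|\Lambda^2/4\le C\Psi$. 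Bounding $\Psi$ as before:
\begin{itemize}
\item If $\Lambda\le\im m_{fc}$, then $\Lambda\le C_2\alpha\le 2C_2U^2K^2\beta\lesssim_U\beta$.
\item If $\Lambda>\im m_{fc}$, then $\Psi\le C'(\varphi_N)^\xi\sqrt{\Lambda/(N\eta)}$, yielding $\Lambda^{3/2}\lesssim(\varphi_N)^\xi/\sqrt{N\eta}$, whence $\Lambda\lesssim(\varphi_N)^{2\xi/3}/(N\eta)^{1/3}=\beta$.
\end{itemize}
Taking the maximum of these constants yields the desired $\Lambda\le C_1(U)\beta$.

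\medskip

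The main technical obstacle is the self-reference in the bound: $\Psi$ itself depends on $\Lambda$ through $(\Lambda+\im m_{fc})/(N\eta)$, so the quadratic inequality for $\Lambda$ is implicit. This is what forces me to split into subcases based on whether $\Lambda$ exceeds $\im m_{fc}$ (equivalently $\alpha$), and to use Lemma~\ref{lemma.17}(i) crucially in two roles: providing $\im m_{fc}\le C_2\alpha$ to control $\Psi$, and identifying the threshold $\alpha\sim\sqrt{\kappa+\eta}$ that makes the choice of $\tilde\eta$ in~\eqref{eq.193} exactly the one at which the bulk dichotomy transitions into the edge-style unconditional bound. Choosing $U_0$ large enough so that all the constants $C/U$, $C''/U$, $C^{(5)}/(2U^2)$ appearing in the subcases are $\le 1$ is the final bookkeeping step.
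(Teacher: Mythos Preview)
Your proof is correct. The organizational difference from the paper lies in how the implicit $\Lambda$-dependence of $\Psi$ is handled. The paper eliminates it upfront: writing $\Psi^2=(\varphi_N)^{2\xi}(\Lambda+\im m_{fc})/(N\eta)=\caO(\beta^3\Lambda+\beta^3\alpha)$ (using $\beta^3=(\varphi_N)^{2\xi}/(N\eta)$ and $\im m_{fc}\le C_2\alpha$) and then applying Young's inequality $\sqrt{\beta^3\Lambda+\beta^3\alpha}\le C(\beta^2+\beta\alpha+\beta\Lambda)$ converts~\eqref{eq.172} into the explicit polynomial inequality
\[
\bigl|(1-R_2)[\v]-R_3[\v]^2\bigr|\le \caO\!\Bigl(\tfrac{\Lambda^2}{\log N}\Bigr)+C^*(\beta\Lambda+\alpha\beta+\beta^2)\,.
\]
After this, Case~1 yields $\alpha\Lambda\le 2(C_3+1)\Lambda^2+4C^*\alpha\beta$ in one step (whence the dichotomy), and Case~2 gives $c\Lambda^2\le C'\beta\Lambda+C'\beta^2$ directly. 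You instead keep $\Psi$ intact and resolve the self-reference by an additional layer of sub-cases ($\Lambda\lessgtr C_2\alpha$, respectively $\Lambda\lessgtr\im m_{fc}$), which is valid but duplicates work --- note for instance that in your Case~1 the sub-case $\Lambda>C_2\alpha$ is already vacuous under the standing hypothesis $\Lambda<\alpha/U$, since $C_2\ge 1$ and $U\ge 1$. The paper's Young-inequality preprocessing is the cleaner device here, and the same trick recurs in the iteration for the strong law (Lemma~\ref{lemma.22}), so it is worth internalizing.
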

\begin{proof}
Fix $z\in \caD_L$. Since
\begin{align*}
 \Psi^2=(\varphi_N)^{2\xi}\frac{\Lambda+\im m_{fc}}{N\eta}=\caO(\beta^3\Lambda+\beta^3\alpha)\,,
\end{align*}
we can write the weak self-consistent Equation~\eqref{eq.172} as
\begin{align}\label{eq.194}
(1-R_2)[\v]= R_3 [\v]^2+\caO\left(\frac{\Lambda^2}{\log N}\right)+\caO\left(\frac{\lambda(\varphi_N)^{\xi}}{\sqrt N}+\sqrt{\beta^3\Lambda+\beta^3\alpha}\right)\,.
\end{align}
 Since $\sqrt{{\beta^3\Lambda+\beta^3\alpha}}\le \beta\sqrt{\beta\Lambda}+\beta\sqrt{\alpha\beta}\le C(\beta^2+\beta\alpha+\beta\Lambda)$ by Young's inequality, we obtain from ~\eqref{eq.194}
\begin{align}\label{eq.196}
 \left|(1-R_2)[\v]- R_3 [\v]^2\right|\le \caO\left(\frac{\Lambda^2}{\log N}\right)+ C\adj(\beta\Lambda+\alpha\beta+\beta^2)\,,
\end{align}
with high probability on $\Omega(z)$, for some $C\adj\ge1$. We set $U_0\deq 9(C\adj + C_3 +1)$, where $C_3$ is the constant in Lemma~\ref{lemma.17}. Depending on the size of $\beta$ relative to~$\alpha$, we estimate either $[\v]$ or $[\v]^2$ using the above inequality. We have to consider two cases:

{\it Case 1:} $\eta\ge\tilde\eta(U,E)$ (``Bulk estimate'') From~\eqref{eq.193} we find $\sqrt{\ka+\eta}\ge 2U^2K\beta(z)$ and hence, using~\eqref{eq.192} and the definition of $C\adj$,
\begin{align*}
 \beta\le\frac{\alpha}{2U^2}\le\frac{\alpha}{2C\adj} \le\alpha\,.
\end{align*}
Thus we find from~\eqref{eq.196} with high probability on $\Omega(z)$ that 
\begin{align*}
 \alpha\Lambda\le (|R_3|+1) \Lambda^2+C\adj(\beta\Lambda+\alpha\beta+\beta^2)\le (C_3 +1)\Lambda^2+\frac{\alpha\Lambda}{2}+2C\adj\alpha\beta\,.
\end{align*}
Hence, $\alpha\Lambda\le 2(C_3 +1)\Lambda^2+4C\adj\alpha\beta$. Thus, we either have $\alpha\Lambda/2\le 2(C_3 +1)\Lambda^2$ implying $\Lambda\ge \alpha/[4(C_3 +1)] \ge \alpha/U$ (recall that \mbox{$U\ge U_0=9(C\adj+C_3+1)$}), or $\alpha\Lambda/2\le 4 C\adj\alpha\beta$ implying $\Lambda\le 8C\adj\beta\le U\beta$. This proves~\eqref{dicho1}.

{\it Case 2:} $\eta\le\tilde\eta(U,E)$ (``Edge estimate''). Note that, when $\kappa \sim 1$, the left side of \eqref{eq.193} is of order 1, while the right side $2U^2 K\beta(E+\ii \eta) = o(1)$. (Recall that $\eta \geq (\varphi_N)^N N^{-1}$.) Thus, if $\eta\le\tilde\eta(U,E)$, then $\kappa < \epsilon_0$, where $\epsilon_0$ is the constant in Lemma \ref{lemma.17}. In particular, $|R_3| > c$ in this case.

From~\eqref{eq.192} and~\eqref{eq.193} we find $\alpha\le2U^2K^2\beta$. Thus from~\eqref{eq.196}, we find
\begin{align*}
c\Lambda^2\le2\alpha\Lambda+2C\adj(\beta\Lambda+\alpha\beta+\beta^2)\le C'\beta\Lambda+C'\beta^2\,,
\end{align*}
for some constant $C'$ depending on $U$. Inequality~\eqref{dicho2} follows.
\end{proof}

With Lemmas~\ref{lemma.17} and~\ref{lemma.18} at hand, we are prepared to start the continuity argument: We choose a decreasing sequence $(\eta_k)$, $k=1,\ldots,k_0$ satisfying $k_0\le C N^8$, $|\eta_k-\eta_{k+1}|\le N^{-8}$, $\eta_1=2$ and $\eta_{k_0}=(\varphi_N)^L N^{-1}$. For fixed $E\in[-E_0,E_0]$ we set $z_k=E+\ii\eta_k$. Recall Lemma~\ref{lemma.18}. We fix a $U\ge U_0$ throughout the remainder of this section.

One easily sees that, for large enough $N$,  $\eta_1\ge\eta(U,E)$, for any $E\in[-E_0,E_0]$. Therefore Lemma~\ref{lemma.16} implies that $\Omega(z_1)$ holds with high probability. This is the starting point of the continuity argument. The next lemma extends this result to all $k\le k_0$.
\begin{lemma}\label{lemma.19}
 Define the event
\begin{align}\label{eq.1A1}
 \Omega_k\deq\Omega(z_k)\cap\{\Lambda(z_k)\le C^{(k)}(U)\beta(z_k)\}\,,
\end{align}
where
\begin{align*}
 C^{(k)}(U)\deq\begin{cases} U &\textrm{ if }\eta_k\ge\tilde\eta(U,E)\,,\\
C_1(U) &\textrm{ if }\eta_k<\tilde\eta(U,E)\,.
               \end{cases}
\end{align*}
Then, there exists $\nu>0$, such that for any $\xi$, $1\le k\le k_0$,
\begin{align}\label{eq1.A2}
 \mathbb{P}(\Omega_k^c)\le 3k\, \e{-\nu(\log N)^{\xi}}\,.
\end{align}
\end{lemma}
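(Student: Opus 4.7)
I will prove the lemma by induction on $k$, using a continuity (bootstrap) argument that combines Lemma~\ref{lemma.16} (base case), Lemma~\ref{lemma.18} (dichotomy) and Lipschitz continuity of the Green function in the spectral parameter.

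\textbf{Base case $k=1$.} At $z_1 = E + 2\ii$, Lemma~\ref{lemma.16} gives $\Lambda_o(z_1) + \Lambda(z_1) \le (\varphi_N)^{2\xi}/\sqrt{N}$ with $(\xi,\nu)$-high probability. Since $L \ge 12\xi$, this is much smaller than $(\varphi_N)^{-2\xi}$, so $\Omega(z_1)$ holds. Moreover $\eta_1 = 2 \ge \tilde\eta(U,E)$ for large $N$, and $\beta(z_1) \sim N^{-1/3}$ dominates $(\varphi_N)^{2\xi}/\sqrt{N}$, so $\Lambda(z_1) \le U\beta(z_1)$ holds trivially. This proves $\mathbb{P}(\Omega_1^c) \le 3\e{-\nu(\log N)^\xi}$.

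\textbf{Inductive step.} Assume $\mathbb{P}(\Omega_k^c) \le 3k\,\e{-\nu(\log N)^\xi}$. On $\Omega_k$, the Lipschitz estimates
\begin{align*}
 |G_{ij}(z_{k+1}) - G_{ij}(z_k)| \le \frac{|\eta_{k+1}-\eta_k|}{\eta_{k+1}^2} \le \frac{N^{-8}}{(\varphi_N)^{2L}N^{-2}} \le N^{-5}\,,\qquad |m(z_{k+1})-m(z_k)| \le N^{-5}\,,
\end{align*}
together with analogous continuity of $m_{fc}$, imply that $\Lambda_o(z_{k+1}) \le \Lambda_o(z_k)+N^{-5}$ and $\Lambda(z_{k+1}) \le \Lambda(z_k)+N^{-5}$. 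Both upper bounds are much smaller than $(\varphi_N)^{-2\xi}$ (using $C^{(k)}(U)\beta(z_k) \ll (\varphi_N)^{-2\xi}$), so $\Omega(z_{k+1})$ holds on $\Omega_k$. Apply Lemma~\ref{lemma.18} at $z_{k+1}$ on the (high-probability) event where the large deviation estimates entering its proof are valid; this costs at most $\e{-\nu(\log N)^\xi}$ in probability.

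\textbf{Ruling out the bad branch of the dichotomy.} If $\eta_{k+1} < \tilde\eta(U,E)$, then \eqref{dicho2} directly gives $\Lambda(z_{k+1}) \le C_1(U)\beta(z_{k+1}) = C^{(k+1)}(U)\beta(z_{k+1})$. If $\eta_{k+1} \ge \tilde\eta(U,E)$, then \eqref{dicho1} offers the alternative $\Lambda(z_{k+1}) \le U\beta(z_{k+1})$ or $\Lambda(z_{k+1}) \ge \alpha(z_{k+1})/U$; here the definition of $\tilde\eta$ and Lemma~\ref{lemma.17}(i) give
\begin{align*}
 \frac{\alpha(z_{k+1})}{U} \ge \frac{\sqrt{\kappa+\eta_{k+1}}}{KU} \ge 2U\beta(z_{k+1})\,.
\end{align*}
Note also that $\eta_k \ge \eta_{k+1} \ge \tilde\eta(U,E)$ and hence $C^{(k)}(U) = U$, so by induction and continuity
\begin{align*}
 \Lambda(z_{k+1}) \le \Lambda(z_k) + N^{-5} \le U\beta(z_k) + N^{-5} \le U\beta(z_{k+1}) + o(\beta(z_{k+1}))\,,
\end{align*}
using monotonicity of $\beta$ in $\eta$ (up to negligible corrections from the spacing $N^{-8}$). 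Since $U\beta(z_{k+1}) < 2U\beta(z_{k+1}) \le \alpha(z_{k+1})/U$, the second branch of \eqref{dicho1} is incompatible with the continuity bound, so $\Lambda(z_{k+1}) \le U\beta(z_{k+1}) = C^{(k+1)}(U)\beta(z_{k+1})$.

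\textbf{Conclusion of the induction.} On $\Omega_k$ intersected with the high-probability event on which the large deviation estimates at $z_{k+1}$ (used in Lemma~\ref{lemma.14} and Lemma~\ref{lemma.18}) hold, the event $\Omega_{k+1}$ is realized. Thus
\begin{align*}
 \mathbb{P}(\Omega_{k+1}^c) \le \mathbb{P}(\Omega_k^c) + 3\e{-\nu(\log N)^\xi} \le 3(k+1)\e{-\nu(\log N)^\xi}\,,
\end{align*}
which completes the induction. The main subtlety is the third step: the continuity bound $N^{-5}$ must be comparable to, and in fact much smaller than, the gap between the two branches of the dichotomy, which shrinks near the spectral edge but remains large enough thanks to the stability estimate \eqref{eq.192} combined with the definition of $\tilde\eta$. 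Since $\xi > 2$, the probability bound $3k_0\e{-\nu(\log N)^\xi}$ with $k_0 \le CN^8$ remains exponentially small in $(\log N)^\xi$, which is the estimate we need for subsequent arguments.
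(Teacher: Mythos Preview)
Your proof is correct and follows essentially the same bootstrap/continuity argument as the paper: induction on $k$, Lipschitz continuity of the resolvent to propagate $\Omega(z_k)$ to $\Omega(z_{k+1})$, and the dichotomy of Lemma~\ref{lemma.18} to rule out the bad branch. One small point of care: when you write ``$\Omega(z_{k+1})$ holds on $\Omega_k$'' using only continuity, the bound $\Lambda_o(z_{k+1})\le\Lambda_o(z_k)+N^{-5}$ together with the mere definition $\Lambda_o(z_k)\le(\varphi_N)^{-2\xi}$ does not quite give $\Lambda_o(z_{k+1})\le(\varphi_N)^{-2\xi}$; you need the stronger input $\Lambda_o(z_k)\le C\Psi(z_k)\ll(\varphi_N)^{-2\xi}$ from Lemma~\ref{lemma.14} (which you do invoke later), and this costs one high-probability event --- exactly as in the paper's decomposition $\bbP(\Omega_{k+1}^c)\le A+B+\bbP(\Omega_k^c)$ with $A\le 2\e{-\nu(\log N)^\xi}$.
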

\noindent Note that the estimates in this lemma are uniform in $\lambda\in\caD_{\lambda_0}$.
\begin{proof}
We proceed by induction on $k$. The case $k=1$ has just been proven. Hence, assume that~\eqref{eq1.A2} holds for some~$k\ge2$. Then
\begin{align*}
 \bbP(\Omega_{k+1}^{c})\le\bbP(\Omega_k\cap\Omega(z_{k+1})\cap\Omega_{k+1}^c)+\bbP(\Omega_k\cap(\Omega(z_{k+1}))^c)+\bbP(\Omega_k^c)=:B+A+\bbP(\Omega_k^c)\,,
\end{align*}
where we set
\begin{align*}
 A&\deq\bbP\big( \{\Omega_k\cap\{\Lambda>(\varphi_N)^{-2\xi}\}\} \cup \{\Omega_k\cap\{\Lambda_o>(\varphi_N)^{-2\xi} \}\} \big)\,,\\[1mm]
B&\deq\bbP\big(\Omega_k\cap\Omega(z_{k+1})\cap\{\Lambda(z_{k+1})>C^{(k+1)}(U)\beta(z_{k+1})\}\big)\,.
\end{align*}
We start by estimating $A$. Using the Lipschitz continuity of the resolvent map $z\mapsto G(z)$, $z\in\C^+$, we obtain
\begin{align*}
 |G_{ij}(z_{k+1})-G_{ij}(z_k)|\le |z_{k+1}-z_{k}|\sup_{z\in \caD_L} |G'_{ij}(z)|\le N^{-8}\sup_{z\in \caD_L}\frac{1}{(\im z)^2}\le N^{-6}\,.
\end{align*}
Thus $\Lambda(z_{k+1})\le \Lambda(z_k)+N^{-6}\le C\beta(z_k)\ll(\varphi_N)^{-2\xi}$ and
\begin{align*}
 \Lambda_o(z_{k+1})\le \Lambda_o(z_k)+N^{-6}\le C\Psi(z_k) \ll(\varphi_N)^{-2\xi}\,,
\end{align*}
with high probability on $\Omega(z_k)$, where we used Lemma~\ref{lemma.14}. Thus $A\le 2\,\e{-\nu(\log N)^{\xi}}$.

To bound $B$, suppose first that $\eta_k\ge \tilde\eta(U,E)$. Then, using the Lipschitz continuity of the resolvent map we find $|\Lambda(z_{k+1})-\Lambda(z_k)|\le N^{-6}$. Thus we find on $\Omega_k$ with high probability
\begin{align*}
 \Lambda(z_{k+1})\le\Lambda(z_{k})+N^{-6}\le U\beta(z_k)+N^{-6}\le \frac{3}{2} U\beta(z_{k+1})\,,
\end{align*}
where we used that $\beta$ is a deterministic decreasing function of $\eta$.   

Suppose next that $\eta_k>\eta_{k+1}\ge\tilde\eta(U,E)$. Then since $\frac{3}{2}U\beta<\alpha U^{-1}$, by Equation~\eqref{eq.193}, we find, in this case, $\Lambda(z_{k+1})<\alpha U^{-1}$. But the dichotomy of Equation~\eqref{dicho1} then implies on $\Omega_k\cap\Omega(z_{k+1})$ with high probability that $\Lambda(z_{k+1})\le U\beta(z_{k+1})$. If $\eta_{k+1}<\tilde\eta(U,E)$, the dichotomy immediately yields $\Lambda(z_{k+1})\le U\beta(z_{k+1})$. This shows that $B\le \e{-(\log N)^{\xi}}$ if $\eta_k\ge\tilde \eta(U,E)$.

If $\eta_k<\tilde\eta(U,E)$, then also $\eta_{k+1}< \tilde\eta(U,E)$ and hence Equation~\eqref{dicho2} gives $\Lambda(z_{k+1})\le C_1(U)\beta(z_{k+1})$. 

Thus, we have proven that, for all $k\le k_0$, $\bbP(\Omega_{k+1}^c)\le 3\e{-\nu(\log N)^{\xi}}+\bbP(\Omega_{k}^c)$. This concludes the proof of the lemma.
\end{proof}
To complete the proof of Theorem~\ref{thm.weak}, we need to extend the conclusion of the previous lemma to all $z\in\caD_L$. To accomplish this we use a simple lattice argument using the regularity of the Green function.

\begin{corollary}\label{cor.12}
 There exists constants $C$ and $\nu>0$, such that, for $\xi$ satisfying~\eqref{eq.xi},
\begin{align}\label{eq.1A3}
 \bbP\left[\bigcup_{\substack{z\in \caD_L \\ \lambda\in\caD_{\lambda_0}}}\Omega(z)^{c}\right]+\bbP\left[\bigcup_{\substack{z\in \caD_L \\ \lambda\in\caD_{\lambda_0}}}\left\{\Lambda(z)>C\beta(z)\right\}\right]\le\e{-\nu(\log N)^{\xi}}\,.
\end{align}

\end{corollary}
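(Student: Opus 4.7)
The plan is to combine Lemma~\ref{lemma.19} with a standard lattice argument that exploits the joint Lipschitz continuity of $G(z)$ in the spectral parameter $z$ and the coupling constant $\lambda$, together with a union bound whose cost is polynomial in $N$ while the per-point probability bound is stretched exponential in $\log N$.

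First, I would construct a lattice $\caL \subset [-E_0,E_0] \times \caD_{\lambda_0}$ of mesh size $N^{-s}$ for some sufficiently large constant $s$ (for concreteness, $s=10$). Since this domain is two-dimensional and bounded, $|\caL|\le CN^{2s}$. For each pair $(E^{*},\lambda^{*})\in\caL$, apply Lemma~\ref{lemma.19} along the $\eta$-sequence $(\eta_k)_{k=1}^{k_0}$ constructed just before that lemma. Lemma~\ref{lemma.19} provides, with $(\xi,\nu)$-high probability, that $\Omega(z_k)$ holds and $\Lambda(z_k)\le C^{(k)}(U)\beta(z_k)$ for each $k\le k_0$, with constants that are uniform in $\lambda\in\caD_{\lambda_0}$ (as noted in the remark following the statement). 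Because $|\caL|\cdot k_0\le CN^{2s+8}$ and each individual failure probability is at most $3k_0\e{-\nu(\log N)^\xi}$, a union bound still gives a total failure probability of the form $\e{-\nu'(\log N)^\xi}$ for some $\nu'>0$, since $(\log N)^\xi$ dominates any power of $\log N$ in the range~\eqref{eq.xi}.

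Second, I would extend from the lattice to all of $\caD_L\times\caD_{\lambda_0}$ by Lipschitz continuity. Writing $H(\lambda)=\lambda V+W$, one has $\partial_z G(z,\lambda)=G(z,\lambda)^{2}$ and $\partial_\lambda G(z,\lambda)=-G(z,\lambda)VG(z,\lambda)$; on $\caD_L$ these are bounded in operator norm by $\eta^{-2}\le N^{2}$ and $\eta^{-2}\|V\|\le CN^{2}$ respectively (using $\supp\mu=[-1,1]$). Hence for any $(z,\lambda),(z',\lambda')$ in the same lattice cell,
\[
\max_{i,j}|G_{ij}(z,\lambda)-G_{ij}(z',\lambda')|\le CN^{2-s},
\]
and analogously $|m_{fc}(z,\lambda)-m_{fc}(z',\lambda')|\le CN^{-s+O(1)}$ by differentiating the Pastur relation~\eqref{eq111} and using the stability bound~\eqref{stability bound} to control the implicit derivative. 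Choosing $s$ large, these fluctuations across a cell are much smaller than both $\beta(z)\gtrsim N^{-1/3}$ and $(\varphi_N)^{-2\xi}$, so the bounds $\Lambda_o\le(\varphi_N)^{-2\xi}$, $\Lambda\le(\varphi_N)^{-2\xi}$, and $\Lambda\le C\beta$ transfer from each $(E^{*},\lambda^{*})\in\caL$ and each $\eta_k$ to the whole cell. Finally, the gap between consecutive $\eta_k$ is at most $N^{-8}$, which is again much smaller than $\beta$ and $(\varphi_N)^{-2\xi}$, so the inequality along $(\eta_k)$ propagates to all intermediate $\eta$.

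The main obstacle is bookkeeping rather than conceptual: one must verify that the constants in Lemma~\ref{lemma.19} are indeed uniform in $\lambda\in\caD_{\lambda_0}$ (which the proof of that lemma gives, since every bound used---the stability bound~\eqref{stability bound}, the large deviation estimates, and the control of $R_2,R_3$ in Lemma~\ref{lemma.17}---is already uniform in $\lambda$), and that the Lipschitz constants used to fill in the gaps in $\lambda$ and $z$ are at most polynomial in $N$, so that a fixed choice of $s$ suffices. Once these points are checked, the corollary follows directly from the union bound described above, with constants $C$ and $\nu>0$ inherited (and possibly slightly degraded) from Lemma~\ref{lemma.19}.
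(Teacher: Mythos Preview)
Your proposal is correct and follows essentially the same lattice-plus-Lipschitz strategy as the paper. The paper's own proof is terser: it takes a lattice $\caL\subset\caD_L$ in the $z$-variable of size $|\caL|\le CN^6$ with mesh $N^{-3}$, invokes Lemma~\ref{lemma.19} at the lattice points (with the remark that the constants there are uniform in $\lambda$), uses the crude Lipschitz bound $|G_{ij}(z)-G_{ij}(z')|\le\eta^{-2}|z-z'|\le N^{-1}$ to fill in the gaps, and observes that $N^{-1}\le\beta(z)$. Your version discretizes $(E,\lambda)$ and reuses the $\eta$-sequence from Lemma~\ref{lemma.19}; this is equivalent, and your explicit treatment of the $\lambda$-continuity (via $\partial_\lambda G=-GVG$) is a useful addition that the paper leaves implicit.
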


\begin{proof}
We choose a lattice $\caL\subset \caD_L$ with $|\caL|\le CN^6$ such that for any $z\in\caD_L$ there is a $z'\in\caL$ satisfying $|z-z'|\le N^{-3}$. Using the regularity of the Green function we have for $z,z'\in \caD_L$,
\begin{align}\label{eq.1A4}
 |G_{ij}(z)-G_{ij}(z')|\le\eta^{-2}|z-z'|\le\frac{1}{N}\,.
\end{align}
Lemma~\ref{lemma.19} yields
\begin{align}\label{eq.1A5}
 \bbP\left[\bigcap_{\substack{z'\in\caL \\ \lambda\in\caD_{\lambda_0}}}\left\{\Lambda(z')\le \frac{C}{2}\beta(z')\right\}\right]\ge 1-\e{-\nu(\log N)^{\xi}}\,,
\end{align}
for some constants $C$ and $\nu$. Hence, combining~\eqref{eq.1A4},~\eqref{eq.1A5} and $N^{-1}\le\beta(z)$, we get
\begin{align*}
 \bbP\left[\bigcup_{\substack{z\in\caD_L\\ \lambda\in\caD_{\lambda_0}}}\left\{\Lambda(z)>C\beta(z)   \right\}\right]\ge 1-\e{-\nu(\log N)^{\xi}}\,.
\end{align*}
The first term of~\eqref{eq.1A3} is estimated in a similar way.
\end{proof}
This proves ~\eqref{eq.weak3} of Theorem~\ref{thm.weak}. To prove~\eqref{eq.weak1}, we observe that~\eqref{eq.161}, ~\eqref{eq.191} and~\eqref{eq.1A3} imply that 
\begin{align*}
 \Lambda_o\le C\frac{{(\varphi_N)^{\xi}}}{\sqrt{N\eta }}\,,
\end{align*}
with high probability on $\Omega(z)$. Then~\eqref{eq.1A3} and a similar lattice argument as above yields~\eqref{eq.weak1}. To prove~\eqref{eq.weak2}, we note that~\eqref{eq.148} yields
\begin{align*}
 |\E^{v_i}G_{ii}-m|=\left|\int\frac{\dd\mu(v)}{\lambda v-z-m_{fc}(z)}-\frac{1}{N}\sum_{i=1}^N\frac{1}{\lambda v_i -z-m_{fc}}\right|+\caO([\v]+\max_i |\caY_i|)\,,
\end{align*}
with $(\xi,\nu)$-high probability on $\Omega(z)$, $z\in \caD_L$, $\lambda\in\caD_{\lambda_0}$. From the large deviation estimate in~\eqref{lawlargenumber} we find
\begin{align*}
 |\E^{v_i}G_{ii}-m|\le C \left(\frac{ \lambda (\varphi_N)^{\xi}}{\sqrt{N}}+\frac{(\varphi_N)^{\xi}}{(N\eta)^{1/3}}\right)\,,
\end{align*}
with high probability on $\Omega(z)$, and we can conclude the proof of~\eqref{eq.weak2} as above. This finishes the proof of Theorem~\ref{thm.weak}.

\subsection{Delocalization of eigenvectors}
Next, we show that the eigenvectors of $H$ are completely delocalized. We denote by $\bsu_{\alpha}$ the normed eigenvector to the eigenvalue $\mu_{\alpha}$ of $H=\lambda V+W$, i.e.,
\begin{align*}
 (\lambda V+W)\bsu_{\alpha}=\mu_{\alpha} \bsu_{\alpha}\,,
\end{align*}
such that $\|\bsu_{\alpha}\|^2_2=\sum_{i}|u_{\alpha}(i)|^2=1$, where $(u_{\alpha}(i))$ are the components of $\bsu_\alpha$.
\begin{proof}[Proof of Lemma~\ref{thm: eigenvector delocalization}]
We follow~\cite{EKYY1}. For $z\in \caD_L$ and $\lambda\in\caD_L$, we have 
\begin{align*}
 |G_{ii}(z)|\le\frac{1}{|\lambda v_i-z-m_{fc}+([\v]-\caY_i)|}\,.
\end{align*}
From the weak deformed semicircle law, Theorems~\ref{thm.weak}, we conclude that $ |[\v]-\caY_i|=o(1)$, with high probability. Since $|v_i-z-m_{fc}(z)|\ge c>0$ is bounded below uniformly in $\lambda\in\caD_{\lambda_0}$ and $z\in\caD_L$, by~\eqref{stability bound}, we have
\begin{align*}
 \max_i |G_{ii}(z)|\le C\,,
\end{align*}
with $(\xi,\nu)$-high probability, uniformly in $z\in \caD_L$ and $\lambda\in\caD_{\lambda_0}$. Set $\eta\deq(\varphi_N)^LN^{-1}$, $L\deq12\xi$. Then, by the spectral decomposition of $H$,
\begin{align*}
 C\ge \im G_{ii}(\mu_{\alpha}+\ii\eta)=\sum_{\beta=1}^N\frac{\eta|u_{\beta}(i)|^2}{(\mu_{\alpha}-\mu_{\beta})^2+\eta^2}\ge\frac{|u_{\alpha}(i)|^2}{\eta}\,,
\end{align*}
with $(\xi,\nu)$-high probability. This concludes the proof.
\end{proof}

\section{Fluctuation Lemma and Strong Deformed Semicircle Law} \label{strong deformed semicircle Law}

In this section, we prove a fluctuation Lemma (see Lemma~\ref{lemma.21} below) that, when combined with the weak local deformed law yields a proof of the strong local deformed law, i.e., Theorem~\ref{thm.strong}. 

Recall that we denote by $\E_i$ the partial expectation with respect to the $i^{\textrm{th}}$-column/row of the matrix~$W$. Set $Q_i\deq \lone-\E_i$. Roughly speaking, the main result of Subsection~\ref{subsection fluctuation lemma} asserts, assuming the conclusions of Theorem~\ref{thm.weak}, that we have 
\begin{align}\label{fluctuation average}
 \frac{1}{N}\sum_{i=1}^NQ_i\left(\frac{1}{G_{ii}}\right)\lesssim\frac{1}{N\eta}\,,
\end{align}
with high probability, up to logarithmic corrections. For a detailed study of fluctuation averages (for generalized Wigner- and band matrices) similar to~\eqref{fluctuation average} we refer to~\cite{EKY}, see also~\cite{EKYY3}, whose arguments we follow. The situation for the deformed ensembles considered here is in so far different as $Q_i(G_{ii})$ is of order $\lambda$, whereas $Q_i(G_{ii})\ll 1$ in the Wigner ensemble. Note, however, that $Q_i (G_{ii}^{-1})\lesssim (N\eta) ^{-1/2}$ for the deformed model studied here as well; see below.

Using the result of Subsection~\ref{subsection fluctuation lemma}, we derive in Subsection~\ref{subsection strong self-consistent equation} a `strong' self-consistent equation for $m-m_{fc}$. In Subsection~\ref{proof of the strong deforemd semicircle law}, we prove, following the arguments of~\cite{EKYY1}, Theorem~\ref{thm.strong}.

\subsection{Fluctuation lemma}\label{subsection fluctuation lemma}
Recall the notation $\Lambda=|m-m_{fc}|$. We set $Q_i\deq\lone-\E_{i}$, where $\E_{i}$ denotes the partial expectation with respect to the $i^{\mathrm{th}}$-column/row of the matrix $W$.
\begin{lemma}\label{lemma.21}
Suppose $\xi$ satisfies~\eqref{eq.xi} and let $L\ge 12\xi$. Let $\Xi$ be an event defined by requiring that the following holds on it: There are constants $C,c>0$ such that,
\begin{itemize}
 \item[$i.$] for all $z\in \caD_L$, $\lambda\in\caD_{\lambda_0}$,
\begin{align}\label{eq.21}
 \Lambda(z)\le \gamma(z)\,,\
\end{align}
where $\gamma$ is a deterministic function satisfying $\gamma(z)\le (\varphi_N)^{-2\xi}$;

\item[$ii.$] for all $z\in \caD_L$, $\lambda\in\caD_{\lambda_0}$,
\begin{align}\label{eq.22}
 \Lambda_o(z)\le C \Psi(z)\le C\Phi(z)\,,
\end{align}
where \[\Phi(z)^2\deq(\varphi_N)^{2\xi}\frac{\im m_{fc}(z)+\gamma(z)}{N\eta}\,\] is a deterministic control parameter;
\item[$iii.$] for all $z\in \caD_L$, $\lambda\in\caD_{\lambda_0}$ and any $i\in\{1,\ldots,N\}$, $c\le |G_{ii}(z)|\le C$ and
\begin{align}\label{eq.24}
\left|Q_i\left(\frac{1}{G_{ii}(z)}\right)\right|\le C \left(\frac{(\varphi_N)^{\xi}}{\sqrt{N}}+\Psi(z)\right)\,\le C\Phi(z)\,.
\end{align}

\end{itemize}
Assume that $\Xi$ holds with $(\xi,\nu)$-high probability, then there exist constants $C,c$, independent of $\lambda$ and $z$, such that, for $p\in\N$, even and satisfying $ p\le \nu(\log N)^{\xi-3/2}$,

\begin{align}\label{eq.25}
  \E\left|\frac{1}{N}\sum_{i=1}^NQ_i\left(\frac{1}{G_{ii}(z)}\right)\right|^{p}\le (Cp)^{5p}\left(\Phi(z)\right)^{2p}\,,
\end{align}
for all $z\in \caD_L$, $\lambda\in\caD_{\lambda_0}$.
\end{lemma}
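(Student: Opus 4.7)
The strategy, inspired by~\cite{EKY}, is to expand the $p$-th moment explicitly and extract the gain from the fact that each $Q_i$ projects onto the mean-zero fluctuation carried by the $i$-th row/column of $W$. Since $p$ is even, write
\begin{align*}
\E\left|\frac{1}{N}\sum_{i=1}^N Q_i\!\left(\frac{1}{G_{ii}}\right)\right|^p
= \frac{1}{N^p}\sum_{\mathbf{i},\mathbf{j}} \E \prod_{k=1}^{p/2} Q_{i_k}\!\left(\frac{1}{G_{i_ki_k}}\right) \prod_{l=1}^{p/2} \overline{Q_{j_l}\!\left(\frac{1}{G_{j_lj_l}}\right)},
\end{align*}
and combine the Schur formula~\eqref{schur} with $\E_i m^{(i)} = m^{(i)}$ and $\E_i w_{ii} = 0$ to obtain the explicit decomposition $Q_i(1/G_{ii}) = w_{ii} - Z_i$, with $Z_i$ as in~\eqref{eq.146}. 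The contribution from the $w_{ii}$'s is a sum of independent centered variables of size $\lesssim (\varphi_N)^\xi/\sqrt{N}$, so its $p$-th moment is at most $(Cp/N)^{p/2}$ by an elementary moment bound, which is absorbed into $(Cp)^{5p}\Phi^{2p}$ since $\Phi^2 \gtrsim 1/(N\eta)$.

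The main contribution comes from the $Z_i$-terms. For a fixed multi-index let $A \deq \{i_1,\ldots,i_{p/2}\}\cup\{j_1,\ldots,j_{p/2}\}$ and iteratively apply the resolvent identity~\eqref{res.2} to rewrite every $G_{kl}^{(i_k)}$ and $G_{kl}^{(j_l)}$ occurring inside the $Z$'s in terms of the deeply reduced minor $G_{kl}^{(A)}$, plus correction terms of the form $G^{(T)}_{k,a}G^{(T)}_{a,l}/G^{(T)}_{aa}$ with $a \in A$. Truncating the expansion at depth $\caO(p)$, the leading contribution becomes a polynomial in the variables $\{w_{a,\cdot}\}_{a\in A}$ with coefficients measurable with respect to $\sigma(G^{(A)})$, which is independent of those variables. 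Conditioning on $G^{(A)}$ and evaluating the expectation through a Wick-type pairing forces each index $a \in A$ to appear with even total multiplicity (because $\E w_{ij} = 0$ and $Q_{a}$ kills constants in row $a$), so at most $p/2$ distinct indices survive; combined with the $N^{-p}$ prefactor, this yields the crucial $N^{-p/2}$ improvement. The resulting quadratic forms are then estimated via the Ward identity $\sum_l |G^{(A)}_{kl}|^2 = \eta^{-1}\im G^{(A)}_{kk}$ together with $\Lambda_o \le C\Phi$ on $\Xi$, producing a factor $\Phi^2$ per pair and overall $\Phi^{2p}$. Counting pairings and truncation branches bounded by $(Cp)^{5p}$ then closes the estimate.

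The principal obstacle is the combinatorial and analytic control of the correction terms generated by the iterated resolvent expansion: each use of~\eqref{res.2} spawns new off-diagonal factors, and one must both cut the expansion at a depth guaranteeing that the truncation error (of size $\caO(\Phi)^{\text{depth}}$ on $\Xi$) is subleading, and verify that every lower-depth term satisfies the same $\Phi^{2p}$ bound. A complication specific to the present deformed ensemble is that $Q_i(1/G_{ii})$ is only of order $\Phi$ rather than being automatically small as in the Wigner case, so the entire gain must come from the $Q_i$-projection and the pairing structure, not from smallness of individual factors. The potentially dangerous $\lambda v_i$ piece cancels out of $Q_i(1/G_{ii})$ directly but reappears through $G^{(A)}$, where it is tamed by the uniform bound $|G^{(A)}_{ii}| \le C$ from~\eqref{eq.153}, while the constraint $p \le \nu(\log N)^{\xi - 3/2}$ is exactly what is needed for a union bound over the $\caO(p)$ high-probability events inside $\Xi$ to remain effective.
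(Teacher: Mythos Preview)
Your proposal takes a different route from the paper and, more importantly, the mechanism you describe for obtaining the gain from $\Phi^p$ to $\Phi^{2p}$ is not correct.

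The paper does \emph{not} pass through the Schur decomposition $Q_i(1/G_{ii})=w_{ii}-Z_i$ and then reduce the $G^{(i)}$ inside each $Z_i$ to a common minor $G^{(A)}$. Instead it expands $1/G_{i_ki_k}$ directly by repeatedly applying the two identities
\[
G_{ij}^{(\T)}=G_{ij}^{(\T k)}+\frac{G_{ik}^{(\T)}G_{kj}^{(\T)}}{G_{kk}^{(\T)}}\,,\qquad
\frac{1}{G_{ii}^{(\T)}}=\frac{1}{G_{ii}^{(\T k)}}-\frac{G_{ik}^{(\T)}G_{ki}^{(\T)}}{G_{ii}^{(\T)}G_{ii}^{(\T k)}G_{kk}^{(\T)}}\,,
\]
generating a binary tree of monomials. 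The expansion is stopped either when a monomial is maximally expanded in all summation indices, or when it already contains $2p$ off-diagonal factors (the latter go into a remainder bounded trivially by $\Phi^{2p}$). The crucial observation is then purely structural: if $\E\big[Q_{i_1}(F_{i_1})_{\sigma_1}\cdots Q_{i_{2r}}(F_{i_{2r}})_{\sigma_{2r}}\big]\neq 0$, then for every label $a$ there must exist some \emph{other} label $b$ whose monomial $(F_{i_b})_{\sigma_b}$ carries an off-diagonal resolvent entry with $i_a$ as a lower index; otherwise all factors $b\neq a$ would be independent of row $i_a$ and $Q_{i_a}$ would kill the term. This forces at least $p$ extra off-diagonal factors across the product, on top of the baseline $|Q_i(1/G_{ii})|\le C\Phi$ from hypothesis~$iii$, yielding $\Phi^{2p}$. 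Coinciding indices are handled by a separate partition argument over $\caP_{2r}$, trading lost $Q$-cancellations against the factor $N^{-(2r-|\Gamma|)}$ from the restricted summation.

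Your claimed mechanism---``Wick-type pairing forces each index $a\in A$ to appear with even total multiplicity, so at most $p/2$ distinct indices survive, giving $N^{-p/2}$''---does not hold. Each $Z_a$ already carries two $w$'s indexed by row $a$, so parity in $a$ is automatic and imposes nothing. What actually happens after your reduction to $G^{(A)}$ is that the leading product $\prod_a Z_a^{(A)}$ is a product of conditionally independent \emph{centered} random variables and therefore has zero expectation; there is no collapse of summation indices and no $N^{-p/2}$ from that source. The entire nonzero contribution comes from the correction terms of your iterated resolvent expansion, and those are precisely where off-diagonal entries with lower indices in $A$ appear---i.e.\ the very mechanism the paper isolates, which you have not tracked. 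Your outline also leaves the case of coinciding indices $i_k=i_l$ and the crude control on $\Xi^c$ (where one needs $\bbP(\Xi^c)^{1/2}N^{Cp}\le C^p$, which is what the restriction $p\le\nu(\log N)^{\xi-3/2}$ is really for) without a concrete argument.
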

For the proof of this lemma, we need the following two auxiliary results:

\begin{lemma}\label{lemma.2.2}
 Let the event $\Xi$ be defined as in Lemma~\ref{lemma.21}. Let $\xi$ satisfy~\eqref{eq.xi} and let $L>12\xi$.  Then there exists a constant $C$ such that, for $z\in \caD_L$, $\lambda\in\caD_{\lambda_0}$, the following holds: For any $\T\subset\{1,\ldots,N\}$, with $|\T|\le (\log N)^{\xi-1}$,
\begin{align*}
 \max_{i\not\in \T}|G_{ii}^{(\T)}(z)-G_{ii}(z)|\le C{|\T|}(\Lambda_o(z))^2\,,\qquad\quad \max_{\substack{i\not=j\\ i,j\not\in\T}}|G_{ij}^{(\T)}(z)|\le C \Lambda_o(z)\,,
\end{align*}
on $\Xi$. In particular, we have that $|G_{ii}^{(\T)}(z)|\ge c$, for some $c>0$, uniformly in $\T$ and $z\in \caD_L$, $\lambda\in\caD_{\lambda_0}$.
\end{lemma}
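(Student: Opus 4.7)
The plan is to prove the three assertions simultaneously by induction on $n = |\T|$, working throughout on the event $\Xi$. The key tool is the minor form of identity \eqref{res.2}: for $k \notin \T$ and $i, j \notin \T \cup \{k\}$,
\begin{align*}
G_{ij}^{(\T k)} = G_{ij}^{(\T)} - \frac{G_{ik}^{(\T)} G_{kj}^{(\T)}}{G_{kk}^{(\T)}}.
\end{align*}
My inductive hypothesis at level $n$ will be threefold: (a) $|G_{ii}^{(\T)}| \ge c/2$ for all $i \notin \T$; (b) $|G_{ij}^{(\T)}| \le 2 \Lambda_o$ for all $i \ne j$ with $i, j \notin \T$; and (c) $|G_{ii}^{(\T)} - G_{ii}| \le C |\T| \Lambda_o^2$. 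The base case $n = 0$ is immediate from the assumed bounds in $\Xi$ (cf.\ Lemma~\ref{lemma.111}).

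For the inductive step, I would write $\T = \T' \cup \{k\}$ with $|\T'| = n - 1$, apply the displayed identity to $H^{(\T')}$ with $i = j$, and use Hermitian symmetry $G_{ki}^{(\T')} = \overline{G_{ik}^{(\T')}}$ to obtain
\begin{align*}
\bigl| G_{ii}^{(\T)} - G_{ii}^{(\T')} \bigr| = \frac{|G_{ik}^{(\T')}|^2}{|G_{kk}^{(\T')}|} \le \frac{(2\Lambda_o)^2}{c/2} \le C \Lambda_o^2,
\end{align*}
using (a) and (b) at level $n-1$. Telescoping the differences as one removes the indices of $\T$ one at a time yields (c) at level $n$. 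The same computation for $i \ne j$, combined with the triangle inequality, gives $|G_{ij}^{(\T)}| \le \Lambda_o + C n \Lambda_o^2 \le 2\Lambda_o$, proving (b) at level $n$; then (a) at level $n$ follows from $|G_{ii}| \ge c$ on $\Xi$, from (c), and from the smallness of $n \Lambda_o^2$.

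The main obstacle will be ensuring the constants do not degrade as $n$ grows. I would handle this by two observations. First, the telescoping is additive rather than multiplicative, so errors accumulate linearly in $|\T|$; the constant $C$ in (c) is determined once and for all by the a priori lower bound $c/2$ in (a). Second, under the assumption $|\T| \le (\log N)^{\xi-1}$ together with $\Lambda_o^2 \le C \Phi^2 \le C (\varphi_N)^{2\xi}/(N\eta)$ on $\Xi$ and $N\eta \ge (\varphi_N)^{12\xi}$, one has $|\T|\Lambda_o^2 = o(1)$ and $|\T|\Lambda_o = o(1)$. This is precisely what is needed to keep $|G_{ii}^{(\T)}|$ bounded away from zero along the induction, to ensure the constant $2$ in (b) need not be enlarged, and ultimately to obtain the stated estimates with universal constants independent of $\T$, $z \in \caD_L$, and $\lambda \in \caD_{\lambda_0}$.
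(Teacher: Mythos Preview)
Your proposal is correct and takes essentially the same inductive approach as the paper, which organizes the same argument via the quantities $\Gamma_l=\max_{|\T'|=l}\max_{i\ne j}|G_{ij}^{(\T')}|$ and $\widetilde\Gamma_l=\max_{|\T'|=l}\max_i|G_{ii}^{(\T')}-G_{ii}|$, showing $\Gamma_l\le 2\Gamma_1$ and $\widetilde\Gamma_l\le C l\Lambda_o^2$ as long as $l\Lambda_o$ is small. One harmless slip: for complex Hermitian $H$ one has $G_{ki}(z)=\overline{G_{ik}(\bar z)}$ rather than $\overline{G_{ik}(z)}$, so your displayed equality should read $|G_{ik}^{(\T')}G_{ki}^{(\T')}|/|G_{kk}^{(\T')}|$; since both off-diagonal factors are bounded by $2\Lambda_o$ under your hypothesis (b), the estimate is unaffected.
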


\begin{proof}
{ For simplicity we drop the $z$-dependence from the notation. For $l\in \N$, we set 
\begin{align*}
\Gamma_l \deq \max\left\{\left|G_{ij}^{(\T')}\right|\,:\,i,j\not\in\T'\,, i \neq j \,, |\T'|=l  \right\}, \quad \quad\widetilde\Gamma_l \deq \max\left\{\left|G_{ii}^{(\T')}- G_{ii}\right|\,:\,i\not\in\T'\,, |\T'|=l  \right\}.
\end{align*}
Equation~\eqref{res.2}, i.e., $ G_{ij}=G_{ij}^{(k)}+{G_{ik}G_{kj}}/{G_{kk}}$, implies that we have on $\Xi$
\begin{align*}
\Gamma_1 \le \Lambda_o +C\Lambda_o^2\ll (\varphi_N)^{-2\xi}\,, \quad \quad\widetilde\Gamma_1\le C\Lambda_o^2 \leq C(\Gamma_1)^2 \leq \Gamma_1 \ll (\varphi_N)^{-2\xi}\,.
\end{align*}
In particular, we have on $\Xi$ that $|G_{ii}^{(k)}|\ge |G_{ii}|- \widetilde\Gamma_1 \geq |G_{ii}|- 2\Gamma_1>0$, for any $k\not=i$ and $z\in \caD_L$, $\lambda\in\caD_{\lambda_0}$. Assume that there is a constant $C_0$ such that $|G_{ii}^{(\T')}|\ge |G_{ii}|- 2\Gamma_1\ge C_0^{-1}$ for any $\T'$ with $|\T'|\le l$,  $i\not\in \T'$, and $z\in \caD_L$, $\lambda\in\caD_{\lambda_0}$. Then Equation~\eqref{res.2} implies
\begin{align*}
 \Gamma_{l+1}\le \Gamma_l+C_0 \Gamma_l^2\,, \quad \quad\widetilde\Gamma_{l+1}\le \widetilde\Gamma_l+C_0 \Gamma_l^2\,,
\end{align*}
thence
\begin{align*}
 \Gamma_{l+1}\le \Gamma_1 + C_0\sum_{n=1}^l \Gamma_n^2 \,, \quad\quad \widetilde\Gamma_{l+1}\le \widetilde\Gamma_1 + C_0\sum_{n=1}^l \Gamma_n^2 \,.
\end{align*}
Thus, as long as $C_0l\Gamma_1\le 1/4$, we obtain by induction that 
$$
\Gamma_{l+1}\le 2\Gamma_1, \quad\quad\widetilde\Gamma_{l+1} \leq \widetilde\Gamma_1 + 4 C_0 l (\Gamma_1)^2 \leq 2\Gamma_1 \,,
$$
and $|G_{ii}^{(\T')}|\ge C_0^{-1}$, for any $i\not\in \T'$, $|\T'|=l+1$, $l\le (\log N)^{\xi-1}$. By induction on $l$, this proves the desired lemma.
}
\end{proof}

\begin{lemma}\label{lemma.2.3}
Let the event $\Xi$ be defined as in Lemma~\ref{lemma.21}. Let $\xi$ satisfy~\eqref{eq.xi} and let \mbox{$L\ge 12\xi$}. Assume that $\Xi$ has $(\xi,\nu)$-high probability. Then there is a constant $C$ such that for any $p,l\in\N$, with \mbox{$p,l\le (\log N)^{\xi-3/2}$}, and for any $z\in \caD_L$, $\lambda\in\caD_{\lambda_0}$, we have
\begin{align}\label{eq.26}
 \E\left|\frac{1}{G_{ii}^{(\T)}(z)}\right|^p\le C^p\,,
\end{align}
where $\T\subset\{1,\ldots,N\}$, with $|\T|\le l$, and $i\not\in\T$.
\end{lemma}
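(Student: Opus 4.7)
The plan is to split the expectation according to whether the event $\Xi$ occurs:
\[
\E\bigl|G_{ii}^{(\T)}(z)\bigr|^{-p} \;=\; \E\bigl[\lone_{\Xi}\bigl|G_{ii}^{(\T)}(z)\bigr|^{-p}\bigr]\;+\;\E\bigl[\lone_{\Xi^c}\bigl|G_{ii}^{(\T)}(z)\bigr|^{-p}\bigr].
\]
On $\Xi$ the work has already been done: Lemma~\ref{lemma.2.2} asserts a uniform lower bound $|G_{ii}^{(\T)}(z)|\ge c$ for every $\T$ with $|\T|\le(\log N)^{\xi-1}$ and every $i\notin\T$, uniformly in $z\in\caD_L$ and $\lambda\in\caD_{\lambda_0}$. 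Since $l\le p\le(\log N)^{\xi-3/2}$ falls comfortably within the range admitted by that lemma, the first summand is bounded by $c^{-p}\le C^p$ deterministically. The only genuine content of Lemma~\ref{lemma.2.3} is therefore to argue that the pathological contribution of the super-polynomially unlikely event $\Xi^c$ does not destroy this bound.

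For the $\Xi^c$ piece I would apply Cauchy--Schwarz,
\[
\E\bigl[\lone_{\Xi^c}\bigl|G_{ii}^{(\T)}(z)\bigr|^{-p}\bigr]
\le \bbP(\Xi^c)^{1/2}\Bigl(\E\bigl|G_{ii}^{(\T)}(z)\bigr|^{-2p}\Bigr)^{1/2},
\]
and control the $2p$-th moment deterministically. Using Schur's formula,
\[
\frac{1}{G_{ii}^{(\T)}(z)} \;=\; h_{ii}-z-\langle \bsh_i,\,G^{(\T i)}(z)\,\bsh_i\rangle,
\]
where $\bsh_i$ denotes the restricted $i$-th row of $H$, together with the trivial operator-norm bound $\|G^{(\T i)}(z)\|_{\mathrm{op}}\le\eta^{-1}\le N(\varphi_N)^{-L}$, I get the pointwise estimate
\[
\bigl|G_{ii}^{(\T)}(z)\bigr|^{-1}\;\le\; |h_{ii}|+|z|+\frac{N}{(\varphi_N)^L}\,\|\bsh_i\|_2^2.
\]
Since $|h_{ii}|\le\lambda_0+|w_{ii}|$ and $|z|\le E_0+1$, and since the subexponential moment bounds of Definition~\ref{assumption wigner}, combined with Minkowski's inequality for $\|\cdot\|_2^2=\sum_k|h_{ik}|^2$, yield $\E\|\bsh_i\|_2^{4p}\le(Cp)^{Cp}$ for some constant $C$, it follows that $\E|G_{ii}^{(\T)}(z)|^{-2p}\le N^{Cp}(Cp)^{Cp}$.

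The remaining step is the bookkeeping check that this crude polynomial-in-$N$ moment bound is absorbed by the super-polynomial smallness of $\bbP(\Xi^c)$. Because $p\le(\log N)^{\xi-3/2}$, the moment bound grows only like $\exp\!\bigl(C(\log N)^{\xi-1/2}\bigr)$, whereas $\bbP(\Xi^c)^{1/2}\le\exp\!\bigl(-\tfrac{\nu}{2}(\log N)^\xi\bigr)$; the product vanishes faster than any inverse polynomial in $N$, and in particular is eventually dominated by $C^p$. The two contributions combined give the claimed bound $\E|G_{ii}^{(\T)}(z)|^{-p}\le C^p$, uniformly in $z\in\caD_L$, $\lambda\in\caD_{\lambda_0}$ and $\T$. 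The main obstacle, which is really just a bookkeeping one, is to pick the universal constants so that this absorption goes through; the restriction $p,l\le(\log N)^{\xi-3/2}$ is used in precisely one place, namely to ensure $Cp\log N\ll \nu(\log N)^\xi$.
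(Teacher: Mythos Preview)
Your proposal is correct and follows essentially the same route as the paper: split on $\Xi$ versus $\Xi^c$, invoke Lemma~\ref{lemma.2.2} on $\Xi$, and on $\Xi^c$ combine Cauchy--Schwarz with Schur's formula and crude polynomial-in-$N$ moment bounds, which are absorbed by the super-polynomial smallness of $\bbP(\Xi^c)$ under the constraint $p\le(\log N)^{\xi-3/2}$. The only cosmetic difference is that you bound the quadratic form via the operator norm $\|G^{(\T i)}\|\le\eta^{-1}$ and $\|\bsh_i\|_2^2$, whereas the paper writes the cruder entry-wise estimate $(C+CN+CN^3)^p$ directly; both reach the same conclusion.
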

\begin{proof}
For simplicity we drop the $z$-dependence from our notation. By Lemma~\ref{lemma.2.2} we have $|G_{ii}^{(\T)}| \ge c$ on $\Xi$, for any $\T\not\ni i$ with $|\T|\le (\log N)^{\xi-1}$. On the complementary event~$\Xi^c$, we use Schur's complement formula~\eqref{schur},
\begin{align*}
 \frac{1}{G_{ii}^{(\T)}}=\lambda v_i+w_{ii}-z-\sum_{k,l}^{(i\T)}w_{ik}G^{(i\T)}_{kl}w_{li}\,,\quad\quad i\not\in\T\,.
\end{align*}
Then by Cauchy-Schwarz, the trivial bounds $|G_{ii}^{(\T)}|\le\eta^{-1}\le N$, $\E |h_{ij}|^{p}\le N^p$ and $\E |\la v_i|^p\le \lambda_0^p$, and the boundedness of $\caD_L$, we find
\begin{align*}
 \E\left|\frac{1}{G_{ii}^{(\T)}}\right|^p\lone(\Xi^c)\le \left[ \E\left|\frac{1}{G_{ii}^{(\T)}}\right|^{2p}\lone(\Xi^c) \right]^{1/2}\,\mathbb{P}(\Xi^c)^{1/2}\le (C+CN+CN^3)^p\mathbb{P}(\Xi^c)^{1/2}\le C^p\,,
\end{align*}
where we used that $\Xi$ has $(\xi,\nu)$-high probability and that $p\le (\log N)^{\xi-3/2}$.
\end{proof}

\begin{proof}[Proof of Lemma~\ref{lemma.21}]
For simplicity we drop the $z$-dependence from our notation.  We illustrate the idea of the proof for the simple case $p=2$:
\begin{align}\label{eq.27}
\E\left|\frac{1}{N}\sum_{i=1}^NQ_i\left(\frac{1}{G_{ii}}\right)\right|^{2}=\frac{1}{N^2}\sum_{i=1}^N \E\left|Q_i\left(\frac{1}{G_{ii}}\right)\right|^2+\frac{1}{N^2}\sum_{i\not=j}\E Q_i\ol{\left(\frac{1}{{G}_{ii}}\right)}Q_j\left(\frac{1}{G_{jj}}\right)\,.
\end{align}
The first term on the right side is bounded by
\begin{align}\label{eq.27bis}
\frac{1}{N^2}\sum_{i=1}^N \E\left|Q_i\left(\frac{1}{G_{ii}}\right)\right|^2\lone(\Xi)+\frac{1}{N^2}\sum_{i=1}^N \E\left|Q_i\left(\frac{1}{G_{ii}}\right)\right|^2\lone(\Xi^c)&\le\frac{C}{N}\Phi^2+\frac{o(1)}{N^2}\le
C\Phi^4\,,
\end{align}
where we used that $\Xi$ has $(\xi,\nu)$-high probability and that $N^{-1/2}\le C\Phi(z)$, since $\im\mfc(z)\ge C\eta$, $z\in\caD_L$.

To handle the second term on the right side of~\eqref{eq.27}, we use Equation~\eqref{res.2} to write
\begin{align}\label{eq.28}
 Q_j\left(\frac{1}{G_{jj}}\right)=Q_j\left(\frac{1}{G_{jj}^{(i)}}-\frac{G_{ij}G_{ji}}{G_{jj}G_{jj}^{(i)}G_{ii}}\right)\,,
\end{align}
 for $i\not=j$. Hence,
\begin{align*}
\E \,Q_i{\ol{\left(\frac{1}{G_{ii}}\right)}}Q_j\left(\frac{1}{G_{jj}}\right) &=\E\,{Q_i\ol{\left(\frac{1}{G_{ii}}\right)}Q_j\left(\frac{1}{G_{jj}^{(i)}}-\frac{G_{ij}G_{ji}}{G_{jj}G_{jj}^{(i)}G_{ii}} \right)}=-\E\,{Q_i\ol{\left(\frac{1}{G_{ii}}\right)}Q_j\frac{G_{ij}G_{ji}}{G_{jj}G_{jj}^{(i)}G_{ii}} }\,,
\end{align*}
where we used that $G_{jj}^{(i)}$ is independent of the entries in the $i^{\mathrm{th}}$-column/row of $W$, and that, for general random variables $A=A(W)$ and $B=B(W)$, $\E[ (Q_iA) B]=\E[B \E_iQ_i A]=0$ if $B$ is independent of the variables in the $i^{\mathrm{th}}$-column/row of $W$. Using Equation~\eqref{eq.28} once more and applying the same reasoning we obtain
\begin{align*}
\E\, Q_i\ol{\left(\frac{1}{G_{ii}}\right)}Q_j\left(\frac{G_{ij}G_{ji}}{G_{jj}G_{jj}^{(i)}G_{ii}}\right)&=\E \,Q_i\ol{\left(\frac{G_{ji}G_{ij}}{G_{ii}G_{ii}^{(j)}G_{jj}}\right)}Q_j\left(\frac{G_{ij}G_{ji}}{G_{jj}G_{jj}^{(i)}G_{ii}}\right)\,.
\end{align*}
Hence,
 \begin{align}\label{eq.29}
 \left|\frac{1}{N^2}\sum_{i\not=j}\E\, Q_i\ol{\left(\frac{1}{G_{ii}}\right)}Q_{j}\left(\frac{1}{G_{jj}}\right)\right|&\le\sup_{i\not=j}\E\left|Q_i\ol{\left(\frac{G_{ji}G_{ij}}{G_{ii}G_{ii}^{(j)}G_{jj}}\right)}Q_j\left(\frac{G_{ij}G_{ji}}{{G_{jj}G_{jj}^{(i)}G_{ii}}}\right)\right|\,.
 \end{align}
Using that, for a general random variable $A=A(W)$, $q\in\N$, 
\begin{align}\label{for general rv}
 \E|Q_{i}A|^q\le 2^{q-1}(\E|A|^q+\E|\E_i A|^q)\le 2^q\E|A|^q\,,
\end{align}
where we used Jensen's inequality for partial expectations, we obtain
\begin{align}\label{nouveau1}
 \left|\frac{1}{N^2}\sum_{i\not=j}\E\, Q_i\ol{\left(\frac{1}{G_{ii}}\right)}Q_{j}\left(\frac{1}{G_{jj}}\right)\right|&\le\sup_{i\not=j}C\left(\E\left|\frac{G_{ji}G_{ij}}{G_{ii}G_{ii}^{(j)}G_{jj}} \right|^2 \right)^{1/2}\left(\E\left|\frac{G_{ij}G_{ji}}{{G_{jj}G_{jj}^{(i)}G_{ii}}}\right|^2\right)^{1/2}\,.
\end{align}
Using that $|G_{ij}^{(\T)}|\le C\Phi$, ($i\not=j$), $|G_{ii}^{(\T)}|>c$, on $\Xi$, we have, for $i\not=j$,
\begin{align}\label{nouveau2}
 \E\left|\frac{G_{ji}G_{ij}}{G_{ii}G_{ii}^{(j)}G_{jj}} \right|^2 \lone(\Xi)\le C\Phi^4\,.
\end{align}
Using Lemma~\ref{lemma.2.3} and $|G_{ij}|\le \eta^{-1}\le N$, H\"older's inequality yields
\begin{align}\label{nouveau3}
 \E\left|\frac{G_{ji}G_{ij}}{G_{ii}G_{ii}^{(j)}G_{jj}} \right|^2 \lone(\Xi^c)\le C \,\mathbb{P}(\Xi^c)^{1/2} N^{4}\le C\Phi^4\,,
\end{align}
where we used that $\Xi$ has $(\xi,\nu)$-high probability, and the fact that $N^{-1/2}\le C\Phi(z)$, $z\in\caD_L$.

Combining the estimates~\eqref{eq.27bis}, \eqref{nouveau1},~\eqref{nouveau2} and~\eqref{nouveau3}, Inequality~\eqref{eq.25} follows for $p=2$.

Next, let $4\le p\le \nu(\log N)^{\xi-3/2}$ be even. Writing $p=2r$, we have
\begin{align}\label{eq.211}
\E\left|\frac{1}{N}\sum_{i=1}^N Q_i\left(\frac{1}{G_{ii}}\right)\right|^{2r}=\frac{1}{N^{2r}}\sum_{i_1,\ldots, i_{2r}}\E \prod_{j=1}^r{Q_{i_j}\ol{\left(\frac{1}{G_{i_ji_j}}\right)}}\prod_{j'=r+1}^{2r} Q_{i_{j'}}\left(\frac{1}{G_{i_{j'}i_{j'}}}\right)\,.
\end{align}
For simplicity, we first assume that we can replace the sum over the indices $\underline{i}\equiv(i_1,\ldots, i_{2r})$ by a truncated sum, where all indices are distinct, i.e., we consider
\begin{align}\label{eq.212-1}
 \frac{1}{N^{2r}}\sum_{\substack{i_1,\ldots, i_{2r} \\ \textrm{all distinct}}}\E \prod_{k=1}^rQ_{i_k}\ol{\left(\frac{1}{G_{i_ki_k}}\right)}\prod_{k'=r+1}^{2r} Q_{i_{k'}}\left(\frac{1}{G_{i_{k'}i_{k'}}}\right)\,.
\end{align}

As in the $p=2$ case, we make each factor of ${G_{ii}}$ in the above expression independent as of many summation indices as possible by an expansion procedure that uses the identities
\begin{align}\label{expand1}
G_{ij}^{(\T)}=G_{ij}^{(\T k)}+\frac{G_{ik}^{(\T)}G_{kj}^{(\T)}}{G_{kk}^{(\T)}}\,,
\end{align}
for $i,j,k\not\in\T$, $k\not=i,j$, and
\begin{align}\label{expand2}
 \frac{1}{G_{ii}^{(\T)}}=\frac{1}{G_{ii}^{(\T k)}}-\frac{G_{ik}^{(\T)}G_{ki}^{(\T)}}{G_{ii}^{(\T)}G_{ii}^{(\T k)}G_{kk}^{(\T)}}\,,
\end{align}
for $k\not\in\T$, $k\not=i$. 

The expansion procedure goes as follows: We start with expanding $ F_{i_1}\deq (G_{i_1i_1})^{-1}$ in~ \eqref{eq.212-1}. Using formula~\eqref{expand2}, where the choice of $k\in\{i_1,\ldots,i_{2r}\}\backslash\{{i_1}\}$ is immaterial, we can add to $(G_{i_1i_1})^{-1}$ one upper index $k$. This results in two terms, $(F_{i_1})_1\deq({G_{i_1i_1}^{(k)}})^{-1}$ and $(F_{i_1})_{0}\deq-G_{i_1k}G_{ki_1}/{G_{i_1i_1}G_{i_1i_1}^{(k)}G_{kk}}$. Using formula~\eqref{expand2} we can further expand $(F_{i_1})_1$ as $(F_{i_1})_{11}+(F_{i_1})_{10}$, where $(F_{i_1})_{11}=(G_{i_1i_1}^{(kl)})^{-1}$, for $l\in\{i_1,\ldots,i_{2r}\}\backslash\{i_1,k\}$ (again the choice of $l$ is immaterial), and $(F_{i_1})_{10}$ is a fraction with two off-diagonal resolvent entries in the numerator and three diagonal resolvent entries in the denominator. Similarly, we can split the term $(F_{i_1})_{0}=(F_{i_1})_{00}+(F_{i_1})_{01}$, where we applied~\eqref{expand1} or~\eqref{expand2} to one resolvent entry of $(F_{i_1})_0$, with an index $l\not=i_
1,k$. There is some arbitrariness in the choice of the resolvent entry used for the splitting that can, if desirable, be removed by choosing an ordering on the set of all resolvent entries $G_{ij}^{(\T)}$. We continue the splitting of the terms $(F_{i_1})_{\sigma}$, hereby generating terms indexed by sequences $\sigma$ of zeros and ones. 

The precise procedure is the following. Let $\caG$ denotes the set of monomials of resolvent entries of the form $G_{nm}^{(\T)}$, with $n\not=m$, $\T\subset\{i_1,\ldots,i_{2r}\}\backslash\{n,m\}$, and $1/G_{nn}^{(\T )}$, $\T\subset\{i_1,\ldots,i_{2r}\}\backslash\{n\}$. Given $F\in\caG$, the formulas~\eqref{expand1} and~\eqref{expand2} define an operation, $F\mapsto F_1\in\caG$, by adding an upper index, e.g., $G_{nm}^{(\T)}\mapsto G_{nm}^{(\T k)}$, and its complementary operation $F\mapsto F_0$, e.g., $G_{nm}^{(\T)}\mapsto {G_{nk}^{(\T)}G_{km}^{(\T)}}/{G_{kk}^{(\T)}}$, such that $F=(F)_0+(F)_1$. Composing these operations we generate from $F\equiv (F)_{\emptyset}\in\caG$, elements $(F)_\sigma\in\caG$, labeled by binary sequences $\sigma$. For these operations we use the notation $\sigma\mapsto\sigma0$ and $\sigma\mapsto\sigma1$. Given $F\equiv (F)_{\emptyset}\in\caG$, the recursive algorithm is as follows:
\begin{itemize}
 \item [$(A)$] {\it Stopping rules}
\begin{itemize}
\item[$(1)$] 
If all terms in $(F)_\sigma$ are maximally expanded, i.e., each resolvent entry in $(F)_\sigma$ is of the form
$G_{nm}^{(\T)}$ with $n,m\not\in\T$, $(\T nm)=\{i_1,\ldots,i_{2r}\}$;

\item[$(2)$] else if $(F)_\sigma$ contains at least $2p$ off-diagonal resolvent entries in the numerator;
\end{itemize}
we stop the expansion. 
\item[$(B)$] Else, we choose an arbitrary resolvent entry $G_{nm}^{(\T)}$ in $(F)_\sigma$. If $n=m$, we use~\eqref{expand2}, with some arbitrary $k\in\{i_1,\ldots,i_{2r}\}\backslash\{ (\T n)\}$, to split $(F)_\sigma=(F)_{\sigma0}+(F)_{\sigma1}$. If $n\not= m$, we use~\eqref{expand1}, with some arbitrary $k\in\{i_1,\ldots,i_{2r}\}\backslash\{(\T nm)\}$, to split  $(F)_\sigma=(F)_{\sigma0}+(F)_{\sigma1}$.
\end{itemize}
Below, we show that the stopping rules ensure that the recursive procedure is terminated after a finite number of steps. Choosing $F=(F_{i_1})=(G_{i_1i_1})^{-1}$, the above procedure yields
\begin{align}\label{eq.220}
  Q_{i_1}\ol{\left(\frac{1}{G_{i_1i_1}}\right)}Q_{i_2}\ol{\left(\frac{1}{G_{i_2i_2}}\right)}\ldots Q_{i_p}\left(\frac{1}{G_{i_pi_p}}\right)=\sum_{\sigma}Q_{i_1}\ol{(F_{i_1})}_\sigma Q_{i_2}\ol{\left(\frac{1}{{G_{i_2i_2}}}\right)}\ldots Q_{i_p}\left(\frac{1}{G_{i_pi_p}}\right)+R_{i_1}\,,
\end{align}
where the summation index $\sigma$ runs over a set of finite binary sequences (the number of terms in the sum is estimated below). The summands $(F_{i_1})_\sigma\in\caG$ are fractions with off-diagonal entries of $G$ in the numerator (except for the maximally expanded leading term $(G_{i_1i_1}^{(\T)})^{-1}$) and diagonal resolvent entries in the denominator. All these entries are maximally expanded in the summation indices. Each term in the rest term $R_{i_1}$, a fraction of resolvent entries, contains at least $2p$ off-diagonal resolvent entries in the numerator. 

We claim that the total number of terms generated by the above recursive procedure is bounded by $(Cp)^{2p}$, for some $p$-independent constant $C$. Indeed, the procedure described above generates a finite rooted binary tree, whose vertices are labeled by binary sequences $\sigma$. By the stopping rules $(1)$ and $(2)$, each term on the right side of~\eqref{eq.220} corresponds to a leaf node of this tree. Thus to get an upper bound on the number of terms in~\eqref{eq.220}, it is enough to estimate the depth of this tree. 

To estimate the depth of the tree, we estimate the maximal length of a generated sequences $\sigma$. We first observe that the number of off-diagonal resolvent entries is raised by one or two under the operation $\sigma\mapsto\sigma0$ (in case it is first applied to $F_{i_1}$, the number is raised by two). Hence, by stopping rule $(2)$, the leaf nodes are labeled by sequences $\sigma$ with less than $2p$ zeros in it. Also note that the operation $\sigma\mapsto\sigma0$ increases the number of resolvent entries by at most $4$, but the operation $\sigma\mapsto\sigma1$ does not change this number. Thus the total number of resolvent entries in a term $(F_{i_1})_\sigma$ is bounded by $8p+1$. Hence, a bound on the number of upper indices for a vertex is $(8p+1)p$. In other words, a sequence labeling a vertex has at most $(8p+1)p$ ones in it. Thus a sequence labeling a leaf node has a most $(8p+1)p$ ones and $2p$ zeros, therefore has length at most~$8p^2+3p$ and we conclude that the number of leaf nodes of the 
tree is bounded by
\begin{align*}
 \sum_{q=0}^{2p}\binom{8p^2+3p}{ q} \leq (2p+1) \frac{(11p^2)^{2p+1}}{(2p)!} \leq (Cp)^{4p} (2p)^{-2p} \le (Cp)^{2p}\,,
\end{align*}
for some constant $C$, independent of $p$. 

It follows that the right side of~\eqref{eq.220} contains at most $(Cp)^{2p}$ terms. In particular, the remainder $R_{i_1}$ contains at most $(Cp)^{2p}$ terms, each of which contains at least $2p$ off-diagonal matrix resolvent entries and less than $3p$ diagonal resolvent entries. By assumptions $ii$, Inequality~\eqref{for general rv}, Lemma~\ref{lemma.2.2} and Lemma~\ref{lemma.2.3}, the rest term $R_{i_1}$ satisfies
\begin{align*}
 \E|R_{i_1}|\le (Cp)^{2p}\Phi(z)^{2p}\,,
\end{align*}
for some sufficiently large $C$.

Next, we expand the term term $(G_{i_2i_2})^{-1}$ in~\eqref{eq.220}. We apply the same procedure to each `leaf node term'
\begin{align*}
 Q_{i_1}\ol{(F_{i_1})}_{\sigma}Q_{i_2}\ol{\left({\frac{1}{G_{i_2i_2}}}\right)}\ldots Q_{i_{p}}\left(\frac{1}{G_{i_{p}i_p}}\right)
\end{align*}
in~\eqref{eq.220}. Note that we do not expand the remainder term $R_{i_1}$ any further nor start a new expansion separately for $(G_{i_2i_2})^{-1}$ (this would yield an expansion with too many terms for our purposes). We also modify the stopping rule $(2)$ accordingly: We stop expanding a term in~\eqref{eq.220} whenever it contains at least $2p$ off-diagonal resolvent entries. Applying the algorithm $(A)$-$(B)$ to ~\eqref{eq.220} we find
\begin{align}\label{expand3}
 Q_{i_1}\ol{\left(\frac{1}{G_{i_1i_1}}\right)}Q_{i_2}\ol{\left(\frac{1}{G_{i_2i_2}}\right)}\ldots Q_{i_{2r}}\left(\frac{1}{G_{i_{2r}i_{2r}}}\right)=\sum_{\sigma_1,\sigma_2}Q_{i_1}\ol{(F_{i_1})}_{\sigma_1}Q_{i_2}{\ol{(F_{i_2})}_{\sigma_2}}\ldots Q_{i_{2r}}\left(\frac{1}{G_{i_{2r}i_{2r}}}\right)+R_{i_1}+R_{i_2}\,,
\end{align}
 where the remainder $R_{i_2}$ satisfies the same bound as $R_{i_1}$. The effect of the modified stopping rule $(2)$ is that the sequences $\sigma_1$ and $\sigma_2$ together contain in total at most $2p-1$ zeros.

 Expanding the remaining $2r-2$ factors of $(G_{ii})^{-1}$ in~\eqref{expand3}, we find
\begin{align}\label{eq.212}
\E \prod_{k=1}^rQ_{i_k}\ol{\left(\frac{1}{G_{i_ki_k}}\right)}\prod_{k'=r+1}^{2r} Q_{i_{k'}}\left(\frac{1}{G_{i_{k'}i_{k'}}}\right)=\sum_{\sigma_1,\ldots,\sigma_{2r}}\E\left[Q_{i_1}\ol{(F_{i_1})}_{\sigma_1}\cdots Q_{i_{2r}}{(F_{i_{2r}})}_{\sigma_{2r}}\right]+\E\mathcal{R}\,,
\end{align}
where the remainder $\mathcal{R}=\sum_{q=1}^p R_{i_q}$ satisfies
\begin{align}\label{bound on remainder}
 \E|\mathcal{R}|\le (Cp)^{2p}\Phi^{2p}\,,
\end{align}
for some sufficiently large $C$. It therefore suffices to consider only the first term on the right side of~\eqref{eq.212}, in which all monomials $(F_{i_k})_{\sigma_k}$ are maximally expanded and the summation runs over $2r$ binary sequences of finite length. Note that the total number of zeros in the array of sequences $\underline\sigma=(\sigma_1,\ldots,\sigma_{2r})$ is, by the modified stopping rule~$(2)$, at most~$2p-1$. It follows that the total number of terms in~\eqref{expand3} is less than $(Cp)^{3p}$. Indeed, this can be checked in the same way as is done above: A term in \eqref{expand3} corresponds to a leaf node on a rooted binary tree, whose vertices are labeled by $\underline{\sigma}$. The total number of zeros in~$\underline{\sigma}$ indexing a leaf node is bounded by~$2p$ and the number of ones is less than $(8p+1)p^2$. It follows that the total number of terms in the expansion of~\eqref{expand3} is bounded by $(Cp)^{3p}$ and we find
\begin{align}\label{eq.240}
 \left|\sum_{\sigma_1,\ldots,\sigma_{2r}}\E Q_{i_1}\ol{(F_{i_1})}_{\sigma_1}\cdots Q_{i_{2r}}(F_{i_{2r}})_{\sigma_{2r}}\right|\le (Cp)^{3p} \Phi^{p}\,.
\end{align}
Recall that, due to our simplification assumption all indices $(i_1,\ldots,i_p)$ are distinct. As in the case $p=2$ we now use the presence of the $Q$'s: First, we claim that, for any label $a\in\{1,\ldots,2r\}$,
\begin{align}\label{zeros in F}
 \left|(F_{i_a})_{\sigma_a}\right|\lone(\Xi)\le (C\Phi)^{1+\mathbf{0}(\sigma_a)}\,,
\end{align}
where $\mathbf{0}(\sigma_a)$ denotes the number of zeros in the sequence $\sigma_a$. For $\mathbf{0}(\sigma_a)=0$, this follows from hypothesis~$iii$. If $\mathbf{0}(\sigma_a)\ge 1$, the successive application of the operation $\sigma\mapsto\sigma0$ has generated at least $\mathbf{0}(\sigma_a)+1$ off-diagonal resolvent entries and at most $3\, \mathbf{0}(\sigma_a)+1$ diagonal resolvent entries.

Next, choose $(i_1,\ldots,i_{2r})$ and $(\sigma_1,\ldots,\sigma_{2r})$ in~\eqref{eq.240} such that 
\begin{align}\label{Q-term}
 \E\, Q_{i_1}{\ol{(F_{i_1})}_{\sigma_1}}\cdots Q_{i_{2r}}(F_{i_{2r}})_{\sigma_{2r}} \not=0\,.
\end{align}
 The key observation is the following:
\begin{itemize}
 \item[$(C)$]
Let $a\in\{1,\ldots,2r\}$, then there is a label $b\in\{1,\ldots,{2r}\}\backslash\{a\}$, such that the monomial $(F_{i_b})_{\sigma_{b}}$ contains an off-diagonal resolvent entry with $i_a$ as a lower index. We use the notation $b=\mathbf{l}(a)$, if $b$ is linked to $a$ in this sense.
\end{itemize}

  Indeed, assuming the contrary, we conclude that all monomials $(F_{i_c})_{\sigma_c}$ in~\eqref{Q-term}, but $(F_{i_a})_{\sigma_a}$, are independent of the random variables indexed by $i_a$. But due to the presence of the $Q_{i_a}$ this term has vanishing expectation. Note that this argument relies on the assumptions that all indices $(i_1,\ldots,i_{2r})$ are distinct.

Next, let $a\in\{1,\ldots,{2r}\}$ and denote by $\mathbf{l}_a\deq|\mathbf{l}^{-1}(\{a\})|$, the number of times the label $a$ is linked to some label $b$ in the sense of $(C)$. Then, 
\begin{align}\label{equation with |a|}
|(F_{i_a})_{\sigma_a}|\lone(\Xi)\le C^p\Phi^{1+\mathbf{l}_a}\,.
\end{align}
Indeed, for each label $c\in \mathbf{l}^{-1}(\{a\})$ we had to use at least once the operation $\sigma\mapsto\sigma0$ to get the lower index $i_a$. Hence, $\mathbf{0}(\sigma_{b})$, the number of zeros in $\sigma_{b}$, is at least $\mathbf{l}_a$. Inequality~\eqref{equation with |a|} follows from~\eqref{zeros in F}. Finally, noting that $\sum_{a}\mathbf{l}_a\ge p$ by $(C)$, we find that, for terms as in~\eqref{Q-term},
\begin{align}
 \left|\E\, Q_{i_1}{\ol{(F_{i_1})}_{\sigma_1}}\cdots Q_{i_{2r}}(F_{i_{2r}})_{\sigma_{2r}}\right|\le (C\Phi)^{2p}\,.
\end{align}

Combination with the bound ~\eqref{bound on remainder} on the remainder term, the estimate on the number of terms in~\eqref{eq.212}, we thus obtain
\begin{align*}
\left| \frac{1}{N^{2r}}\sum_{\substack{i_1,\ldots, i_{2r} \\ \textrm{all distinct}}}\E \prod_{k=1}^rQ_{i_k}\ol{\left(\frac{1}{G_{i_ki_k}}\right)}\prod_{k'=r+1}^{2r} Q_{i_{k'}}\left(\frac{1}{G_{i_{k'}i_{k'}}}\right)\right|\le (Cp)^{cp}\Phi^{2p}\,,
\end{align*}
for any $p\le \nu(\log N)^{\xi-3/2}$, under the simplifying assumption that all indeces are distinct in the sum.

To deal with the general case, we go back to~\eqref{eq.211}. Abbreviate $\underline{i}=(i_1,\ldots,i_{2r})$. Denote by $\caP_{2r}$ the set of partitions of $\{1,\ldots,2r\}$. Let $\Gamma(\underline{i})$ be the element of $\caP_{2r}$ defined by the equivalence relation $a\sim b$, if and only if $i_a=i_b$. Then we can write
\begin{align}\label{without restrictions}
 \E\left|\frac{1}{N}\sum_{i=1}^N  Q_i\left(\frac{1}{G_{ii}}\right)\right|^{2r}=\frac{1}{N^{2r}}\sum_{\Gamma\in\caP_{2r}}\sum_{i_1,\ldots,i_{2r}}\lone(\Gamma=\Gamma(\underline{i}))\,\E 	\,Q_{i_1}\ol{\left(\frac{1}{G_{i_1i_1}}\right)}\ldots Q_{i_{2r}}\left(\frac{1}{G_{i_ri_r}}\right)\,.
\end{align}
Fix now $\underline{i}$, and denote by $\Gamma\deq\Gamma(\underline{i})$, the partition induced by the equivalence relation $\sim$. For a label $a\in\{1,\ldots,2r\}$, we denote by $[a]$ the block of $a$ in $\Gamma$. Let $S(\Gamma)\deq\{ a\,:\ |[a]|=1\}\subset\{1,\ldots,2r\}$ denote the set of single labels and abbreviate by $s\deq|S(\Gamma)|$ its cardinality. We denote by $\underline{i}_{S(\Gamma)}\deq (i_a)_{a\in S}$, the summation indices associated with single labels. Notice that if $a$ is a single label (for some $\Gamma$), then there is exactly one $Q_{i_a}$ on the right side of~\eqref{without restrictions}. However, if $a$ is not a single label (for some $\Gamma$), $Q_{i_a}$ appears more than once on the right side of~\eqref{without restrictions}. 

Next, we expand the summands on the right side of~\eqref{without restrictions}, using the recursive procedure $(A)$-$(B)$, but we only expand in the single labels. More precisely, the recursive procedure is now defined as follows:
\begin{itemize}
 \item [$(A)$] {\it Stopping rules}
\begin{itemize}
\item[$(1')$] If all terms in $(F)_\sigma$ are maximally expanded in the single labels; a resolvent entry $G_{nm}^{(\T)}$, is maximally expanded in the single labels if $\underline{i}_S\subset (\T nm)$, $n,m\not\in\T$;
\item[$(2)$] else if $(F)_\sigma$ contains at least $2p$ off-diagonal resolvent entries in the numerator;

\end{itemize}
we stop the expansion. 
\item[$(B')$] Else, we choose an arbitrary resolvent entry $G_{nm}^{(\T)}$ in $(F)_\sigma$. If $n=m$, we use~\eqref{expand1}, with some arbitrary index $k\in\{\underline{i}_S\}\backslash\{ (\T n)\}$, to split $(F)_\sigma=(F)_{\sigma0}+(F)_{\sigma1}$. If $n\not= m$, we use~\eqref{expand2}, with some arbitrary $k\in\{\underline{i}_S\}\backslash\{(\T nm)\}$, to split  $(F)_\sigma=(F)_{\sigma0}+(F)_{\sigma1}$.
\end{itemize}
Applying this procedure to $(G_{i_1i_1})^{-1}$, we obtain a similar expansion as in~\eqref{eq.220}. Expanding the remaining factors of $(G_{i_ji_j})^{-1}$ as before (using only single labels), we obtain the analogue expression to~\eqref{eq.212}. The remainder terms can be estimated in the same way as before, simply by using the fact that each term in the remainder contains at least $2p$ off-diagonal resolvent entries. Also note that the bounds on the number of terms in the expansion still apply.  It therefore suffices to bound the summands in the first term on the right side of~\eqref{eq.212}, (now some of the indices may coincide). Recall that $s$ denotes the number of single labels in the fixed configuration $\underline{i}$. We claim that
\begin{align}\label{Q bound general}
\left| \E\, Q_{i_1}\ol{(F_{i_1})}_{\sigma_1}\cdots Q_{i_{2r}}(F_{i_{2r}})_{\sigma_{2r}}\right|\le C^{2p}\Phi^{p+s}\,.
\end{align}
This follows in a similar way as above, using the following observation:
\begin{itemize}
 \item[$(C')$]
Let $a\in S(\Gamma)$, then there is an label $b\in\{1,\ldots,{2r}\}\backslash\{a\}$, such that the monomial $(F_{i_b})_{\sigma_{b}}$ contains an off-diagonal resolvent entry with $i_a$ as a lower index. 
\end{itemize}
The bound~\eqref{Q bound general} now follows in the same way as above, by only considering single labels.

We now return to the sum in~\eqref{without restrictions}. We perform the summation by first fixing a partition $\Gamma\in\caP_{2r}$. Then
\begin{align}\label{sum over non label}
 \frac{1}{N^{2r}}\sum_{\underline{i}}\lone(\Gamma=\Gamma(\underline{i}))\le \left(\frac{1}{N}\right)^{2r-|\Gamma|}\le \left(\frac{1}{\sqrt{N}}\right)^{2r-s}\,.
\end{align}
Here we used that any block in the partition $\Gamma$ that is not associated to a single label, consists of at least two elements. Thus $|\Gamma|\le (2r+s)/2=r+s/2$. Now, using ${N}^{-1/2}\le C\Phi$, we find, combining~\eqref{sum over non label}, ~\eqref{Q bound general} and~\eqref{bound on remainder},
\begin{align*}
 \E\left|\frac{1}{N}\sum_{i=1}^NQ_{i}\left(\frac{1}{G_{ii}} \right)\right|^{2r}\le (Cp)^{3p}\sum_{\Gamma\in\caP_{2r}}(C \Phi)^{2p}\,.
\end{align*}
Finally, we recall that the number of partitions of $p$ elements is bounded by $(Cp)^{2p}$, thus
\begin{align*}
 \E\left|\frac{1}{N}\sum_{i=1}^NQ_{i}\left(\frac{1}{G_{ii}} \right)\right|^{2r}\le (Cp)^{5p} \Phi^{2p}\,.
\end{align*}
This proves the desired lemma.
\end{proof}

We will use the fluctuation Lemma~\ref{lemma.21} in a slightly generalized setting. Abbreviate
\begin{align}\label{small green function}
 g_i(z)\deq\frac{1}{\lambda v_i-z-\mfc(z)}\,,\quad\quad z\in\caD_L\,,\quad\lambda\in\caD_{\lambda_0}\,,\quad i\in\{1,\ldots,N\}\,,
\end{align}
and also recall that the random variables $(g_i)$ are bounded uniformly in $\lambda$ and $z$ as follows form the stability bound~\eqref{stability bound}. We will use the following corollary of the fluctuation lemma:
\begin{corollary}\label{generalized Z lemma}
Suppose $\xi$ satisfies~\eqref{eq.xi} and let $L\ge 12\xi$. Let $\Xi$  be the event defined in Lemma~\ref{lemma.21} and assume it has $(\xi,\nu)$-high probability. Then there exists a constant $C$, independent of $\lambda$ and $z$, such that, for $p\in\N$, even and satisfying $ p\le \nu(\log N)^{\xi-3/2}$, and $n=1,2,3$,
\begin{align}\label{generalied Z lemma estimate}
  \E\left|\frac{1}{N}\sum_{i=1}^NQ_i\left(g_i^n\frac{1}{G_{ii}}\right)\right|^{p}\le (Cp)^{5p}\left(\Phi(z)\right)^{2p}\,,
\end{align}
for $z\in \caD_L$, $\lambda\in\caD_{\lambda_0}$.

\end{corollary}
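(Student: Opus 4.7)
The plan is to reduce the corollary directly to Lemma~\ref{lemma.21} by exploiting two elementary properties of $g_i$. First, the stability bound~\eqref{stability bound} gives $|g_i(z)| \le c^{-1}$ uniformly in $i$, $z \in \caD_L$, and $\lambda \in \caD_{\lambda_0}$, so $g_i^n$ is a deterministically bounded random variable. Second, $g_i$ depends only on $v_i$ and is therefore $\sigma(V)$-measurable, hence independent of all entries of $W$. Since $\E_i$ is the partial expectation over only the $i$-th row/column of $W$, these facts combine to give $\E_i(g_i^n X) = g_i^n\,\E_i X$ for every random variable $X$, and consequently $Q_i(g_i^n/G_{ii}) = g_i^n\, Q_i(1/G_{ii})$.

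With this identity in hand, I would repeat the proof of Lemma~\ref{lemma.21} essentially verbatim on the equivalent expression $\frac{1}{N}\sum_i g_i^n Q_i(1/G_{ii})$. Expanding the $p$-th moment as in~\eqref{eq.211} and~\eqref{without restrictions}, each summand acquires an additional multiplicative factor $\prod_k g_{i_k}^n$ (with complex conjugates on half of the factors); pointwise this factor is bounded by $C^{np}$ and can be absorbed into the overall constant. The recursive expansion algorithm $(A)$--$(B')$ is then applied to each $Q_{i_k}(1/G_{i_k i_k})$ exactly as before, with the $g_{i_k}^n$ prefactors treated as passive spectators that do not alter the combinatorial structure of the expansion tree or the bound~\eqref{bound on remainder} on the remainder.

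The one point deserving a sanity check is the combinatorial observation~$(C')$ --- that for every single label $a$, some other monomial $(F_{i_b})_{\sigma_b}$ must contain an off-diagonal resolvent entry with lower index $i_a$, lest the expectation vanish. This remains valid in the present setting because the $g_{i_k}^n$ factors, being $\sigma(V)$-measurable, are independent of the $W$-entries in every row/column; they therefore do not interfere with the vanishing identity $\E[Q_{i_a}(F)\cdot R] = 0$ whenever $R$ has no dependence on $W$-entries in row/column $i_a$. The remainder of the proof of Lemma~\ref{lemma.21} then goes through unchanged, yielding~\eqref{generalied Z lemma estimate} after enlarging the constant by at most a factor of $C^n$. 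I do not foresee any substantive new obstacle; the corollary is essentially a cosmetic extension of Lemma~\ref{lemma.21} enabled by the fact that $g_i$ is both bounded and $\sigma(V)$-measurable.
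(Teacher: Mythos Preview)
Your proposal is correct and follows essentially the same approach as the paper. The paper packages the argument slightly differently by defining $\widetilde{Q}_i \deq Q_i g_i^n$ and verifying that the two abstract properties of $Q_i$ used in the proof of Lemma~\ref{lemma.21} (the vanishing property and the moment bound~\eqref{for general rv}) continue to hold for $\widetilde{Q}_i$, whereas you factor $g_i^n$ out via the identity $Q_i(g_i^n X)=g_i^n Q_i X$ and treat it as a pointwise bounded spectator; these are equivalent formulations of the same observation, and the paper likewise concludes that the proof of Lemma~\ref{lemma.21} goes through after multiplying the bounds by~$C^p$.
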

\begin{proof}
In the proof of Lemma~\ref{lemma.21}, we used the following two properties of $(Q_i)$:
\begin{itemize}
 \item [$i.$] For general random variables $A=A(W)$ and $B=B(W)$, $\E[ ({Q}_iA) B]=\E[B \E_i{Q}_i A]=0$, if $B$ is independent of the variables in the $i^{\mathrm{th}}$-column/row of $W$.
\item[$ii.$] For a general random variable $A=A(W)$, $q\in\N$, $\E|Q_iA|^q\le 2^q\E|A|^q$; see~\eqref{for general rv}.
\end{itemize}
Fix $n$ and define $\widetilde{Q}_i\deq Q_{i}g_i^n$. Since the random variables $(v_i)$ are independent of the random variables $(w_{ij})$, property $i$ holds true with $Q_i$ replaced by $\widetilde{Q}_i$. (Here $\E$ stands for the expectation with respect the $(w_{ij})$ and the~$(v_i)$ random variables, but, since the random variables $(g_i)$ are uniformly bounded, one could replace $\E$ by the conditional expectation with respect the $(w_{ij})$). Since the family of random variables $(g_i)$ is uniformly bounded and independent of $W$, property $ii$ holds now with $\E|\widetilde{Q}_i A|^q\le 2^q \E[|g_i|^{qn}] \E[|A|^q]\le C^q\E|A|^q$, for some constant~$C$, for any random variables $A=A(W)$ depending only on $W$. Thus the proof of Lemma~\ref{lemma.21} also applies to left side of~\eqref{generalied Z lemma estimate}: It suffices to multiply the bounds with $C^{p}$.

\end{proof}

\subsection{Strong self-consistent equation}\label{subsection strong self-consistent equation}
With Corollary~\ref{generalized Z lemma} at hand, it is easy to derive a stronger self-consistent equation for $m-\mfc$ than the one obtained in Lemma~\ref{lemma.15}. Recall the notation $[Z]=\frac{1}{N}\sum_{i=1}^NZ_i$.
\begin{lemma}\label{cor.french}
Suppose $\xi$ satisfies~\eqref{eq.xi}. Assume that there exists a deterministic function $\gamma(z)$ with \mbox{$\gamma(z)\le (\varphi_N)^{-2\xi}$} such that, for all $z\in \caD_L$, $\lambda\in\caD_{\lambda_0}$,
\begin{align*}
 \Lambda(z)\le\gamma(z)\,,
\end{align*}
with $(\xi,\nu)$-high probability. Then we have with $(\xi-2,\nu)$-high probability
\begin{align}\label{eq.251}
 |[Z]|\le C(\varphi_N)^{10\xi}\left(\frac{\im m_{fc}(z)+\gamma(z)}{N\eta}\right)\,,
\end{align}
and, for $n=1,2,3$,
\begin{align}\label{eq.250}
 \left|\frac{1}{N}\sum_{i=1}^N Q_i\left(g_i^n\frac{1}{G_{ii}}\right)\right|\le C(\varphi_N)^{10\xi}\left(\frac{\im m_{fc}(z)+\gamma(z)}{N\eta}\right)\,,
\end{align}
where the constant $C$ can be chosen uniformly in $z\in\caD_L$ and $\lambda\in\caD_{\lambda_0}$.

Moreover, the strong self-consistent equation
\begin{align}\label{self}
\left| (1-R_2) [\v]- R_3 [\v]^2\right|&\le \caO\left(\frac{\Lambda^2}{\log N}\right)+\caO\left((\varphi_N)^{10\xi}\,\left(\frac{\im m_{fc}(z)+\gamma(z)}{N\eta}\right)\right)+\caO\left(\frac{\lambda(\varphi_N)^{\xi}}{\sqrt{N}}\right)
\end{align}
holds with $(\xi-2,\nu)$-high probability, uniformly in $z\in\caD_{L}$, $\lambda\in\caD_{\lambda_0}$. 
\end{lemma}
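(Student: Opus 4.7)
\emph{Reduction via Schur's formula.} The starting point will be the identity $Z_i = w_{ii} - Q_i(1/G_{ii})$, which follows from Schur's formula~\eqref{schur}: since $\E_i(h_{ii})=\lambda v_i$ and $\E_i\sum^{(i)}_{k,l} h_{ik}G^{(i)}_{kl}h_{li} = m^{(i)}$, one has $\E_i(1/G_{ii})=\lambda v_i-z-m^{(i)}$, hence $Q_i(1/G_{ii})=w_{ii}-Z_i$. Because $g_i$ in~\eqref{small green function} depends only on $v_i$, it is $\E_i$-measurable, so for $n=0,1,2,3$,
\begin{align*}
Q_i(g_i^n/G_{ii}) \;=\; g_i^n \,Q_i(1/G_{ii}) \;=\; g_i^n w_{ii} - g_i^n Z_i.
\end{align*}
After averaging, the bounds~\eqref{eq.251} and~\eqref{eq.250} reduce to high-probability control of $\tfrac{1}{N}\sum_i Q_i(g_i^n/G_{ii})$, supplemented by the trivial bound $|\tfrac{1}{N}\sum_i g_i^n w_{ii}| \le C(\varphi_N)^\xi/N$ from Lemma~\ref{lemma.LDE} (noting that $g_i$ is independent of $w_{ii}$ and uniformly bounded by~\eqref{stability bound}); this error is absorbed into $(\varphi_N)^{10\xi}(\im m_{fc}+\gamma)/(N\eta)$ since $\eta\le C$.

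\emph{Markov step.} The hypotheses of Lemma~\ref{lemma.21} (namely $c\le|G_{ii}|\le C$, $\Lambda_o\le C\Phi$, and $|Q_i(1/G_{ii})|\le C\Phi$) hold on a $(\xi,\nu)$-high probability event by Theorem~\ref{thm.weak} combined with Lemmas~\ref{lemma.111} and~\ref{lemma.14}, together with the standing assumption $\Lambda\le\gamma$. The plan is to apply Markov's inequality to the moment bound~\eqref{generalied Z lemma estimate} of Corollary~\ref{generalized Z lemma}: setting $t = (\varphi_N)^{8\xi}\Phi^2 = (\varphi_N)^{10\xi}(\im m_{fc}+\gamma)/(N\eta)$,
\begin{align*}
\mathbb{P}\!\left(\left|\tfrac{1}{N}\sum_i Q_i(g_i^n/G_{ii})\right|>t\right) \;\le\; \frac{(Cp)^{5p}}{(\varphi_N)^{8p\xi}}.
\end{align*}
Choosing $p=\lfloor\nu(\log N)^{\xi-3/2}\rfloor$, which is the maximum permitted, and using $\log\varphi_N\asymp\log\log N$, the exponent $5p\log(Cp)-8p\xi\log\varphi_N\le -cp\xi\log\log N$ yields a probability bound of order $\exp(-c'(\log N)^{\xi-3/2}\log\log N)$, which implies $(\xi-2,\nu')$-high probability. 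A lattice argument as in Corollary~\ref{cor.12} extends the estimate uniformly over $\caD_L\times\caD_{\lambda_0}$, proving~\eqref{eq.251} and~\eqref{eq.250}.

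\emph{Strong self-consistent equation.} Starting from~\eqref{eq.149bis}, substitute $\caY_i = Q_i(1/G_{ii}) - (m^{(i)}-m)$ (from the identity of the first paragraph) into each $\caY$-dependent sum. The linear term splits as
\begin{align*}
\tfrac{1}{N}\sum g_i^2\caY_i \;=\; \tfrac{1}{N}\sum Q_i(g_i^2/G_{ii}) \;-\; \tfrac{1}{N}\sum g_i^2(m^{(i)}-m),
\end{align*}
where the first piece is controlled by~\eqref{eq.250} with $n=2$ and the second is $\caO(1/(N\eta))$ by Lemma~\ref{lemma.13}. The quadratic term satisfies $|\tfrac{1}{N}\sum g_i^3\caY_i^2| \le C\Phi^2 \le C(\varphi_N)^{10\xi}(\im m_{fc}+\gamma)/(N\eta)$ using $\max_i|\caY_i|\le C\Phi$ from Lemma~\ref{lemma.14}. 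The cross term is bounded by $|\tfrac{1}{N}\sum g_i^3[\v]\caY_i|\le C\Lambda\Phi$, which by Young's inequality splits into $\Lambda^2/\log N$ plus an $(\log N)\Phi^2$ piece absorbed into the $(\varphi_N)^{10\xi}$ factor. The remainders $\caO(\Lambda^3)$ and $\caO(\max_i|\caY_i|^3)$ are absorbed using $\Lambda\le\gamma\le(\varphi_N)^{-2\xi}\ll 1/\log N$ and $\Phi\le(\varphi_N)^{-5\xi}$ (from $N\eta\ge(\varphi_N)^{12\xi}$, $\im m_{fc}+\gamma=\caO(1)$). The $\caO(\lambda(\varphi_N)^\xi/\sqrt{N})$ term carries over unchanged from~\eqref{eq.149bis}, yielding~\eqref{self}.

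\emph{Main obstacle.} The delicate point is the Markov accounting: Lemma~\ref{lemma.21} provides moments only up to $p\le\nu(\log N)^{\xi-3/2}$, so the combinatorial factor $(Cp)^{5p}\approx\exp(5p(\xi-3/2)\log\log N)$ must be beaten by $(\varphi_N)^{8p\xi}=\exp(8pC_0\xi\log\log N)$. The factor $8\xi$ in the exponent, obtained by placing the target threshold at the inflated scale $(\varphi_N)^{10\xi}(\im m_{fc}+\gamma)/(N\eta)$ rather than at the natural scale $\Phi^2$, provides the crucial slack to overwhelm $(Cp)^{5p}$ and extract the claimed $(\xi-2,\nu)$-high probability concentration.
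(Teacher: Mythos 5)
Your proof follows the same strategy as the paper's: identify $Z_i = w_{ii} - Q_i(1/G_{ii})$ via Schur's formula (so that $Q_i(g_i^n/G_{ii})=g_i^nQ_i(1/G_{ii})$), verify the hypotheses of Lemma~\ref{lemma.21}, apply a high-moment Markov inequality to the fluctuation bound~\eqref{eq.25} and to Corollary~\ref{generalized Z lemma}, and substitute back into~\eqref{eq.149bis}. The Markov accounting is sound; the paper picks $p$ just below $\nu(\log N)^{\xi-2}$ rather than the maximal admissible $p\approx\nu(\log N)^{\xi-3/2}$, but both choices deliver $(\xi-2,\nu)$-high probability. Your handling of the quadratic and cross terms, of the cubic remainders via $\gamma\le(\varphi_N)^{-2\xi}$ and $\Phi\le C(\varphi_N)^{-5\xi}$, and of the $\lambda(\varphi_N)^\xi/\sqrt N$ piece also matches the paper.

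The one genuine gap is the treatment of the $m^{(i)}-m$ contribution to the linear term. You bound $|m^{(i)}-m|\le C/(N\eta)$ via the rank-one interlacing Lemma~\ref{lemma.13}, and implicitly absorb the resulting $\caO(1/(N\eta))$ error into the claimed $(\varphi_N)^{10\xi}(\im\mfc+\gamma)/(N\eta)$. That absorption requires $\im\mfc(z)+\gamma(z)\gtrsim(\varphi_N)^{-10\xi}$, which fails on part of $\caD_L$: by~\eqref{behaviour of mfc}, for $E$ outside $\supp\mu_{fc}$ one has $\im\mfc(z)\sim\eta/\sqrt{\kappa_E+\eta}$, so at $\eta\sim(\varphi_N)^{12\xi}N^{-1}$ the quantity $\im\mfc$ is polynomially small in $N$ while $(\varphi_N)^{-10\xi}$ is only polylogarithmically small, and $\gamma$ is typically of the same tiny order. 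The extra $(N\eta)^{-1}$ error lacking the factor $\im\mfc+\gamma$ would then break the downstream dichotomy of Lemma~\ref{lemma.22}, whose hypothesis~\eqref{eq.281} needs the error to carry the prefactor $\alpha+\gamma$. The paper avoids this by using the sharper bound $|m^{(i)}-m|\le C\Lambda_o^2\le C\Phi^2$ on the event $\Xi$, which follows from~\eqref{eq.155} together with~\eqref{eq.22}; this is exactly~\eqref{bound for yi}, and since $\Phi^2=(\varphi_N)^{-8\xi}\cdot(\varphi_N)^{10\xi}(\im\mfc+\gamma)/(N\eta)$ it fits the target with room to spare. The fix is therefore simply to replace your citation of Lemma~\ref{lemma.13} by Lemma~\ref{lemma.111}$(iii)$ together with the control $\Lambda_o\le C\Psi\le C\Phi$ available on $\Xi$.
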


Note that in the above lemma we have not changed the value of the parameter $\nu$, but replaced the $N$-dependent parameter $\xi$ by $\xi-2$. This is necessary at this point, since in the iteration procedure below we apply this lemma $\log\log N$ times.
\begin{proof}
We begin by proving \eqref{eq.251}. From Schur's complement formula we obtain
\begin{align}\label{eq.252}
 Q_i\left(\frac{1}{G_{ii}}\right)&=Q_i\left(\lambda v_i+w_{ii}-z-\sum_{k,l}^{(i)}h_{ik}G^{(i)}_{kl}h_{li}\right)\nonumber\\
&=w_{ii}-Q_i\left(\sum_{k,l}^{(i)}h_{ik}G^{(i)}_{kl}h_{li}\right)\nonumber\\
&=w_{ii}-Z_i\,.
\end{align}
Since $|w_{ii}|\le(\varphi_N)^{\xi}N^{-1/2}$, with high probability, we obtain from the large deviation estimate~\eqref{LDE1},
\begin{align*}
\left| \frac{1}{N}\sum_{i=1}^Nw_{ii}\right|\le \frac{(\varphi_N)^{2\xi}}{N}\,,
\end{align*}
with $(\xi,\nu)$-high probability. Hence it suffices to bound the average of the left side of~\eqref{eq.252} to get~\eqref{eq.251}.

Theorem~\ref{thm.weak} and Lemma~\ref{lemma.14} imply that assumptions $i$ and $ii$ of Lemma~\ref{lemma.21} hold with high probability. By Lemma~\ref{lemma.111}, we have $c\le |G_{ii}|\le C$, with high-probability. Finally, from the estimate on $Z_i$ in~\eqref{eq.162} and the bound on $w_{ii}$, we conclude that assumption $iii$ of Lemma~\ref{lemma.21}, i.e., Inequality~\eqref{eq.24}, holds with high probability. Hence the event~$\Xi$, as defined in Lemma~\ref{lemma.21}, being the intersection of several $(\xi,\nu)$-high probability events, has $(\xi-1/2,\nu)$-high probability. Thus we can apply Lemma~\ref{lemma.21}: Choosing $p$ in~\eqref{eq.25} as the largest even integer smaller than $\nu(\log N)^{\xi-2}$, Markov's inequality yields~\eqref{eq.251}. 
Note that we have not changed the parameter $\nu$ here, but have replaced $\xi$ by $\xi-2$. 

Similarly,~\eqref{eq.250} follows from Corollary~\ref{generalized Z lemma} and a high-moment Markov estimate.

 To derive the self-consistent Equation~\eqref{self}, we return to~\eqref{eq.149bis}, i.e.,
\begin{align}\label{self1}
(1-R_2)[\v]&=R_3[\v]^2+\frac{1}{N}\sum_{i=1}^N\frac{1}{(\lambda v_i-z-m_{fc})^2}\caY_i +\frac{1}{N}\sum_{i=1}^N\frac{1}{(\lambda v_i-z-m_{fc})^3}(\caY_i^2-2[\v]\caY_i)\nonumber\\
&\quad\quad+\caO(\Lambda^3)+\caO(\max_i|\caY_i|^3)+\caO\left(\frac{\lambda (\varphi_N)^{\xi}}{\sqrt{N}}\right)\,,
\end{align}
which holds with $(\xi,\nu)$-high probability. Recall that, on the event $\Xi$,
\begin{align}\label{bound for yi}
\caY_i=w_{ii}-Z_{i}-(m^{(i)}-m)&=w_{ii}-Z_{i}+\caO\left(\Phi^2\right)=Q_{i}\left(\frac{1}{G_{ii}}\right)+\caO\left(\Phi^2\right)  \,.
\end{align}
Using~\eqref{eq.250} to control the second term on the right side of~\eqref{self1} and the apriori bound~\eqref{eq.163} on $|\caY_i|$ to control the terms $\caO(\caY_i^2)$ in~\eqref{self1}, we obtain
\begin{align*}
(1-R_2)[\v]&=R_3[\v]^2 -2\frac{1}{N}\sum_{i=1}^N\frac{1}{(\lambda v_i-z-m_{fc})^3}[\v]\caY_i+\caO\left((\varphi_N)^{10\xi}\frac{\im m_{fc}(z)+\gamma(z)}{N\eta}\right)\\
&\quad\quad+\caO(\Lambda^3)+\caO\left(\frac{\lambda (\varphi_N)^{\xi}}{\sqrt{N}}\right)\,,
\end{align*}
with $(\xi-2,\nu)$-high probability. Arguing as in the proof of Lemma~\ref{lemma.15}, we obtain~\eqref{self}.

\end{proof}

\subsection{Proof of the strong deformed semicircle law}\label{proof of the strong deforemd semicircle law}
The proof of Theorem~\ref{thm.strong} is based on an iteration using the weak semicircle law, i.e., Theorem~\ref{thm.weak}, and Lemma~\ref{cor.french}. We start with an entirely deterministic lemma:

\begin{lemma}\label{lemma.22}
 Assume that $1\le\xi_1\le \xi_2$. Let $0<\tau<1$ and $L>40 \xi_2$. Suppose that there is a function $\gamma(z)$ satisfying
\begin{align}\label{eq.280}
 \gamma(z)\le(\varphi_N)^{11\xi_2}\left(\frac{\lambda^{1/2}}{{N}^{1/4}}+\frac{1}{N\eta} \right)^{1-\tau}\,,
\end{align}
such that $\Lambda(z)\le \gamma(z)$, for all $z\in \caD_L$, $\lambda\in\caD_{\lambda_0}$. We also assume that, for $z\in \caD_L$, $\lambda\in\caD_{\lambda_0}$,
\begin{align}\label{eq.281}
\left|(1-R_2) [\v]- R_3[\v]^2\right|=\caO\left(\frac{\Lambda^2}{\log N}\right)+\caO\left(\frac{\lambda(\varphi_N)^{\xi_1}}{\sqrt{N}}+(\varphi_N)^{10\xi_1}\frac{\alpha(z)+\gamma(z)}{N\eta}\right)\,,
\end{align}
where $\alpha=|1-R_2|$ was defined in~\eqref{eq.191}. Moreover, we assume that $\Lambda\ll 1$, if $\eta\sim1$. Then
\begin{align}\label{eq.282}
 \Lambda(z)\le (\varphi_N)^{11\xi_2}\left(\frac{\lambda^{1/2}}{{N}^{1/4}}+\frac{1}{N\eta} \right)^{1-\tau/2}\,,
\end{align}
for all $z\in \caD_L$, $\lambda\in\caD_{\lambda_0}$.
\end{lemma}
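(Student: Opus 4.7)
The lemma is a purely deterministic bootstrap and is essentially the strong-law analogue of Lemma~\ref{lemma.18}: I plan to combine a quadratic stability analysis of \eqref{eq.281} with a continuity argument in $\eta$. First, I would introduce the shorthand $X(z) \deq \lambda^{1/2} N^{-1/4} + (N\eta)^{-1}$ and the target function $E(z) \deq (\varphi_N)^{11\xi_2} X(z)^{1-\tau/2}$, so the hypothesis reads $\Lambda \le \gamma \le (\varphi_N)^{11\xi_2}X^{1-\tau}$ and the conclusion reads $\Lambda \le E$. Using the uniform bound $|R_3| \le C_3$ from Lemma~\ref{lemma.17}, \eqref{eq.281} rearranges to
\begin{align*}
\bigl|\,\alpha(z)\Lambda(z) - R_3(z)[\v]^2\,\bigr| \;\le\; \frac{C\Lambda^2}{\log N} + C\lambda(\varphi_N)^{\xi_1}N^{-1/2} + C(\varphi_N)^{10\xi_1}\frac{\alpha(z)+\gamma(z)}{N\eta} \;=:\; r(z).
\end{align*}
The arithmetic that produces the improvement $\tau \mapsto \tau/2$ is the identity $\sqrt{\gamma/(N\eta)} \le (\varphi_N)^{11\xi_2/2}\,X^{1-\tau/2}$ (obtained from $\gamma/(N\eta) \le (\varphi_N)^{11\xi_2}X^{1-\tau}\cdot X = (\varphi_N)^{11\xi_2}X^{2-\tau}$), together with $\sqrt{\lambda N^{-1/2}} = \lambda^{1/2}N^{-1/4} \le X$ and $\sqrt{\alpha/(N\eta)} \le \sqrt{\alpha\, X}$. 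Since $\xi_1 \le \xi_2$, the prefactor bookkeeping $5\xi_1 + 11\xi_2/2 \le 11\xi_2$ ensures $\sqrt{r}$ is bounded by $E/2$ after absorbing constants.

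Next, solving the quadratic inequality in the unknown $\Lambda$ and distinguishing whether $\Lambda$ lies near the root $0$ or near the root $\alpha/R_3$ produces, for a sufficiently large universal $U$ depending only on $C_3, \lambda_0, E_0, \mu$, the dichotomy
\begin{align*}
\Lambda(z) \;\le\; U\cdot E(z) \qquad \text{or} \qquad \Lambda(z) \;\ge\; \alpha(z)/U,
\end{align*}
exactly as in \eqref{dicho1}--\eqref{dicho2}. The nonlinear term $\Lambda^2/\log N$ is absorbed by writing $\Lambda^2/\log N \le \gamma\Lambda/\log N$ and using $\gamma/\log N \ll \alpha$ on $\caD_L$ away from the edge (where the first branch is active), and by absorption into $R_3[\v]^2$ near the edge (where $|R_3|\ge c$ by Lemma~\ref{lemma.17}).

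Finally, I would run the continuity argument in $\eta$ identical to the one in Section~\ref{proofofthm.weak}. The hypothesis $\Lambda \ll 1$ at $\eta \sim 1$ together with $\alpha(E+2\ii) \sim 1$ excludes the bad branch at $\eta_1 = 2$. Sampling a lattice $\eta_1 > \eta_2 > \cdots > \eta_{k_0} = (\varphi_N)^L N^{-1}$ with step $\le N^{-8}$, and using Lipschitz continuity $|\Lambda(z)-\Lambda(z')|\le N^{-6}$ for $|z-z'|\le N^{-8}$ (from $|G'(z)| \le \eta^{-2}$), the bad branch cannot be entered at an intermediate step because this would require a jump exceeding $\alpha/U - U E$, which is bounded below by a fixed multiple of $\alpha \gg N^{-6}$ along the lattice. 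Inductively the good branch persists down to $\eta_{k_0}$, and the uniform bound on all of $\caD_L\times\caD_{\lambda_0}$ follows from a density argument as in Corollary~\ref{cor.12}.

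The main obstacle is the bookkeeping in the quadratic step: one must verify term by term that both the ``linear'' estimate $r/\alpha$ (active when $\alpha$ is bounded below) and the ``edge'' estimate $\sqrt{r/|R_3|}$ (active when $\alpha$ is small) are dominated by $E$, uniformly in $\tau \in (0,1)$. The delicate cancellation is the precise match $\sqrt{(\varphi_N)^{11\xi_2}\cdot X^{2-\tau}} = (\varphi_N)^{11\xi_2/2}X^{1-\tau/2}$, which is exactly what is absorbed by the $(\varphi_N)^{11\xi_2}$ headroom in $E$; any smaller headroom would fail to close the bootstrap.
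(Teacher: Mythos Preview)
Your proposal is correct and follows essentially the same approach as the paper: split according to the size of $\alpha$ relative to a threshold $\alpha_0 \sim (\varphi_N)^{c\xi_2}X^{1-\tau/2}$, use the lower bound $|R_3|\ge c$ near the edge to solve the quadratic directly, and away from the edge run a dichotomy plus continuity in $\eta$, with the key arithmetic $\gamma/(N\eta)\le (\varphi_N)^{11\xi_2}X^{2-\tau}$ producing the improvement $\tau\mapsto\tau/2$. The only cosmetic difference is that the paper invokes continuity of $\Lambda(z)$ in $\eta$ directly (the lemma is deterministic, so no lattice or union bound is needed), whereas you route through an explicit lattice as in the weak-law proof; this is harmless but unnecessary.
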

\begin{proof}
 The proof is based on a dichotomy argument. We set
\begin{align*}
 \alpha_0(z)\deq (\varphi_N)^{(10+3/4)\xi_2}\left(\frac{\lambda^{1/2}}{{N}^{1/4}}+\frac{1}{N\eta} \right)^{1-\tau/2}\,.
\end{align*}
Note that $\alpha_0\le\gamma$ and $\alpha_0 \ll 1$. Using~\eqref{eq.280} we find
\begin{align*}
C(\varphi_N)^{10\xi_1}\left(\frac{\lambda}{\sqrt{N}}+\frac{\gamma(z)}{N\eta}\right)&\le \alpha_0^2+(\varphi_N)^{21\xi_2}\frac{1}{N\eta}\left(\frac{\lambda^{1/2}}{{N}^{1/4}}+\frac{1}{N\eta} \right)^{1-\tau}\,\\
&\le(\varphi_N)^{22\xi_2}\left(\frac{\lambda^{1/2}}{{N}^{1/4}}+\frac{1}{N\eta} \right)^{2-\tau}\,.
\end{align*}
First consider the case $\alpha\le \alpha_0$: In this case, $\kappa \ll 1$ and $|R_3| > c$ for some constant $c$. From equation~\eqref{eq.281} we find that
\begin{align*}
 |[\v]^2| \le \left|[\v]^2- \frac{\alpha [\v]}{c} \right|+\frac{\alpha |[\v]|}{c} \le o(1)|[\v]|^2+(\varphi_N)^{22\xi_2}\left(\frac{\lambda^{1/2}}{{N}^{1/4}}+\frac{1}{N\eta}\right)^{2-\tau}+C(\varphi_N)^{10\xi_2}\frac{\alpha}{N\eta}+\frac{\alpha |[\v]|}{c}\,,
\end{align*}
and hence
\begin{align*}
 (|[\v]|-\frac{\alpha}{c})^2 \le \frac{\alpha^2}{c^2}+ (\varphi_N)^{22\xi_2}\left(\frac{\lambda^{1/2}}{{N}^{1/4}}+\frac{1}{N\eta}\right)^{2-\tau}+(\varphi_N)^{(10+3/4)\xi_2}\frac{\alpha}{N\eta}\,.
\end{align*}
(Note that we used here $\varphi_N$ to compensate for various constants, as we shall do below.) Thus taking the square root, recalling that $\alpha\le\alpha_0$ and using the definition of $\alpha_0$, we find
\begin{align*}
 |[\v]|\le C\alpha_0+(\varphi_N)^{11\xi_2}\left(\frac{\lambda^{1/2}}{{N}^{1/4}}+\frac{1}{N\eta} \right)^{1-\tau/2}\,,
\end{align*}
and the claim follows for $\alpha\le \alpha_0$.

Next, consider $\alpha>\alpha_0$: Recall that $|R_3| < C_3$ for some constant $C_3 > 0$. Assume first that $\Lambda\le\alpha/(2 C_3)$. Then in ~\eqref{eq.281} we can absorb the terms $R_3 [\v]^2$ and $\Lambda^2/\log N$ into the term $\alpha |[\v]|$ and we get
\begin{align}\label{eq.285}
 \Lambda&\le C(\varphi_N)^{10\xi_1}\left(\frac{\lambda}{\alpha \sqrt{N}}+\frac{1}{N\eta}+\frac{\gamma}{\alpha N\eta }\right)\nonumber\\
&\le \frac{1}{(\varphi_N)^{\xi_2/4}}\left(\frac{\alpha_0^2}{\alpha}+\alpha_0+\frac{\alpha_0^2}{\alpha}\right)\,,
\end{align}
where we used the definitions of $\gamma$ and $\alpha_0$. Since we assumed that $\alpha>\alpha_0$, we get $\Lambda\ll\alpha/(2 C_3)$ if $\Lambda\le \alpha/(2 C_3)$. Thus, if $\alpha\ge \alpha_0$, we either have $\Lambda>\alpha/(2 C_3)$ or $\Lambda\ll\alpha/(2 C_3)$. By the continuity of $\Lambda(z)$ in $\eta=\im z$, we must have $\Lambda\ll\alpha$, since we assume that $\Lambda(z)\ll 1=\caO(\alpha)$, for $\eta\sim 1$. Thus, the claim follows from~\eqref{eq.285}.
\end{proof}

\begin{proof}[Proof of Theorem~\ref{thm.strong}]
We prove~\eqref{strong1}. Let $\xi=\frac{A_0+o(1)}{2}\log\log N $ and set 
\begin{align*}
 \widetilde{\xi}\deq2(\log \log N/\log 2)+\xi\,.
\end{align*}
Note that $\widetilde{\xi}\le 3\xi/2\le A_0\log \log N$. Let $L\ge 40\widetilde{\xi}$.
To prove~\eqref{strong1} it suffices to prove
\begin{align}\label{eq.299}
 \bigcap_{\substack{z\in \caD_L\\ \lambda\in\caD_{\lambda_0}}}\left\{\left| m(z)-m_{fc}(z) \right| \le (\varphi_N)^{12\widetilde\xi}\left(\min\left\{\frac{(\varphi_N)^{12\widetilde{\xi}}}{\alpha}\frac{\lambda}{\sqrt{N}},\frac{\lambda^{1/2}}{N^{1/4}}\right\}+\frac{1}{N\eta}\right)\right\}\,,
\end{align}
with $(\xi,\nu)$-high probability.

The weak semicircle law, i.e., Theorem~\ref{thm.weak} with $\widetilde{\xi}$ replacing $\xi$, yields
\begin{align*}
 \Lambda\le (\varphi_N)^{2\widetilde{\xi}}\left(\frac{1}{N\eta}\right)^{1/3}\le (\varphi_N)^{2\widetilde{\xi}}\left(\frac{\lambda^{1/2}}{N^{1/4}}+\frac{1}{N\eta}\right)^{1-2/3}\,,
\end{align*}
for $z \in \caD_L$, $\lambda\in\caD_{\lambda_0}$, with $(\widetilde{\xi},\nu)$-high probability. Thus~\eqref{eq.21} holds with
\begin{align*}
 \gamma(z)\deq(\varphi_N)^{11\widetilde{\xi}}\left(\frac{\lambda^{1/2}}{N^{1/4}}+\frac{1}{N\eta}\right)^{1-2/3}\,.
\end{align*}
Since $L\ge 40\widetilde{\xi}$, we also have $\gamma(z)\le (\varphi_N)^{-2\widetilde{\xi}}$. Hence, by Lemma~\ref{cor.french} we have
\begin{align*}
\left|(1-R_2)[\v]- R_3 [\v]^2\right|&\le C\frac{\Lambda^2}{\log N}+C(\varphi_N)^{10\widetilde{\xi}}\left(\frac{\lambda}{\sqrt{N}}+\frac{\im m_{fc}(z)+\gamma(z)}{N\eta}\right)\,,
\end{align*}
for $z\in \caD_L$, $\lambda\in\caD_{\lambda_0}$, with $(\widetilde{\xi}-2,\nu)$-high probability. Since $\im m_{fc}\le C\alpha$, by Lemma~\ref{lemma.17}, this implies~\eqref{eq.281} with $\xi_1=\widetilde{\xi}$. Also, $\gamma$ satisfies~\eqref{eq.280} with $\xi_2=\widetilde{\xi}$ and $\tau=2/3$.  Moreover, since $\Lambda\le\gamma\le(\varphi_N) ^{-2\widetilde{\xi}}$, we have $\Lambda\ll1$, if $\eta\sim1$. Therefore, we can apply Lemma~\ref{lemma.22} with $\xi_1=\xi_2=\widetilde{\xi}$ to obtain
\begin{align*}
 \Lambda\le (\varphi_N)^{11\widetilde{\xi}}\left(\frac{\lambda^{1/2}}{N^{1/4}}+\frac{1}{N\eta}\right)^{1-1/3}\,,
\end{align*}
for $z\in \caD_L$, $\lambda\in\caD_{\lambda_0}$, with $(\widetilde{\xi}-2,\nu)$-high probability. Iterating this process $M$ times, we find that
\begin{align*}
 \Lambda\le (\varphi_N)^{11\widetilde{\xi}}\left(\frac{\lambda^{1/2}}{N^{1/4}}+\frac{1}{N\eta}\right)^{1-\frac{2}{3}\left(\frac{1}{2}\right)^M}\,,
\end{align*}
for $z\in \caD_L$, $\lambda\in\caD_{\lambda_0}$, holds with $(\widetilde{\xi}-2M,\nu)$-high probability. We choose $M=\lfloor\log \log N/ \log 2\rfloor-1$, here $\lfloor\cdot\rfloor$ denotes the integer part. Since $\lambda^{1/2}N^{-1/4}+N^{-1/2}+(N\eta)^{-1}\ge c N^{-1}$ on $\caD_L$, we get
\begin{align*}
 \left(\frac{\lambda^{1/2}}{N^{1/4}}+\frac{1}{N\eta}\right)^{-\frac{2}{3}\left(\frac{1}{2}\right)^M}\le C\le  (\varphi_N)^{\widetilde{\xi}}\,.
\end{align*}
Thus 
\begin{align}\label{eq.289}
 \Lambda\le (\varphi_N)^{12 \widetilde{\xi}}\left(\frac{\lambda^{1/2}}{N^{1/4}}+\frac{1}{N\eta}\right)\,,
\end{align}
for $z\in \caD_L$, $\lambda\in\caD_{\lambda_0}$, with $(\xi+2,\nu)$-high probability (the factor of $2$ comes from the $-1$ in $M$). This proves~\eqref{eq.299} when
\begin{align*}
\frac{(\varphi_N)^{12\widetilde{\xi}}}{\alpha}\frac{\lambda}{\sqrt{N}} \geq \frac{\lambda^{1/2}}{N^{1/4}}\,.
\end{align*}
In case
\begin{align*}
\frac{\lambda^{1/2}}{N^{1/4}} \leq (\varphi_N)^{-12\widetilde\xi}\alpha \leq \frac{\lambda^{1/2}}{N^{1/4}}+\frac{1}{N\eta}\,,
\end{align*}
we have
\begin{align*}
\min\left\{\frac{(\varphi_N)^{12\widetilde{\xi}}}{\alpha}\frac{\lambda}{\sqrt{N}},\frac{\lambda^{1/2}}{N^{1/4}}\right\}+\frac{1}{N\eta} \geq \frac{\lambda}{\sqrt{N}} \left(\frac{\lambda^{1/2}}{N^{1/4}}+\frac{1}{N\eta}\right)^{-1} + \frac{1}{N\eta} \geq \frac{1}{2} \left(\frac{\lambda^{1/2}}{N^{1/4}}+\frac{1}{N\eta}\right),
\end{align*}
and the proof for \eqref{eq.299} is similar to the above case. Finally, when
\begin{align}\label{eq.290}
 (\varphi_N)^{-12\widetilde\xi}\alpha\ge \frac{\lambda^{1/2}}{N^{1/4}}+\frac{1}{N\eta}\,,
\end{align}
set
\begin{align*}
 \ga(z)\deq  (\varphi_N)^{12\widetilde{\xi}}\left(\frac{\lambda^{1/2}}{N^{1/4}}+\frac{1}{N\eta} \right)\,.
\end{align*}
Then by Lemma~\ref{cor.french} we have
\begin{align*}
\alpha|[\v]|&\le C\Lambda^2+C(\varphi_N)^{10\widetilde{\xi}}\left(\frac{\alpha+\gamma(z)}{N\eta}\right)+C\frac{\lambda (\varphi_N)^{\xi}}{\sqrt{N}}\,,
\end{align*}
with $(\xi,\nu)$-high probability, where we used that $\im m_{fc}\le C\alpha$. Assuming that ~\eqref{eq.290} holds and using the definition of $\ga$, we have $\gamma(z)\le\alpha(z)$, and we get, using~\eqref{eq.289},
\begin{align}\label{eq.291}
 \left|[\v]\right|&\le C(\varphi_N)^{24\widetilde{\xi}}\frac{\lambda}{\alpha \sqrt{N}}+ C(\varphi_N)^{12\widetilde{\xi}}\left(\frac{1}{N\eta}+\frac{\gamma	}{\alpha {N}\eta}\right)\nonumber\\&\le (\varphi_N)^{12\widetilde{\xi}}\left((\varphi_N)^{12\widetilde{\xi}}\frac{\lambda}{\alpha\sqrt{N}}+\frac{1}{N\eta}\right)\,.
\end{align}

Hence, combining~\eqref{eq.289} and ~\eqref{eq.291} we find, using a simple lattice argument,~\eqref{eq.299}. Inequality~\eqref{strong2}  then follow from~\eqref{eq.299} combined with~\eqref{eq.161}.

\end{proof}

\section{Identifying the leading corrections in the bulk} \label{Identifying the leading corrections}
In this section, we identify the leading correction terms to $m-\mfc$ stemming from the diagonal random matrix~$V$. We define random variables $\zeta_0(z)\equiv\zeta_0^N(z)$, which only depends on the random variables $(v_i)$, such that, in the bulk of the spectrum, the leading correction term in the estimate on $|m(z)-m_{fc}(z)-\zeta_0|$ is of order $(N\eta)^{-1}$. This estimate is then used to prove Theorem~\ref{thm xi0}. 

In this section, we fix $\xi=\frac{A_0+o(1)}{2}\log\log N$ and choose $L\ge 40\xi$.
\subsection{Preliminaries}
Recall the notation $\Lambda=|m-\mfc|$ and the definition of $(R_n)$ in~\eqref{eq.171}. In Lemma~\ref{lemma.17}, we showed that $1-R_2\sim\sqrt{\kappa+\eta}$, $R_3 =\caO(1)$, for all $z\in\caD_L$, $\lambda\in\caD_{\lambda_0}$. We will need some more notation. For $z\in\caD_L$, $\lambda\in\caD_{\lambda_0}$, $n\in\N$, set
\begin{align*}
 r_n(z)\equiv r_n\deq\frac{1}{N}\sum_{i=1}^N\frac{1}{(\lambda v_i-z-\mfc)^n}-\int \frac{\dd\mu(v)}{(\lambda v-z-\mfc)^n}\,.
\end{align*}
Recall from~\eqref{lawlargenumber} that $|r_n(z)|\le (\varphi_N)^{\xi}\frac{\lambda}{\sqrt{N}}$, with high probability, uniformly in $z\in\caD_L$ and $\lambda\in\caD_{\lambda_0}$. Thus, combining the above observations, we obtain
\begin{align}\label{Xi0}
C^{-1}\sqrt{\kappa+\eta}\le& |1-R_2-r_2|\le C\sqrt{\kappa+\eta}\,,
\quad\quad |R_3+r_3|\le C\,,
\end{align}
for all $z\in\caD_L$, $\lambda\in\caD_{\lambda_0}$, with $(\xi,\nu)$-high probability, for some $C>1$.

\subsubsection{Definition of $\zeta_0$}
In order to define $\zeta_0\equiv \zeta_0(z)$, it is convenient to introduce a high-probability event $\Xi_0$, by requiring that~\eqref{Xi0} holds on it. We define $\zeta_0$ as the solution to the equation
\begin{align}\label{definition of zeta0}
 (1-R_2-r_2)\zeta_0(z)=r_1(z)+(R_3+r_3)\zeta_0(z)^2\,,\quad\quad z\in \C^+ \,,\quad\lambda\in\caD_{\lambda_0}\,,
\end{align}
such that $\zeta_0(z)\to 0$, as $\im z\to \infty$.

First, note that $\zeta_0(z)$, $z\in\caD_L$, is well-defined on $\Xi_0$. Second, note that $\zeta_0$ only depends on $(v_i)$, but is independent of the random entries $(w_{ij})$ of the Wigner matrix $W$. Third, from the discussion in the preceding subsection, we infer:
\begin{lemma}\label{lemma bound zeta_0}
 There is a constant $c>0$, such that, for $z\in\caD_L$, $\lambda\in\caD_{\lambda_0}$, we have on $\Xi_0$,
\begin{align*}
 |\zeta_0(z)|\le(\varphi_N)^{c\xi} \min \left\{\frac{\lambda^{1/2}}{N^{1/4}},\frac{\lambda}{\sqrt{\kappa+\eta}}\frac{1}{\sqrt{N}}\right\}\,,
\end{align*}
with $(\xi,\nu)$-high probability.
\end{lemma}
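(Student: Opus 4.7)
\medskip

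The plan is to exploit the quadratic structure of the defining equation~\eqref{definition of zeta0}. Writing $\alpha(z) = 1-R_2-r_2$, $\beta(z) = R_3+r_3$, and $\gamma(z) = r_1$, the equation becomes $\beta\zeta_0^2 - \alpha\zeta_0 + \gamma = 0$, and the quadratic formula gives
$$\zeta_0(z) = \frac{\alpha - \sqrt{\alpha^2 - 4\beta\gamma}}{2\beta},$$
where the branch of the square root is the one that is continuous on $\C^+$ and satisfies $\sqrt{\alpha^2 - 4\beta\gamma} \to \alpha$ as $\im z \to \infty$. This branch is forced by the boundary condition $\zeta_0(z) \to 0$: at $\im z = \infty$ one has $\alpha \to 1$ and $\beta,\gamma \to 0$, and Taylor expansion of the square root in the small quantity $\beta\gamma$ selects the minus sign, yielding $\zeta_0 \approx \gamma/\alpha \to 0$. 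On the event $\Xi_0$ we have, by~\eqref{Xi0} and by~\eqref{lawlargenumber} (with $(\xi,\nu)$-high probability),
$$c^{-1}\sqrt{\kappa+\eta}\,\le\,|\alpha|\,\le\, C\sqrt{\kappa+\eta},\qquad |\beta|\le C,\qquad |\gamma|\le (\varphi_N)^{\xi}\frac{\lambda}{\sqrt{N}}\,,$$
and by Lemma~\ref{lemma.17}(ii) there exists $\epsilon_0 > 0$ with $|\beta|\ge c/2$ whenever $|z-L_i|<\epsilon_0$, $i=1,2$, once $N$ is large enough that $(\varphi_N)^\xi \lambda/\sqrt{N}$ is absorbed into the constant $c$.

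The bound is then established by a case analysis based on the relative size of $|\alpha|^2$ and $|\beta\gamma|$. In the regime $\sqrt{\kappa+\eta}\ge (\varphi_N)^{2\xi}\lambda^{1/2} N^{-1/4}$ we have $|\alpha|^2 \ge (\varphi_N)^{3\xi}|\beta\gamma|/C$, which is large enough to Taylor expand the square root:
$$\sqrt{\alpha^2 - 4\beta\gamma}\,=\,\alpha - \frac{2\beta\gamma}{\alpha} + \caO\!\left(\frac{\beta^2\gamma^2}{\alpha^3}\right).$$
Substituting back gives $\zeta_0 = \gamma/\alpha + \caO(\beta\gamma^2/\alpha^3)$, hence
$$|\zeta_0|\,\lesssim\,\frac{|\gamma|}{|\alpha|}\,\le\,(\varphi_N)^{c\xi}\frac{\lambda}{\sqrt{\kappa+\eta}\sqrt{N}}\,,$$
and in this regime this quantity is the smaller of the two candidates on the right side of the claim.

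In the complementary regime $\sqrt{\kappa+\eta}<(\varphi_N)^{2\xi}\lambda^{1/2}N^{-1/4}$, the assumption $\lambda\le\lambda_0$ forces $\kappa<\epsilon_0$ for $N$ sufficiently large, which puts us in the neighbourhood of an edge where $|\beta|\ge c/2$. The trivial estimate on both roots of the quadratic then gives
$$|\zeta_0|\,\le\,\frac{|\alpha|+\sqrt{|\alpha|^2+4|\beta\gamma|}}{2|\beta|}\,\lesssim\,|\alpha|+\sqrt{|\gamma|}\,\lesssim\,(\varphi_N)^{c\xi}\frac{\lambda^{1/2}}{N^{1/4}}\,,$$
which is again the smaller of the two candidate bounds in this regime. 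Combining the two cases and adjusting $c$ yields the stated estimate. The main subtlety I expect is the consistent selection of the branch of $\sqrt{\alpha^2-4\beta\gamma}$ across all of $\caD_L$: this is handled by analytic continuation from large $\im z$ (where the boundary condition pins down a unique root), noting that the discriminant vanishes only on a small exceptional set on $\Xi_0$. All remaining work is routine bookkeeping of the polylogarithmic factors $(\varphi_N)^{c\xi}$.
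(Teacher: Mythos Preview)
Your proof is correct and follows exactly the approach the paper hints at: the paper omits the proof entirely, remarking only that it suffices to split into the two cases $\kappa+\eta\sim|1-R_2|^2\ll |r_1|$ and $|r_1|\ll|1-R_2|^2$ (the paper writes $|r_3|$, almost certainly a typo for $|r_1|$, since the relevant comparison is between the square of the linear coefficient and the product of the quadratic and constant coefficients). Your Case~1/Case~2 split is precisely this dichotomy, and your use of the quadratic formula with Taylor expansion in the first case and the crude root bound (valid for \emph{both} roots, neatly sidestepping the branch issue there) in the second case is the natural way to execute it.

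One small remark: your sentence ``which is again the smaller of the two candidate bounds in this regime'' at the end of Case~2 is not literally true on the overlap region $\lambda^{1/2}N^{-1/4}\le\sqrt{\kappa+\eta}<(\varphi_N)^{2\xi}\lambda^{1/2}N^{-1/4}$, where the minimum is actually $\lambda/(\sqrt{\kappa+\eta}\sqrt N)$; but there the two candidates differ only by a factor $(\varphi_N)^{2\xi}$, so your final ``adjusting $c$'' covers it. The branch-selection concern you flag is handled exactly as you suggest: in Case~1 the discriminant is bounded away from zero, the region is connected and contains $\eta$ large, so analytic continuation pins down the root; in Case~2 your bound applies to both roots anyway.
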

We  omit the proof and just remark that it is suffices to consider the cases ${\kappa+\eta}\sim |1-R_2|^2\ll |r_3|$ and $|r_3|\ll |1-R_2|^2$.

Recall the (strong) self-consistent equation for $m(z)-\mfc(z)$ in~\eqref{self}. The definition of $\zeta_0$ is natural in the sense that it embodies the leading correction to $m-\mfc$ stemming from the random matrix $V$: Subtracting the defining equation for $\zeta_0$ from the self-consistent Equation~\eqref{self}, we obtain, after some manipulations,
\begin{align*}
(1-R_2-r_2)(m-\mfc-\zeta_0)=(R_3+r_3)(m-\mfc)^2-(R_3+r_3)\zeta_0^2+\caO(\Lambda^3)+\caO\left((\varphi_N)^{c\xi}\frac{\im \mfc+\Lambda}{N\eta}\right)\,,
\end{align*}
on some high probability event $\Xi$. Theorem~\ref{thm xi0}, now follows easily from analyzing the stability of this equation in the variable $\zeta(z)\deq m(z)-\mfc(z)-\zeta_0(z)$. 

\subsection{Proof of Theorem~\ref{thm xi0}}
Next, we carry out the details of the proof of Theorem~\ref{thm xi0}.
\begin{proof}[Proof of Theorem~\ref{thm xi0}]
Recall the event $\Xi_0$ defined in~\eqref{Xi0}. Let 
\begin{align}\label{choice gamma}
\gamma(z)\deq(\varphi_N)^{c_1\xi}\left(\min\left\{\frac{\lambda^{1/2}}{N^{1/4}},\frac{\lambda}{\sqrt{\kappa+\eta}}\frac{1}{\sqrt{N}}\right\}+\frac{1}{N\eta}\right)\,,\quad\quad z\in\caD_L\,.
\end{align}
 Choosing $c_1$ sufficiently large in~\eqref{choice gamma}, we can achieve that $|\zeta_0|\le\gamma(z)$ on~$\Xi_0$.  Next, it follows from Theorem~\ref{thm.strong}, Lemma~\ref{cor.french} and Lemma~\ref{lemma.14}, that there is an event $\Xi_1$, having $(\xi,\nu)$-high probability, such that the following holds on it: There is a constant $c_0$ such that $|\Lambda(z)|\le\gamma(z)$,
\begin{align}\label{Xi1}
\max_{i}|\caY_i(z)|\le (\varphi_N)^{c_0\xi/2}\sqrt{\frac{\im\mfc(z)+\gamma(z)}{N\eta}}\,,
\end{align}
and, recalling~\eqref{small green function},
\begin{align}\label{Xi2}
\left|\frac{1}{N}\sum_{i=1}^N g_i^n\caY_i(z)\right|\le (\varphi_N)^{c_0\xi}\left(\frac{\im\mfc(z)+\gamma(z)}{N\eta}\right)\,,
\end{align}
for $n=1,2,3$, where $\caY_i=w_{ii}-Z_i-(m^{(i)}-m)$; see~\eqref{eq.149}.
Since both events $\Xi_0$ and $\Xi_1$ have high probability, the event $\Xi\deq\Xi_0\cap\Xi_1$ has $(\xi,\nu)$-high probability, with a slightly smaller $\nu>0$. Set \mbox{$\zeta(z)\deq m(z)-\mfc(z)-\zeta_0(z)$}. Subtracting the defining equation of $\zeta_0$, from Equation~\eqref{eq.173}, we obtain, using the bounds in~\eqref{Xi1} and~\eqref{Xi2},
\begin{align}\label{zeta stability}
 (1-R_2-r_2)\zeta=(R_3+r_3)(m-\mfc)^2-(R_3+r_3)\zeta_0^2+ \caO(\Lambda^3)+\caO\left((\varphi_N)^{c_0\xi}\frac{\im \mfc(z)+\gamma(z)}{N\eta}\right)\,,
\end{align}
on $\Xi$, for $z\in\caD_L$, $\lambda\in\caD_{\lambda_0}$. Let $c_2>\max\{c_0,c_1\}$ and set
\begin{align*}
 \alpha_0(z)\deq (\varphi_N)^{c_2\xi}\left(\min\left\{\frac{\lambda^{1/2}}{N^{1/4}},\frac{\lambda}{\sqrt{\kappa+\eta}}\frac{1}{\sqrt{N}}\right\}+\frac{1}{N\eta}\right)\,.
\end{align*}
Thus, on $\Xi$, we have $\Lambda\ll\alpha_0$ and $|\zeta_0|\ll \alpha_0$, for all $z\in\caD_L$, $\lambda\in\caD_{\lambda_0}$.

Recall that we defined the domain
\begin{align*}
 \caB_L = \caD_L\cap\{ z=E+\ii\eta\in\C\,:\, \sqrt{\kappa_E+\eta}\ge(\varphi_N)^{L\xi}N^{-1/4}\}\,.
\end{align*}
Note that we have on the domain $\caB_L$,
\begin{align*}
\min\left\{\frac{\lambda^{1/2}}{N^{1/4}},\frac{\lambda}{\sqrt{\kappa+\eta}}\frac{1}{\sqrt{N}}\right\} = \frac{\lambda}{\sqrt{\kappa+\eta}}\frac{1}{\sqrt{N}}\,.
\end{align*}
First, consider the case
\begin{align*}
\frac{\lambda}{\sqrt{\kappa+\eta}}\frac{1}{\sqrt{N}} \geq \frac{1}{N\eta}\,.
\end{align*}
In this case, we can easily see that
\begin{align*}
 \widetilde{\alpha}\deq\left|\frac{1-R_2-r_2}{R_3+r_3}\right|\ge \alpha_0\,,
\end{align*}
on $\Xi$. Then we obtain from~\eqref{zeta stability},
\begin{align*}
 |m-\mfc-\zeta_0|&\le\left|\frac{m-\mfc+\zeta_0}{\alpha_0}\right|\,|m-\mfc-\zeta_0|+C\frac{\Lambda^3}{\widetilde{\alpha}}+ C\frac{(\varphi_N)^{c_0\xi}}{\widetilde{\alpha}}\frac{\im\mfc+\gamma(z)}{N\eta} \nonumber \\
& \le o(1)|m-\mfc-\zeta_0|+C\frac{\gamma(z)^3}{\widetilde{\alpha}}+C\frac{(\varphi_N)^{c_0\xi}}{\widetilde{\alpha}}\frac{\im\mfc+\gamma(z)}{N\eta}\,,
\end{align*}
on $\Xi$. By Lemma~\ref{lemma.17}, we have  $K \widetilde\alpha\ge\sqrt{\kappa+\eta}\sim\im\mfc$ on $\Xi_0$, for some constant $K$, and we obtain, for some $c>c_2$, 
\begin{align*}
|m-\mfc-\zeta_0|\le (\varphi_N)^{c\xi}\left(\frac{\lambda^3}{(\kappa+\eta)^{2}}\frac{1}{N^{3/2}} +\frac{1}{N\eta}\right)\,,
\end{align*}
on $\Xi$, for all $\lambda\in\caD_{\lambda_0}$. Since the condition $\sqrt{\kappa+\eta}\ge(\varphi_N)^{L\xi}N^{-1/4}$ implies that
\begin{align*}
\frac{\lambda^3}{(\kappa+\eta)^{2}}\frac{1}{N^{3/2}} \ll \frac{1}{N(\kappa+\eta)} \leq \frac{1}{N\eta}\,,
\end{align*}
we conclude that
\begin{align*}
|m-\mfc-\zeta_0|\le \frac{(\varphi_N)^{c\xi}}{N\eta}\,,
\end{align*}
on $\Xi$, for $\lambda\in\caD_{\lambda_0}$. If
\begin{align*}
\frac{\lambda}{\sqrt{\kappa+\eta}}\frac{1}{\sqrt{N}} \leq \frac{1}{N\eta}\,,
\end{align*}
we may use the bound
\begin{align*}
|m-\mfc-\zeta_0|\le \Lambda+|\zeta_0|\le 2\gamma \leq \frac{(\varphi_N)^{c\xi}}{N\eta}\,,
\end{align*}
on $\Xi$, for $\lambda\in\caD_{\lambda_0}$. This finishes the proof.
\end{proof}

\section{Density of states} \label{Density of states}
In this section, we prove {Theorems~\ref{theorem density of states}, \ref{rigidity of eigenvalue spacing}, \ref{integrated density of states}, and \ref{rigidity of eigenvalues}.} Recall that we denote by $(\mu_\alpha)$ the eigenvalues of $H=\lambda V+W$. Define the {\it normalized eigenvalue counting function} of $H$ by
\begin{align}\label{counting measure}
 \rho(x)\deq\frac{1}{N}\sum_{\alpha=1}^N \delta (x-\mu_{\alpha})\,.
\end{align}
Then we can write
\begin{align*}
 m(z)=\frac{1}{N}\sum_{i=1}^NG_{ii}(z)=\int_{\R}\frac{\rho(x)\dd x}{x-z}\,,\qquad z\in\C^+\,.
\end{align*}
 For $E_1<E_2$, we defined in~\eqref{the counting functions} the counting functions
\begin{align*}
{\mathfrak{n}}(E_1,E_2)=\frac{1}{N}|\{\alpha\,:\,E_1<\mu_{\alpha}\le E_2\}|\,,\quad\quad{\mathfrak{n}}(E)=\frac{1}{N}|\{\alpha\,:\,\mu_{\alpha}\le E\}|\,.
\end{align*}
Similarly, we have denoted
\begin{align*}
n_{fc}(E_1,E_2)=\int_{E_1}^{E_2}\rho_{fc}(x)\,\dd x\,,\quad\quad n_{fc}(E)=\int_{-\infty}^{E}\rho_{fc}(x)\,\dd x\,,
\end{align*}
where $\rho_{fc}$ stands for the density of the free convolution measure $\mu_{fc}$.

 Throughout this section we fix \mbox{$\xi=\frac{A_0+o(1)}{2}\log\log N$} and choose $L\ge 40\xi$.
\subsection{Local density of states}\label{local density of states}
 Recall that $\kappa_E\deq\min\{| E-L_i|\,,\,i=1,2\}$. In the following, we set $\eta\deq N^{-1}$. The first part of Theorem~\ref{theorem density of states}, Inequality~\eqref{result density of states 1}, is an immediate consequence of the next two lemmas. Their proofs follow closely the proof of Lemma~8.1 and Lemma~8.2 in~\cite{EKYY1}.

\begin{lemma}\label{lemma helffer}
 Let $\eta\deq N^{-1}$. For any $E_1<E_2$  in $[-E_0,E_0]$, we define $f(x)\equiv f_{E_1,E_2,\eta}(x)$ to be an indicator function of the interval $[E_1,E_2]$, smoothed out on a scale $\eta$, i.e., $f(x)=1$, for $x\in[E_1,E_2]$, $f(x)=0$, for $x$ in $[E_1-\eta,E_2+\eta]^c$,  $|f'(x)|\le C \eta^{-1}$ and $|f''(x)|\le C\eta^{-2}$. Assume that the event
\begin{align}\label{eq.5.38}
 \bigcap_{\substack{z\in\caD_L\\ \lambda\in\caD_{\lambda_0}}}\left\lbrace |m(z)-m_{fc}(z)|\le (\varphi_N)^{c_0\xi}\left(\min\left\{\frac{\lambda^{1/2}}{N^{1/4}},\frac{\lambda}{\sqrt{\kappa_E+\eta}\sqrt{N}}\right\}+\frac{1}{N\eta}\right)\right\rbrace\,,
\end{align}
holds with $(\xi,\nu)$-high probability with $L\deq C_0\xi$, for some constant $C_0>0.$ Abbreviate 
\begin{align*}
 \kappa\deq\min\{\kappa_{E_1},\kappa_{E_2}\}\,,\quad\quad \caE\deq\max\{E_2-E_1,\,(\varphi_N)^{L}N^{-1}\}\,.
\end{align*}
Then, we have
\begin{align}\label{eq.5.40}
\left| \int_{\R} f(x)(\rho-\rho_{fc})(x)\dd x\right|\le (\varphi_N)^{c\xi}\left(\frac{1}{N}+\frac{\caE\lambda}{\sqrt{\kappa+\caE}}\frac{1}{\sqrt{N}}\right)\,,
\end{align}
with $(\xi,\nu)$-high probability, for some $c>c_0$.
\end{lemma}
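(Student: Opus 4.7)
The strategy is the Helffer--Sj\"ostrand functional calculus: convert $\int f\,\dd(\rho - \rho_{fc})$ into an integral of $m(z) - m_{fc}(z)$ over the upper half plane against an almost-analytic extension of $f$, and then deploy the hypothesis~\eqref{eq.5.38} on adapted scales of $\im z$.

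Concretely, I will introduce a smooth even cutoff $\chi \in C_c^\infty(\R)$ with $\chi \equiv 1$ on $[-1/2, 1/2]$ and $\supp \chi \subset [-1, 1]$, and define the almost-analytic extension $\tilde f(x + iy) \deq (f(x) + iy f'(x))\chi(y)$. Stokes' formula (as used in~\cite{ERSY}) gives
\begin{equation*}
\int_{\R} f(x)(\rho - \rho_{fc})(x)\,\dd x
= \frac{1}{\pi} \int_{\R^2} \partial_{\bar z}\tilde f(z)\,(m(z) - m_{fc}(z))\,\dd x\,\dd y,
\end{equation*}
where $2\partial_{\bar z}\tilde f(z) = iy \chi(y) f''(x) + \chi'(y)\bigl(f(x) + iy f'(x)\bigr)$. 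I then split the $y$-integration into three regimes: $\{|y| \geq 1\}$, $\{\eta \leq |y| \leq 1\}$, and $\{0 \leq |y| \leq \eta\}$.

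On $\{|y| \geq 1\}$ only the $\chi'$-terms survive; combined with the trivial bound $|m - m_{fc}| \leq 2/|y|$ and the compact support and boundedness of $f, f'$, this contributes at most $O(N^{-1})$. On $\{\eta \leq |y| \leq 1\}$ I insert~\eqref{eq.5.38}. The $(N|y|)^{-1}$ piece of~\eqref{eq.5.38}, combined with the factor $y|f''(x)|$ and the $y$-integration over $[\eta, 1]$, yields an $O((\varphi_N)^{c\xi}/N)$ contribution using $\int |f''(x)|\,\dd x \leq C/\eta$ and the fact that $\supp f'' \subset [E_1 - \eta, E_1] \cup [E_2, E_2 + \eta]$. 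The $\lambda$-contribution needs one integration by parts: using that $m - m_{fc}$ is holomorphic on $\C^+$ and so satisfies $\partial_x(m - m_{fc}) = -i\partial_y(m - m_{fc})$, I can trade the $x$-derivative from $f''$ for a $y$-derivative, then integrate by parts in $y$ to replace $f''$ by $f'$ (or, iteratively, by $f$) and eliminate the potentially divergent factor $1/\eta$. The remaining integral $\int_\eta^1 y(\kappa + y)^{-1/2}\,\dd y$ together with the support structure of $f'$ and $f$ around $E_1, E_2$ produces the claimed $\mathcal{E}\lambda/(\sqrt{\kappa + \mathcal{E}}\sqrt{N})$ term. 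Finally, on $\{|y| \leq \eta\}$, the map $y \mapsto y\,\im m(x + iy)$ is non-decreasing in $y$ (being the Poisson integral of the positive measure $\rho$), and the analogous monotonicity holds for $m_{fc}$; this reduces the sub-microscopic region to the boundary value at $|y| = \eta$, which is already controlled by the second regime.

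The main obstacle is the careful extraction of the $\mathcal{E}/\sqrt{\kappa + \mathcal{E}}$ factor from the $\lambda$-term in the intermediate regime $\eta \leq |y| \leq 1$, since a naive estimate using $\int |f''|\,\dd x \sim 1/\eta$ loses a factor of $\sim \mathcal{E}/\eta$. The integration by parts described above, together with the monotonicity argument for small $y$, is precisely designed to recover this loss and produce the sharp dependence on $\mathcal{E}$ and $\kappa$ stated in~\eqref{eq.5.40}.
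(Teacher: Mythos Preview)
Your overall strategy---Helffer--Sj\"ostrand functional calculus, integration by parts to trade $f''$ for $f'$ via Cauchy--Riemann, and the monotonicity of $y\mapsto y\,\im m(x+\ii y)$ to handle the sub-microscopic region---matches the paper's proof. However, there is a genuine gap: your cutoff $\chi$ is on the fixed scale~$1$, whereas the paper takes $\chi\equiv 1$ on $[-\caE,\caE]$, $\supp\chi\subset[-2\caE,2\caE]$, with $|\chi'|\le C/\caE$. This choice of scale is not cosmetic; it is precisely what produces the factor $\caE/\sqrt{\kappa+\caE}$ in~\eqref{eq.5.40}.

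Concretely, after your integration by parts the dominant $\lambda$-contribution is
\[
\int\dd x\,|f'(x)|\int_{y_0}^{\text{cutoff}}\dd y\,\frac{(\varphi_N)^{c_0\xi}\lambda}{\sqrt{\kappa+y}\,\sqrt{N}}\,.
\]
Since $\int|f'|\,\dd x\sim 1$, the size of this term is governed by $\int_{y_0}^{\text{cutoff}}(\kappa+y)^{-1/2}\,\dd y$. With the paper's cutoff at $2\caE$ this integral is $\lesssim\caE/\sqrt{\kappa+\caE}$, exactly what is needed. With your cutoff at~$1$ it is of order~$1$ whenever $\kappa\lesssim 1$, yielding only $(\varphi_N)^{c\xi}\lambda/\sqrt N$. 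This is too large: for instance with $\kappa\sim 1$ and $\caE\sim N^{-1/2}$ the target~\eqref{eq.5.40} is of order $(\varphi_N)^{c\xi}N^{-1}$, while your estimate gives $(\varphi_N)^{c\xi}N^{-1/2}$. (The integral you quote, $\int_\eta^1 y(\kappa+y)^{-1/2}\,\dd y$, is likewise of order~$1$ and does not produce the $\caE$-factor.) A further integration by parts replacing $f'$ by $f$ does not help: the resulting boundary term at $y=\eta$ is $\int f(x)\,\im m^{\Delta}(x+\ii\eta)\,\dd x$, which the monotonicity bound controls only by $(\varphi_N)^{c\xi}\caE$, again too large. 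The same scale mismatch affects your $\chi'$-term: on $|y|\sim 1$ the trivial bound $|m^\Delta|\le 2/|y|$ gives $O(1)$, not $O(N^{-1})$, and even using~\eqref{eq.5.38} there, the $y|f'(x)|$ piece produces an unwanted $(\varphi_N)^{c\xi}\lambda/\sqrt N$.

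The fix is to take $\chi$ on the scale $\caE$; then all three of your regimes yield the claimed bound, and the remainder of your outline goes through essentially as in the paper.
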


\begin{proof}
 For convenience denote
\begin{align*}
 \rho^{\Delta}\deq\rho-\rho_{fc}\,,\quad\quad m^{\Delta}\deq m-m_{fc}\,.
\end{align*}
We use the Helffer-Sj\"ostrand formula. We set $y_0\deq(\varphi_N)^LN^{-1}$ and choose a smooth cut-off function~$\chi$ such that: 
\begin{align}\label{5.42}
 \chi(y)=1\,,\quad\textrm{on}\quad[-\caE,\caE]\,;\quad\quad \chi(y)=0\,,\quad\textrm{on}\quad[-2\caE,2\caE]^{c}\,;\quad\quad |\chi'(y)|\le\frac{C}{\caE}\,.
\end{align}
Starting from the Helffer-Sj\"ostrand formula, 
\begin{align}\label{5.43}
 f(w)=\frac{1}{2\pi}\int_{\R^2}\frac{\ii y f''(x)\chi(y)+\ii(f(x)+\ii y f'(x))\chi'(y)}{w-x-\ii y}\,\dd x\,\dd y\,,
\end{align}
we obtain
\begin{align}\label{eq.5.44}
 &\left|\int_{\R} f(w)\rho^{\Delta}(w)\dd w\right|\le C\int \dd x\int_0^{\infty}  \dd y (|f(x)|+|y| |f'(x)|)|\chi'(y)| |m^{\Delta}(x+\ii y)|\nonumber\\
&\quad\quad+C\left| \int \dd x\int_0^{\eta}\dd y f''(x)\chi(y) y\im m^{\Delta}(x+\ii y) \right|+C\left|\int \dd x\int_{\eta}^{\infty}\dd y f''(x)\chi(y) y\im m^{\Delta}(x+\ii y) \right|\,.
\end{align}
Using that $\chi'$ is supported on $[\caE,2\caE]$, we can bound the first term on the right side of the above inequality by
\begin{align}\label{eq.5.45}
 \frac{(\varphi_N)^{c\xi}}{\caE}\int\dd x\int_{\caE}^{2\caE}\dd y\, { (|f(x)|+ y|f'(x)|)} \left(\frac{\lambda}{\sqrt{\kappa_x+\caE}\sqrt{N}}+\frac{1}{\caE N}\right)\le (\varphi_N)^{c\xi}\left(\frac{\caE\lambda}{\sqrt{\kappa+\caE}\sqrt{N}}+\frac{1}{N}  \right)\,,
\end{align}
with $(\xi,\nu)$-high probability. In order to bound the two remaining terms in~\eqref{eq.5.44}, we first bound the imaginary part of $m^{\Delta}(x+\ii y)$. For $y\ge y_0$, we can use~\eqref{eq.5.38}. So assume that $0<y<y_0$. Using the spectral decomposition of $\lambda V+W$, it is easy to see that the function $y\mapsto y\im m(x+\ii y)$ is monotone increasing. Thus
\begin{align}\label{eq.5.46}
 y\im m(x+\ii y)\le y_0 \im m(x+\ii y_0)\le y_0\im m_{fc}(x+\ii y_0)+(\varphi_N)^{c\xi}y_0\left(\frac{\lambda^{1/2}}{N^{1/4}}+\frac{1}{N y_0}   \right)\,,\quad\quad (y\le y_0)\,.
\end{align}
Recalling that, by Lemma~\ref{lemma.12}, we have $\im m_{fc}(x+\ii y)\le C\sqrt{\kappa_x+y}$, we get
\begin{align}\label{eq.5.47}
  y\im m(x+\ii y)\le y_0 C\sqrt{\kappa_x+y}+(\varphi_N)^{c\xi}y_0\left(\frac{\lambda^{1/2}}{N^{1/4}}+\frac{1}{N y_0}   \right)\le\frac{(\varphi_N)^{c\xi}}{N}\,,\quad\quad(y\le y_0)\,,
\end{align}
with $(\xi,\nu)$-high probability. Using that $y\le y_0=(\varphi_N)^L N^{-1}$, we can now easily bound
\begin{align}\label{eq.5.48}
 |y \im m^{\Delta}(x+\ii y)|\le \frac{(\varphi_N)^{c\xi}}{N}\,,\quad\quad (y\le y_0)\,,
\end{align}
with $(\xi,\nu)$-high probability. Since by assumption we have $\eta\le y_0$, we can bound the second term on the right side of~\eqref{eq.5.44} by
\begin{align}\label{eq.5.49}
 \frac{(\varphi_N)^{c\xi}}{N}\int \dd x|f''(x)|\,\int_{{ 0}}^{\eta}\dd y\,\chi(y)\le \frac{(\varphi_N)^{c\xi}}{N}\,,
\end{align}
with $(\xi,\nu)$-high probability, where we used that the support of $f''$ has measure $\caO(\eta)$.
To bound the third term on the right side of~\eqref{eq.5.44}, we integrate by parts, first in $x$ then in $y$  to find the bound
\begin{align}\label{eq.5.60}
 C\left|\int\dd xf'(x)\eta \re m^{\Delta}(x+\ii \eta)  \right|&+C\left|\int\dd x\int _{\eta}^{\infty}\dd y f'(x)\chi'(y) { y} \re m^{\Delta} (x+\ii y)\right|\nonumber\\
&\quad\quad + C\left|\int\dd x \int_{\eta}^{\infty}\dd y f'(x)\chi(y)\re m^{\Delta}(x+\ii y)\right|\,.
\end{align}
The second term in~\eqref{eq.5.60} can be bounded similarly to the first term of~\eqref{eq.5.44} and we obtain
\begin{align}\label{eq.5.61}
 \left|\int\dd x\int _{\eta}^{\infty}\dd y f'(x)\chi'(y) { y} \re m^{\Delta} (x+\ii y)\right|\le (\varphi_N)^{c\xi}\left(\frac{\caE\lambda}{\sqrt{\kappa+\caE}\sqrt{N}}+\frac{1}{N}\right)\,,
\end{align}
with $(\xi,\nu)$-high probability. To bound the first and the third term in~\eqref{eq.5.60}, we write, for $y\le y_0$,
\begin{align}\label{eq.5.62}
 |m^{\Delta}(x+\ii y)|\le |m^{\Delta}(x+\ii y_0)|+\int_y^{y_0} { \dd u} \left(|\partial_u m(x+\ii u)|+|\partial_u m_{fc}(x+\ii u)|  \right)\,.
\end{align}
The first term on the right side of~\eqref{eq.5.62} can be estimated using~\eqref{eq.5.38}. For the others we observe that the Ward identity~\eqref{ward} implies, for $u\le y_0$,
\begin{align*}
 |\partial_u m(x+\ii u)|=|\frac{1}{N}\Tr G^2(x+\ii u)|\le \frac{1}{N}\sum_{i,j=1}^N|G_{ij}(x+\ii u)|^2=\frac{1}{u}\im m(x+\ii u)\le \frac{1}{u^2} y_0 \im m(x+\ii y_0)\,.
\end{align*}
Similarly, we obtain 
\begin{align*}
 |\partial_u m_{fc}(x+\ii u)|\le\int \frac{\rho_{fc}(t)\dd t}{|t-x-\ii u|^2}=\frac{1}{u}\im m_{fc}(x+\ii u)\le\frac{1}{u^2}y_0\im m_{fc}(x+\ii y_0)\,.
\end{align*}
From~\eqref{eq.5.62} we hence obtain
\begin{align}\label{eq.5.63}
 |m^{\Delta}(x+\ii y)|\le (\varphi_N)^{c\xi}\left(1+\int_y^{y_0} {\dd u \frac{y_0}{u^2}} \right)\le (\varphi_N)^{c\xi}\frac{y_0}{y}\,,\quad\quad (y\le y_0)\,,
\end{align}
with $(\xi,\nu)$-high probability. Thus we can bound the first term on the right side of~\eqref{eq.5.60} by
\begin{align}\label{eq.5.64}
 \left|\int\dd xf'(x)\eta \re m^{\Delta}(x+\ii \eta) \right|\le \frac{(\varphi_N)^{c\xi}}{N}\,,
\end{align}
with high probability. To bound the third term on the right side of~\eqref{eq.5.60}, we split the integration in the $y$ variable into the pieces $[\eta,y_0)$ and $[y_0,\infty)$. Using~\eqref{eq.5.63} we can bound the first piece by
\begin{align*}
 \int\dd x\,|f'(x)|\int_{\eta}^{y_0} \dd y\, |m^{\Delta}(x+\ii y)|\le \frac{(\varphi_N)^{c\xi}}{N}\,,
\end{align*}
with high probability. For the second integration piece we find
\begin{align*}
\int\dd x\,|f'(x)|\int_{y_0}^{{2\caE}} \dd y\, |m^{\Delta}(x+\ii y)|&\le {(\varphi_N)^{c\xi}}\int \dd x\,|f'(x)|\int_{y_0}^{2\caE}\dd y\,\left(\frac{{ \lambda}}{\sqrt{\kappa_x+y}}\frac{1}{\sqrt{N}}+\frac{1}{N y}\right)\nonumber\\
&\le (\varphi_N)^{c\xi}\left(\frac{1}{N}+\frac{1}{\sqrt{N}}\int_{y_0}^{2\caE}\dd y\,\frac{\lambda}{\sqrt{\kappa+y}}\right)\nonumber\\
&\le (\varphi_N)^{c\xi}\left(\frac{1}{N}+\frac{\caE\lambda}{\sqrt{\kappa+\caE}}\frac{1}{\sqrt{N}}\right)\,,
\end{align*}
with high probability. Adding all the contributions together, we have proven that
\begin{align*}
\left| \int_{\R} f(w)\rho^{\Delta}(w)\dd w\right|\le (\varphi_N)^{c\xi}\left(\frac{1}{N}+\frac{\caE\lambda}{\sqrt{\kappa+\caE}}\frac{1}{\sqrt{N}}\right)\,,
\end{align*}
with $(\xi,\nu)$-high probability.
\end{proof}
As a simple corollary, we obtain:
\begin{corollary}
 Under the assumptions of Lemma~\ref{lemma helffer}, there is $c>0$ such that, for any $-E_0\le E_1<E_2\le E_0$,
\begin{align}\label{eq.5.66}
 \left|\left(\frn(E_2)-\frn(E_1))-(n_{fc}(E_2)-n_{fc}(E_1)\right)\right|\le (\varphi_N)^{c\xi}\left(\frac{1}{N}+\frac{\caE\lambda}{\sqrt{\kappa+\caE}\sqrt{N}}\right)\,,
\end{align}
with $(\xi,\nu)$-high probability.
\end{corollary}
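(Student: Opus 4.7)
The plan is to deduce the corollary from Lemma~\ref{lemma helffer} by sandwiching the sharp indicator of the interval $(E_1,E_2]$ between two smoothed indicators at scale $\eta\deq N^{-1}$. First I would choose two functions $f_\pm$ of the type used in Lemma~\ref{lemma helffer}: $f_+$ with $f_+=1$ on $[E_1,E_2]$, vanishing outside $[E_1-\eta,E_2+\eta]$, and $f_-$ with $f_-=1$ on $[E_1+\eta,E_2-\eta]$, vanishing outside $[E_1,E_2]$ (taking $f_-\equiv 0$ if $E_2-E_1<2\eta$), both with the derivative bounds $|f_\pm'|\le C\eta^{-1}$, $|f_\pm''|\le C\eta^{-2}$. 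By construction, $f_-(x)\le \lone_{[E_1,E_2]}(x)\le f_+(x)$ for all $x\in\R$.

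Integrating against $\rho$ and against $\rho_{fc}$ and taking the difference, this pointwise sandwich yields
\begin{align*}
\int f_-(\rho-\rho_{fc})\,\dd x -\int (f_+-f_-)\rho_{fc}\,\dd x \;\le\; \int \lone_{[E_1,E_2]}(\rho-\rho_{fc})\,\dd x \;\le\; \int f_+(\rho-\rho_{fc})\,\dd x +\int (f_+-f_-)\rho_{fc}\,\dd x\,.
\end{align*}
Lemma~\ref{lemma helffer} applied to $f_+$ and to $f_-$ controls the first term on each side by $(\varphi_N)^{c\xi}\bigl(N^{-1}+\caE\lambda/(\sqrt{\kappa+\caE}\sqrt{N})\bigr)$ with $(\xi,\nu)$-high probability. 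For the second term I would use that $f_+-f_-$ is supported on a set of Lebesgue measure at most $4\eta$ and that $\rho_{fc}$ is bounded (this follows from the square-root behaviour at the edges, Lemma~\ref{lemma assumptions 1}/\ref{squareroot lemma 2}, together with continuity of $\rho_{fc}$ in the interior of its support); thus $\int(f_+-f_-)\rho_{fc}\,\dd x\le C\eta=CN^{-1}$, which is absorbed into the $N^{-1}$ term of the target bound.

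Finally, I would note that $\int_{E_1}^{E_2}\rho(x)\,\dd x$ coincides, up to at most one eigenvalue lying exactly at $E_1$ or $E_2$, with $\frn(E_2)-\frn(E_1)$, and this discrepancy contributes at most $N^{-1}$, again absorbable. Combining all three contributions gives~\eqref{eq.5.66}.

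No single step looks hard: the only point that needs a little care is the bound $\|\rho_{fc}\|_\infty<\infty$ used to control $\int(f_+-f_-)\rho_{fc}$. This is where I invoke the square-root behaviour of $\mu_{fc}$ at the spectral edges (and its analyticity in the interior) provided by the Appendix lemmas cited in Section~\ref{Definition and Results}, ensuring a uniform bound on $\rho_{fc}$ that is independent of where $E_1,E_2$ sit. Once this is in place the sandwich argument is entirely elementary and the constant $c$ can simply be taken slightly larger than the constant $c$ produced by Lemma~\ref{lemma helffer}.
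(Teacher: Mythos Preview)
Your proposal is correct, and it is a standard two-sided sandwiching argument. One small cosmetic point: rather than $\lone_{[E_1,E_2]}$ you may as well sandwich $\lone_{(E_1,E_2]}$ directly, since $f_-\le \lone_{(E_1,E_2]}\le f_+$ holds pointwise and $\int \lone_{(E_1,E_2]}\,\dd\rho=\frn(E_2)-\frn(E_1)$ exactly, eliminating your last paragraph about eigenvalues at the endpoints. Also, when you apply Lemma~\ref{lemma helffer} to $f_-=f_{E_1+\eta,E_2-\eta,\eta}$, the parameters $(\kappa,\caE)$ shift by $O(\eta)=O(N^{-1})$; since $\caE\ge(\varphi_N)^L N^{-1}$ dominates this shift in $\sqrt{\kappa+\caE}$, the bound is unchanged up to a constant, which you should say explicitly.

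The paper's proof takes a slightly different route: it uses only the single smoothing $f=f_+$ and controls the boundary error $\bigl|\frn(E_1,E_2)-\int f\,\dd\rho\bigr|$ not by a second application of Lemma~\ref{lemma helffer} but by the local eigenvalue count estimate $\frn(x+\eta)-\frn(x-\eta)\le C\eta\,\im m(x+\ii\eta)\le(\varphi_N)^{c\xi}N^{-1}$, which is the bound~\eqref{eq.5.47} already established inside the proof of Lemma~\ref{lemma helffer}. Your approach treats Lemma~\ref{lemma helffer} as a black box and invokes it twice, trading the resolvent-based local count for the deterministic estimate $\int(f_+-f_-)\rho_{fc}\le C\eta$ using $\|\rho_{fc}\|_\infty<\infty$; the paper's approach invokes the lemma once but reaches back into its proof for the auxiliary bound. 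Both are entirely valid and of comparable length.
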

\begin{proof}
 Observe that, for $\eta=N^{-1}$,
\begin{align*}
 |\frn(x+\eta)-\frn(x-\eta)|\le C\eta \im m(x+\ii\eta)\le \frac{(\varphi_N)^{c\xi}}{N}\,,
\end{align*}
with $(\xi,\nu)$-high probability, where we used~\eqref{eq.5.46}. Hence,
\begin{align}\label{eq.5.68}
\left|\frn(E_1)-\frn(E_2)-\int_{\R} f(w)\rho(w)\dd w\right|\le C\sum_{i=1,2}
(\frn(E_i+\eta)-\frn(E_i-\eta))\le\frac{(\varphi_N)^{c\xi}}{N}\,,
\end{align}
with $(\xi,\nu)$-high probability. Moreover, since $\rho_{fc}$ is a bounded function, we find
\begin{align*}
 \left|n_{fc}(E_1)-n_{fc}(E_2)-\int_{\R} f(w)\rho_{fc}(w)\dd w\right|\le C\eta=\frac{C}{N}\,.
\end{align*}
Combination with the claims of Lemma~\ref{lemma helffer} yields the statements. 
\end{proof}
The first statement of Theorem~\ref{theorem density of states}, i.e.,~\eqref{result density of states 1}, now follows easily from the two preceding lemmas.

\subsection{Bulk fluctuations}\label{section bulk fluctuation}
The aim of this section is to prove the second part of Theorem~\ref{theorem density of states}, i.e., Inequality~\eqref{results density of states 2}. Recall the definition of the random variables $\zeta_0$ in~\eqref{definition of zeta0}. Since we will restrict the discussion to the bulk of the spectrum, we may use slightly modified random variables, $\widetilde\zeta_0(z)\equiv\widetilde\zeta^N_0(z)$, approximating~$\zeta_0$ in the bulk that are easier to handle in computations.

\subsubsection{Definition of $\widetilde{\zeta}_0$}\label{definition of widetildezeta0}
We define a random variable $\widetilde{\zeta}_0(z)\equiv\widetilde{\zeta}_0^N(z)$ by
\begin{align}\label{definition of zeta}
 \widetilde{\zeta}_0(z)=(1-R_2(z))^{-1}\left(\frac{1}{N}\sum_{i=1}^N\frac{1}{\lambda v_i-z-m_{fc}(z)}-m_{fc}(z)\right)\,,
\end{align}
for $z\in \C^+$, $\lambda\in\caD_{\lambda_0}$, where $(R_n)$ have been defined in~\eqref{eq.171}. Recall that, $1-R_2(z)\sim \sqrt{\kappa+\eta}$. Hence, by the large deviation estimate ~\eqref{lawlargenumber},
\begin{align}\label{bound zeta_0}
 |\widetilde{\zeta}_0(z)|\le \frac{(\varphi_N)^{c\xi}\lambda}{\sqrt{\kappa_E+\eta}\sqrt{N}}\,,\qquad \qquad(z=E+\ii\eta)\,,
\end{align}
with $(\xi,\nu)$-high probability, for some $c$, uniformly in $z\in\caD_L$ and $\lambda\in\caD_{\lambda_0}$.  Also note that $\widetilde\zeta_0$ approximates the random variables $\zeta_0$ in the bulk: Since $1-R_2\sim 1$ away from the spectral edge, it is straightforward to show that $|\zeta_0(z)-\widetilde\zeta_0(z)|=\caO(N^{-1})$, with high probability for such $z$.

\begin{lemma}\label{lemma bound on widetildexi0}
 Under the assumptions of Theorem~\ref{theorem density of states}, there is $c>0$ such that the event

\begin{align}\label{eq:bound zeta}
\bigcap_{\substack{z\in \caD_L\\ \lambda\in\caD_{\lambda_0}}}\left\{ |m(z)-m_{fc}(z)-\widetilde{\zeta}_0(z)|\le {(\varphi_N)^{c\xi}}\left(\min\left\{\frac{\lambda}{\sqrt{\kappa_E+\eta}}\frac{1}{\sqrt{N}},\frac{\lambda^2}{({\kappa_E+\eta})^{3/2}}\frac{1}{{N}}\right\}+\frac{1}{N\eta}  \right)\right\}\,,\qquad (z=E+\ii\eta)\,,
\end{align}
has $(\xi,\nu)$-high probability.
\end{lemma}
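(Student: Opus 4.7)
The plan is to compare $\widetilde{\zeta}_0$ with the random variable $\zeta_0$ defined in~\eqref{definition of zeta0}, and then invoke Theorem~\ref{thm xi0} to transfer bounds on $m - m_{fc} - \zeta_0$ into bounds on $m - m_{fc} - \widetilde{\zeta}_0$. Subtracting the defining relation $(1 - R_2)\widetilde{\zeta}_0 = r_1$ from $(1-R_2-r_2)\zeta_0 = r_1 + (R_3+r_3)\zeta_0^2$, one obtains the algebraic identity
\begin{equation*}
(1 - R_2)\,(\zeta_0 - \widetilde{\zeta}_0) \;=\; r_2\,\zeta_0 \;+\; (R_3 + r_3)\,\zeta_0^2\,.
\end{equation*}
By Lemma~\ref{lemma.17}, $|1 - R_2| \sim \sqrt{\kappa_E + \eta}$; by~\eqref{lawlargenumber}, $|r_2| \le (\varphi_N)^{c\xi}\lambda/\sqrt{N}$ with high probability; and by Theorem~\ref{thm xi0}, $|\zeta_0| \le (\varphi_N)^{c\xi}\lambda/(\sqrt{\kappa_E+\eta}\sqrt{N})$ with high probability. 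Inserting these estimates yields
\begin{equation*}
|\zeta_0 - \widetilde{\zeta}_0| \;\le\; \frac{(\varphi_N)^{c\xi}}{\sqrt{\kappa_E + \eta}} \left(\frac{\lambda}{\sqrt{N}} \cdot \frac{\lambda}{\sqrt{N(\kappa_E+\eta)}} + \frac{\lambda^2}{N(\kappa_E+\eta)}\right) \;\le\; (\varphi_N)^{c\xi}\,\frac{\lambda^2}{N(\kappa_E+\eta)^{3/2}}\,,
\end{equation*}
where the two terms from $r_2\zeta_0$ and $(R_3+r_3)\zeta_0^2$ combine and the second one dominates.

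On the domain $\caB_L$, Theorem~\ref{thm xi0} supplies $|m - m_{fc} - \zeta_0| \le (\varphi_N)^{c\xi}/(N\eta)$, so a triangle inequality combined with the bound above gives
\begin{equation*}
|m - m_{fc} - \widetilde{\zeta}_0| \;\le\; (\varphi_N)^{c\xi}\left(\frac{1}{N\eta} + \frac{\lambda^2}{N(\kappa_E+\eta)^{3/2}}\right)
\end{equation*}
with $(\xi,\nu)$-high probability. For $z \in \caD_L \setminus \caB_L$, i.e.\ where $\sqrt{\kappa_E+\eta} < (\varphi_N)^{L\xi}N^{-1/4}$, the estimate~\eqref{strong1} of Theorem~\ref{thm.strong} together with the a priori bound~\eqref{bound zeta_0} on $\widetilde{\zeta}_0$ gives directly
\begin{equation*}
|m - m_{fc} - \widetilde{\zeta}_0| \;\le\; |m - m_{fc}| + |\widetilde{\zeta}_0| \;\le\; (\varphi_N)^{c\xi}\left(\frac{\lambda}{\sqrt{\kappa_E+\eta}\sqrt{N}} + \frac{1}{N\eta}\right)\,.
\end{equation*}
A routine case check with $x=\sqrt{\kappa_E+\eta}$ shows that the two deterministic bounds $\lambda/(x\sqrt{N})$ and $\lambda^2/(x^3 N)$ in the $\min$ are comparable precisely at the threshold $\kappa_E+\eta \sim \lambda/\sqrt{N}$, which (up to logarithmic corrections in $\varphi_N$) is the boundary separating $\caB_L$ from its complement. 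Assembling the two regimes and absorbing the transition constants into the $(\varphi_N)^{c\xi}$ factor therefore yields the min structure in~\eqref{eq:bound zeta}.

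The main obstacle is confirming that the stability bound $|1 - R_2| \gtrsim \sqrt{\kappa_E + \eta}$ really yields the correct scaling of $|\zeta_0 - \widetilde{\zeta}_0|$ near the edges of the support, since near the spectral edge both $\zeta_0$ itself and the factor $(1-R_2)^{-1}$ become large; this is handled by the explicit size bound on $\zeta_0$ from Theorem~\ref{thm xi0}, which already incorporates the edge blow-up. The remaining case analysis is then a matter of comparing deterministic scales and glueing together the two domains via the exponent $L\xi$.
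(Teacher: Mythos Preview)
Your comparison of $\zeta_0$ with $\widetilde{\zeta}_0$ is clean, and the identity $(1-R_2)(\zeta_0-\widetilde{\zeta}_0)=r_2\zeta_0+(R_3+r_3)\zeta_0^2$ together with the resulting bound $|\zeta_0-\widetilde{\zeta}_0|\le(\varphi_N)^{c\xi}\lambda^2/(N(\kappa_E+\eta)^{3/2})$ is correct and in fact holds on all of $\caD_L$. The paper itself omits the proof, saying only that it is ``similar to the proof of Theorem~\ref{thm xi0}''; by this the authors mean a \emph{direct} stability analysis of $\widetilde{\zeta}\deq m-m_{fc}-\widetilde{\zeta}_0$, not a detour through~$\zeta_0$.

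The gap in your argument is the claim that the boundary of $\caB_L$ coincides ``up to logarithmic corrections'' with the threshold $\kappa_E+\eta\sim\lambda/\sqrt{N}$ where the two terms in the minimum meet. It does not: the $\caB_L$ boundary sits at $\kappa_E+\eta=(\varphi_N)^{2L\xi}/\sqrt{N}$, which is $\lambda$-independent, while the min-threshold is $\kappa_E+\eta=\lambda/\sqrt{N}$. In the strip $\lambda/\sqrt{N}<\kappa_E+\eta<(\varphi_N)^{2L\xi}/\sqrt{N}$ (take for instance $\lambda=1$, $\kappa_E=0$, $\eta$ just below the $\caB_L$ cutoff) you are outside $\caB_L$, so Theorem~\ref{thm xi0} is unavailable, yet the minimum in~\eqref{eq:bound zeta} is already $\lambda^2/(N(\kappa_E+\eta)^{3/2})$. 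Your only tool there is the trivial bound $\lambda/\sqrt{N(\kappa_E+\eta)}$, and the ratio of that to the target is $\sqrt{N}(\kappa_E+\eta)/\lambda$, which can be as large as $(\varphi_N)^{2L\xi}$. Since $L\ge40\xi$, this factor is $(\varphi_N)^{80\xi^2}$ and cannot be absorbed into $(\varphi_N)^{c\xi}$ for any fixed constant~$c$.

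The remedy is to do what the paper suggests: subtract the defining relation $(1-R_2)\widetilde{\zeta}_0=r_1$ directly from the strong self-consistent equation to get
\[
(1-R_2)\widetilde{\zeta}\;=\;r_2[\v]+(R_3+r_3)[\v]^2+\caO\!\left((\varphi_N)^{c\xi}\,\frac{\im m_{fc}+\gamma}{N\eta}\right),
\]
then write $[\v]=\widetilde{\zeta}+\widetilde{\zeta}_0$ and run the same dichotomy as in the proof of Theorem~\ref{thm xi0}, but split cases at the \emph{natural} threshold $\kappa_E+\eta\sim(\varphi_N)^{c\xi}\lambda/\sqrt{N}$ (where $|\widetilde{\zeta}_0|$ becomes comparable to $|1-R_2|$) rather than at the $\caB_L$ boundary. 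Above this threshold the stability gives the $\lambda^2/(N(\kappa_E+\eta)^{3/2})$ term; below it the trivial triangle inequality matches the first branch of the minimum up to a genuine $(\varphi_N)^{c\xi}$ factor.
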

 We omit the proof of this lemma since it is similar to the proof of Theorem~\ref{thm xi0}. Note, however, that the estimate in~\eqref{eq:bound zeta}, deteriorates at the spectral edge and we have to restrict the discussion below mostly to the bulk of the spectrum.

Next, we show that the random variable $\widetilde\zeta_0(z)$, $z=E+\ii \eta$, is a slowly varying function of $E$ (for fixed $\eta$), in the bulk of the spectrum.
\begin{lemma}\label{derivative zeta0}
Under the assumptions of Theorem~\ref{theorem density of states}, the event
\begin{align*}
 \bigcap_{\substack{z\in \caD_L \\ \lambda\in\caD_{\lambda_0}}}\left\{\left|\frac{\partial \widetilde\zeta_o(E+\ii\eta)}{\partial E}\right|\le \frac{(\varphi_N)^{2\xi}\lambda}{(\kappa_E+\eta)^{3/2}}\frac{1}{\sqrt{N}}\right\}\,,
\end{align*}
has $(\xi,\nu)$-high-probability.
\end{lemma}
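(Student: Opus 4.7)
The plan is to differentiate the explicit formula \eqref{definition of zeta} for $\widetilde{\zeta}_0$ using the quotient rule and then bound each factor via the large deviation estimate \eqref{lawlargenumber} and the stability bound \eqref{stability bound} of Lemma~\ref{lemma.lee1}. Since $\widetilde{\zeta}_0(z)$ is analytic in $z$, $\partial_E$ acts as $\partial_z$, so I can freely manipulate it as a complex derivative.

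Writing $\widetilde\zeta_0 = h/(1-R_2)$ with $h(z) \deq \frac{1}{N}\sum_{i=1}^N g_i(z) - \mfc(z)$ (where $g_i$ is defined in~\eqref{small green function}), the quotient rule gives
\begin{align*}
\partial_z \widetilde\zeta_0 \;=\; \frac{\partial_z h}{1-R_2} + \frac{h\,\partial_z R_2}{(1-R_2)^2}\,.
\end{align*}
First I would differentiate the Pastur relation~\eqref{eq111}, i.e.\ $\mfc = R_1$, to obtain $\mfc'(z)(1-R_2(z)) = R_2(z)$, and hence the two identities
\begin{align*}
1+\mfc'(z) \;=\; \frac{1}{1-R_2(z)}\,,\qquad \partial_z R_n(z) \;=\; \frac{n\,R_{n+1}(z)}{1-R_2(z)}\,.
\end{align*}
Using $\partial_z g_i = g_i^2(1+\mfc'(z))$, one finds
\begin{align*}
\partial_z h(z) \;=\; \frac{1}{1-R_2(z)}\left(\frac{1}{N}\sum_{i=1}^N g_i(z)^2 - R_2(z)\right) \;=\; \frac{r_2(z)}{1-R_2(z)}\,,
\end{align*}
in the notation of Section~\ref{Identifying the leading corrections}, and note that $h(z) = r_1(z)$ by the Pastur relation.

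Next I would invoke the uniform high-probability bound $|r_n(z)| \le (\varphi_N)^\xi \lambda/\sqrt{N}$ ($n=1,2$) established in~\eqref{lawlargenumber}, the stability estimate $|1-R_2(z)| \ge c\sqrt{\kappa_E+\eta}$ from Lemma~\ref{lemma.17}~(i), and the uniform bound $|R_3(z)|\le C$ from Lemma~\ref{lemma.17}~(ii). Combining these with the identity $\partial_z R_2 = 2R_3/(1-R_2)$ yields
\begin{align*}
\left|\frac{\partial_z h}{1-R_2}\right| \;\le\; \frac{C(\varphi_N)^\xi \lambda}{\sqrt{N}\,(\kappa_E+\eta)}\,,\qquad
\left|\frac{h\,\partial_z R_2}{(1-R_2)^2}\right| \;\le\; \frac{C(\varphi_N)^\xi \lambda}{\sqrt{N}\,(\kappa_E+\eta)^{3/2}}\,,
\end{align*}
with $(\xi,\nu)$-high probability, uniformly in $z\in\caD_L$ and $\lambda\in\caD_{\lambda_0}$. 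The second bound dominates, and the factor $(\varphi_N)^\xi$ is absorbed into $(\varphi_N)^{2\xi}$. Uniformity of the bound over $\caD_L\times\caD_{\lambda_0}$ is obtained exactly as in the proof of~\eqref{lawlargenumber}, via a lattice argument combined with the Lipschitz continuity of the relevant functions (all denominators are bounded below by $c\sqrt{\kappa_E+\eta}\ge c(\varphi_N)^{L/2}N^{-1/2}$ on $\caD_L$).

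There is no serious obstacle here: the proof is essentially an exercise in the chain rule, leveraging that differentiation of the Pastur relation produces a single factor of $(1-R_2)^{-1}$ and that the dangerous-looking derivative $\partial_z h$ gains a factor of $r_2$, which is small by the law of large numbers. The only point requiring care is the uniform validity of the large deviation estimates across $\caD_L$, but this has already been handled in the proof of Lemma~\ref{lemma.15}.
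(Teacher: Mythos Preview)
Your proof is correct and follows essentially the same approach as the paper: both differentiate the Pastur relation to obtain $1+m'_{fc}=(1-R_2)^{-1}$ and $\partial_z R_2=2R_3/(1-R_2)$, then combine these with the large deviation bounds on $r_1,r_2$ and the stability estimate $|1-R_2|\sim\sqrt{\kappa_E+\eta}$ from Lemma~\ref{lemma.17}. The only cosmetic difference is that the paper writes the centered sums with the notation $Q_{v_i}$ rather than your $r_n$, but the computation and estimates are identical.
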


\begin{proof}
 Recalling the definition of $\widetilde{\zeta}_0$ in~\eqref{definition of zeta}, we compute, for $z\in\caD_L$, $\lambda\in\caD_{\lambda_0}$,
\begin{align*}
 \frac{\partial \widetilde{\zeta}_0(E+\ii \eta)}{\partial E}&=\left(\frac{\partial}{\partial E}\frac{1}{1-R_2(z)}\right)\left(\frac{1}{N}\sum_{i=1}^NQ_{v_i}\frac{1}{\lambda v_i-z-m_{fc}(z)}\right)\\
&\quad\quad+\frac{1}{1-R_2(z)}\left(\frac{1}{N}\sum_{i=1}^NQ_{v_i}\frac{1}{(\lambda v_i-z-m_{fc}(z))^2}\right)(1+m'_{fc}(E+\ii\eta))\,,
\end{align*}
where we abbreviate $m'_{fc}(E+\ii\eta)\equiv\frac{\partial m_{fc}(E+\ii\eta)}{\partial E}$ and $Q_{v_i}\deq\lone-\E_{v_i}$, where $\E_{v_i}$ denotes the partial expectation with respect the random variable $v_i$. Differentiating the functional Equation~\eqref{eq111} for $m_{fc}(z)$, we get
\begin{align*}
 m'_{fc}(E+\ii\eta)&=\int\frac{\dd\mu(v)}{(\lambda v-E-\ii\eta-m_{fc}(E+\ii\eta))^2}\left(1+m'_{fc}(E+\ii\eta)\right)\,,
\end{align*}
hence,
\begin{align*}
|1+m'_{fc}(E+\ii\eta)|=\frac{1}{|1-R_2(E+\ii\eta)|}\le\frac{K}{\sqrt{\kappa_E+\eta}}\,, 
\end{align*}
for some constant $K>1$, where we used Lemma~\ref{lemma.12}. Similarly,
\begin{align*}
 \frac{\partial}{\partial E}\frac{1}{1-R_2(E+\ii\eta)}&=\frac{2(1+m'_{fc}(E+\ii\eta))}{(1-R_2(E+\ii\eta))^2}\int\frac{\dd \mu(v)}{(\lambda v-z-m_{fc}(E+\ii\eta))^3}\\
&=\frac{2(1+m'_{fc}(E+\ii\eta))R_3(E+\ii\eta)}{(1-R_2(E+\ii\eta))^2}\,,
\end{align*}
and hence, by Lemma~\ref{lemma.12},
\begin{align*}
 \left|\frac{\partial}{\partial E}\frac{1}{1-R_2(E+\ii\eta)}\right|\le \frac{C}{(\kappa_E+\eta)^{3/2}}\,,
\end{align*}
for some constant $C$. The terms involving the $Q_{v_i}$ can be bounded by the large deviation estimates~\eqref{lawlargenumber}. Uniformity in $\lambda$ and $z$ follows from a lattice argument using the stability bound~\eqref{stability bound}.
\end{proof}

Next, let $f(x)\equiv f_{E_1,E_2,\eta}(x)$ be an indicator function of the interval $[E_1,E_2]$, smoothed out on scale $\eta=N^{-1}$. Let $\chi(y)$ be a smooth cut-off function as defined in~\eqref{5.42}. We set $m^{\Delta}\deq m-m_{fc}$. Appealing to the discussion in Section~\ref{local density of states}, we define
\begin{align}
\frX_0(E_1,E_2)&\deq \frac{1}{2\pi}\int_{\R^2} { \dd x\, \dd y}\, \big(\ii y f''(x)\chi(y)+\ii(f(x)+\ii y f'(x))\chi'(y)\big)\widetilde{\zeta}_0(x+\ii y)\label{eq.5.71}\,,
\end{align}
where we extend $\widetilde\zeta_0$ to the lower half-plane as $\widetilde\zeta_0(\overline{z})=\overline{\widetilde\zeta_0(z)}$, $z\in\C^+$.
\begin{lemma} \label{bound on frX0}
There is a constant $c>0$ such that, for $E_1<E_2$ with $E_1,E_2\in[-E_0,E_0]$ and for $\lambda\in\caD_{\lambda_0}$, we have
\begin{align}\label{FrX0}
|\frX_0(E_1,E_2)|\le(\varphi_N)^{c\xi}\frac{\caE^2\lambda}{{(\kappa+\caE)^{3/2}}}\frac{1}{\sqrt{N}}\,,
\end{align}
with $(\xi,\nu)$-high probability, where $\caE=\max\{E_2-E_1,\,(\varphi_N)^{L}N^{-1}\}$ and $\kappa=\min\{|E_i-L_i|\,:\, i=1,2\}$.
\end{lemma}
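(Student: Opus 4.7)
The plan is to decompose $\frX_0 = \frX_0^{(1)} + \frX_0^{(2)}$ corresponding to the two terms of the integrand in \eqref{eq.5.71}, namely the piece with $iyf''(x)\chi(y)$ and the piece with $i(f(x)+iyf'(x))\chi'(y)$. The three essential inputs will be (a) the pointwise bound $|\widetilde\zeta_0(x+iy)|\le(\varphi_N)^{c\xi}\lambda/(\sqrt{\kappa_x+|y|}\sqrt{N})$ from \eqref{bound zeta_0}; (b) the slow-variation estimate $|\partial_x\widetilde\zeta_0(x+iy)|\le(\varphi_N)^{c\xi}\lambda/((\kappa_x+|y|)^{3/2}\sqrt{N})$ of Lemma~\ref{derivative zeta0}; and (c) the analyticity of $\widetilde\zeta_0$ in $\C^+$ (and in $\C^-$ after the reflection extension $\widetilde\zeta_0(\bar z)=\overline{\widetilde\zeta_0(z)}$), which provides the Cauchy--Riemann identity $\partial_y\widetilde\zeta_0=i\partial_x\widetilde\zeta_0$ in each half-plane.

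For $\frX_0^{(1)}$, I would do a single integration by parts in $x$ --- legitimate because $f'$ has compact support --- to rewrite the inner integral as $-\int yf'(x)\chi(y)\partial_x\widetilde\zeta_0(x+iy)\,dx$. Combining (b) with $\int|f'(x)|\,dx\le 2$ (since $f'$ is an approximate derivative of an indicator) and the elementary estimate $\int_0^{2\caE}y/(\kappa+y)^{3/2}\,dy\le C\caE^2/(\kappa+\caE)^{3/2}$ for the remaining $y$-integral against $|y\chi(y)|$, one immediately obtains the target bound $|\frX_0^{(1)}|\le C(\varphi_N)^{c\xi}\caE^2\lambda/((\kappa+\caE)^{3/2}\sqrt{N})$.

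For $\frX_0^{(2)}$, which is supported where $|y|\in[\caE,2\caE]$, I would Taylor-expand $\widetilde\zeta_0(x+iy)=\widetilde\zeta_0(x_c+iy)+(x-x_c)\partial_x\widetilde\zeta_0(\xi_{x,y}+iy)$ around $x_c=(E_1+E_2)/2$. The remainder term, controlled using (b), $|x-x_c|\le\caE$ on $\supp f$, $\int|\chi'|\,dy\le C$, and $|y|\ge\caE$ on $\supp\chi'$, produces a contribution of size $\caE^2\lambda/((\kappa+\caE)^{3/2}\sqrt{N})$. The leading Taylor term factorises, using $\int f'(x)\,dx=0$, into $(\caE+O(\eta))\cdot\frac{i}{2\pi}\int\chi'(y)\widetilde\zeta_0(x_c+iy)\,dy$. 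I would treat this inner $y$-integral by integration by parts in $y$ on $\C^+$ and $\C^-$ separately: the interior integrals combine via (c) into $-i\int\chi(y)\partial_x\widetilde\zeta_0(x_c+iy)\,dy$, which Lemma~\ref{derivative zeta0} bounds by $C\lambda\caE/((\kappa+\caE)^{3/2}\sqrt{N})$ using the elementary integral $\int_{|y|\le 2\caE}dy/(\kappa+|y|)^{3/2}\le C\caE/(\kappa+\caE)^{3/2}\cdot (\kappa+\caE)$, yielding the desired size after multiplication by the outer $\caE$ factor; the boundary contribution at $y=0$ is of the form $\im\widetilde\zeta_0(x_c+i0^+)$, which is absorbed by noting that Stokes' theorem gives the equivalent representation $\frX_0=-\frac{1}{\pi}\int f(x)\,\im\widetilde\zeta_0(x+i0^+)\,dx$ and then replacing $\widetilde\zeta_0(x_c+i0^+)$ by its $x$-averaged value using (b) once more.

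The principal obstacle is controlling the residual boundary contribution $\im\widetilde\zeta_0(x_c+i0^+)$ in the bulk: any pointwise bound alone is of order $\lambda/\sqrt{N}$ and falls short by a factor $\caE/(\kappa+\caE)$. The resolution must exploit the Lipschitz estimate (b) not only along $y$ but also along the real direction $x$, converting $\im\widetilde\zeta_0(x_c+i0^+)$ into $\frac{1}{\caE}\int_{E_1}^{E_2}\im\widetilde\zeta_0(x+i0^+)\,dx$ modulo an error of size $\caE\lambda/((\kappa+\caE)^{3/2}\sqrt N)$; the average then feeds back into Stokes' representation of $\frX_0$ itself, producing a self-consistent system whose solution yields the claimed bound.
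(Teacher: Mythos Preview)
Your treatment of $\frX_0^{(1)}$ (the $\ii y f''\chi$ piece) is correct and matches the paper's argument in~\eqref{eq.10.53}: one integration by parts in $x$, the derivative bound of Lemma~\ref{derivative zeta0}, and the elementary estimate $\int_0^{2\caE}y(\kappa+y)^{-3/2}\,\dd y\le C\caE^2/(\kappa+\caE)^{3/2}$.

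The gap is precisely the boundary contribution you isolate, and your self-consistent fix cannot close it. The Stokes identity you invoke is \emph{exact},
\[
\frX_0=\frac{1}{\pi}\int f(x)\,\im\widetilde\zeta_0(x+\ii 0^+)\,\dd x\,,
\]
so replacing $\im\widetilde\zeta_0(x_c+\ii 0^+)$ by $\caE^{-1}\int_{E_1}^{E_2}\im\widetilde\zeta_0\,\dd x$ via Lemma~\ref{derivative zeta0} and then recognising the average as $\pi\frX_0/\caE$ simply reproduces $\frX_0=\frX_0+\caO(\text{small})$ --- a tautology, not an inequality one can solve for $\frX_0$. There is no second independent equation to close the system. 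In fact, in the bulk $\im\widetilde\zeta_0(x+\ii 0^+)=|1-R_2|^{-2}\im\bigl(\,\overline{(1-R_2)}\,r_1\bigr)$ is an $\caO(1)$ deterministic linear combination of $\re r_1(x)$ and $\im r_1(x)$, each a centred i.i.d.\ sum with individual variance $\sim\lambda^2$; so $\im\widetilde\zeta_0(x+\ii 0^+)$ is generically of order $\lambda/\sqrt N$ with no extra smallness, and $|\frX_0|$ is of order $\caE\lambda/(\sqrt{\kappa+\caE}\,\sqrt N)$, larger than the stated bound by the factor $(\kappa+\caE)/\caE$ whenever $\caE\ll\kappa$. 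The paper's own proof has the same omission: the integration by parts in $y$ over $[\caE,2\caE]$ in~\eqref{eq.10.521} silently drops the boundary term $\chi(\caE)\widetilde\zeta_0(x+\ii\caE)=\widetilde\zeta_0(x+\ii\caE)$, whose $f$-weighted $x$-integral is exactly of this dominant size. The bound one can actually establish by either route is $(\varphi_N)^{c\xi}\caE\lambda/(\sqrt{\kappa+\caE}\,\sqrt N)$, which in the bulk coincides with~\eqref{result density of states 1} and still suffices for the proof of Theorem~\ref{rigidity of eigenvalue spacing}.
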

Choosing the energies $E_1$, $E_2$, such that $\min\{\kappa_{E_1},\kappa_{E_2}\}\ge\varkappa$, for some  fixed $\varkappa>0$, we obtain
\begin{align}\label{bound on frX02}
 |\frX_0(E_1,E_2)|\le C_{\varkappa}(\varphi_N)^{c\xi}\frac{\caE^2\lambda}{\sqrt{N}}\,,
\end{align}
with $(\xi,\nu)$-high probability, for some constant $C_\varkappa$, depending on $\varkappa$.
\begin{proof}
Starting from the definition of $\widetilde{\zeta}_0$, we find
\begin{align}\label{eq.10.50}
 |\frX_0(E_1,E_2)| &\le C\left|\int \dd x\int_0^{\infty}\dd y\, f(x)\chi'(y) \widetilde{\zeta}_0(x+\ii y)\right|+\left|\int \dd x\int_0^{\infty}\dd y\,yf'(x)\chi'(y) \widetilde{\zeta}_0(x+\ii y)\right|\\
&\quad\quad+C\left| \int \dd x\int_0^{2\caE}\dd y\, f''(x)\chi(y) y\im \widetilde{\zeta}_0(x+\ii y) \right|\nonumber\,.
\end{align}
To bound the first term on the right side of~\eqref{eq.10.50} we integrate by part in the variable $y$ to find, with $(\xi,\nu)$-high probability,
\begin{align}\label{eq.10.521}
 \left|\int \dd x\,f(x)\int_{\caE}^{2\caE}\dd y\, \chi'(y) \widetilde{\zeta}_0(x+\ii y)\right|&=\left|\int \dd x\,f(x)\int_{\caE}^{2\caE}\dd y\, \chi(y)\partial_y\widetilde{\zeta}_0(x+\ii y)  \right|\nonumber\\
&=\left|\int \dd x\,f(x)\int_{\caE}^{2\caE}\dd y\,\chi(y) \partial_x\widetilde{\zeta}_0(x+\ii y)\right|\nonumber\\
&\le (\varphi_N)^{c\xi}\caE\left|\int \dd x\,f(x)\frac{\lambda}{{(\kappa_x+\caE)^{3/2}}}\frac{1}{\sqrt{N}}  \right|\nonumber\\
&\le (\varphi_N)^{c\xi}\frac{\caE\lambda}{(\kappa+\caE)^{3/2}}\frac{1}{\sqrt{N}}\,,
\end{align}
where we used in the second line that $\widetilde{\zeta}_0(z)$ in an analytic function in the upper half plane, and in the third line we used Lemma~\ref{derivative zeta0}. In the fourth line we used that $\kappa_x\ge \kappa=\min\{\kappa_{E_1},\kappa_{E_2}\}$. Finally, we used that $f$ is supported on $[E_1-\eta,E_2+\eta]$, $\eta=N^{-1}$.

To bound the second term on the right side of~\eqref{eq.10.50}, we integrate by part in the variable $x$ and find, similarly to the computation above,
\begin{align}\label{eq.10.52}
 \left|\int_{E_1-\eta}^{E_2+\eta} \dd x\int_0^{\infty}\dd y\,yf'(x)\chi'(y) \widetilde{\zeta}_0(x+\ii y)\right|&=\left|\int_{E_1-\eta}^{E_2+\eta} \dd x\int_0^{\infty}\dd y\,yf(x)\chi'(y)\partial_x\widetilde{\zeta}_0(x+\ii y)\right|\nonumber\\
&\le(\varphi_N)^{c\xi}\int_{E_1-\eta}^{E_2+\eta} \dd x\int_{\caE}^{2\caE}\dd y\,yf(x)|\chi'(y)|\frac{\lambda}{(\kappa_x+\caE)^{3/2}}\frac{1}{\sqrt{N}}\nonumber\\
&\le(\varphi_N)^{c\xi}\frac{\lambda}{(\kappa+\caE)^{3/2}}\frac{1}{\sqrt{N}}\int_{E_1-\eta}^{E_2+\eta} \dd x\,f(x)\int_{\caE}^{2\caE}\dd y\,\frac{y}{\caE}\nonumber\\
&\le(\varphi_N)^{c\xi}\frac{\caE^2\lambda}{(\kappa+\caE)^{3/2}}\frac{1}{\sqrt{N}}\,,
\end{align}
with $(\xi,\nu)$-high probability.

Finally, the third term in~\eqref{eq.10.50} can be bounded by integrating by parts in $x$ to obtain
\begin{align}
 \left| \int \dd x\int_0^{2\caE}\dd y\, f''(x)\chi(y) y\im \widetilde{\zeta}_0(x+\ii y) \right|&=\left| \int \dd x\int_0^{2\caE}\dd y\, f'(x)\chi(y) y\im \partial_x\widetilde{\zeta}_0(x+\ii y) \right|\nonumber\\&\le (\varphi_N)^{c\xi}\left|\int_{0}^{2\caE}\dd y  \frac{\lambda y}{(\kappa+y)^{3/2}}\frac{1}{\sqrt{N}}  \right|\nonumber\\
&\le(\varphi_N)^{c\xi}\frac{\caE^2\lambda}{(\kappa+\caE)^{3/2}}\frac{1}{\sqrt{N}}\label{eq.10.53}\,,
\end{align}
with $(\xi,\nu)$-high probability. Adding up the estimates~\eqref{eq.10.521},~\eqref{eq.10.52} and~\eqref{eq.10.53} yields the claim.
\end{proof}

\subsubsection{Local eigenvalue density in the bulk}
In this subsection, we show that we can control the difference $\frn(E_2)-\frn(E_1)$ in terms of $n_{fc}(E_2)-n_{fc}(E_1)$ in the bulk of the spectrum up to an optimal error: Fix some $\varkappa>0$. We consider energies $E_1<E_2$, such that $\min\{\kappa_{E_1},\kappa_{E_2}\}\ge\varkappa$, $L_1<E_1<E_2<L_2$ and $E_2-E_1\ge(\varphi_N)^{L\xi}N^{-1}$. We denote with $C_{\varkappa}$ constants that only depend on $\varkappa$ (with $C_{\varkappa}\to\infty$, as $\varkappa\to 0$).

As above, let $f(x)\equiv f_{E_1,E_2,\eta}(x)$ be an indicator function of the interval $[E_1,E_2]$, smoothed out on scale $\eta=N^{-1}$. Let $\chi(y)$ be a smooth cut-off function as defined in~\eqref{5.42} and let $m^{\Delta}\deq m-m_{fc}$. Define
\begin{align}
\frX_1(E_1,E_2)&\deq \frac{1}{2\pi}\int_{\R^2}\big(\ii y f''(x)\chi(y)+\ii(f(x)+\ii y f'(x))\chi'(y)\big)m^{\Delta}(x+\ii y)\,,\label{eq.5.70}
\end{align}
and recall the definition of $\frX_0$ in~\eqref{eq.5.71},
\begin{align}
 \frX_0(E_1,E_2)&\deq \frac{1}{2\pi}\int_{\R^2}\big(\ii y f''(x)\chi(y)+\ii(f(x)+\ii y f'(x))\chi'(y)\big){ \widetilde{\zeta}_0}(x+\ii y)\,.
\end{align}
Here we implicitly assume that the functions $f$ and $\chi$ in both definitions agree.

Following the discussion in Section~\ref{local density of states}, one easily sees that
\begin{align}\label{estimate density frX}
 \left|\left(\frn(E_1,E_2)-{n_{fc}}(E_1,E_2)\right)-\frX_1(E_1,E_2)   \right|\le\frac{(\varphi_N)^{c\xi}}{N}\,,
\end{align}
with $(\xi,\nu)$-high probability. Recalling the estimate on $\frX_0$ in~\eqref{FrX0}, we observe that it suffices to bound $\frX_1-\frX_0$ in order to control the density of states.  
\begin{lemma}\label{lemma bound on frX10}
 Let $L_1<E_1<E_2<L_2$, with $\min\{\kappa_{E_1},\kappa_{E_2}\}\ge\varkappa$ and $E_2-E_1\ge (\varphi_N)^{L\xi}N^{-1}$. Then 
\begin{align}\label{eq.10.54}
|\frX_1(E_1,E_2)-\frX_0(E_1,E_2)|\le C_{\varkappa}(\varphi_N)^{c\xi} \frac{1}{N} \,,
\end{align}
with $(\xi,\nu)$-high probability. The constant $c>0$ can be chosen independent of $E_1,E_2$ and $\lambda\in\caD_{\lambda_0}$.
\end{lemma}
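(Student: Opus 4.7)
I will mimic the Helffer--Sj\"ostrand argument of Lemma~\ref{lemma helffer}, but with $m^{\Delta}=m-m_{fc}$ replaced throughout by $m^{\Delta}-\widetilde{\zeta}_0$. Subtracting \eqref{eq.5.71} from \eqref{eq.5.70} gives
\begin{align*}
\frX_1-\frX_0=\frac{1}{2\pi}\int_{\R^2}\bigl(\ii y f''(x)\chi(y)+\ii(f(x)+\ii y f'(x))\chi'(y)\bigr)(m^{\Delta}-\widetilde{\zeta}_0)(x+\ii y)\,\dd x\,\dd y\,.
\end{align*}
The crucial new ingredient is Lemma~\ref{lemma bound on widetildexi0}: for $x\in[E_1-\eta,E_2+\eta]\subset[L_1+\varkappa/2,L_2-\varkappa/2]$ and $y\ge y_0\deq(\varphi_N)^{L}N^{-1}$, it specializes in the bulk to the high-probability estimate
\begin{align*}
|(m^{\Delta}-\widetilde{\zeta}_0)(x+\ii y)|\le C_{\varkappa}(\varphi_N)^{c\xi}\left(\frac{\lambda^2}{N}+\frac{1}{Ny}\right)\,.
\end{align*}
Compared to the bound $(\varphi_N)^{c\xi}(\lambda/\sqrt{(\kappa+y)N}+(Ny)^{-1})$ used in Lemma~\ref{lemma helffer}, the subtraction of $\widetilde{\zeta}_0$ has removed the $\lambda/\sqrt{N}$ fluctuation and replaced it by $\lambda^2/N$; this is precisely what allows a $C_{\varkappa}/N$ bound rather than the $C_{\varkappa}\caE\lambda/\sqrt{N}$ of \eqref{eq.5.40}.

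I will split the integrand into the three summands of \eqref{eq.5.44}. The two pieces weighted by $\chi'(y)$ are supported on $y\in[\caE,2\caE]$ (so in particular $y\ge y_0$); inserting the pointwise bound above and using $\int|f(x)|\,\dd x\le\caE$, $\int|f'(x)|\,\dd x\le C$, $|\chi'|\le C/\caE$ and $|y\chi'(y)|\le C$, each contributes at most $C_{\varkappa}(\varphi_N)^{c\xi}(\lambda^2\caE/N+1/N)\le C_{\varkappa}(\varphi_N)^{c\xi}/N$, since $\lambda\le\lambda_0$ and $\caE\le 2E_0$.

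The third piece $\ii y f''(x)\chi(y)(m^{\Delta}-\widetilde{\zeta}_0)(x+\ii y)$ requires care. I split the $y$-integration into $[0,\eta]$ and $[\eta,2\caE]$, with $\eta=N^{-1}$. On $[\eta,2\caE]$ I integrate by parts once in $x$ (using $\int|f'|\,\dd x\le C$), then invoke the Cauchy--Riemann identity $\partial_x\im g=-\partial_y\re g$ (which holds since $m^{\Delta}-\widetilde{\zeta}_0$ is analytic in $\C^+$) to convert $\partial_x$ into $\partial_y$, and finally integrate by parts in $y$, exactly as in \eqref{eq.5.60}--\eqref{eq.5.63}. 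The resulting boundary term at $y=\eta$ and bulk integral $\int_\eta^{2\caE}\dd y\,(\lambda^2/N+1/(Ny))$ are both $\le C_{\varkappa}(\varphi_N)^{c\xi}/N$ (the logarithm from the second summand being absorbed into $(\varphi_N)^{c\xi}$).

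The main obstacle is the small-$y$ piece $y\in[0,\eta]$, where the pointwise bound deteriorates. Here I will use, as in \eqref{eq.5.47}--\eqref{eq.5.48}, the monotonicity of $y\mapsto y\im m(x+\ii y)$ (from the spectral decomposition of $H$) and of $y\mapsto y\im m_{fc}(x+\ii y)$ (from the Stieltjes representation of $\mu_{fc}$) to dominate $y\im m^{\Delta}(x+\ii y)$ by its value at $y=y_0$, which is $\le(\varphi_N)^{c\xi}/N$. The remaining contribution $y\,\im\widetilde{\zeta}_0(x+\ii y)$ is trivially $\le\eta\cdot C_{\varkappa}(\varphi_N)^{c\xi}\lambda/\sqrt{N}\le C_{\varkappa}(\varphi_N)^{c\xi}/N$ by Lemma~\ref{lemma bound zeta_0}. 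Combined with $\int|f''|\,\dd x\cdot\eta\le C$, this piece is also $\le C_{\varkappa}(\varphi_N)^{c\xi}/N$, completing the proof.
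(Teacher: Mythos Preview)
Your approach is essentially the paper's: both apply the Helffer--Sj\"ostrand representation to $m^{\Delta}-\widetilde{\zeta}_0$, invoke Lemma~\ref{lemma bound on widetildexi0} for the bulk pointwise bound, handle the $\chi'$-weighted terms directly, integrate the $f''$-term by parts (first in $x$, then in $y$ via Cauchy--Riemann), and treat the small-$y$ strip by combining the monotonicity of $y\mapsto y\,\im m(x+\ii y)$ with the direct bound~\eqref{bound zeta_0} on $\widetilde{\zeta}_0$.

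There is one imprecision worth flagging. You split the $f''$-integral at $\eta=N^{-1}$, whereas the paper splits at $y_0=(\varphi_N)^L N^{-1}$; this matters because the bound $|(m^{\Delta}-\widetilde{\zeta}_0)(x+\ii y)|\le C_{\varkappa}(\varphi_N)^{c\xi}(\lambda^2/N+1/(Ny))$ from Lemma~\ref{lemma bound on widetildexi0} is stated only for $z\in\caD_L$, i.e.\ for $y\ge y_0$. Your displayed ``bulk integral $\int_\eta^{2\caE}\dd y\,(\lambda^2/N+1/(Ny))$'' therefore tacitly uses that bound on the strip $y\in[\eta,y_0)$ where it is not available. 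The repair is precisely the propagation estimate you cite as \eqref{eq.5.62}--\eqref{eq.5.63}: on $[\eta,y_0]$ bound $|m^{\Delta}(x+\ii y)|\le(\varphi_N)^{c\xi}y_0/y$ and $|\widetilde{\zeta}_0(x+\ii y)|\le C_{\varkappa}(\varphi_N)^{c\xi}\lambda N^{-1/2}$ separately, giving $\int_\eta^{y_0}(y_0/y+\lambda N^{-1/2})\,\dd y\le(\varphi_N)^{c\xi}/N$. With this adjustment (or, equivalently, by splitting at $y_0$ from the start as the paper does), your argument is complete.
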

\begin{proof}
We set $y_0\deq(\varphi_N)^L N^{-1}$ and abbreviate $\widetilde\zeta(z)=m(z)-m_{fc}(z)-\widetilde{\zeta}_0(z)$. Using the definition of $\widetilde{\zeta}_0$, we find
\begin{align}\label{eq.10.70}
&|(\frX_1-\frX_0)(E_1,E_2)| \nonumber\\
&\le C\int \dd x\int_0^{\infty}(|f(x)|+|yf'(x)|)|\chi'(y)|\,|\widetilde\zeta(x+\ii y)|+C\left| \int \dd x\int_0^{y_0}\dd y f''(x)\chi(y) y\im \widetilde{\zeta}_0(x+\ii y) \right|\nonumber\\
&\quad+C\left| \int \dd x\int_0^{y_0}\dd y f''(x)\chi(y) y\im m^{\Delta}(x+\ii y) \right|+C\left|\int \dd x\int_{y_0}^{\infty}\dd y f''(x)\chi(y) y\im \widetilde\zeta(x+\ii y) \right|\,.
\end{align}
Using~\eqref{eq:bound zeta} we can bound the first term on the right side of~\eqref{eq.10.70} as
\begin{align}\label{eq.10.71}
\left|\int \dd x\int_{\caE}^{2\caE}\big( |f(x)|+y|f'(x)|\big)|\chi'(y)|\, |\widetilde\zeta(x+\ii y)| \right|&\le C_{\varkappa}\frac{(\varphi_N)^{c\xi}}{\caE}\int \dd x\int_{\caE}^{2\caE}\dd y (|f(x)|+ y |f'(x)|) \frac{1}{N y}\nonumber\\ 
&\le C_{\varkappa}(\varphi_N)^{c\xi} \frac{1}{N}\,,
\end{align}
with $(\xi,\nu)$-high probability.

The second term on the right side of~\eqref{eq.10.70} is, by~\eqref{bound zeta_0}, bounded by
\begin{align}\label{eq.10.72}
\left| \int \dd x\int_0^{y_0}\dd y f''(x)\chi(y) y\im \widetilde{\zeta}_0(x+\ii y) \right|&\le (\varphi_N)^{c\xi}\frac{\lambda}{\eta}\int_0^{y_0}\dd y \,\chi(y)y\frac{1}{\sqrt{\kappa+y}}\frac{1}{\sqrt{N}}\nonumber\\ &\le
(\varphi_N)^{c\xi}\frac{1}{\eta}\int_0^{y_0}\dd y \frac{\lambda\sqrt{y}}{\sqrt{N}}\nonumber\\
 &  \le \frac{(\varphi_N)^{c\xi}}{N}\,,
 \end{align}
with $(\xi,\nu)$-high probability.

To control the third term, we note that both functions $y\mapsto y \,\im m(x+\ii y)\,,y\,\im {m_{fc}}(x+\ii y)$, are monotone increasing. Thus we get from~\eqref{eq.5.38},
\begin{align*}
 y \im m(x+\ii y)\le y_0  \im m(x+\ii y_0)\le (\varphi_N)^{c\xi}y_0\left( \sqrt{\kappa_x+y_0}+\frac{\lambda^{1/2}}{N^{1/4}}+\frac{1}{N y_0}\right)\le\frac{ (\varphi_N)^{c\xi}}{N}\,,\quad\quad (y\le y_0)\,,
\end{align*}
and
\begin{align*}
y \im m_{fc}(x+\ii y)\le  y_0  \im {m_{fc}}(x+\ii y_0)\le C y_0 \sqrt{\kappa_x+y_0}\,,\quad\quad (y\le y_0)\,.
\end{align*}
Since $y_0=(\varphi_N)^LN^{-1}$, this yields
\begin{align}\label{eq.10.73}
 y\, |\im \widetilde\zeta(x+\ii y)|\le \frac{(\varphi_N)^{c\xi}}{{N}}\,,\quad\quad (y\le y_0)\,,
\end{align}
with $(\xi,\nu)$-high probability. The third term on the right side of~\eqref{eq.10.70} is thus bounded as
\begin{align}\label{eq.10.74}
\left| \int \dd x\int_0^{y_0}\dd y f''(x)\chi(y) y\im m^{\Delta}(x+\ii y) \right|\le \frac{(\varphi_N)^{c\xi}}{N}\int \dd x |f''(x)|\,\int_0^{y_0} \dd y\, \chi(y)\le \frac{(\varphi_N)^{c\xi}}{N}\,,
\end{align}
with $(\xi,\nu)$-high probability.

To bound the fourth term in~\eqref{eq.10.70}, we integrate first by parts in the variable $x$ and then in $y$, to find the bound
\begin{align}\label{eq 553}
 \left|\int \dd x\int_{y_0}^{2\caE}\dd y f'(x)\partial_y(\chi(y)y)\re\widetilde\zeta(x+\ii y)\right|+\left|\int \dd x f'(x)\chi(y_0)y_0\re\widetilde\zeta(x+\ii y_0) \right|\,.
\end{align}
Using the a priori high probability bounds
\begin{align*}
 |m^{\Delta}(x+\ii y_0)|\le (\varphi_N)^{c\xi}\left(\frac{\lambda^{1/2}}{{N}^{1/4}}+\frac{1}{{ Ny_0}   }\right)\le  (\varphi_N)^{c\xi}\,,\quad\quad |\widetilde{\zeta}_0(x+\ii y_0)|\le(\varphi_N)^{c\xi}\frac{\lambda}{\sqrt{\kappa_x+y_0}}\frac{1}{\sqrt{N}}\le (\varphi_N)^{c\xi}\,,
\end{align*}
we bound the second term on the right side of~\eqref{eq 553} as
\begin{align*}
 \left|\int\dd xf'(x)y_0 \widetilde\zeta(x+\ii \eta)  \right|\le (\varphi_N)^{c\xi} y_0\le \frac{(\varphi_N)^{c\xi}}{N}\,,
\end{align*}
with $(\xi,\nu)$-high probability. It remains to bound the first term in~\eqref{eq 553},
\begin{align}\label{bound nochmals}
 \left|\int \dd x\int_{y_0}^{2\caE}\dd y f'(x)\partial_y(\chi(y)y)\re\widetilde\zeta(x+\ii y)\right|&\le\left|\int \dd x\int_{y_0}^{2\caE}\dd y f'(x)\chi'(y)y\re\widetilde\zeta(x+\ii y)\right|\nonumber\\
&\quad\quad+ \left|\int \dd x\int_{y_0}^{2\caE}\dd y f'(x)\chi(y)\re\widetilde\zeta(x+\ii y)\right|\,.
\end{align}
For the first term on the right side, we use~\eqref{eq:bound zeta} to find 
\begin{align*}
\left|\int \dd x\int_{y_0}^{2\caE}\dd y f'(x)\chi'(y)y\re\widetilde\zeta(x+\ii y)\right|\le C_{\varkappa}(\varphi_N)^{c\xi} \frac{1}{N}\,,
\end{align*}
with $(\xi,\nu)$-high probability. Using once more~\eqref{eq:bound zeta}, we bound the second term on the right side of~\eqref{bound nochmals} as
\begin{align}\label{eq.10.76}
 \left|\int\dd x \int _{y_0}^{\infty}\dd y f'(x)\chi(y)\re \widetilde\zeta(x+\ii y)\right| \le C_{\varkappa} (\varphi_N)^{c\xi}\int_{y_0}^{2\caE}\dd y \frac{1}{\caE N} \leq (\varphi_N)^{c\xi} \frac{1}{N}\,,
\end{align}
with $(\xi,\nu)$-high probability. Adding up the different contributions, we find~ \eqref{eq.10.54}.
\end{proof}
To conclude this subsection, we prove~\eqref{results density of states 2} of Theorem~\ref{theorem density of states}:
\begin{proof}[Proof of~\eqref{results density of states 2}]
 Let $E_1<E_2$. Then we have from~\eqref{estimate density frX}
\begin{align*}
 |\frn(E_1,E_2)-n_{fc}(E_1,E_2)|&\le|\frX_1(E_1,E_2)|+C(\varphi_N)^{c\xi}\frac{1}{N}\\
&\le|\frX_0(E_1,E_2)|+|\frX_1(E_1,E_2)-\frX_0(E_1,E_2)|+C(\varphi_N)^{c\xi}\frac{1}{N}\,,
\end{align*}
with $(\xi,\nu)$-high probability. Using Lemma~\ref{bound on frX0} and Lemma~\ref{lemma bound on frX10}, we therefore get
\begin{align*}
 |\frn(E_1,E_2)-n_{fc}(E_1,E_2)|\le C_{\varkappa}(\varphi_N)^{c\xi}\left(\frac{1}{N} +\frac{\lambda^2\caE^2}{\sqrt{N}}\right)\,,
\end{align*}
with $(\xi,\nu)$-high probability. Inequality~\eqref{results density of states 2} follows by choosing $E_2-E_1\ge(\varphi_N)^{L\xi}N^{-1}$.
\end{proof}

\subsection{Eigenvalue spacing in the bulk}\label{Eigenvalue spacing in the bulk}
In this subsection, we prove Theorem \ref{rigidity of eigenvalue spacing}.

\begin{proof}[Proof of Theorem \ref{rigidity of eigenvalue spacing}]
Let $\lambda\in\caD_{\lambda_0}$. Starting from the identity
\begin{align*}
 \frac{i-j}{N}=\frn(\mu_i)-\frn(\mu_j)\,,
\end{align*}
we obtain from \eqref{results density of states 2} that
\begin{align*}
 n_{fc}(\mu_i)-n_{fc}(\mu_j)=\frac{i-j}{N}+\caO\left((\varphi_N)^{c\xi} \frac{(\mu_i-\mu_j)^2}{\sqrt{N}}\right)+\caO\left((\varphi_N)^{c\xi}\frac{1}{N} \right)\,,
\end{align*}
with $(\xi,\nu)$-high probability, for some $c$ large enough. Then, using $n_{fc}(\mu_i)-n_{fc}(\mu_j)=(\mu_i-\mu_j) n_{fc}'(\mu_i')$, for some $\mu_i' \in[\mu_i,\mu_j]$,
\begin{align} \label{eq.5.83}
\mu_i - \mu_j = \frac{i-j}{N \rho_{fc}(\mu_i')} + \caO\left((\varphi_N)^{c\xi} \frac{(\mu_i-\mu_j)^2}{\sqrt{N}}\right)+\caO\left((\varphi_N)^{c\xi}\frac{1}{N} \right)\,,
\end{align}
where we used that $n_{fc}'(\mu_i') = \rho_{fc}(\mu_i') > 0$ and $1/C' < \rho_{fc} < C'$ in the bulk for some constant $C' >1$, depending on $\lambda$ and $\mu$. Since $|\mu_i - \mu_j| =\caO(1)$, we have
\begin{align*}
(\varphi_N)^{c\xi}\frac{(\mu_i-\mu_j)^2}{\sqrt{N}} \ll |\mu_i - \mu_j|\,,
\end{align*}
which shows that the second term in the right side of \eqref{eq.5.83} can be absorbed into the left side. Similarly, the last term on the right side can be absorbed into the first term in the right side, as we can see from the condition $|i-j| \gg (\varphi_N)^{c\xi}$. Thus,
\begin{align} \label{eq.5.86}
C_1 \frac{|i-j|}{N} \leq |\mu_i - \mu_j| \leq C_2 \frac{|i-j|}{N}\,,
\end{align}
with $(\xi,\nu)$-high probability for some constants $C_1, C_2 > 0$. This proves the first part of the theorem.

If $|i-j| \leq(\varphi_N)^{c\xi} N^{1/2}$, we find from \eqref{eq.5.86} that $|\mu_i - \mu_j| \leq C_2 ( \varphi_N)^{c\xi} N^{-1/2}$. In this case,
\begin{align*}
(\varphi_N)^{c\xi}\frac{(\mu_i-\mu_j)^2}{\sqrt{N}} \ll\frac{1}{ N}\,,
\end{align*}
with high probability. Furthermore, since $|\mu_i' - \mu_i| \leq |\mu_i - \mu_j|$ and since $\rho_{fc}$ is Lipschitz continuous inside $\supp \mu_{fc}$ (see, e.g.,~\cite{B}), we get
\begin{align} \label{eq.rho_fc}
|\rho_{fc}(\mu_i') - \rho_{fc}(\mu_i)| \leq (\varphi_N)^{K \xi} \frac{1}{\sqrt{N}}\,,
\end{align}
hence
\begin{align*}
\left| \frac{i-j}{N \rho_{fc}(\mu_i')} - \frac{i-j}{N \rho_{fc}(\mu_i)} \right| \leq C \frac{|i-j|}{N} \frac{|\rho_{fc}(\mu_i') - \rho_{fc}(\mu_i)|}{\rho_{fc}(\mu_i') \rho_{fc}(\mu_i)} \leq (\varphi_N)^{K \xi} N^{-1}\,.
\end{align*}
with $(\xi,\nu)$-high probability, for some constant $K$. Thus, we obtain that
\begin{equation*}
\left| |\mu_i - \mu_j| - \frac{|i-j|}{N \rho_{fc} (\mu_i)} \right| \leq (\varphi_N)^{K \xi} N^{-1}\,,
\end{equation*}
with $(\xi,\nu)$-high probability, proving the second part the theorem.
\end{proof}
\newpage

\subsection{Integrated density of states and rigidity of eigenvalues}\label{section: Integrated density of states and rigidity of eigenvalues}
The goal of this subsection is to prove Theorems~\ref{integrated density of states} and~\ref{rigidity of eigenvalues}. The proofs follow closely~\cite{EKYY1}.
\subsubsection{Estimate on $\|H\|$}
 As a first step, we need an estimate on the operator norm of $H=\lambda V +W$. We have the following result:
\begin{lemma}\label{estimate norm H}
There is a constant $c_0>0$, such that for all $\lambda\in\caD_{\lambda_0}$, we have
\begin{align*}
 \| H\|\le \max\{|L_1|,L_2\}+(\varphi_N)^{c_0\xi}\left(\frac{\lambda}{\sqrt{N}}+\frac{1}{N^{2/3}}\right)\,,
\end{align*}
with $(\xi,\nu)$-high probability.
\end{lemma}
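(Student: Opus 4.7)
The plan is a contradiction argument based on the counting function estimate \eqref{result density of states 1}. Set $\kappa_0 \deq (\varphi_N)^{c_0\xi}\bigl(\lambda N^{-1/2} + N^{-2/3}\bigr)$ for a sufficiently large constant $c_0$ to be fixed below; I will show that no eigenvalue of $H$ lies in $(L_2+\kappa_0,\infty)$, and the analogous argument at the lower edge $L_1$ proceeds identically by symmetry.

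First I would establish a coarse a priori bound $\|H\|\le E_0=3+\lambda_0$ with $(\xi,\nu)$-high probability. This follows from the classical norm estimate $\|W\|\le 2+o(1)$ for Wigner matrices with subexponentially decaying entries (obtainable via the moment method, cf.\ also the trivial bound $\im m(E+\ii\eta)\ll 1$ for $|E|\ge 2+o(1)$ and $\eta\sim 1$ implied by \eqref{eq.weak3}), together with the deterministic bound $\|\lambda V\|\le\lambda_0$ that follows from $\mathrm{supp}\,\mu\subset[-1,1]$ and $\lambda\in\caD_{\lambda_0}$. Hence the spectrum of $H$ is contained in $[-E_0,E_0]$.

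The core of the argument is to apply \eqref{result density of states 1} to suitable intervals $[E_1,E_2]\subset(L_2,E_0]$. Since $E_1>L_2$, we have $n_{fc}(E_1,E_2)=0$, so the estimate provides a pure upper bound on $\mathfrak{n}(E_1,E_2)$. Taking $E_1=L_2+\kappa_0$ and $E_2=E_0$, one obtains $\mathfrak{n}(L_2+\kappa_0,E_0)\le(\varphi_N)^{c\xi}(N^{-1}+\lambda N^{-1/2})$. On the other hand, if some eigenvalue $\mu_\alpha$ lies in $(L_2+\kappa_0,E_0]$ then $\mathfrak{n}(L_2+\kappa_0,E_0)\ge 1/N$. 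To get the desired contradiction one needs to refine this counting bound. For this I would rerun the Helffer--Sj\"ostrand computation of Lemma~\ref{lemma helffer} using a cutoff $\chi$ of width $\caE\sim\kappa_0$ rather than $\caE\sim 1$, exploiting the square-root decay $\im m_{fc}(E+\ii\eta)\sim\eta/\sqrt{\kappa_E+\eta}$ from Lemma~\ref{lemma.12} to show that the dominant contribution is bounded by $(\varphi_N)^{c\xi}\bigl(N^{-1}+\lambda\sqrt{\kappa_0}\,N^{-1/2}\bigr)$; since the prefactor $(\varphi_N)^{c\xi}$ is absorbed by choosing $c_0\ge 2c$, the right-hand side is strictly smaller than $1/N$, forcing $\mathfrak{n}=0$.

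The main obstacle is precisely this refinement in the $\lambda$-dominated regime $\kappa_0\sim(\varphi_N)^{c_0\xi}\lambda/\sqrt N$: the factor $\lambda\sqrt{\kappa_0}/\sqrt N$ equals $(\varphi_N)^{c_0\xi/2}\lambda^{3/2}/N^{3/4}$, which is not automatically smaller than $1/N$ for $\lambda\sim 1$. The resolution is to observe that this residual fluctuation corresponds exactly to the $V$-induced displacement of the spectral edge, captured by the random variable $\zeta_0$ of Section~\ref{Identifying the leading corrections}; one therefore subtracts $\zeta_0$ from $m-m_{fc}$ before applying Helffer--Sj\"ostrand, so that the resulting remainder satisfies the sharper bound $\caO((\varphi_N)^{c_1\xi}/(N\eta))$ from Theorem~\ref{thm xi0}, while the $\zeta_0$-contribution itself is absorbed by the margin $\kappa_0$ thanks to the high-probability bound $|\zeta_0|\le(\varphi_N)^{c_1\xi}\lambda/\sqrt{(\kappa_E+\eta)N}$. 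This tightened estimate finally yields $\mathfrak{n}(L_2+\kappa_0,E_0)<1/N$ with $(\xi,\nu)$-high probability, completing the proof.
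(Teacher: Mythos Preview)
Your approach has a genuine gap in the regime $\lambda\sim 1$ that the $\zeta_0$ subtraction does not close. On a window of width $\caE\sim\kappa_0$ at distance $\kappa\sim\kappa_0$ from $L_2$, the Helffer--Sj\"ostrand computation leaves a $V$-fluctuation contribution to $\frn$ of order $(\varphi_N)^{c\xi}\lambda\sqrt{\kappa_0}/\sqrt{N}\sim(\varphi_N)^{c'\xi}\lambda^{3/2}N^{-3/4}$, which for $\lambda\sim 1$ is far larger than $1/N$. Subtracting $\zeta_0$ does not help: the $\zeta_0$-part of the integral is itself of this size --- the estimate of Lemma~\ref{bound on frX0} gives $|\frX_0|\lesssim\lambda\caE^2/((\kappa+\caE)^{3/2}\sqrt{N})\sim\lambda\sqrt{\kappa_0}/\sqrt{N}$. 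The assertion that this is ``absorbed by the margin $\kappa_0$'' has no content here; already the pointwise size $|\zeta_0(z)|\lesssim\lambda/\sqrt{\kappa_0 N}\sim\sqrt{\lambda}\,N^{-1/4}$ exceeds $\kappa_0\sim\lambda N^{-1/2}$ at the relevant energies. You also invoke Theorem~\ref{thm xi0} outside its domain $\caB_L$, which requires $\kappa+\eta\gtrsim N^{-1/2}$ and fails at $\kappa\sim\kappa_0\sim N^{-2/3}$ when $\lambda\ll N^{-1/6}$.

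The paper's argument is of a different nature: a \emph{pointwise} resolvent estimate rather than an integrated counting bound. For $E>L_2$ one has $\im m_{fc}(E+\ii\eta)\sim\eta/\sqrt{\kappa}$, and the paper sets $\eta\deq(\varphi_N)^{c_1\xi+1}/(N\sqrt{\kappa})$. Substituting into the strong self-consistent equation~\eqref{self} and dividing by $\alpha=|1-R_2|\sim\sqrt{\kappa}$ yields $\Lambda(z)\ll 1/(N\eta)$ directly, hence $\im m(z)\ll 1/(N\eta)$; an eigenvalue in $[E-\eta,E+\eta]$ would force $\im m(z)\ge c/(N\eta)$, a contradiction. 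The $\lambda/\sqrt{N}$ error in~\eqref{self} becomes, after division by $\alpha$, the quantity $\lambda/\sqrt{\kappa N}$, and the hypothesis $\kappa\ge(\varphi_N)^{c_2\xi}\lambda/\sqrt{N}$ (with $c_2>2c_1$) makes this $\ll\sqrt{\kappa}\sim 1/(N\eta)$. Thus the margin $\kappa_0$ absorbs the $V$-fluctuation through the stability factor $\alpha$ in the self-consistent equation, not through a counting integral.
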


\begin{proof}
 We will only consider the largest eigenvalue $\mu_N$. A bound on the lowest eigenvalue $\mu_1$ is obtained in a similar way. From the strong local law~\eqref{strong1}, we  get
\begin{align*}
 \Lambda(z)\le (\varphi_N)^{c\xi}\left(\frac{\lambda^{1/2}}{N^{1/4}}+\frac{1}{N\eta}\right)\,,\quad\quad z\in\caD_L\,,\quad \lambda\in\caD_{\lambda_0}\,,
\end{align*}
with $(\xi+2,\nu)$-high probability. Then we can apply Lemma~\ref{lemma.22}, with
\begin{align*}
 \gamma(z)\deq(\varphi_N)^{c\xi}\left(\frac{\lambda^{1/2}}{N^{1/4}}+\frac{1}{N\eta}\right)\,,
\end{align*}
to get, for some sufficiently large constant $c_1$,
\begin{align*}
 \left|(1-R_2)[\v]- R_3 [\v]^2\right| \le C\frac{\Lambda^2}{\log N} +C( \varphi_N)^{c_1\xi}\left(\frac{\lambda}{\sqrt{N}}+\frac{\im\mfc(z)+\gamma(z)}{N\eta}\right)\,,
\end{align*}
with $(\xi,\nu)$-high probability, for any $z\in\caD_L$ and $\lambda\in\caD_{\lambda_0}$. Now, if $E>L_2$ and $\kappa\ge\eta$, we have $\im\mfc(z)\sim \eta/\sqrt{\kappa}$ and 
\begin{align*}
 \alpha\deq |1-R_2| \sim\sqrt{\kappa}\,,
\end{align*}
by the Lemmas~\ref{lemma.12} and~\ref{lemma.17}. Thus, we obtain, upon using Young's inequality,
\begin{align}\label{self outside}
 \left|(1-R_2)[\v]- R_3 [\v]^2\right| \le C\frac{\Lambda^2}{\log N} +C (\varphi_N)^{c_1\xi}\left(\frac{\lambda}{\sqrt{N}}+\frac{1}{(N\eta)^2}+\frac{1}{N\sqrt{\kappa}}\right)\,,
\end{align}
with $(\xi,\nu)$-high probability, for some $c_1$ sufficiently large.

 Given $c_1$, it is straightforward to check that there is a constant $c_2>2c_1$, such that, for any $E$ satisfying
\begin{align}\label{condition on E}
 L_2+(\varphi_N)^{c_2\xi}\left(\frac{\lambda}{\sqrt{N}}+\frac{1}{N^{2/3}}\right)\le E\le E_0\,,
\end{align}
we have
\begin{align}\label{conse of E}
 \min\{N^{-1/2}\kappa^{1/4}, N^{-1/2}\lambda^{-1}\kappa^{1/2},\kappa \}\ge(\varphi_N)^{c_1\xi+2}\frac{1}{N\sqrt{\kappa}}\,.
\end{align}
We assume now that $E$ satisfies~\eqref{condition on E} and set
\begin{align*}
 \eta\equiv \eta_E\deq (\varphi_N)^{c_1\xi+1}\frac{1}{N\sqrt{\kappa}}\,.
\end{align*}
Note that $z=E+\ii\eta\in\caD_{L}$. From~\eqref{conse of E}, we have $\kappa\ge\eta$. Similarly, we have
\begin{align}\label{kappa eta relation}
 \im\mfc(E+\ii\eta)\sim\frac{\eta}{\sqrt\kappa}\ll\frac{1}{N\eta}\,;\quad\quad \frac{\lambda}{\sqrt{\kappa N}}\ll\frac{1}{N\eta}\,.
\end{align}
Furthermore, since $\alpha\ge\sqrt{\kappa}/K$, for some $K>1$, we must have 
\begin{align}\label{alpha bound on lambda}
2|R_3| \Lambda(z)\le C(\varphi_N)^{c_1\xi+1}\left(\frac{\lambda^{1/2}}{{N}^{1/4}}+\frac{1}{N\eta}\right)\le\alpha\,,
\end{align} 
with $(\xi,\nu)$-high probability. Here, we used that $N\eta\sqrt{\kappa}=(\varphi_N)^{c_1\xi+1}$ and $\sqrt\kappa\ge (\varphi_N)^{c_2\xi/2}\lambda^{1/2}N^{-1/4}$. Since $\alpha\ge 2|R_3|\Lambda$, we get from~\eqref{self outside},
\begin{align}\label{self outside 2}
 \Lambda\le C(\varphi_N)^{c_1\xi}\left(\frac{\lambda}{\alpha \sqrt{N}}+\frac{1}{\alpha (N\eta)^2}+\frac{1}{\alpha N\sqrt{\kappa}}\right)\,,
\end{align}
with $(\xi,\nu)$-high probability. Since $\alpha\ge \sqrt{\kappa}/K$, we obtain from~\eqref{kappa eta relation}, 
\begin{align*}
 (\varphi_N)^{c_1\xi+1}\frac{1}{\alpha N\sqrt{\kappa}}\le C\frac{\eta}{\sqrt{\kappa}} \ll \frac{1}{N\eta}\,.
\end{align*}
The second term on the right side of~\eqref{self outside 2}, can be bounded by using~\eqref{alpha bound on lambda}. The first term on the right side of~\eqref{self outside 2} is estimated by using the second inequality in~\eqref{kappa eta relation}. Thus, for any $E$ satisfying~\eqref{condition on E}, $z\in\caD_L$, and any $\lambda\in\caD_{\lambda_0}$, we obtain that
\begin{align*}
 \Lambda(z)\ll\frac{1}{N\eta}\,,
\end{align*}
with $(\xi,\nu)$-high probability. Thus
\begin{align}\label{no eigenvalue here}
 \im m(z)\le\im\mfc(z)+\Lambda(z)\ll\frac{1}{N\eta}\,,
\end{align}
with $(\xi,\nu)$-high probability, for such $E$. By the spectral decomposition of $H$, we have
\begin{align*}
 \im m(z)=\frac{1}{N}\sum_{\alpha=1}^N\frac{\eta}{(\mu_\alpha-E)^2+\eta^2}\,,
\end{align*}
and we conclude that 
\begin{align}\label{condition for eigenvalue}
 \im m(z)\ge\frac{C}{N\eta}\,,
\end{align}
for some $C>0$, if there is an eigenvalue in the interval $[E-\eta,E+\eta]$. Thus~\eqref{no eigenvalue here}, implies, for any $E$, satisfying~\eqref{condition on E}, that there is no eigenvalue in the interval $[E-\eta,E+\eta]$, with $(\xi,\nu)$-high probability.

To cover energies $E\ge E_0$, we use the following result: For a Wigner matrix $W$ satisfying the assumptions in Definition~\ref{assumption wigner} we have
\begin{align}
 \|W\|\le 2+\frac{(\varphi_N)^{\xi}}{N^{1/4}}\,,
\end{align}
with $(\xi,\nu)$-high probability. We refer, e.g., to Lemma~4.3. in~\cite{EKYY1}. Spectral perturbation theory then implies  $\|H\|\le \|\lambda V\|+\|W\|\le 2+\frac{(\varphi_N)^{\xi}}{N^{1/4}}+\lambda$, with $(\xi,\nu)$-high probability, covering the regime $E\ge E_0$. This concludes the proof.

\end{proof}

\subsubsection{Integrated density of states}
In this subsection, we prove Theorem~\ref{integrated density of states}. Given the results on $\frn(E_1,E_2)$ in Theorem~\ref{theorem density of states} and the estimate on~$\| H\|$ this is straightforward:
\begin{proof}[Proof of Theorem \ref{integrated density of states}]
We assume that $E$ is such that $|E-L_1|\le |E-L_2|$. The other case is dealt with in the same way. Set
\begin{align}\label{the chosen E}
 E_1=L_1-(\varphi_N)^{c_1\xi}\left(\frac{\lambda}{\sqrt{N}}+\frac{1}{N^{2/3}}\right)\,,
\end{align}
with some $c_1$ large enough, such that ${ n_{fc}}(E_1)=0$ and $\frn(E_1)=0$ with $(\xi,\nu)$-high probability; see Lemma~\ref{estimate norm H}. 

Next, choose $E\ge E_1$, then from~\eqref{result density of states 1}, we get, setting $E_2=E$ and bounding $\caE\le E-E_1+(\varphi_N)^{L\xi}N^{-1}$,
\begin{align*}
 \left| \frn(E)-n_{fc}(E)\right|\le (\varphi_N)^{c\xi}\left(\frac{1}{N}+\frac{\lambda}{\sqrt{N}}\sqrt{E-E_1+(\varphi_N)^{L\xi}N^{-1}}\right)\,.
\end{align*}
with $(\xi,\nu)$-high probability. Using our assumption on $E$ and~\eqref{the chosen E}, we get
\begin{align*}
 \left| \frn(E)-n_{fc}(E)\right|&\le (\varphi_N)^{c\xi}\left(\frac{1}{N}+\frac{\lambda^{3/2}}{N^{3/4}}+\frac{\lambda}{N^{5/6}}+\frac{\lambda\sqrt{\kappa_E}}{\sqrt{N}} \right)\,,
\end{align*}
with $(\xi,\nu)$-high probability, for some $c_2$ large enough. This estimate holds for any $E$ and $\lambda$. Uniformity is obtained with a lattice argument, we omit the details.
\end{proof}

\subsubsection{Rigidity of eigenvalues}
In this subsection, we prove Theorem~\ref{rigidity of eigenvalues}. Recall the definition of the classical location $\gamma_\alpha$ of the eigenvalue $\mu_\alpha$ in~\eqref{classical location}. 
\begin{lemma}\label{rigidity lemma}
There exists a constant $C$, such that, for all $\lambda\in\caD_{\lambda_0}$, the following statements hold with $(\xi,\nu)$-high probability for some large enough $c$,
\begin{itemize}
 \item [$i.$] if $\max\{\kappa_{\gamma_{\alpha}},\kappa_{\mu_{\alpha}}\}\le(\varphi_N)^{c\xi}(\frac{\lambda}{\sqrt{N}}+\frac{1}{N^{2/3}})$, then
\begin{align*}
 |\mu_{\alpha}-\gamma_{\alpha}|\le(\varphi_N)^{C\xi}\left(\frac{\lambda}{\sqrt{N}}+\frac{1}{N^{2/3}}\right)\,;
\end{align*}
\item[$ii.$] if $\max\{\kappa_{\gamma_{\alpha}},\kappa_{\mu_{\alpha}}\}\ge(\varphi_N)^{c\xi}(\frac{\lambda}{\sqrt{N}}+\frac{1}{N^{2/3}})$, then
\begin{align*}
|\mu_{\alpha}-\gamma_{\alpha}|\le(\varphi_N)^{C\xi}\left(\frac{\lambda}{\sqrt{N}}+\frac{1}{\widehat\alpha^{1/3}N^{2/3}}+ \frac{\lambda^2}{N^{1/3} \widehat\alpha^{2/3}}\right)\,,
\end{align*}

\end{itemize}
where $\widehat\alpha\deq\min\{\alpha,N-\alpha\}$.
\end{lemma}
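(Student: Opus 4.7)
The approach is to invert the integrated density of states estimate (Theorem~\ref{integrated density of states}) using the square-root behaviour of the free convolution density $\rho_{fc}$ near the endpoints $L_1, L_2$. Applying Theorem~\ref{integrated density of states} at $E = \mu_\alpha$ and combining with the identity $\frn(\mu_\alpha) = \alpha/N = n_{fc}(\gamma_\alpha)$ (up to a negligible $1/N$ correction since $\frn$ jumps by $1/N$ at each eigenvalue) yields
\begin{align*}
|n_{fc}(\mu_\alpha) - n_{fc}(\gamma_\alpha)| \le (\varphi_N)^{c\xi}\left(\frac{1}{N} + \frac{\lambda^{3/2}}{N^{3/4}} + \frac{\lambda}{N^{5/6}} + \frac{\lambda\sqrt{\kappa_{\mu_\alpha}}}{\sqrt{N}}\right)
\end{align*}
with $(\xi,\nu)$-high probability. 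By the mean value theorem this equals $\rho_{fc}(\xi^{*})\,|\mu_\alpha - \gamma_\alpha|$ for some $\xi^{*}$ between $\mu_\alpha$ and $\gamma_\alpha$, so the entire task reduces to a lower bound on $\rho_{fc}(\xi^{*})$ in each of the two regimes.

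For Part $i$, the hypothesis $\max\{\kappa_{\gamma_\alpha}, \kappa_{\mu_\alpha}\} \le \delta := (\varphi_N)^{c\xi}(\lambda/\sqrt{N} + N^{-2/3})$ places both $\gamma_\alpha$ and $\mu_\alpha$ within distance $\delta$ of $\{L_1, L_2\}$. I would first verify that they lie near the \emph{same} endpoint: in the extremal case this is the operator norm bound of Lemma~\ref{estimate norm H}, and for intermediate indices it follows by counting eigenvalues via Theorem~\ref{theorem density of states}. The square-root behaviour $\rho_{fc}(L_i \pm t) \sim \sqrt{t}$ translates to $n_{fc}(L_i \pm t) - n_{fc}(L_i) \sim t^{3/2}$, so $\widehat\alpha/N \lesssim \delta^{3/2}$, which is precisely the range of $\widehat\alpha$ where the indicator in Theorem~\ref{rigidity of eigenvalues} is active. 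The triangle inequality then gives $|\mu_\alpha - \gamma_\alpha| \le 2\delta$, which is the claim.

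For Part $ii$, the hypothesis $\max\{\kappa_{\gamma_\alpha}, \kappa_{\mu_\alpha}\} \ge \delta$ gives, via $\widehat\alpha/N = n_{fc}(\gamma_\alpha) \sim \kappa_{\gamma_\alpha}^{3/2}$, the scaling $\kappa_{\gamma_\alpha} \sim (\widehat\alpha/N)^{2/3}$. Assuming the a priori control $|\mu_\alpha - \gamma_\alpha| \le \kappa_{\gamma_\alpha}/2$ (which I would establish by a short bootstrap: first use the crude lower bound $\rho_{fc}(\xi^{*}) \ge c\sqrt{\delta}$ coming directly from the hypothesis to show $|\mu_\alpha - \gamma_\alpha| \ll \kappa_{\gamma_\alpha}$, then iterate), one obtains $\rho_{fc}(\xi^{*}) \ge c\,(\widehat\alpha/N)^{1/3}$. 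Dividing the displayed inequality by $\rho_{fc}(\xi^{*})$ and simplifying term-by-term gives the claim: the $1/N$ term produces $1/(\widehat\alpha^{1/3}N^{2/3})$; the $\lambda\sqrt{\kappa_{\mu_\alpha}}/\sqrt{N}$ term collapses to $\lambda/\sqrt{N}$ using $\sqrt{\kappa_{\mu_\alpha}} \sim (\widehat\alpha/N)^{1/3}$; the term $\lambda^{3/2}/N^{3/4}$ produces $\lambda^{3/2}/(N^{5/12}\widehat\alpha^{1/3})$, which is the geometric mean of $\lambda/\sqrt{N}$ and $\lambda^{2}/(N^{1/3}\widehat\alpha^{2/3})$ and hence dominated by their sum via AM--GM; and $\lambda/N^{5/6}$ produces $\lambda/(N^{1/2}\widehat\alpha^{1/3}) \le \lambda/\sqrt{N}$.

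The main obstacle is the bootstrap step verifying that $\kappa_{\xi^{*}}$ is comparable to $\max\{\kappa_{\gamma_\alpha}, \kappa_{\mu_\alpha}\}$, i.e.\ that the random $\mu_\alpha$ is not much closer to the edge than the deterministic $\gamma_\alpha$. Without this, one only has $\rho_{fc}(\xi^{*}) \ge c\sqrt{\kappa_{\xi^{*}}}$ with $\kappa_{\xi^{*}}$ potentially much smaller than $(\widehat\alpha/N)^{2/3}$, which degrades the rigidity estimate. A structurally similar issue is handled in the proof of Theorem~\ref{rigidity of eigenvalue spacing} earlier in the paper via the estimate \eqref{eq.rho_fc} on the Lipschitz continuity of $\rho_{fc}$ inside $\supp\mu_{fc}$, and I would follow the same strategy, combined with the density estimate~\eqref{results density of states 2} applied on an interval of length $\sim\kappa_{\gamma_\alpha}$ around $\gamma_\alpha$ to guarantee the correct count of eigenvalues there.
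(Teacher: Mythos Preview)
Your overall strategy --- invert Theorem~\ref{integrated density of states} via the square-root profile of $\rho_{fc}$ --- is exactly the paper's, and your term-by-term bookkeeping in Part~$ii$ (including the AM--GM absorption of the $\lambda^{3/2}N^{-3/4}$ contribution) matches what the paper obtains. Part~$i$ is also fine.

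The gap is in the step you yourself flag as the main obstacle: establishing $\kappa_{\mu_\alpha}\sim\kappa_{\gamma_\alpha}$ (equivalently, a lower bound on $\rho_{fc}(\xi^{*})$). Your ``crude lower bound $\rho_{fc}(\xi^{*})\ge c\sqrt{\delta}$ coming directly from the hypothesis'' does not follow: the hypothesis only gives $\max\{\kappa_{\gamma_\alpha},\kappa_{\mu_\alpha}\}\ge\delta$, and if the minimum is tiny then $\xi^{*}$ (which lies between $\mu_\alpha$ and $\gamma_\alpha$) can have $\kappa_{\xi^{*}}$ arbitrarily small, so there is nothing to bootstrap from. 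Your fallback --- the Lipschitz bound on $\rho_{fc}$ and the bulk density estimate~\eqref{results density of states 2} --- is also off-target here, since both are interior tools and the difficulty is precisely at the edge where $\rho_{fc}\to 0$.

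The paper sidesteps the mean-value formulation entirely. Instead of lower-bounding $\rho_{fc}(\xi^{*})$, it compares the error in the integrated density of states directly to $n_{fc}(\gamma_\alpha)+n_{fc}(\mu_\alpha)$. Concretely, on a dyadic scale where $\max\{\kappa_{\gamma_\alpha},\kappa_{\mu_\alpha}\}\sim 2^{k}\delta$, one has $n_{fc}(\gamma_\alpha)+n_{fc}(\mu_\alpha)\gtrsim (2^{k}\delta)^{3/2}$, while the error (which involves $\sqrt{\kappa_{\mu_\alpha}}\le\sqrt{2^{k}\delta}$) is $\ll(2^{k}\delta)^{3/2}$ once $c$ is large enough. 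This immediately forces $n_{fc}(\mu_\alpha)=n_{fc}(\gamma_\alpha)(1+o(1))$ \emph{multiplicatively}, hence $\kappa_{\mu_\alpha}\sim\kappa_{\gamma_\alpha}$, and then $n_{fc}'(x)\sim n_{fc}'(\gamma_\alpha)\sim(\widehat\alpha/N)^{1/3}$ for all $x$ between $\mu_\alpha$ and $\gamma_\alpha$. After that the division proceeds exactly as you wrote; the paper absorbs the residual $\lambda\sqrt{|\mu_\alpha-\gamma_\alpha|}/N^{1/6}$ term via Young's inequality rather than the substitution $\sqrt{\kappa_{\mu_\alpha}}\sim(\widehat\alpha/N)^{1/3}$, but either route works once the comparability is in hand.
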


\begin{proof}
We will focus on the eigenvalues $\mu_1,\ldots,\mu_{N/2}$. The other eigenvalues can be treated in a similar way. Define an event $\Xi$ as the intersection of the events on which the estimates
\begin{align}\label{norm H nochmals}
 \|H\|\le \max\{|L_1|,L_2\}+(\varphi_N)^{C_0\xi}\left(\frac{\lambda}{\sqrt{N}}+\frac{1}{N^{2/3}}\right)\,,
\end{align}
(see Lemma~\ref{estimate norm H}), and
\begin{align*}
|\frn(E)-n_{fc}(E)|\le(\varphi_N)^{C_0\xi}\left(\frac{1}{N}+ \frac{\lambda^{3/2}}{N^{3/4}}+\frac{\lambda}{N^{5/6}}+\frac{\lambda\sqrt{\kappa_E}}{\sqrt{N}}\right)\,,
\end{align*}
(see Theorem~\ref{integrated density of states}), hold, for any $\lambda\le \lambda_0$ and $|E|\le E_0$. We note that on $\Xi$, we have $\mu_{N/2}\le K$, for some $K<L_2$.

Let $C_1>C_0$. We use the dyadic decomposition $$\{1,\ldots, N/2\}=\bigcup_{k=0}^{2\log N} U_{k}\,,$$ where
\begin{align*}
 U_0&\deq\left\{\alpha\le N/2\,:\, |L_1|+\max\{\mu_{\alpha},\gamma_{\alpha}\}\le 2(\varphi_N)^{C_1\xi} \lambda N^{-1/2}\right\}\,,\\
U_k&\deq\left\{\alpha\le N/2\,:\, 2^k\lambda(\varphi_N)^{C_1\xi} N^{-1/2}\le|L_1|+\max\{\mu_{\alpha},\gamma_{\alpha}\}\le 2^{k+1}(\varphi_N)^{C_1\xi}\lambda N^{-1/2}\right\}\,,\quad\quad (k\ge 1)\,.
\end{align*}
By the definition of $U_0$ and~\eqref{norm H nochmals}, we have
\begin{align*}
 |\mu_\alpha-\gamma_\alpha|\le (\varphi_N)^{C\xi}\frac{\lambda}{N^{1/2}}\,,
\end{align*}
on $\Xi$, for $\alpha\in U_0$. 

For $k\ge1$, we find on $\Xi$ that
\begin{align}\label{alpha N}
\frac{\alpha}{N}=n_{fc}(\gamma_\alpha)=\frn(\mu_{\alpha})=n_{fc}(\mu_\alpha)+(\varphi_N)^{{ C_0} \xi}\caO\left(\frac{1}{N}+ \frac{\lambda^{3/2}}{N^{3/4}}+\frac{\lambda}{N^{5/6}} +\frac{\lambda\sqrt{\kappa_E}}{\sqrt{N}}\right)\,.
\end{align}
On $\Xi$, and for $\alpha\in U_{k}$, we can bound the second term on the right side of the above equation as
\begin{align*}
&(\varphi_N)^{ C_0\xi}\caO\left(\frac{1}{N}+\frac{\lambda^{3/2}}{N^{3/4}}+\frac{\lambda}{N^{5/6}}+\frac{\lambda\sqrt{\kappa_E}}{\sqrt{N}}\right) \\
&\le C(\varphi_N)^{C_0\xi}\left(\frac{1}{N}+\frac{\lambda^{3/2}}{N^{3/4}}+\frac{\lambda}{N^{5/6}} \right)+C 2^{(k+1)/2}(\varphi_N)^{({C_0}+{C_1}/2)\xi}\frac{\lambda^{3/2}}{N^{3/4}}\,,
\end{align*}
where we used $\kappa_{\mu_\alpha}\le |L_1|+\mu_{\alpha}$. Furthermore, we have on $\Xi$, for $\alpha\in U_k$,
\begin{align*}
 n_{fc}(\gamma_{\alpha})+n_{fc}(\mu_{\alpha})\ge c 2^{3k/2}(\varphi_N)^{3{ C_1}\xi/2}\lambda^{3/2}N^{-3/4}\,,
\end{align*}
where we used $n_{fc}(L_1+x)\sim x^{3/2}$, for $0\le x\le |L_1|+K$. Thus
\begin{align*}
(\varphi_N)^{{ C_0}\xi}\caO\left(\frac{1}{N}+\frac{\lambda^{3/2}}{N^{3/4}}+\frac{\lambda}{N^{5/6}}+\frac{\lambda\sqrt{\kappa_E}}{\sqrt{N}}\right) \ll n_{fc}(\gamma_{\alpha})+n_{fc}(\mu_{\alpha})\,,
\end{align*}
which implies by~\eqref{alpha N} that
\begin{align*}
 n_{fc}(\mu_\alpha)=n_{fc}(\gamma_{\alpha})\left(1+\caO\left((\varphi_N)^{-({ C_1}-{ C_0})\xi}\right)\right)\,,
\end{align*}
on $\Xi$, for $\alpha\in U_k$. Using that $n_{fc}'(x)\sim (n_{fc}{ (x)})^{1/3} \sim (|L_1|+x)^{ 1/2}$, for $L_1\le x\le K$, we have $|L_1|+\gamma_\alpha\sim |L_1|+\mu_\alpha$. Hence
\begin{align*}
 n'_{fc}(x)\sim n_{fc}'(\gamma_{\alpha})\,,
\end{align*}
for any $x$ between $\gamma_\alpha$ and $\mu_\alpha$. Recalling that the density $\rho_{fc}(x)$ is continuous, we conclude that, on $\Xi$, for $\alpha\in U_k$,
\begin{align}
 |\mu_\alpha-\gamma_\alpha|&\le C\frac{|n_{fc}(\mu_\alpha)-n_{fc}(\gamma_\alpha) |}{n'_{fc}(\gamma_{\alpha})}\nonumber\\
&\le\frac{C(\varphi_N)^{{ C_0}\xi}}{(\alpha/N)^{1/3}} \left(\frac{1}{N}+\frac{\lambda^{3/2}}{N^{3/4}}+\frac{\lambda}{N^{5/6}}+\frac{\lambda\sqrt{\kappa_{\mu_{\alpha}}}}{\sqrt{N}} \right)\nonumber\\
&\le\frac{C(\varphi_N)^{{ C_0}\xi}}{\alpha^{1/3}}\left(\frac{1}{N^{2/3}}+\frac{\lambda^{3/2}}{N^{5/12}}+ \frac{\lambda\alpha^{1/3}}{\sqrt{N}}+\frac{\lambda\sqrt{|{ \mu_\alpha}-{ \gamma_\alpha}}|}{N^{1/6}} \right)\,,\label{last equation}
\end{align}
where we used $\kappa_{\mu_{\alpha}}\le\kappa_{\gamma_\alpha}+|\mu_\alpha-\gamma_\alpha|$ and $\kappa_{\gamma_\alpha}\sim(\alpha/N)^{2/3}$. Next, since $\alpha=N n_{fc}(\gamma_\alpha) \sim { N} (|L_1|+\gamma_{\alpha})^{3/2}$, we find for $\alpha\in U_k$, ($k\ge 1$),
\begin{align*}
 \alpha\ge cN\left(2^k(\varphi_N)^{{ C_1}\xi}\frac{\lambda}{\sqrt{N}}\right)^{3/2}\gg N^{1/4}\,,
\end{align*}
hence $\alpha^{-1/3}\ll N^{-1/12}$. Using Young's inequality, we can absorb the last term on the right side of~\eqref{last equation} into the left side and we obtain
\begin{align*}
  |\mu_\alpha-\gamma_\alpha|\le(\varphi_N)^{C\xi}\left( \frac{1}{\alpha^{1/3}N^{2/3}} +\frac{\lambda^2}{\alpha^{2/3}N^{1/3}} +\frac{\lambda}{\sqrt{N}}\right)\,,
\end{align*}
on $\Xi$, for $\alpha\in U_k$, some $C$ sufficiently large. The proof is completed by noticing that the event $\Xi$ has $(\xi,\nu)$-high probability.
\end{proof}

We conclude this section with the proof of Theorem~\ref{rigidity of eigenvalues}.

\begin{proof}[Proof of Theorem~\ref{rigidity of eigenvalues}]
 We restrict the discussion to eigenvalues with $\alpha\le N/2$, the other eigenvalues are dealt with in the same way. From $\alpha/N=n_{fc}(\gamma_\alpha)\sim(|L_1|+\gamma_\alpha)^{3/2}$, we find that
\begin{align}\label{alpha small}
 \alpha\le (\varphi_N)^{C\xi}(1+\lambda^{3/2}N^{1/4})\,,
\end{align}
if $\alpha$ is as in item~$i$ of Lemma~\ref{rigidity lemma}.  Combing the conclusions of items $i$ and $ii$ of Lemma~\ref{rigidity lemma} with~\eqref{alpha small} completes the proof of the theorem.
\end{proof}

\begin{appendix}

\section{Appendix: Free Convolution Measure and Stability Bounds}\label{appendixA}
\subsection{Introduction}
In this appendix, we discuss some properties of the (rescaled) free convolution measure,~$\mu_{fc}$, defined through the functional equation
\begin{align}\label{free convolution equation}
 m_{fc}(z)=\int_{-1}^{1}\frac{\dd\mu_{fc}(v)}{\lambda v-z-\mfc(z)}\,,\quad\quad z=E+\ii\eta\in\C^+\,,
\end{align}
such that $\im m_{fc}(z)>0$, for $\eta>0$; c.f., Equation~\eqref{eq111}. Here $\lambda\ge 0$ and we assume that $\mu$  is an absolutely continuous measure, with bounded and continuous density $\mu(v)$ such that $\mathrm{supp}\,\mu=[-1,1]$. For simplicity, we always assume that $\mu$ is centered, although this is not essential for our argument.

To see that Equation~\eqref{free convolution equation} has a unique solution such that $\im m_{fc}(E+\ii\eta)>0$, for $\eta>0$, one can choose $\eta>2$ first. Then it is straightforward to check that the right side of~\eqref{free convolution equation} is a contraction (in the sup-norm on the set of analytic function on the upper half plane with positive imaginary part). The fixed point equation~\eqref{free convolution equation} thus has a unique solution for $\eta>2$. By analytic continuation, the solution extends to the whole upper half plane. We leave the details aside and refer, e.g., to~\cite{PV}.

A deep study of the equation~\eqref{free convolution equation}, with slightly different conventions, can be found in~\cite{B}. One important result of~\cite{B} is the following: The measure $\mu_{fc}$ is absolutely continuous with respect to Lebesgue measure, in particular, we have $\pi \mu_{fc}(E)=\lim_{\eta\searrow 0}\im\mfc(E+\ii\eta)$. For general probability measures (of bounded support), the support of $\mu_{fc}$ may consist of several disjoint intervals, however, under our assumptions, the support of~$\mu_{fc}$ is a single interval, i.e., $\supp \mu_{fc}=[L_1,L_2]$, with $L_1<0<L_2$; see~Lemma~\ref{squareroot of mufc}. We refer to~\cite{B} for a discussion of the general case.

We are mainly interested in the behaviour of $\mu_{fc}(E)$ and $\im m_{fc}(E+\ii\eta)$, for $E\in\R$ close to $L_1$, $L_2$ respectively. We distinguish the cases $\lambda\le 1$ and $\lambda>1$:

For the former case, it was already pointed out in~\cite{B} (see also~\cite{ON, S2}) that $\mu_{fc}$ has a square root behaviour near $L_2$, i.e., $\mu_{fc}(L_2-\kappa)\sim\sqrt{\kappa}$, $\kappa\ge 0$, and similar for $L_1$. 

For $\lambda>1$, we will restrict our attention to Jacobi measures, a special class of measures whose densities are of the form
\begin{align*}
 \mu(v)= Z^{-1} (1+v)^{\a}(1-v)^{\b} d(v)\chi_{[-1,1]}(v)\,,
\end{align*}
where $\a,\b>-1$, $d\in C^1([-1,1])$ with $d(v)>0$, $v\in[-1,1]$ and the normalization constant $Z$ is appropriately chosen so that $\mu$ becomes a probability density. Again, for simplicity, we will always assume that $\mu$ is centered. Note that we also admit exponents $\a,\b$ smaller than zero, thus $\mu(v)\to\infty$ as $v\to\pm 1$ is allowed. As it turns out, the square root behaviour at the endpoint of the support persists for $\lambda>1$, in case we have $-1<\a,\b\le 1$, respectively. However, if $\a,\b>1$, there exists $\lambda_0>1$, such that for any $\lambda>\lambda_0$, we have $\mu_{fc}(L_1-\kappa)\sim \kappa^{\b}$; for a precise statement see Lemma~\ref{general case - large lambda}.

\subsection{Case $\lambda\le 1$}
In this subsection, we choose $\lambda\le 1$. Adopting the proof of Proposition 2 in~\cite{S2}, we have the following result:

\begin{lemma}\label{squareroot of mufc}
 Let $\mu$ be a centered probability measure supported on $[-1,1]$. Assume that $\mu$ has a continuous, strictly positive, bounded density $\mu(v)$ on $(-1,1)$. Suppose that $0\leq \lambda \leq 1$. Then, there exits $L_1,L_2\in\R$, with $L_1<0<L_2$, such that the (rescaled) free convolution of $\mu$ with the semicircle law, $\mu_{fc}$, satisfies
\begin{align*}
 \mathrm{supp}\,\mu_{fc}=[L_1,L_2]\,.
\end{align*}
Moreover, denoting by $\kappa_E$ the distance to the endpoints of the support of $\mu_{fc}$, i.e.,
\begin{align*}
 \kappa_E\deq\min\{|E-L_1|,|E-L_2|\}\,,
\end{align*}
we have
\begin{align}\label{edge wiedermal}
 C^{-1}\sqrt{\kappa_E}\le\mu_{fc}(E)\le C\sqrt{\kappa_E}\,,\quad\quad E\in[L_1,L_2]\,,
\end{align}
for some constant $C\ge1$.
\end{lemma}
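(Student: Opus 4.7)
My strategy is to reduce the Pastur relation~\eqref{free convolution equation} to a scalar real-variable problem via the subordination function $w(z) \deq z + m_{fc}(z)$. Substituting gives $m_{fc}(z) = F(w(z))$ and $z = \Phi(w(z))$, where
\begin{align*}
 F(w) \deq \int_{-1}^{1}\frac{\dd\mu(v)}{\lambda v - w}\,,\qquad \Phi(w) \deq w - F(w)\,.
\end{align*}
Since $-F$ is the Stieltjes transform of the push-forward of $\mu$ under $v \mapsto \lambda v$, it extends analytically to $\C \setminus [-\lambda,\lambda]$, and is real, smooth, and strictly monotone on each of $(-\infty,-\lambda)$ and $(\lambda,\infty)$. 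The lemma then reduces to a study of $\Phi$ on $\R \setminus [-\lambda,\lambda]$ together with an analytic inversion of $\Phi$ in the upper half plane.

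The first step is to locate the edges. On $(\lambda,\infty)$, one has
\begin{align*}
 \Phi'(w) = 1 - \int\frac{\dd\mu(v)}{(\lambda v - w)^{2}}\,,
\end{align*}
where the integral is positive, strictly decreasing in $w$, and tends to $0$ as $w\to\infty$. Using the hypothesis that $\mu$ is continuous and strictly positive on $(-1,1)$ together with $\lambda \leq 1$, I will verify that this integral exceeds $1$ once $w$ is sufficiently close to $\lambda$; monotonicity then yields a unique critical point $w_2 \in (\lambda,\infty)$ of $\Phi$. Set $L_2 \deq \Phi(w_2)$; the mirror argument on $(-\infty,-\lambda)$ produces $w_1$ and $L_1 \deq \Phi(w_1)$. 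The inequality $L_1 < 0 < L_2$ follows from the centering of $\mu$ and the signs of $F$ on the two intervals.

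The second step is to identify $[L_1,L_2]$ with $\supp \mu_{fc}$. A monotonicity analysis of $\Phi$ on each of $(-\infty,w_1)$, $(w_1,-\lambda)$, $(\lambda,w_2)$, $(w_2,\infty)$ shows that $\Phi$ maps $\R \setminus [-\lambda,\lambda]$ onto $(-\infty,L_1] \cup [L_2,\infty)$ in a two-to-one fashion, with the ``outer'' restrictions $\Phi|_{(-\infty,w_1]}$ and $\Phi|_{[w_2,\infty)}$ being strictly monotone bijections onto $(-\infty,L_1]$ and $[L_2,\infty)$, respectively. Using these outer inverse branches together with the normalization $w(z) \sim z$ as $\im z\to\infty$, I obtain an analytic inverse $w(z)$ on $\C^+$ that extends continuously to $\R \setminus (L_1,L_2)$ with real boundary values; this gives $\im m_{fc}(E+\ii 0) = \im w(E+\ii 0) = 0$ on $\R \setminus [L_1,L_2]$. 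On $(L_1,L_2)$, the absence of any real preimage of $\Phi$ forces $\im w(E+\ii 0) > 0$, so $\supp \mu_{fc} = [L_1,L_2]$.

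The final step is the square-root behaviour, which is also where the main technical care lies. Taylor expansion at $w_i$ gives
\begin{align*}
 \Phi(w) = L_i + \tfrac{1}{2}\Phi''(w_i)(w-w_i)^2 + \caO\big((w-w_i)^3\big)\,,
\end{align*}
with $\Phi''(w_i) = -F''(w_i) \neq 0$ since $\lambda v - w_i$ has constant sign on $\supp \mu$. Inverting this expansion produces $w(E+\ii 0) - w_i \sim \sqrt{\kappa_E}$ for $E$ close to $L_i$, whence $\pi \mu_{fc}(E) = \im w(E+\ii 0) \sim \sqrt{\kappa_E}$. The two-sided bound on the whole interval $[L_1,L_2]$ then follows from these edge asymptotics together with the bulk bound $\mu_{fc}(E) \sim 1$ on compact subsets of $(L_1,L_2)$, which comes from continuity and strict positivity of $\im m_{fc}$ plus compactness, combined in a finite cover. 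The main obstacle is to carry out the Taylor inversion and the branch-selection uniformly near the real axis, ensuring that $w(z)$ neither reaches the branch points $\{\pm \lambda\}$ nor slips onto an ``inner'' inverse branch of $\Phi^{-1}$; it is precisely the combination of $\lambda \leq 1$ and the positivity of $\mu$ on $(-1,1)$ that isolates the two outer critical points $w_1,w_2$ and makes this uniform control possible, and which will fail in situation $(2b)$ of Lemma~\ref{squareroot lemma 2}.
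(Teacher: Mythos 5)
Your approach is essentially the same as the paper's: both pass to the subordination variable $w(z)=z+m_{fc}(z)$ (called $\tau$ in the paper), study the auxiliary map $\Phi(w)=w-\int\frac{\dd\mu(v)}{\lambda v-w}$ on $\R\setminus[-\lambda,\lambda]$, characterize the edges $L_i$ as images of the critical points of $\Phi$, and obtain the square-root behaviour by Taylor-inverting $\Phi$ near those critical points using $\Phi''(w_i)\neq 0$. The one step you defer (``I will verify that this integral exceeds $1$ once $w$ is sufficiently close to $\lambda$'') is precisely where the paper invokes Jensen's inequality: $H(\lambda)=\int\frac{\dd\mu(v)}{\lambda^2(v-1)^2}>\frac{1}{\lambda^2(\int(v-1)\dd\mu)^2}=\frac{1}{\lambda^2}\ge 1$, which uses the centering of $\mu$ (to get $\int(v-1)\dd\mu=-1$) together with $\lambda\le 1$ and absolute continuity (for strictness); you attribute this estimate to strict positivity on $(-1,1)$ and $\lambda\le 1$ alone, but it is really the centering that produces the clean threshold $\lambda\le 1$, so be sure to make that dependence explicit when you fill in the details.
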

We briefly outline how the proof in~\cite{S2} can be adopted to our setting: We denote by $\mfc(z)$, $z\in\C^+$, the Stieltjes transform of the free convolution measure $\mu_{fc}$. Define $\tau\deq z+\mfc(z)$ and consider instead of~\eqref{free convolution equation} the equation $F(\tau)=z$, where
\begin{align}\label{definition of F}
 F(\tau)\deq\tau-\int_{-1}^{1}\frac{\dd\mu(v)}{\lambda v-\tau}\,,\quad \tau\in\C^{+}\,.
\end{align}
Note that $\lim_{y\searrow 0} \im F(x+\ii y)=-\pi\mu(x)<0$, for $x\in(-\lambda,\lambda)$, since we have assumed that the density of $\mu$ is bounded and continuous, and strictly positive in the interval $(-1,1)$. Thus $F$ extends to a function on $\R$, which is continuous and bounded, except possibly at the point $\{\pm\lambda\}$.
As shown in~\cite{S2}, the endpoints, $(L_i)$, of the support of $\mu_{fc}$ are characterized as the real valued solutions, $\tau_i$, with $|\tau_i|\ge\lambda$, of the equation $F'(\tau)=0$ ($L_i$ are then obtained by solving $\tau_i=L_i+\mfc(L_i)$). Setting
\begin{align}
 H(\tau)\deq\int_{-1}^{1}\frac{\dd\mu(v)}{(\lambda v-\tau)^2}\,,\quad\quad\tau\in\C^+\,,
\end{align}
a point $E\in\R$ is an endpoint of the support of $\mu_{fc}$, if $H(\tau)=1$, $|\tau|\ge\lambda$, $\tau=E+\mfc(E)\in\R$. Since $\lambda\le 1$ and $\mu$ is centered, we have from Jensen's inequality
\begin{align}\label{jensen}
 H(\lambda)=\int_{-1}^{1}\frac{\dd\mu(v)}{(\lambda v-\lambda)^2} > \frac{1}{\lambda^2} \frac{1}{\left(\int\dd\mu(v)(v-1)\right)^2} = \frac{1}{\lambda^2}\ge 1\,.
\end{align}
Here, the first inequality is strict since $\mu$ is absolutely continuous. Since $H(\tau)$ is monotone decreasing (on $\R$) as $|\tau|\to\infty$, we conclude that there are only two real solutions $\tau_1,$ $\tau_2$. One then checks that the endpoints of the support of $\mu_{fc}$, $L_1$ and $L_2$ satisfy $L_1<-2$ and $L_2>2$. The square root behaviour of $\mu_{fc}$ at $L_i$, i.e.,~\eqref{edge wiedermal}, follows as in~\cite{S1}: It suffices to observe that $F''(\tau_i)\not=0$, thus by the inverse function theorem, we have, for $z\in\C$ in a neighborhood of $L_i$, $F^{-1}(z)=\tau_i+c_i\sqrt{z-L_i}(1+A_i(\sqrt{z-L_i}))$ (such that $\im F^{-1}(z)\ge 0$, for $z\in\C^+$), for real constants $c_i\not=0$ and analytic functions $A_i$, with $|A_i|\le 1$ in a neighborhood of zero. This concludes our discussion on the proof of Lemma~\ref{squareroot of mufc}.

As an important corollary of the proof of Lemma~\ref{squareroot of mufc}, we have the following stability bound already pointed out in~\cite{S2}:

\begin{corollary}\label{cor: stability bound}
 Under the assumptions of Lemma~\ref{squareroot of mufc} there exist constants $C,c>0$, such that
\begin{align*}
 c\le|\lambda v-z-\mfc(z)|\le C\,, \quad z=E+\ii\eta\,,
\end{align*}
for any $\lambda v\in(-\lambda,\lambda)$ and $|E|\le E_0$, $0<\eta\le 3$.
\end{corollary}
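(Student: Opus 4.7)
I plan to show that $|m_{fc}(z)|$ is uniformly bounded on the relevant domain, from which $|\lambda v - z - m_{fc}(z)| \le \lambda + E_0 + 3 + \sup |m_{fc}|$ is immediate. Boundedness of $m_{fc}$ follows from the square-root decay of $\mu_{fc}$ established in Lemma~\ref{squareroot of mufc}: combining the density estimate $\mu_{fc}(x) \le C\sqrt{\kappa_x}$ with the compactness of $\supp \mu_{fc} = [L_1, L_2]$, standard estimates on the Stieltjes transform of a density with square-root endpoints show that $m_{fc}$ extends to a continuous, bounded function on $\overline{\C^+} \cap \{|E| \le E_0,\, 0 \le \eta \le 3\}$.

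\textbf{Lower bound.} Introduce $\tau(z) := z + m_{fc}(z)$, so that the defining relation rewrites as $F(\tau(z)) = z$ with $F$ as in \eqref{definition of F}. The asserted bound is equivalent to
\[
\mathrm{dist}(\tau(z), [-\lambda, \lambda]) \ge c
\]
uniformly on $\mathcal{K} := \{z \in \overline{\C^+} : |E| \le E_0,\, 0 \le \eta \le 3\}$. My plan is to verify $\tau(z) \notin [-\lambda, \lambda]$ pointwise on $\mathcal{K}$ and then upgrade to a uniform lower bound via compactness of $\mathcal{K} \times [-1,1]$ and continuity of $(z,v) \mapsto |\lambda v - \tau(z)|$.

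Pointwise exclusion splits into four cases. (i) If $\eta > 0$, then $\im \tau(z) = \eta + \im m_{fc}(z) > 0$, so $\tau(z) \notin \R$. (ii) If $\eta = 0$ and $E \in (L_1, L_2)$, Lemma~\ref{squareroot of mufc} yields $\im m_{fc}(E) = \pi \mu_{fc}(E) > 0$, so again $\tau(E) \notin \R$. (iii) If $\eta = 0$ and $E = L_i$, the proof of Lemma~\ref{squareroot of mufc} identifies $\tau(L_i) = \tau_i$ as the real solution of $H(\tau_i) = 1$ with $|\tau_i| > \lambda$ strictly; the strict separation follows from the Jensen estimate $H(\lambda) > 1$ (strict because $\mu$ is non-atomic), together with the monotonicity of $H$ on $(\lambda, \infty)$. (iv) If $\eta = 0$ and $E \notin [L_1, L_2]$, then $F' = 1 - H > 0$ on $(\tau_2, \infty)$ and $(-\infty, \tau_1)$, so $F$ bijectively maps these onto $(L_2, \infty)$ and $(-\infty, L_1)$, forcing $|\tau(E)| > |\tau_i| > \lambda$.

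Since $\tau$ is continuous on $\mathcal{K}$ (by continuity of $m_{fc}$, which is a consequence of the square-root decay of $\mu_{fc}$), and $\mathcal{K} \times [-1,1]$ is compact, the function $(z, v) \mapsto |\lambda v - \tau(z)|$ attains a strictly positive minimum, yielding the constant $c$. The main obstacle is confirming continuity of $\tau$ up to the real axis, in particular at the spectral edges $L_i$ where $m_{fc}$ is only H\"older-$\tfrac{1}{2}$ continuous; however, H\"older continuity is plenty for the compactness argument to close.
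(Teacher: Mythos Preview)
Your argument is correct and uses essentially the same ingredients as the paper, packaged slightly differently.

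For the lower bound, both you and the paper partition into the regimes (neighborhood of the edges $L_i$, bulk of the spectrum, and outside the support) and use the same facts in each: $|\tau_i|>\lambda$ from the strict Jensen inequality, $\im\tau>0$ in the bulk, and monotonicity of $F$ outside. The only difference is that the paper extracts an explicit $\epsilon$-lower bound in each region using the local expansion $\tau(z)=\tau_i+c_i\sqrt{z-L_i}(1+o(1))$, whereas you obtain uniformity in one stroke by the continuity-plus-compactness argument on $\mathcal K\times[-1,1]$. Your packaging is arguably cleaner; the paper's is more hands-on but yields the same conclusion.

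For the upper bound, the paper is shorter: taking imaginary parts in \eqref{free convolution equation} gives $\int\frac{\dd\mu(v)}{|\lambda v-z-m_{fc}(z)|^2}\le 1$, and then Cauchy--Schwarz yields $|m_{fc}(z)|\le 1$ directly from the functional equation, with no appeal to the square-root behaviour of $\mu_{fc}$. Your route via the density bound and Stieltjes-transform estimates is valid but less economical.
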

\begin{proof}
For the upper bound, we note that $|\mfc(z)|\le 1$, as follows from considering the imaginary part of $\mfc$ in~\eqref{free convolution equation}. For the lower bound, note that in a neighborhood of $L_i$, $\re (z+ \mfc(z))=\re \tau(z)=\re \tau_{i}+\caO(|z-L_i|^{1/2})$. Since $|\tau_i|>1$, $|\re(z+\mfc(z))|>1$, for $|z-L_i|<\epsilon$ for a sufficiently small $\epsilon>0$. For $|\re z|\ge |L_i|+ (\epsilon /2)$, the estimate is trivial. In the region not covered by the two preceding estimates, we must have $\im\tau>c$, thus $\im \mfc+\eta>c$. The claim follows.
\end{proof}

\subsection{Case $\lambda>1$}
In this subsection, we choose for simplicity $\mu$ as a Jacobi measure, i.e., $\mu$ is described in terms of its density
\begin{align}\label{jacobi measure}
 \mu(v)= Z^{-1} (1+v)^{\a}(1-v)^{\b}d(v)\chi_{[-1,1]}(v)\,,
\end{align}
where $\a,\b>-1$, $d\in C^1([-a,b])$ such that $d(v)>0$, $v\in[-a,b]$ and $Z$ is an appropriately chosen normalization constant such that $\mu$ is a probability measure. Below, we will assume, for simplicity of the arguments, that $\mu$ is centered, but this condition can easily be relaxed.

\begin{lemma}\label{squareroot of mufc - large lambda}
Let $\mu$ be a centered Jacobi measure. Suppose that $\lambda > 1$. If $-1<\a, \b \leq 1$, the results in Lemma~\ref{squareroot of mufc} and Corollary~\ref{cor: stability bound} hold true.
\end{lemma}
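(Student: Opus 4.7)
The strategy is to adapt the argument outlined after Lemma~\ref{squareroot of mufc}, checking at each step where the hypothesis $\lambda\le 1$ was used and showing that, in the Jacobi case with $-1<\a,\b\le 1$, it can be replaced by the integrability condition at the endpoints $v=\pm 1$. Recall the auxiliary function $F(\tau)=\tau-\int_{-1}^{1}(\lambda v-\tau)^{-1}\dd\mu(v)$ from~\eqref{definition of F} and $H(\tau)=F'(\tau)+1=\int_{-1}^{1}(\lambda v-\tau)^{-2}\dd\mu(v)$. As in the case $\lambda\le 1$, the endpoints of $\supp\mu_{fc}$ are the images under $\tau\mapsto\tau-R_1(\tau)$ of the real solutions of $H(\tau)=1$ satisfying $|\tau|\ge\lambda$. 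The case $\lambda\le 1$ used Jensen's inequality~\eqref{jensen} to force $H(\lambda)\ge\lambda^{-2}\ge 1$; this is what breaks for $\lambda>1$.

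The crucial replacement is an integrability argument. For a Jacobi density as in~\eqref{jacobi measure} with $\b\le 1$, the integrand of $H(\tau)$ near $v=1$ behaves like $(1-v)^{\b-2}$, which fails to be integrable, so
\[
\lim_{\tau\searrow\lambda} H(\tau)=+\infty,
\]
and analogously $\lim_{\tau\nearrow -\lambda}H(\tau)=+\infty$ when $\a\le 1$. On the other hand, $H$ is smooth and strictly monotone on each of the intervals $(\lambda,\infty)$ and $(-\infty,-\lambda)$ (differentiate under the integral) and tends to $0$ at infinity. Hence the equation $H(\tau)=1$ has a unique real solution $\tau_2>\lambda$ and a unique real solution $\tau_1<-\lambda$. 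Setting $L_i\deq \tau_i-R_1(\tau_i)$ and arguing as in~\cite{S2}, one checks that $\supp\mu_{fc}=[L_1,L_2]$ is a single interval (the centring of $\mu$ ensures $L_1<0<L_2$).

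For the square root behaviour~\eqref{squarerrot behaviour}, observe that $F''(\tau_i)=-2\int_{-1}^{1}(\lambda v-\tau_i)^{-3}\dd\mu(v)$ has a definite sign, since $\lambda v-\tau_i$ does not change sign on $[-1,1]$ when $|\tau_i|>\lambda$; in particular $F''(\tau_i)\ne 0$. The inverse function theorem then yields
\[
F^{-1}(z)=\tau_i+c_i\sqrt{z-L_i}\bigl(1+A_i(\sqrt{z-L_i})\bigr),\qquad c_i\in\R\setminus\{0\},
\]
with $A_i$ analytic and bounded near $0$, exactly as in the proof of Lemma~\ref{squareroot of mufc}. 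Since $m_{fc}(z)=F^{-1}(z)-z$, the Stieltjes inversion formula gives~\eqref{squarerrot behaviour} and~\eqref{behaviour of mfc}.

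Finally, the stability bound $c\le |\lambda v-z-m_{fc}(z)|\le C$ extends verbatim. The upper bound follows by noting that $m_{fc}$ stays bounded on any compact subset of $\C^+\cup[L_1-\varepsilon,L_2+\varepsilon]^c\cup\{\text{neighborhoods of }L_i\}$ because $\tau(z)=z+m_{fc}(z)=\tau_i+\caO(|z-L_i|^{1/2})$ near the edges, while the lower bound comes from $|\tau_i|>\lambda$ near the edges and from $\im\tau(z)=\eta+\im m_{fc}(z)$ being bounded below by the square-root density elsewhere in the spectrum and by $\eta$ outside. The main obstacle — and the only place where one has to be careful — is the integrability step: it is precisely the boundary case $\a=1$ or $\b=1$ (where the divergence of $H$ at $\pm\lambda$ is only logarithmic) that must be checked explicitly, since beyond $\a,\b>1$ the integrability fails and, as statement~(2b) of Lemma~\ref{lemma assumptions 2a} shows, the square root behaviour genuinely breaks down for sufficiently large $\lambda$.
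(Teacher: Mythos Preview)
Your proposal is correct and takes essentially the same approach as the paper: both reduce the problem to showing that $H(\tau)\to\infty$ as $\tau\searrow\lambda$ (equivalently, $H(\lambda+\epsilon)>1$ for small $\epsilon$), after which the argument of Lemma~\ref{squareroot of mufc} and Corollary~\ref{cor: stability bound} goes through verbatim. The paper establishes this by the explicit lower bound $\mu(v)\ge C_0(1-v)$ on $(0,1)$ (valid since $\b\le 1$) and a direct computation yielding a logarithmic divergence, while you phrase it as non-integrability of $(1-v)^{\b-2}$ near $v=1$ combined with monotone convergence; these are the same observation at different levels of concreteness, and your remark that $\b=1$ is the borderline (logarithmic) case matches exactly what the paper's estimate exhibits.
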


\begin{proof}
We can apply the same argument as in the proof of Lemma \ref{squareroot of mufc}. The only thing we need to prove is that
\begin{equation*}
H(\lambda + \epsilon)=\int_{-1}^{1}\frac{\dd\mu(v)}{(\lambda v- \lambda - \epsilon)^2} > 1\,,
\end{equation*}
for any sufficiently small $\epsilon > 0$, and a similar estimate for $H(-\lambda-\epsilon)$.

From the assumptions, we find that there exist constants $C,C_0 > 0$ such that $\mu(v)\geq C(1-v)^{\b} \geq C_0 (1-v)$ for any $v \in (0, 1)$. Let $n \deq e^{1 + \lambda^2 C_0^{-1}}$ and choose $\epsilon < 1/n$. Then, we have
\begin{equation*}
H(\lambda + \epsilon) \geq C_0 \int_{1 - (n-1)\epsilon / \lambda}^{1} \frac{(1-v) \dd v}{(\lambda v- \lambda - \epsilon)^2} = \frac{C_0}{\lambda^2} \int_{\epsilon / \lambda}^{n \epsilon / \lambda} \frac{t - (\epsilon / \lambda)}{t^2} \dd t = \frac{C_0}{\lambda^2} (\log n - 1 + \frac{1}{n} ) > 1\,.
\end{equation*}
From the continuity of $H$, we get the desired results. The same argument applies to $H(-\lambda-\epsilon)$.
\end{proof}

For $\a,\b>1$, we have the following result:

\begin{lemma}\label{general case - large lambda}

Let $\mu$ be a centered Jacobi measure with $\a,\b>1$. Define
\begin{align*}
\lambda_2\deq \left( \int_{-1}^1 \frac{\mu(v) \dd v}{(1-v)^2} \right)^{1/2}\,,\quad\quad \tau_2 \deq \int_{-1}^1 \frac{\mu(v) \dd v}{1-v} \,.
\end{align*}
Then, there exist $L_1<0<L_2$ such that the support of $\mu_{fc}$ is $[L_1,L_2]$. Moreover,
\begin{itemize}
 \item [$i.$] if $\lambda<\lambda_2$, then for $0\le\kappa\le L_2$,
\begin{align}
 C^{-1}\sqrt{\kappa}\le \mu_{fc}(L_2-\kappa)\le C \sqrt\kappa\,,
\end{align}
for some $C\ge1$.
\item[$ii.$] if $\lambda>\lambda_2$, then $L_2=\lambda+(\tau_2/\lambda)$ and, for $0\le\kappa\le L_2$,
\begin{align}\label{exponent beta}
 C^{-1}{\kappa}^{\b}\le \mu_{fc}(L_2-\kappa)\le C \kappa^{\b}\,,
\end{align}
for some $C\ge1$.
\end{itemize}
Moreover, for $0\le E\le E_0$, $0<\eta\le 2$, $z=E+\ii\eta$, $v\in[-1,1]$, 
\begin{align*}
|\lambda v-z-\mfc(z)|
\end{align*}
remains bounded from below in case $i$ uniformly in $z$ and $v$, but in case $ii$, it can be arbitrarily small as $v \to 1$, $E = L_2$, and $\eta \to 0$.

Similar statements hold for the lower endpoint $L_1$ of the support of $\mu_{fc}$, with $\tau_2$ and $\lambda_2$ replaced by
\begin{align}
 \lambda_1\deq \left( \int_{-1}^1 \frac{\mu(v) \dd v}{(1+v)^2} \right)^{1/2}\,,\quad\quad\tau_1 \deq \int_{-1}^1 \frac{\mu(v) \dd v}{1+v} \,.
\end{align}
\end{lemma}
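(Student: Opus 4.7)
The plan is to adapt the framework from the proof of Lemma~\ref{squareroot of mufc} (which follows~\cite{S2}). I introduce $\tau(z)\deq z+m_{fc}(z)$ and rewrite~\eqref{free convolution equation} as $F(\tau)=z$, where
\[
F(\tau)=\tau-\int_{-1}^1\frac{\dd\mu(v)}{\lambda v-\tau}\,,\qquad H(\tau)\deq\int_{-1}^1\frac{\dd\mu(v)}{(\lambda v-\tau)^2}=1-F'(\tau)\,.
\]
As in~\cite{S2}, the endpoints of $\supp\mu_{fc}$ are characterized as critical points of $F$ on the real axis with $|\tau|\geq\lambda$, and the fact that $\supp\mu_{fc}$ is a single interval follows from absolute continuity of $\mu_{fc}$ (see~\cite{B}) together with the monotonicity structure of $F$ outside $[-\lambda,\lambda]$. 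The decisive new feature beyond Lemma~\ref{squareroot of mufc} is that, because $\b>1$, the endpoint integrals at $v=1$ converge, and direct computation gives
\[
H(\lambda)=\frac{1}{\lambda^2}\int_{-1}^1\frac{\dd\mu(v)}{(1-v)^2}=\frac{\lambda_2^2}{\lambda^2}\,,\qquad F(\lambda)=\lambda+\frac{1}{\lambda}\int_{-1}^1\frac{\dd\mu(v)}{1-v}=\lambda+\frac{\tau_2}{\lambda}\,.
\]
These identities drive the dichotomy $\lambda<\lambda_2$ versus $\lambda>\lambda_2$; the analysis at the left edge is entirely symmetric (reflecting $v\mapsto-v$) and yields $L_1$ together with the analogous dichotomy governed by $\lambda_1$ and $\tau_1$.

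In Case $(i)$, $\lambda<\lambda_2$, one has $H(\lambda)>1$, and since $H$ is continuous, strictly decreasing on $(\lambda,\infty)$, and tends to $0$ at infinity, there is a unique $\tau_2^{\ast}>\lambda$ with $F'(\tau_2^{\ast})=0$. Setting $L_2\deq F(\tau_2^{\ast})$ and checking that $F''(\tau_2^{\ast})\neq 0$, the inverse-function argument of~\cite{S2} produces the local expansion $\tau(z)=\tau_2^{\ast}+c_2\sqrt{z-L_2}(1+A_2(\sqrt{z-L_2}))$ for $z$ near $L_2$, from which the square-root edge behavior follows. Since $\tau_2^{\ast}>\lambda$ strictly, we obtain $|\lambda v-\tau(z)|\geq\tau_2^{\ast}-\lambda-o(1)>0$ uniformly for $v\in[-1,1]$ and $z$ in a neighborhood of $L_2$, giving the claimed lower bound on $|\lambda v-z-m_{fc}(z)|$.

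The main obstacle is Case $(ii)$, $\lambda>\lambda_2$. Here $H(\lambda)<1$, so $F'>0$ on $[\lambda,\infty)$ and $F$ is a real-analytic bijection of $[\lambda,\infty)$ onto $[L_2,\infty)$ with $L_2=F(\lambda)=\lambda+\tau_2/\lambda$; no interior critical point of $F$ exists, and the edge arises from the boundary point $\tau=\lambda$ itself. To obtain the exponent $\b$ in~\eqref{exponent beta}, I would perform an asymptotic expansion of $F(\lambda+s)$ for small $s\in\C^+$. Writing $u=1-v$ and isolating the contribution from $v=1$, where $\mu(v)=c_\b(1-v)^\b(1+O(1-v))$ with $c_\b>0$, the key integral to control is $\int_0^\epsilon u^{\b-1}/(u+s/\lambda)\,\dd u$; one strips off $\lfloor\b-1\rfloor$ analytic Taylor terms by repeated integration by parts and then evaluates the remaining singular piece via the standard identity $\int_0^\infty r^{c-1}/(r+a)\,\dd r=\pi a^{c-1}/\sin(\pi c)$ (valid for $0<c<1$ and $a\notin(-\infty,0]$). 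This produces the expansion
\[
F(\lambda+s)-L_2=(1-H(\lambda))\,s+C_\b s^\b+(\text{analytic in }s)+o(|s|^\b),
\]
with an explicit nonzero constant $C_\b$. Inverting this relation for $z=L_2-\kappa+\ii 0^+$ and carefully tracking the branch of $s^\b$ through the factor $e^{\ii\pi\b}$ shows that the leading solution $s\approx -\kappa/(1-H(\lambda))$ is shifted by a pure imaginary correction of order $\kappa^\b$ with the correct positive sign, whence $\mu_{fc}(L_2-\kappa)=\pi^{-1}\im s\sim\kappa^\b$. The failure of the stability bound in Case $(ii)$ is then immediate from $\tau(L_2)=\lambda$: taking $v=1$ and letting $z\to L_2$ along the real axis, $\lambda v-z-m_{fc}(z)=\lambda-\tau(z)\to 0$, as claimed.
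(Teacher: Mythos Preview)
Your outline for Case~$(i)$ and the identification $L_2=F(\lambda)=\lambda+\tau_2/\lambda$ in Case~$(ii)$ match the paper's argument. For Case~$(ii)$, however, you take a genuinely different route to the edge exponent~$\b$. The paper works on the real side: it parametrizes $\tau=\lambda-\lambda k+\ii\lambda y$, takes the imaginary part of the self-consistent equation to obtain
\[
\lambda y-\eta=\frac{y}{\lambda}\int_{-1}^{1}\frac{\mu(v)\,\dd v}{(v-1+k)^2+y^2}\,,
\]
and then splits this integral into four explicit ranges (far bulk, $[1-8\epsilon^{-1}k,1-k-y]$, $[1-k-y,1-k+y]$, $[1-k+y,1]$), bounding each piece by elementary calculus to conclude $y\sim k^{\b}$; a parallel decomposition of the real part gives $\kappa\sim k$. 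Your approach is the complex-analytic one the authors themselves anticipate in the outline of proofs (``we expect that the statement can be proven using methods of complex analysis''): you expand $F(\lambda+s)-L_2$ in small complex~$s$, extract the non-analytic $s^{\b}$ term via the identity $\int_0^\infty r^{c-1}/(r+a)\,\dd r=\pi a^{c-1}/\sin(\pi c)$, and invert. This is cleaner conceptually and yields explicit constants, whereas the paper's decomposition is more hands-on but uniformly robust (no branch-tracking, no separate handling of integer~$\b$ where your formula degenerates and a logarithm appears---though the $\ii\pi$ from $\log(-x+\ii 0^+)$ still produces the correct $\kappa^{\b}$ imaginary part). One point you leave implicit that the paper makes explicit: you should argue that the local inverse you construct is indeed the branch selected by the global condition $\im\tau>0$ on~$\C^+$; the paper handles this by first establishing the existence and endpoint behavior of the curve $\{\im F=0\}$ in the upper half-plane.
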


\begin{proof}

We first note that $0 < \lambda_2, \tau_2 < \infty$, for $\b > 1$. Since $\mu(v)>0$, for $v\in(-1,1)$, $\mu_{fc}$ is supported on a single interval. Consider now
\begin{equation*}
H(\lambda)=\int_{-1}^{1}\frac{\mu(v) \dd v}{(\lambda v- \lambda)^2} = \frac{1}{\lambda^2} \int_{-1}^{1}\frac{\mu(v) \dd v}{(v-1)^2} = \left( \frac{\lambda_2}{\lambda} \right)^2\,.
\end{equation*}
When $\lambda < \lambda_2$, we may follow the proof of Lemma \ref{squareroot of mufc} to prove the claims in $i$.

We now choose $\lambda > \lambda_2$. We claim that there exists a unique continuous bounded curve $\gamma$ in $\C^{+}$ on which $\im F(\tau)=0$. For $z\in\C^+$,
\begin{align*}
 \im F(\tau)=\im\tau\left(1-\int\frac{\dd\mu(v)}{|\lambda v-\tau|^2} \right)\,.
\end{align*}
We know that the non-negative continuous function
\begin{align*}
\widetilde{H}(\tau)\deq\int\frac{\dd\mu(v)}{|\lambda v-\tau|^2} = \int\frac{\dd\mu(v)}{(\lambda v-\re\tau)^2+(\im\tau)^2}\,,\quad\quad\tau\in\C^{+} \cup (\R \backslash [-\lambda, \lambda]) \,,
\end{align*}
is monotonically decreasing in $\im\tau$. Let $\tau=x+\ii y$. For $x\in(-\lambda,\lambda)$, the continuity of $\mu$ implies that, as $y\searrow 0$, $y \widetilde{H}(x+\ii y)\to\pi \mu(x)>0$, hence $\widetilde{H}(x+\ii y)\to \infty$. Since $\widetilde{H}(x+\ii y)$ is monotonically decreasing as $y$ increases and $\widetilde{H}(x+\ii y) \to 0$ as $y \to \infty$, the equation $y=y\widetilde{H}(x+\ii y)$ has a unique solution $0<y<\infty$.
The analyticity of $F$ in the upper half plane then implies that on the interval $(-\lambda,\lambda)$ there exists a single bounded curve such that the imaginary part of $F$ vanishes on it.

The endpoints of the support of $\mu_{fc}$ are characterized as the points where the curve $\gamma$ approaches to the real line. Since $\widetilde{H}(\lambda) = H(\lambda) < 1$, the curve $\gamma$ does not connect with the real axis on $\R^{+} \backslash (0, \lambda)$. Since this curve cannot end at some point where $F$ is analytic, we can conclude that the curve approaches to $\lambda$ on $\R^{+}$. When $\tau = \lambda$, we have
\begin{equation*}
z = \tau - m_{fc} (z) = \lambda - \int_{-1}^{1} \frac{\mu(v) \dd v}{\lambda v- \lambda} = \lambda + \frac{\tau_2}{\lambda}\,,
\end{equation*}
which corresponds to the endpoint, $L_2$, of the support of $\mu_{fc}$ on $\R^{+}$.

To prove~\eqref{exponent beta}, let
\begin{equation*}
\tau = \lambda - \lambda k +\ii  \lambda y\,,\quad \quad z = \lambda + \frac{\tau_2}{\lambda} - \kappa +\ii \eta\,.
\end{equation*}
Considering the imaginary part of $m_{fc}$, we obtain
\begin{equation}\label{upp}
\lambda y - \eta = \im m_{fc}(z) = \im \int_{-1}^{1} \frac{\mu(v) \dd v}{\lambda v- \tau} = \frac{y}{\lambda} \int_{-1}^{1} \frac{\mu(v) \dd v}{(v-1+k)^2 + y^2}\,.
\end{equation}
We claim that in the limit $\eta\searrow 0$,
\begin{align}\label{bound y}
y \sim (k+y)^{\b}\,, 
\end{align}
for $\kappa,y\ll 1$.
 
For the upper bound, we consider first the case $y < k$: Let $\epsilon = \min \{ 1/2, ( \lambda^2 / \lambda_2^2)-1 \} $, then we have
\begin{equation} \label{up}
y \int_{-1}^{1} \frac{\mu(v) \dd v}{(v-1+k)^2 + y^2} = y \left( \int_{-1}^{1-8\epsilon^{-1} k} + \int_{1-8\epsilon^{-1} k}^{1-k-y} + \int_{1-k-y}^{1-k+y} + \int_{1-k+y}^1 \right) \frac{\mu(v) \dd v}{(1-v-k)^2 + y^2}\,.
\end{equation}
The first term in \eqref{up} can be estimated as
\begin{equation} \begin{split} \label{up_1}
y \int_{-1}^{1-8\epsilon^{-1} k} \frac{\mu(v) \dd v}{(1-v-k)^2 + y^2} &\leq y \int_{-1}^{1-8\epsilon^{-1} k} \frac{\mu(v) \dd v}{(1-v-k)^2} \leq y \left(1 + \frac{\epsilon}{2} \right) \int_{-1}^{1-8\epsilon^{-1} k} \frac{\mu(v) \dd v}{(1-v)^2} \\
& \leq \left(1 + \frac{\epsilon}{2} \right) \lambda_2^2 y\,.
\end{split} \end{equation}
Here, we used that $v \leq 1-8\epsilon^{-1} k$ implies that $1-v-k \geq (1 - \epsilon/8 )(1-v)$, hence
\begin{equation*}
\frac{1}{(1-v-k)^2} \leq \left( 1 - \frac{\epsilon}{8} \right)^{-2} \frac{1}{(1-v)^2} \leq \left(1 + \frac{\epsilon}{2} \right) \frac{1}{(1-v)^2}\,.
\end{equation*}
The second term in \eqref{up} can be estimated as
\begin{equation*}
y \int_{1-8\epsilon^{-1} k}^{1-k-y} \frac{\mu(v) \dd v}{(1-v-k)^2 + y^2} \leq Cy \int_{1-8\epsilon^{-1} k}^{1-k-y} \frac{k^{\b} \dd v}{(1-v-k)^2} \leq C k^{\b}\,.
\end{equation*}
The third term in \eqref{up} can be estimated as
\begin{equation*}
y \int_{1-k-y}^{1-k+y} \frac{\mu(v) \dd v}{(v-1+k)^2 + y^2} \leq C y \int_{1-k-y}^{1-k+y} \frac{(k+y)^{\b} \dd v}{y^2} \leq C (y+k)^{\b}\,.
\end{equation*}
The last term in \eqref{up} can be estimated as
\begin{equation*}
y \int_{1-k+y}^1 \frac{\mu(v) \dd v}{(1-v-k)^2 + y^2} \leq y \int_{1-k+y}^1 \frac{(k-y)^{\b} \dd v}{(1-v-k)^2} = y \int_y^k \frac{(k-y)^{\b} \dd w}{w^2} \leq C k^{\b}\,.
\end{equation*}

Thus, as $\eta \searrow 0$, we have that 
\begin{equation*}
\lambda y \leq \frac{1}{\lambda} \left(1 + \frac{\epsilon}{2} \right) \lambda_2^2 y + C (k+y)^{\b}\,.
\end{equation*}
Since
\begin{equation*}
\lambda - \frac{1}{\lambda} \left(1 + \frac{\epsilon}{2} \right) \lambda_2^2 = \frac{\lambda_2^2}{\lambda} \left( \frac{\lambda^2}{\lambda_2^2} - 1 - \frac{\epsilon}{2} \right) \geq \frac{\epsilon \lambda_2^2}{2 \lambda}\,,
\end{equation*}
we obtain
\begin{equation*}
y \leq C (k+y)^{\b}\,,
\end{equation*}
provided $y<k$. 

When $y \geq k$, we decompose the integral in \eqref{up} as
\begin{equation*}
y \int_{-1}^{1} \frac{\mu(v) \dd v}{(v-1+k)^2 + y^2} = y \left( \int_{-1}^{1-8\epsilon^{-1} k} + \int_{1-8\epsilon^{-1} k}^1 \right) \frac{\mu(v) \dd v}{(1-v-k)^2 + y^2}\,.
\end{equation*}
The first term is again estimated as in \eqref{up_1}. The second term can be estimated as
\begin{equation*}
y \int_{1-8\epsilon^{-1} k}^1 \frac{\mu(v) \dd v}{(1-v-k)^2 + y^2} \leq Cy \int_{1-8\epsilon^{-1} k}^1 \frac{k^{\b} \dd v}{y^2} \leq C k^{\b + 1} y^{-1} \leq C y^{\b}\,.
\end{equation*}
Following the argument we used for the case $y < k$, we find the relation $y \leq C y^{\b}$ in this case. For sufficiently small $y$, this is impossible, so this case does not happen.

To complete the proof of~\eqref{bound y}, we need a lower bound: Observe that
\begin{equation} \label{low}
y \int_{-1}^{1} \frac{\mu(v) \dd v}{(v-1+k)^2 + y^2} \geq y \int_{1-k-y}^{1-k} \frac{\mu(v) \dd v}{(v-1+k)^2 + y^2} \geq Cy \int_{1-k-y}^{1-k} \frac{\mu(v) \dd v}{y^2} \geq C k^{\b}\,,
\end{equation}
and~\eqref{bound y} follows from~\eqref{upp} and $k \geq y$. When $y, k \ll 1$, ~\eqref{bound y} implies that $k \gg y$ and since $y \to C \mu_{fc}(L_2-\kappa)$ as $\eta \to 0$, we have $\mu_{fc}(L_2-\kappa) \sim y \sim k^{\b}$.

To compare $k$ and $\kappa$, we consider the real part of $m_{fc}$ and get
\begin{equation*}
\kappa - \lambda k - \frac{\tau_2}{\lambda} = \re m_{fc}(z) = \re \int_{-1}^{1} \frac{\mu(v) \dd v}{\lambda v- \tau} = \frac{1}{\lambda} \int_{-1}^{1} \frac{(v-1+k) \mu(v) \dd v}{(v-1+k)^2 + y^2}\,.
\end{equation*}
From the definition of $\tau_2$, we find that
\begin{equation*}
\kappa - \lambda k = \frac{1}{\lambda} \int_{-1}^{1} \left( \frac{(v-1+k) \mu(v) \dd v}{(v-1+k)^2 + y^2} + \frac{\mu(v) \dd v}{1-v} \right) = \frac{1}{\lambda} \int_{-1}^{1} \frac{\mu(v) \dd v}{1-v} \cdot \frac{k(v-1) + k^2 + y^2}{(v-1+k)^2 + y^2}\,.
\end{equation*}
We now separate the integral and estimate each term as in \eqref{low} and \eqref{up}. We then get
\begin{equation*}
\int_{-1}^{1} \frac{k \mu(v) \dd v}{(v-1+k)^2 + y^2} \sim \frac{k}{y} (k+y)^{\b}
\end{equation*}
and
\begin{equation*}
\frac{1}{\lambda} \int_{-1}^{1} \frac{\mu(v) \dd v}{1-v} \cdot \frac{k^2 + y^2}{(v-1+k)^2 + y^2} \sim \frac{k^2 + y^2}{y} (k+y)^{\b - 1}\,.
\end{equation*}
Recalling that $y \sim k^{\b}$, when $y, k \ll 1$, we find that $\kappa - \lambda k = O(k)$. Therefore we get
\begin{equation*}
\mu_{fc}(L_2-\kappa) \sim y \sim k^{\b} \sim \kappa^{\b}\,,
\end{equation*}
as $\kappa\searrow0$. Finally, it is easy to see that $|\lambda v-z-\mfc(z)|$ is not bounded from below: Choosing $z=L_2$, we have $\im(\mfc(L_2))=0$, but $\re (\lambda v-L_2-\mfc(L_2))=\lambda v-\lambda$. This proves the claims in $ii$.

\end{proof}

\subsection{Square root behaviour of $\mfc$ and further stability bounds}
In this subsection, we prove that the Stieltjes transform $m_{fc}$ inherits the square root behavior from $\mu_{fc}$: 
\begin{lemma}\label{lem:m_fc_bound}
 Assume that $\mu_{fc}$ has support $[L_1,L_2]$ and satisfies
\begin{align}\label{another squareroot}
 C^{-1}\sqrt{\kappa}\le\mu_{fc}(L_2-\kappa)\le C \sqrt{\kappa}\,,
\end{align}
$0\le\kappa\le L_2$, $C\ge 1$. Then,
\begin{itemize}
 \item[$i.$] 
for $z=L_2-\kappa+\ii\eta$, with $0\le\kappa\le L_2$ and $0<\eta\le 2$, we have, $C\ge 1$,
\begin{align*}
 C^{-1}\sqrt{\kappa+\eta}\le \im\mfc(z)\le C \sqrt{\kappa+\eta}\,;
\end{align*}
\item[$ii.$]
for $z=L_2+\kappa+\ii\eta$, with $0\le\kappa\le 1$ and $0<\eta\le 2$, we have, $C\ge 1$,
\begin{align*}
 C^{-1}\frac{\eta}{\sqrt{\kappa+\eta}}\le \im\mfc(z)\le C\frac{\eta}{\sqrt{\kappa+\eta}}\,.
\end{align*}
\end{itemize}
The analogous statements hold for $z=L_1\pm\kappa+\ii\eta$.
\end{lemma}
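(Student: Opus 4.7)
The proof rests on the Stieltjes inversion formula
\begin{equation*}
\im\, m_{fc}(E + \ii\eta) = \int \frac{\eta \, \mu_{fc}(x)}{(x - E)^2 + \eta^2}\, \dd x,
\end{equation*}
coupled with the square-root behaviour~\eqref{another squareroot} of $\mu_{fc}$ near $L_2$. I will carry out the argument at $L_2$; the claims at $L_1$ are entirely symmetric. After the change of variables $u = L_2 - x$, one obtains $\im m_{fc}(E+\ii\eta) = \int_0^{L_2 - L_1} f(u) \, \eta \, [(u - (L_2 - E))^2 + \eta^2]^{-1} \dd u$, where $f(u) \deq \mu_{fc}(L_2 - u)$ satisfies $C^{-1}\sqrt{u} \le f(u) \le C\sqrt{u}$ on $[0, L_2 - L_1]$. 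In case~$i$ the denominator becomes $(u-\kappa)^2 + \eta^2$, while in case~$ii$ it becomes $(u+\kappa)^2 + \eta^2$. Throughout, $\sqrt{\kappa+\eta} \sim \max(\sqrt{\kappa},\sqrt{\eta})$ and I will freely split the relevant integrals according to whether $\kappa \gtrsim \eta$ or $\eta \gtrsim \kappa$.

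For the upper bound in case~$i$, I would split the $u$-integral at $u_\ast = 2(\kappa+\eta)$. On $[0,u_\ast]$ one has $f(u) \lesssim \sqrt{\kappa+\eta}$, and the remaining Poisson kernel integrates to at most $\pi$, yielding a contribution $\lesssim \sqrt{\kappa+\eta}$. On $[u_\ast, L_2 - L_1]$ one has $|u - \kappa| \ge u/2$, so the integrand is bounded by $4\eta\sqrt{u}/u^2$ and, after integration, the tail contributes $\lesssim \eta / \sqrt{\kappa+\eta} \le \sqrt{\kappa+\eta}$. For the lower bound, I would restrict the integration to a small sub-interval of size of order $\kappa+\eta$ centered at $u=\kappa$ (or $u = \eta$ if $\kappa < \eta$), intersected with $[0,L_2-L_1]$; on such a set $f(u) \gtrsim \sqrt{\kappa+\eta}$ and $(u-\kappa)^2 + \eta^2 \lesssim (\kappa+\eta)^2$, so the integral is $\gtrsim \sqrt{\kappa+\eta}$.

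For case~$ii$, one now has $(u+\kappa)^2 + \eta^2 \ge \tfrac{1}{2}(u+\kappa+\eta)^2$. Splitting the $u$-integral at $u_\ast = \kappa + \eta$, the near contribution is bounded by $\int_0^{\kappa+\eta} \eta \sqrt{\kappa+\eta}\, /\,(\kappa+\eta)^2\,\dd u \lesssim \eta/\sqrt{\kappa+\eta}$, and the tail contribution is bounded by $\int_{\kappa+\eta}^\infty \eta \sqrt{u}/u^2\, \dd u \lesssim \eta/\sqrt{\kappa+\eta}$. For the matching lower bound, I would restrict the integration to $u \in [\max(\kappa,\eta), 2\max(\kappa,\eta)]$; on this interval, $f(u) \gtrsim \sqrt{\kappa+\eta}$ and $(u+\kappa)^2 + \eta^2 \sim (\kappa+\eta)^2$, hence the integral is $\gtrsim \eta\sqrt{\kappa+\eta}\cdot(\kappa+\eta)/(\kappa+\eta)^2 = \eta/\sqrt{\kappa+\eta}$.

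There is no serious obstacle here; the proof is essentially elementary calculus once the correct splitting of the $u$-interval is identified. The only real care is in treating the bulk part of the integral (where $u$ is of order one and $f(u)\sim 1$), which in both cases contributes at most $\caO(\eta)$ to $\im m_{fc}$, an amount readily absorbed into either $\sqrt{\kappa+\eta}$ or $\eta/\sqrt{\kappa+\eta}$ (using $\eta \le 2$ and $\kappa + \eta = \caO(1)$). The main bookkeeping challenge is tracking which of $\kappa$ and $\eta$ dominates in the various sub-estimates so that the bounds combine cleanly into a single factor of $\sqrt{\kappa+\eta}$; this is the reason for choosing the splitting scale at $u \sim \kappa + \eta$ rather than at $\kappa$ or $\eta$ separately.
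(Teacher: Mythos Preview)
Your overall approach is correct and matches the paper's: both use the representation $\im m_{fc}(E+\ii\eta)=\int \eta\,\mu_{fc}(x)\,[(x-E)^2+\eta^2]^{-1}\,\dd x$ and then split the integral according to the distance of $x$ from $L_2$, using the square-root bound on $\mu_{fc}$. Your upper bounds in both cases, and your lower bound in case~$ii$, are fine and in fact organized more cleanly than the paper's case-by-case treatment (you split once at scale $\kappa+\eta$, whereas the paper distinguishes $\kappa\gtrless\eta$ and $\kappa,\eta\gtrless 1/2$ separately).

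There is, however, a concrete slip in your lower bound for case~$i$ when $\kappa\gg\eta$. You take an interval of length $\sim\kappa+\eta$ centered at $u=\kappa$, bound $f(u)\gtrsim\sqrt{\kappa+\eta}$ and the denominator $(u-\kappa)^2+\eta^2\lesssim(\kappa+\eta)^2$. Carrying this through gives
\[
\int_I \frac{\eta\,f(u)}{(u-\kappa)^2+\eta^2}\,\dd u \;\gtrsim\; \frac{\eta\sqrt{\kappa+\eta}\,(\kappa+\eta)}{(\kappa+\eta)^2}\;=\;\frac{\eta}{\sqrt{\kappa+\eta}}\,,
\]
which is \emph{not} $\sqrt{\kappa+\eta}$ when $\kappa\gg\eta$. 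The problem is that the uniform denominator bound $(\kappa+\eta)^2$ is far too crude: the Poisson kernel is sharply peaked on scale $\eta$, not $\kappa$. The fix is to shrink the interval to length $\sim\eta$ (say $[\kappa-\eta,\kappa+\eta]$ when $\kappa\ge\eta$): then $(u-\kappa)^2+\eta^2\le 2\eta^2$, $f(u)\gtrsim\sqrt{\kappa}$, and the integral is $\gtrsim \eta\cdot\sqrt{\kappa}\cdot\eta/\eta^2=\sqrt{\kappa}\sim\sqrt{\kappa+\eta}$. This is exactly what the paper does (its Cases~1 and~2 for the lower bound in part~$i$ use intervals of length $\eta$ or $\eta/8$, not $\kappa+\eta$). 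With this correction your argument goes through.
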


\begin{proof}
We start with the claim $i$:
Notice that
\begin{equation*}
\im m_{fc}(z) = \im \int \frac{\dd\mu_{fc}(x)}{x-z} = \int \frac{\eta \:\dd\mu_{fc}(x) }{(x-L_2+\kappa)^2 + \eta^2}\,.
\end{equation*}
To prove the lower bound, consider the following cases:

\textit{Case 1.} When 
$\kappa,\eta<1/2$, computing the integral from $x=L_2 - \kappa - 2 \eta$ to $x=L_2 - \kappa - \eta$, we find from~\eqref{another squareroot} that
\begin{equation*}
\im m_{fc}(z) = \int \frac{\eta \: \dd\mu_{fc}(x) }{(x-L_2+\kappa)^2 + \eta^2} \geq C \int_{L_2 - \kappa - 2 \eta}^{L_2 - \kappa - \eta} \frac{\eta \sqrt{\kappa + \eta}}{\eta^2} \dd x \geq C\sqrt{\kappa+\eta}\,.
\end{equation*} 

\textit{Case 2.} When $\kappa \geq 1/2$, $\eta<1/2$, we obtain from~\eqref{another squareroot} that
\begin{equation*}
\im m_{fc}(z)\geq C\int_{L_2-\kappa+\eta/8}^{L_2-\kappa+\eta/4}\frac{\eta \: \dd\mu_{fc}(x) }{(x-L_2+\kappa)^2 + \eta^2} \geq C\sqrt{\kappa}\int_{L_2-\kappa+\eta/8}^{L_2-\kappa+\eta/4}\frac{\eta \: \dd x }{ \eta^2} 
\ge C \sqrt{\kappa} \geq C \sqrt{\kappa+\eta}\,.
\end{equation*}

\textit{Case 3.} When $\eta \geq 1/2$, we have the bound
\begin{equation*}
\im m_{fc}(z) = \int \frac{\eta \: \dd\mu_{fc}(x) }{(x-L_2-\kappa)^2 + \eta^2} \geq C \int \frac{\eta \: \dd\mu_{fc}(x) }{\eta^2} = \frac{C}{\eta} \geq C \sqrt{\kappa+\eta}\,.
\end{equation*}

This proves the lower bound. To prove the upper bound, we consider the following cases:

\textit{Case 1.} When $\eta < \kappa < 1/2$, from~\eqref{another squareroot} we have
\begin{align*} 
\im m_{fc}(z)& = \int \dd\mu_{fc}(x) \frac{\eta}{(x-L_2+\kappa)^2 + \eta^2} \\
&\leq C \eta \int_{-L_1}^{L_2 - \kappa - \eta} \frac{\sqrt{L_2 - x}}{(x-L_2+\kappa)^2} \dd x + C \eta \int_{L_2 - \kappa - \eta}^{L_2 - \kappa + \eta} \frac{\sqrt{\kappa + \eta}}{\eta^2} \dd x + C \eta \int_{L_2 - \kappa + \eta}^{L_2} \frac{\sqrt{\kappa}}{(x-L_2+\kappa)^2} \dd x \\
&\leq C \eta \int_{\eta}^{L_1 + L_2 - \kappa} \frac{\sqrt{y + \kappa}}{y^2} \dd y + C \sqrt{\kappa + \eta} + C \eta \int_{\eta}^{\kappa} \frac{\sqrt{\kappa}}{y^2} \dd y \leq C \sqrt{\kappa + \eta}\,.
 \end{align*}

\textit{Case 2.} When $\kappa < \eta < 1/2$, a calculation similar to $\textit{Case 1}$ proves the same bound.

\textit{Case 3.} When $\kappa \geq 1/2$,  we have
\begin{equation*}
\im m_{fc}(z) \leq C\int\frac{\eta\:\dd x}{(x-L_2+\kappa)^2+\eta^2}\leq C  \leq C \sqrt{\kappa + \eta}\,.
\end{equation*}

\textit{Case 4.} When $\eta \geq 1/2$, we have the trivial bound
\begin{equation*}
\im m_{fc}(z) \leq |m_{fc}(z)| \leq \frac{1}{\eta} \leq C \sqrt{\kappa + \eta}\,.
\end{equation*}

This completes the proof of statement $i$. To prove $ii$, we proceed similarly:

\textit{Case 1.} When $\kappa > \eta$, computing the integral from $x=L_2 - \kappa$ to $x=L_2 - 2 \kappa$, we get
\begin{equation*}
\im m_{fc}(z) = \int \frac{\eta \: \dd\mu_{fc}(x) }{(x - L_2 - \kappa)^2 + \eta^2} \geq C \int_{L_2 - \kappa}^{L_2 - 2 \kappa} \frac{\eta \sqrt{\kappa}}{\kappa^2} \dd x \geq \frac{C \eta}{\sqrt{\kappa}} \geq \frac{C \eta}{\sqrt{\eta + \kappa}}\,.
\end{equation*}
For the upper bound, we find that
\begin{equation*} \begin{split}
 \im m_{fc}(z)& = \int \frac{\eta \: \dd\mu_{fc}(x) }{(x - L_2 - \kappa)^2 + \eta^2} \leq C \eta \int_{L_2 - \kappa}^{L_2}  \frac{\sqrt{\kappa}}{\kappa^2} \dd x + C \eta \int_{-L_1}^{L_2 - \kappa} \frac{\sqrt{L_2 -x}}{(x - L_2)^2} \dd x \\
&\leq \frac{C \eta}{\sqrt{\kappa}} + C \eta \int_{\kappa}^{L_1 + L_2} \frac{\sqrt{y}}{y^2} \leq \frac{C \eta}{\sqrt{\kappa}} \leq \frac{C \eta}{\sqrt{\eta + \kappa}}\,.
\end{split} \end{equation*}

\textit{Case 2.} When $\kappa \leq \eta$, computing the integral from $x=L_2 - (\eta /2)$ to $x=L_2 - \eta$, we obtain
\begin{equation*}
\im m_{fc}(z) = \int \frac{\eta \: \dd\mu_{fc}(x) }{(x - L_2 - \kappa)^2 + \eta^2} \geq C \int_{L_2 - (\eta /2)}^{L_2 - \eta} \frac{\eta \sqrt{\eta}}{\eta^2} \dd x \geq C \sqrt{\eta} \geq \frac{C \eta}{\sqrt{\eta + \kappa}}\,.
\end{equation*}
For the upper bound, we find that
\begin{equation*} \begin{split}
 \im m_{fc}(z) &= \int \frac{\eta \:\d d\mu_{fc}(x) }{(x - L_2 - \kappa)^2 + \eta^2} \leq C \eta \int_{L_2 - \eta}^{L_2}  \frac{\sqrt{\eta}}{\eta^2} \dd x + C \eta \int_{-L_1}^{L_2 - \eta} \frac{\sqrt{L_2 -x}}{(x - L_2)^2} \dd x \\
&\leq C \sqrt{\eta} + C \eta \int_{\eta}^{L_1 + L_2} \frac{\sqrt{y}}{y^2} \leq C \sqrt{\eta} \leq \frac{C \eta}{\sqrt{\eta + \kappa}}\,.
\end{split} \end{equation*}
This completes the proof of the lemma.
\end{proof}

Finally, we show that $|1-R_2(z)| \sim \sqrt{{\kappa_E+\eta}}$ and $R_3(z) = \caO(1)$; see \eqref{eq.171} for the definitions.

\begin{lemma} \label{lem:stability_bound}
Assume that $\mu_{fc}$ has support $[L_1,L_2]$ and satisfies
\begin{align*}
 C^{-1}\sqrt{\kappa_E}\le\mu_{fc}(E)\le C \sqrt{\kappa_E}\,,
\end{align*}
with $C\ge1$, where $\kappa_E\deq\min\{|E-L_1|,|E-L_2|\}$, denotes the distance to the endpoints of the support of $\mu_{fc}$. Moreover, assume the stability bound
\begin{align*}
 c<|\lambda v-z-\mfc(z)|\le C_0\,,
\end{align*}
with $C_0,c>0$, for any $|E|\le E_0$, $0<\eta\le 3$ and $|v| \leq 1$. Then, we have the followings:
\begin{itemize}
\item[i.]
There exists a constant $C\ge1$ such that for any $|E|\le E_0$, $0<\eta\le 3$,
\begin{equation*}
C^{-1} \sqrt{\kappa_E + \eta} \leq \left| 1 - \int \frac{\dd \mu (v)}{(\lambda v - z - m_{fc}(z))^2} \right|  \leq C \sqrt{\kappa_E + \eta}\,.
\end{equation*}

\item[ii.]
There exists a constant $C$ such that $|R_3(z)| \leq C$ uniformly in $z\in \caD_L$ and $\lambda\in \caD_{\lambda_0}$. Moreover, there exist constants $c$ and $\epsilon_0$ such that $|R_3(z)| \geq c$ whenever $z\in \caD_L$ satisfies $|z-L_i| < \epsilon_0$, $i=1,2$.

\end{itemize}
\end{lemma}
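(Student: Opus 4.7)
The plan is to reduce both parts to a local analysis of the function $F(\tau) = \tau - \int \d\mu(v)/(\lambda v - \tau)$ from the proof of Lemma~\ref{squareroot of mufc}, together with an algebraic identity obtained by differentiating the Pastur equation. Applying $\partial_z$ to~\eqref{free convolution equation} yields
\begin{equation*}
m_{fc}'(z) = R_2(z)\bigl(1 + m_{fc}'(z)\bigr), \qquad \text{equivalently} \qquad 1 - R_2(z) = \frac{1}{1 + m_{fc}'(z)}.
\end{equation*}
Part $(i)$ thereby reduces to the two-sided bound $|1 + m_{fc}'(z)| \sim 1/\sqrt{\kappa_E + \eta}$.

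In a neighborhood of each edge $L_i$, I would apply the holomorphic inverse function theorem to $F$ at its critical point $\tau_i = L_i + m_{fc}(L_i)$. From the proof of Lemma~\ref{squareroot of mufc}, $F'(\tau_i) = 1 - R_2(L_i) = 0$; moreover, $F''(\tau_i) = -2 R_3(L_i)$ must be nonzero, for if it vanished the local inversion of $F$ would produce an edge exponent strictly larger than $1/2$, contradicting the hypothesis~\eqref{edge wiedermal}. The inverse function theorem therefore gives $\tau(z) := z + m_{fc}(z) = \tau_i + c_i\sqrt{z - L_i}\,(1 + O(\sqrt{z-L_i}))$ with $c_i \neq 0$, and hence $m_{fc}'(z) = (c_i/2)(z - L_i)^{-1/2} + O(1)$, which gives $|1 + m_{fc}'(z)| \sim 1/\sqrt{\kappa_E+\eta}$ in such a neighborhood.

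For $z$ bounded away from both edges, the map $\tau(z)$ avoids the critical set $\{\tau_1,\tau_2\}$ of $F$, so $|1 - R_2(z)| = |F'(\tau(z))|$ is continuous and strictly positive on the resulting compact region; together with the matching upper bound $|1-R_2(z)| \leq 1 + c^{-2}$ from the stability hypothesis, this yields the full estimate in $(i)$. Uniformity in $\lambda \in \caD_{\lambda_0}$ follows from the joint continuity of the relevant quantities in $(\lambda, z)$ and compactness of $\caD_{\lambda_0}$.

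Part $(ii)$ is then short: the upper bound $|R_3(z)| \leq c^{-3}$ is immediate from the stability bound $|\lambda v - z - m_{fc}(z)| \geq c$, and the lower bound $|R_3(z)| \geq c'$ for $|z - L_i| \leq \epsilon_0$ follows from $R_3(L_i) = -\tfrac12 F''(\tau_i) \neq 0$ together with the continuity of $R_3$. The main obstacle is making the edge expansion of $m_{fc}$ in the second paragraph rigorous and uniform in $\lambda \in \caD_{\lambda_0}$; the crucial input is the nondegeneracy $F''(\tau_i) \neq 0$, which is itself forced by the assumed square-root edge behavior of $\mu_{fc}$.
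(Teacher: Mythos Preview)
Your approach is correct and takes a genuinely different route from the paper. You reduce part~$(i)$ to the differential identity $1-R_2 = 1/(1+m_{fc}')$ and then control $m_{fc}'$ via the local edge expansion $\tau(z) = \tau_i + c_i\sqrt{z-L_i}(1+o(1))$ of Lemma~\ref{squareroot of mufc}. The paper instead works algebraically: it uses the identity $\int \dd\mu(v)/|\lambda v - z - m_{fc}|^2 = \im m_{fc}/(\eta + \im m_{fc})$ (obtained by taking imaginary parts in~\eqref{free convolution equation}) together with Lemma~\ref{lem:m_fc_bound}, and then bounds the difference $|R_2 - \int \dd\mu/|\lambda v - z - m_{fc}|^2|$ by $C\,\im(z+m_{fc})$; the lower bound is handled by a case split on $E\in[L_1,L_2]$ versus $E\notin[L_1,L_2]$ and on the sign of $|\re(z+m_{fc})|-\lambda$. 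The paper's argument is more self-contained within the appendix and yields explicit constants, while yours is cleaner conceptually but leans more heavily on the analytic edge expansion.

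One small imprecision worth fixing: in the bulk paragraph you conclude $F'(\tau(z))\neq 0$ from the fact that $\tau(z)$ avoids the \emph{real} critical points $\{\tau_1,\tau_2\}$, but you have not ruled out complex zeros of $F'$. This is harmless, because your own identity $(1-R_2)(1+m_{fc}')=1$ already shows $1-R_2(z)\neq 0$ wherever $m_{fc}'$ is finite, i.e.\ on the closure of the domain minus $\{L_1,L_2\}$ (since $\mu_{fc}$ is analytic in the bulk and $m_{fc}$ is analytic outside the support); compactness then gives the uniform lower bound you want. For part~$(ii)$, your deduction of $R_3(L_i)=-\tfrac12 F''(\tau_i)\neq 0$ directly from the square-root hypothesis is self-contained; the paper instead cites~\cite{S2} for $R_3(L_2)>0$.
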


\begin{proof}
Since $c \leq |\lambda v-z - m_{fc}(z)|$, it is easy to see that $|R_3| < C$. Furthermore, it is proved in \cite{S2} that $R_3 (L_2) > 0$. Since $R_3(z)$ is an analytic function of $z$ in a neighborhood of $L_i$, $i=1,2$, this proves the second part of the lemma.

In order to prove the first part of the lemma, we first consider the following decomposition:
\begin{equation*} \begin{split} \label{eq:stability_decomp}
&\left| 1 - \int \frac{\dd \mu (v)}{(\lambda v - z - m_{fc}(z))^2} \right| \\
&\qquad\leq \left| 1 - \int \frac{\dd \mu (v)}{|\lambda v - z - m_{fc}(z)|^2} \right| + \left| \int \frac{\dd \mu (v)}{|\lambda v - z - m_{fc}(z)|^2} - \int \frac{\dd \mu (v)}{(\lambda v - z - m_{fc}(z))^2} \right|\,.
\end{split} \end{equation*}
For the first term in the right side of the decomposition \eqref{eq:stability_decomp}, we have
\begin{equation*}
1 - \frac{\im m_{fc}(z)}{\im (z + m_{fc}(z))} = \frac{\eta}{\im (z + m_{fc}(z))} \leq \frac{\eta}{C \sqrt{\kappa_E + \eta}} \leq C \sqrt{\kappa_E + \eta}\,,
\end{equation*}
if $E\in[L_1,L_2]$ and
\begin{align*}
 1 - \frac{\im m_{fc}(z)}{\im (z + m_{fc}(z))} = \frac{\eta}{\im (z + m_{fc}(z))} \leq \frac{\eta}{C \eta/\sqrt{\kappa_E + \eta}} \leq C \sqrt{\kappa_E + \eta}\,,
\end{align*}
if $E\in[L_1,L_2]^{c}$.
Since
\begin{equation*}
|\re (\lambda v - z - m_{fc}(z))|\,, \im(z + m_{fc}(z)) \leq |z + m_{fc}(z)| + \lambda < C\,,
\end{equation*}
we also find that
\begin{align}  \label{eq:stability_mid}
&\left| \int \frac{\dd \mu (v)}{|\lambda v - z - m_{fc}(z)|^2} -\int \frac{\dd \mu (v)}{(\lambda v - z - m_{fc}(z))^2} \right|\nonumber  \\
&\qquad\qquad= 2 \left| \int \frac{(\im(z + m_{fc}(z)) )^2 +\ii \re (\lambda v - z - m_{fc}(z)) \cdot \im(z + m_{fc}(z))}{|\lambda v - z - m_{fc}(z)|^4} \dd \mu (v) \right|\nonumber \\
&\qquad\qquad\leq C \: \im(z + m_{fc}(z)) \le C \sqrt{\kappa_E + \eta}\,.
\end{align} 
Thus,
\begin{equation*} \begin{split}
&\left| 1 - \int \frac{\dd \mu (v)}{(\lambda v - z - m_{fc}(z))^2} \right|  \leq C\sqrt{\kappa_E + \eta}\,,
\end{split} \end{equation*}
which proves the upper bound.

For the lower bound, we first consider the case $E\in[L_1,L_2]$: If $|\re (z+m_{fc}(z))| > \lambda$, we get
\begin{align*}
\left| \im \int \frac{\dd\mu (v)}{(\lambda v - z - m_{fc}(z))^2} \right| &= 2 \left| \int \frac{\re (\lambda v - z - m_{fc}(z)) \cdot \im(z + m_{fc}(z))}{|\lambda v - z - m_{fc}(z)|^4} \dd\mu (v) \right| \\
&= 2 \int \frac{ | \re (\lambda v - z - m_{fc}(z))| \cdot \im(z + m_{fc}(z))}{|\lambda v - z - m_{fc}(z)|^4} \dd\mu (v) \\
&\geq C \im(z + m_{fc}(z)) \geq C \sqrt{\kappa_E + \eta}\,.
\end{align*}
Hence,
\begin{equation*}
\left| 1 - \int \frac{\dd \mu (v)}{(\lambda v - z - m_{fc}(z))^2} \right| \geq \left| \im \int \frac{\dd \mu (v)}{(\lambda v - z - m_{fc}(z))^2} \right| \ge C \sqrt{\kappa_E + \eta}\,.
\end{equation*}
If $|\re (z+m_{fc}(z))| < \lambda$, then the stability bound $|\lambda v - z - m_{fc}(z)| > c$ implies that $\im (z + m_{fc}(z)) > c$. Then, we get
\begin{align*}
\re \int \frac{\dd \mu (v)}{(\lambda v - z - m_{fc}(z))^2} &= \int \frac{[\re (\lambda v - z - m_{fc}(z))]^2 - [\im(z + m_{fc}(z))]^2 }{|\lambda v - z - m_{fc}(z)|^4} \dd \mu (v) \\
&\leq \int \frac{\dd \mu (v)}{|\lambda v - z - m_{fc}(z)|^2} - C \,.
\end{align*}
Thus,
\begin{align*}
\left| 1 - \int \frac{\dd \mu (v)}{(\lambda v - z - m_{fc}(z))^2} \right| &\geq 1 - \re \int \frac{\dd\mu (v)}{(\lambda v - z - m_{fc}(z))^2} \\
&\geq \int \frac{\dd \mu (v)}{|\lambda v - z - m_{fc}(z)|^2} - \re \int \frac{\dd\mu (v)}{|\lambda v - z - m_{fc}(z)|^2}\\ & \geq C \geq C \sqrt{\kappa_E + \eta}\,.
\end{align*}

In case $E\in[L_1,L_2]^c$, we obtain a lower bound from
\begin{equation*}
\left| 1 - \int \frac{\dd \mu (v)}{(\lambda v - z - m_{fc}(z))^2} \right| \geq 1 - \int \frac{\dd\mu (v)}{|\lambda v - z - m_{fc}(z)|^2}
\end{equation*}
and
\begin{equation*}
1 - \int \frac{\dd \mu (v)}{|\lambda v - z - m_{fc}(z)|^2} = 1 - \frac{\im m_{fc}(z)}{\im (z + m_{fc}(z))} = \frac{\eta}{\im (z + m_{fc}(z))} \geq \frac{C \eta}{\eta / \sqrt{\kappa_E + \eta}} = C \sqrt{\kappa_E + \eta}\,.
\end{equation*}
This completes the proof.

\end{proof}

\end{appendix}

\end{document}